\newcommand{\rewrite}[1]{\todo[color=red!40]{#1}}
\def\e{\varepsilon}
\def\rr{{\mathbb R}}
\def\dx{\,dx}
\def\NN{{\mathbb N}}
\def\ZZ{{\mathbb Z}}
\def\A{{\mathcal A}}%Notazione aggiunta
\newcommand{\I}{I}%Notazione aggiunta
\newcommand{\dis}{\mathcal{D}}%Notazione aggiunta
\newcommand{\Z}{Z}%Notazione aggiunta
\newcommand{\DD}{\mathscr{D}}%Notazione aggiunta
\newcommand{\PP}{\mathscr{P}}%Notazione aggiunta
\newcommand{\one}{\mathbb{I}}
\newcommand{\zero}{\mathbb{O}}
\newcommand{\enF}{\mathcal{F}^{\tau, \gamma}_\e}
\newcommand{\E}{\mathcal{E}}
\newcommand{\hh}{\mathcal{H}}
\newcommand{\nn}{\mathcal{N}}
\newcommand{\dseg}[2]{\llbracket #1, #2 \rrbracket}%segmento discreto
\DeclareMathOperator*{\R}{\mathbb{R}}
\DeclareMathOperator*{\argmin}{arg\,min}
\DeclareMathOperator*{\len}{len}%aggiunta
\DeclareMathOperator*{\diam}{diam}%aggiunta
\def\XXint#1#2#3{{\setbox0=\hbox{$#1{#2#3}{\int}$} 
  \vcenter{\hbox{$#2#3$}}\kern-.5\wd0}}
\newtheorem{theorem}{Theorem}[section]
\newtheorem*{theorem*}{Theorem}
\newtheorem{lemma}[theorem]{Lemma}
\newtheorem{proposition}[theorem]{Proposition}
\theoremstyle{definition}
\newtheorem{definition}[theorem]{Definition}
\newtheorem{remark}[theorem]{Remark}
\title[Crystalline Motion for the Blume-Emery-Griffiths Model]{Crystalline Motion of discrete interfaces in the Blume-Emery-Griffiths Model}
\author[M. Cicalese]{M. Cicalese}
\address[Marco Cicalese]{Technische Universit\"at M\"unchen, Boltzmannstrasse 3, 85748 Garching, Germany	}
\email[]{marco.cicalese@tum.de}
\author[G. Fusco]{G. Fusco}
\address[Giuliana Fusco]{Scuola Superiore Meridionale, via Mezzocannone 4, 80134 Napoli, Italy}
\email[]{g.fusco@ssmeridionale.it}
\author[G. Savarè]{G. Savar\'e}
\address[Giovanni Savar\'e]{Technische Universit\"at M\"unchen, Boltzmannstrasse 3, 85748 Garching, Germany	}
\email[]{giovanni.savare@tum.de}
\date{\today}  
\begin{document}
\begin{abstract}
We study the discrete-to-continuum evolution of a lattice system consisting of two immiscible phases labelled by $-1$ and $+1$ in presence of a surfactant phase labelled by $0$. The system’s energy is described by the classical Blume-Emery-Griffith model on the lattice $\e\ZZ^2$, and its continuum evolution is obtained as $\e\to 0$ through a minimizing-movements scheme with a time step proportional to $\e$. The dissipation functional we choose contains two contributions: a standard Almgren-Taylor-Wang type  term  penalizing the distance between successive configurations of the $+1$ phase, and a term penalizing the variation of the surfactant mass and modeling surfactant evaporation. The latter term depends on a scaling parameter $\gamma>0$, which determines whether the surfactant mass is conserved at each time step. We focus on the case in which the initial configuration consists of a single crystal of phase $1$ completely wetted by the surfactant. For $\gamma>2$ the surfactant can loose mass and the evolution reduces to the crystalline mean curvature flow of an Ising-type model, while for $\gamma<2$ the conservation of the surfactant mass leads to a more complex evolution characterized by stronger non-uniqueness and partial pinning.	
\end{abstract}

\maketitle
	
	\vskip5pt
	\noindent
	\textsc{Keywords: Blume-Emery-Griffith surfactant model, discrete-to-continuum, minimizing movements, crystalline curvature flow} 
	\vskip5pt
	\noindent
	\textsc{AMS subject classifications: 53E10, 49J45, 74A50, 82B24, 82C24}

\tableofcontents

\section{Introduction}

In statistical mechanics and materials science, lattice models are essential for understanding energy-driven physical systems, particularly in their capacity to model phase transitions and microstructure formation. Beyond their descriptive power, they serve as a crucial rigorous framework for the validation and derivation of asymptotic mesoscopic and continuum phenomenological models which can be obtained as the lattice spacing vanishes. A central technique in the description of the asymptotic behaviour of low energy states and in particular ground states of lattice models is the $\Gamma$-convergence. The $\Gamma$-limit defines a rigorous variational coarse-graining and the whole process goes under the name of discrete-to-continuum passage. The class of those discrete systems that give rise to an asymptotic continuum energy model favoring phase separation has been extensively studied in the static framework over the past 20 years (for a comprehensive study the reader is referred to the book \cite{ABCS} and its references). This class includes classical ferromagnetic Ising-type lattice spin systems. These consist of interacting particles occupying the nodes of (a subset of) a lattice with each particle labelled by a value from a finite set (in the classical Ising case, particles can take only two possible values). Each of these values defines a spin phase which tends to be connected to save energy, and, as a result, the entire system can be described in terms of a finite partition into connected subsets of the lattice (the phases) interacting via an appropriate perimeter functional. Within this description the asymptotic continuum energy is a (possibly anisotropic) perimeter-type energy of a partition of sets of finite perimeter. Finding the phase transition energy of the coarse grained system is therefore equivalent to solving a classical minimal partitioning problem in the calculus of variations.

The relation between the perimeter functional and the Ising system, discussed above in the static setting, extends to the dynamic setting. The appearance of interfacial energies as an approximation of spin energies in fact suggests to consider a continuum description of discrete evolution problems in terms of geometric motions of interfaces. For the ferromagnetic Ising model, where the spin variable takes only two values, the continuum description involves two homogeneous phases separated by an interface with crystalline perimeter energy (see \cite{ABC}). In \cite{BGN}, Braides, Gelli, and Novaga introduced a continuum geometric evolution of these phases by combining the Almgren-Taylor-Wang time-discrete minimizing-movements scheme \cite{Almgren-Taylor-Wang} (see also \cite{LS}) with a coarse-graining procedure. This pioneering result revealed the emergence of an evolution characterized by new and intriguing phenomena arising from the possible interaction between the two approximation parameters - namely, the lattice and time scales. This line of research has since been further developed by several authors, who have shown that the presence in the model of anisotropies or additional scales, such as in the case of periodic inclusions or oscillating forcing terms, can give rise to additional new and nontrivial features in the resulting motion (see \cite{BSc, BMN, BST, MN, S} and \cite{BS} for an introduction to the subject).\\

In this paper, following the variational approach outlined above, we begin the analysis of the discrete Blume–Emery–Griffiths (BEG) model introduced in \cite{BEG} (see also \cite{EOT} and refer to the book \cite{R} for an introduction to the physics of surfactants in interfacial phenomena). This model describes a three-phase Ising-type system, representing two immiscible phases (for instance, water and oil) in the presence of a third phase acting as a surface-active agent (surfactant). The surfactant, by adsorbing onto the interfaces between the two immiscible phases, can significantly modify the interfacial energy and, consequently, the morphology and evolution of the system.

In the discrete setting, a rigorous variational coarse-graining of this class of surfactant models— including the BEG energy—was obtained in \cite{ACS} through $\Gamma$-convergence (see also \cite{LGGZ1, LGGZ2} for related, though less rigorous, studies on ground-state behavior). Earlier continuum descriptions of surfactant effects in phase separation problems were developed in \cite{FMS} (see also \cite{CH1, CH2} for the vectorial and non-local version of the problem) and later extended in \cite{AB}. Building on these results, we now turn to the study of the evolution problem associated with the BEG model, and in particular to its dependence on the conservation (or loss) of surfactant mass during the motion. This effect is captured through a time-discrete minimizing-movements scheme of the type introduced in \cite{BGN}, suitably modified by a dissipation potential that incorporates a parameter controlling the degree of surfactant conservation.

From a physical perspective, numerous experimental and theoretical studies have shown that surfactants can profoundly alter the dynamics of interfaces in immiscible fluids. The accumulation of surfactant at the contact line may generate Marangoni stresses that stabilize the interface and induce pinning, whereas surfactant evaporation or redistribution can promote depinning and motion \cite{D1997, K2019, Z2017,  L2016, S2013, S2017}. These competing mechanisms—pinning through surfactant accumulation and depinning through surfactant loss—have a direct analogue in our model, where they correspond to two distinct regimes determined by the value of the control parameter. For small values of this parameter, the surfactant mass is effectively conserved, leading to pinned crystalline evolutions; for large values, evaporation dominates and the motion reduces to crystalline mean curvature flow. The ultimate goal of this paper is to provide a first rigorous description of these phenomena by constructing an explicit example of a single-crystal evolution that qualitatively changes depending on whether surfactant evaporation is allowed or not.

We further notice that many technical difficulties in our problem, and in primis the careful analysis needed to prove the connectedness of the phases along the flow, come from the fact that it involves three (and not only two) phases. Even in the continuum framework, where there is no lattice scale, the study of network evolutions arising as the boundaries of partitions with more than two phases is still not completely understood, despite the many interesting contributions obtained in the last years (see \cite{MNPS} and the references therein). In the discrete setting, rigorous results on the evolution by minimizing movements of multi-phase interfacial systems are quite scarce. A first example was obtained by the first author in \cite{BCY} (see also \cite{BS} and the references therein). The scarcity of rigorous results in the lattice case can be attributed to the intrinsic complexity of the problem: minimizers may be highly non-unique and may develop nontrivial lattice-scale oscillations, usually referred to as microstructures. We end this first part of the introduction by mentioning that, if the aim of the discrete-to-continuum evolution scheme is that of approximating the crystalline mean curvature flow, then other algorithms can be used in order to avoid microstructure formation, hence pinning phenomena or spatial drift (see e.g., \cite{CDGM})\\

\noindent {\bf The Blume-Emery-Griffith surfactant model}\\

\noindent Given a bounded open set $\Omega \subset \mathbb{R}^2$ with Lipschitz boundary, on the square lattice $\Omega_\e=\e\ZZ^2\cap \Omega$ we consider the set ${\mathcal{A}}_\e=\{u:\Omega_\e\rightarrow \{\pm1,0\}\}$ of those functions whose values on the nodes of the lattice correspond to particles of water and oil for $u=+1$ and $u=-1$, respectively or of surfactant for $u=0$. For a given configuration of particles, the energy of the system is given by
\begin{equation}
\label{BEG_energy}\E^{latt}_\e(u)=\sum_{n.n.}\e^2(-u(p)u(q)+k(u(p)u(q))^2)
\end{equation}
where $n.n.$ stands for nearest neighbors and it means that the sum is performed over those $p, q\in \Omega_\e$ such that $|p-q| = \e$. Here $k>0$ is a constant which represents the quotient between bi-quadratic and quadratic exchange interaction strengths.  In \cite{ACS} it was shown that, as $\e\rightarrow 0$, upon identifying arrays $\{u(p)\}, p \in \Omega_\e$ with their piecewise-constant interpolations on the cell of the lattice, the $\Gamma$-limit (with respect to the $L^{1}$-convergence) of $\mathcal{E}^{latt}_\e$ is finite on $L^{1}(\Omega;[-1,1])$ and it is given by the constant value $2|\Omega|(-1+k)\wedge 0$. Such a value is achieved by one of the three uniform states, namely $u=1$, $u=-1$ and $u=0$. As a consequence, on choosing $k<1$ one can select the uniform states $u=\pm1$ to be ground states. In the spirit of \textit{development by $\Gamma$-convergence} (see \cite{BT} for a more general treatment of this topic) one can refer the energy to its minimum $m_\e:=\sum_{n.n.}\e^2(k-1)$ and further scale the functionals by $\e$ and obtain the new family of functionals
\begin{equation}
    \label{BEG_riscalata}
    \E_\e(u):=\frac{\E^{latt}_\e(u)-m_\e}{\e}=\sum_{n.n.}\e(1-u(p)u(q)-k(1-(u(p)u(q))^2)).
\end{equation}
Observe that in \eqref{BEG_riscalata} the interaction energy of two neighboring particles taking both the value $+1$ (or $-1$) is zero, while the interaction of a surfactant particle $0$ with all the other particles costs the positive value $(1-k)$. For this reason it is also said that the BEG functional describes a \textit{repulsive surfactant model}. At this scaling, a simple computation shows that, on choosing $\frac{1}{3}<k<1$, interposing a particle in the phase $0$ between two particles with opposite phases decreases the energy. Hence one can expect that the phase $0$ acts as a surfactant phase in this range of values of $k$. This educated guess is confirmed by computing the $\Gamma$-limit as $\e\rightarrow 0$ (with respect to the $L^1$ convergence) of the energies $\E_\e$ and proving that it is given by the interfacial-type energy
\begin{equation}
    \label{BEG_riscalato_gamma_limite}
    \E(u)=\int_{S(u)}\psi(\nu_u)\, d\mathcal{H}^1
\end{equation}
where $u \in BV(\Omega;\{\pm1\}),\psi(\nu)=(1-k)(3|\nu_1|\vee|\nu_2|+|\nu_1|\wedge|\nu_2|)$ denotes the anisotropic surface tension, $S(u)$ is the jump set of $u$, namely the interface between the phases $\{u=1\}$ and $\{u=-1\}$ and $\nu_u$ is the measure theoretic inner normal to $S(u)$.\\

\noindent{\bf Time-discrete evolution of the Blume-Emery-Griffith surfactant model}\\

\noindent In our work we are interested in energy-driven motions deriving from the functional $\E_\e$. We follow the approach by Braides, Gelli, Novaga \cite{BGN} and Braides, Solci \cite{BS}, who coupled the minimizing movements approach a la Almgren, Taylor and Wang (\cite{Almgren-Taylor}, \cite{Almgren-Taylor-Wang}) with a discrete-to-continuous analysis. In our setting the minimizing movements scheme can be described as follows. Given a lattice spacing $\e>0$, we first introduce a time step $\tau = \tau(\e)$ and an initial datum $u^\e_0\in\A_\e$. Then, we define a discrete motion by successive minimizations: given the spin configuration at the $j$-th time step $u^\e_{j}$ we obtain the next configuration $u^\e_{j+1}$ as
\begin{equation}\label{intro:schema}
u^\e_{j+1}\in\text{ argmin}\Bigl\{\E_\e(u^\e_{j+1})+\frac{1}{\tau}\dis_{\e, \gamma}(u^\e_{j+1}, u^\e_j)\Bigl\},
\end{equation}
where $\dis_{\e, \gamma}$ is a dissipation term which penalizes configurations $u^\e_{j+1}$ which are distant from $u^\e_j$. In the present case it is given as the sum of two contributions $\dis_{\e, \gamma}\coloneqq \dis_\e^1+\e^\gamma \dis^0_\e$. The first one (see (\ref{eq:dis_1})) penalizes large variations between $\{u^\e_{j+1}=1\}$ and $\{u^\e_{j}=1\}$, and coincides with the dissipation introduced in \cite{BGN}. The second term, instead, penalizes large  variations of the total amount of surfactant, i.e. $\#\{u^\e_j=0\}$, see (\ref{eq:dis_0}). The parameter $\gamma>0$ acts as a scaling factor for the second contribution, thereby tuning its influence in the overall dissipation.  As it will be further stressed in the following of this introduction, the parameter $\gamma$ has a critical role in the conservation of the surfactant mass during the discrete evolution and it is here introduced to account for the influence of the presence (or absence) of surfactant evaporation on the geometry of the evolving phases. We observe that, without this second contribution, i.e. if the amount of surfactant is free to change at every step, then the minimizing movements we considered would coincide with the minimizing movements originated by functional (\ref{BEG_riscalato_gamma_limite}) and already analyzed in \cite[Section 4.1.5]{BS}. 
We then associate to every function $u^\e_j$ the set $A_j^\e\subset\rr^2$ given by the union of all the squares of side length $\e$ centered at points $p$ where $u^\e_j(p) = 1$ (see (\ref{eq:set_A})). We focus on the case in which lattice and time scale interact most, that is $\tau(\e)=\zeta\e$ for some $\zeta>0$, and we choose a single crystal initial datum $u^\e_0$, i.e., such that $A^\e_0\cap\e\ZZ^2$ is the discretization of an octagon (the Wulff shape of the functional (\ref{BEG_riscalato_gamma_limite})). Then for every $t\ge 0$ we define the set function $A^\e(t):=A^\e_{\lfloor t/\tau\rfloor}$. We prove that $A^\e(t)$ converges uniformly locally in time to a limit flow $A(t)$, and we prove several results which describe how $A(t)$ depends on the parameter $\gamma$. \\

The article is organized as follows. In Section \ref{sec:formulation of the probelm} we present the setting of the problem and we introduce some notation and definition. In Section \ref{sec:connection} we show that the set $A^\e_j$ is connected as long as its vertical and horizontal diameter are sufficiently large and we prove that at each time step $j$, $A^\e_{j+1}\subset A^\e_j$. Then, in the following sections, we discuss two qualitatively different motions corresponding to $\gamma>2$ and $\gamma<2$. In Section \ref{sec:gamma>2} we deal with the case $\gamma>2$. We denote by $\I^\e_j:=\{u^\e_j=1\}$ and $\Z^\e_j:=\{u^\e_j=0\}$
and for a set $J\subset\e\ZZ^2$ we denote by $\partial^+J\subset\e\ZZ^2$ the set of points not belonging to $J$ which are adjacent to a point of $J$. 
With this notation, we prove that at every time step $j$ the variation of surfactant, i.e. 
\begin{equation*}
\frac{\e^\gamma}{\tau}|\#Z^\e_{j}-\#Z^\e_{j+1}|
\end{equation*}
gives a negligible contribution to the dissipation, and that the minimizing set $I^\e_{j+1}$ is a discrete octagon completely surrounded by surfactant, i.e. $Z_{j+1}^\e = \partial^+I_{j+1}^\e$ (see Figure \ref{Figure_1_intro-2}). Finally, in Theorem \ref{teo:free_surfactant_movement} we characterize the limit flow $A(t)$ and we find that it agrees with the one described in \cite[Section 4.1.5]{BS} for the energy (\ref{BEG_riscalato_gamma_limite}).

\begin{figure}[H]
    \centering
    \resizebox{0.50\textwidth}{!}{\input{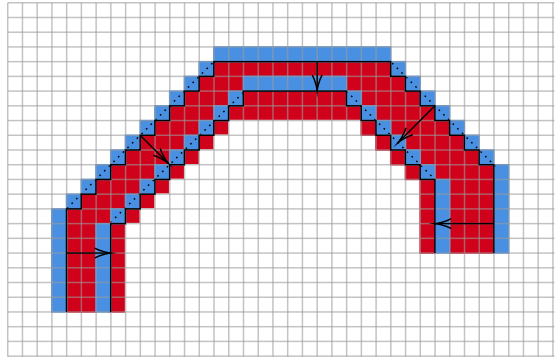}}
    \caption{The case $\gamma >2$: Discrete flow of a Wulff-type set (in red $u=1$ and in blue $u=0$).}
    \label{Figure_1_intro-2}
\end{figure}

In Section \ref{sec:gamma<2} we consider the case $\gamma<2$ and we analyze the minimizing movements under the additional assumption that at the starting time step $j=0$ the surfactant completely wets a Wulff shape, that is the set $I^\e_0$ is a discrete octagon completely surrounded by surfactant (i.e. $\partial^+I^\e_0\subset Z^\e_0$). In a forthcoming paper \cite{CFS2} we will deal with the remaining situations, that is with $\gamma = 2$, and with $\gamma<2$ in the non-complete wetting case.
The results that we obtain in Section \ref{sec:gamma<2} are described below. We focus on the case that $\e^2\#Z^\e_0\to 0$, that is we consider the minimizing movements under the assumption that the two dimensional measure of the surfactant is negligible as $\e\to 0$ (see \eqref{eq:5.1} and the beginning of Section \ref{sec:gamma<2} for the precise assumption). Note that if this assumption is not fulfilled, the surfactant phase would not disappear in the continuum limit and the motion should be described by a different set of equations taking into account also the evolution of the surfactant phase. First, we prove that at each time step the amount of surfactant remains constant and the minimizer set $I_j^\e$ is surrounded by surfactant.   For a set $J\subset\e\ZZ^2$ we denote by $R_J\subset\e\ZZ^2$ the smallest discrete rectangle which contains $J$. In Lemma \ref{lemma:energy_surfactant} we prove that the energy of $u^\e_j$ is given by 
\[\E_\e(u^\e_j) = 2\e(1-k)\#Z^\e_j+\frac12(1-k)\Bigl(Per(Z^\e_j\cup I^\e_j)+Per(I_j^\e)\Bigl).\]
Recalling that $\#Z^\e_j$ is constant with respect to $j$ at $\e$ fixed, we can explain how the minimization scheme works by simplifying the problem and considering in place of $\E_\e$ the energy
\begin{equation}\label{eq:reduced_energy}    
\tilde{\E}_\e(u^\e_j):=Per(Z^\e_j\cup I^\e_j)+Per(I_j^\e) = Per(R_{Z^\e_j\cup I^\e_j}) + Per(R_{I^\e_j}).
\end{equation}
The evolution, obtained via minimizing movements, consists of two stages. In the first stage, $I^\varepsilon_j$ is a discrete octagon having at least one of its sloped sides sufficiently long with respect to the scale parameter $\e$ (for the sake of simplicity in this introduction we can think of such a length to be of order $1$ and we refer the reader to \ref{prop:hausdorff_distance_from_boundary} for the precise assumption). In this case, we show that the minimizing set $I^\e_{j+1}$ is still an octagon with long sloped sides. However, differently from the situation analysed in Section \ref{sec:gamma>2}, while the sides which are parallel to the coordinate axes move inward from time step $j$ to time step $j+1$ (see Proposition \ref{prop:surrounded_shape_minimizer_beginning}), the long sloped ones remain pinned, i.e. they do not move (see Figure \ref{Figure_2_intro-2}).

\begin{figure}[H]
    \centering
    \resizebox{0.50\textwidth}{!}{\input{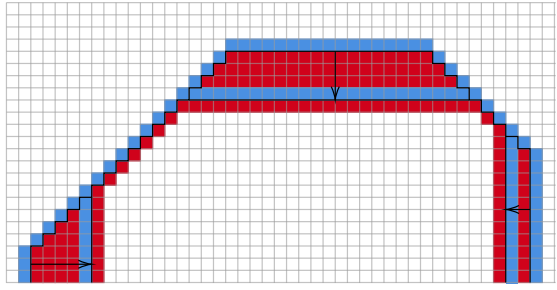}}
    \caption{The case $\gamma<2$. The first stage: pinning of the diagonals (in red $\{u=1\}$ and in blue $\{u=0\}$).}
    \label{Figure_2_intro-2}
\end{figure}

We give an heuristic explanation of the pinning. Using that there exists a long sloped side of $I^\e_{j+1}$, and recalling that $\e^2\#Z^\e_0\to 0$, one can show that the minimizer $u^\e_{j+1}$ satisfies $Z^\e_{j+1}\cup I^\e_{j+1}\subset R_{I^\e_{j+1}\cup\partial^+I^\e_{j+1}}.$ Since we already know that $I^\e_{j+1}$ is surrounded by surfactant, it follows that $Per(R_{Z^\e_{j+1}\cup I^\e_{j+1}}) = Per(R_{I^\e_{j+1}\cup\partial^+I^\e_{j+1}})$, and therefore from \eqref{eq:reduced_energy} we deduce that the energy of any minimizer is (up to a constant term and a multiplicative factor)
\begin{equation}\label{eq:reduced_energy_tilde}
\tilde{\E}_\e(u^\e_{j+1}) = \Bigl(Per(R_{I^\e_{j+1}})+8\e\Bigl)+Per(R_{I^\e_{j+1}}) = 8\e+2Per(R_{I^\e_{j+1}}).
\end{equation}
In particular the energy depends only on the perimeter of the minimizing set $I^\e_{j+1}$, and not on the set $Z^\e_{j+1}$. As a consequence of that and of the a priori estimate on the Hausdorff distance between consecutive optimal shapes (see Proposition \ref{prop:hausdorff_distance_from_boundary}), we obtain the pinning of the long diagonals. In order to give an idea of the argument, let us consider, for a given $u^\e_j$, the minimization scheme in \eqref{intro:schema} with $\tilde\E_\e$ in place of $\E_\e$. Let ${u}^\e_{j+1}$ be a minimizer and let $\tilde{u}^\e_{j+1}$ be a competitor which verifies $\tilde{I}^\e_{j+1}:=\{\tilde{u}^\e_{j+1} = 1\} = R_{I^\e_{j+1}}\cap I^\e_j$. First, we deduce from \eqref{eq:reduced_energy_tilde} that $\tilde{\E}_\e(u^\e_{j+1}) = \tilde{\E}_\e(\tilde{u}^\e_{j+1})$. In addition to that, as a consequence of Proposition \ref{prop:hausdorff_distance_from_boundary}, we also infer that it can not happen that $R_{I^\varepsilon_{j+1}}\subsetneq I^\varepsilon_j$. In other words, the sloped sides of  $\tilde{I}^\varepsilon_{j+1}$ are pinned with respect to the sloped sides of $I^\varepsilon_{j}$.  Finally from by the definition of $\tilde{I}^\e_{j+1}$ we have that $\dis^1_\e(u^\e_{j+1}, u^\e_j)\ge \dis^1_\e(\tilde{u}^\e_{j+1}, u^\e_j)$, and that a strict inequality holds if $I^\e_{j+1}\subsetneq\tilde{I}^\e_{j+1}$. By the minimality of $u^\e_{j+1}$ we eventually deduce that $I^\e_{j+1} = \tilde{I}^\e_{j+1}.$ In particular every long sloped side of the discrete octagon $I^\e_{j+1}$ is pinned. As the motion proceeds, some diagonals may vanish. When this happens, the surfactant which is no more adjacents to some point of $ I ^\varepsilon$ tends to build up along the upper diagonals of the long sloped side (see Figure \ref{Figure_3_intro-2}).
\begin{figure}[H]
    \centering
    \resizebox{0.50\textwidth}{!}{\input{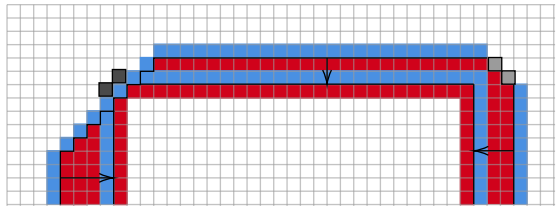}}
    \caption{The case $\gamma<2$: one diagonal disappears (in light gray, we represent the surfactant on the short diagonal at step $j$; in dark gray, we represent such surfactant that build up along the upper diagonal at the step $j+1$).}
    \label{Figure_3_intro-2}
\end{figure}
If the motion does not interrupt, the set $I_j^\e$ will become an octagon with sloped sides having length vanishing with $\e$ and the second stage of the motion starts.
In the second stage, the minimizer $I^\e_{j+1}$ resembles a discrete rectangle at each time step $j$ (see Definition \ref{def:quasi_rectangle} for a precise description of its shape). A precise description of the set $I_{j+1}^\e$ is more difficult, because it may happen that either $Z^\e_{j+1}\cup I^\e_{j+1}\subset R_{I^\e_{j+1}\cup\partial^+I^\e_{j+1}}$ (see Figure \ref{Figure_4_intro-2}) or $Z^\e_{j+1}\cup I^\e_{j+1}\subsetneq R_{I^\e_{j+1}\cup\partial^+I^\e_{j+1}}$ (see figure \ref{Figure_5_intro}). 
\begin{figure}[H]
    \centering
    \resizebox{0.50\textwidth}{!}{\input{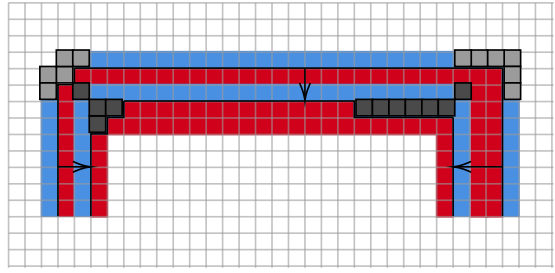}}
    \caption{The case $\gamma<2$: all the sloped sides are short and $Z^\varepsilon_{j+1}\cup I^\varepsilon_{j+1}\subset R_{I^\varepsilon_{j+1}\cup \partial^+I^\varepsilon_{j+1}}$ (in light gray, we represent the surfactant at step $j$ that is not adjacent to the sides parallel to the coordinate axes at step $j+1$. A possible rearrangement of such surfactant particles at step $j+1$ is represented in dark gray).}
    \label{Figure_4_intro-2}
\end{figure}

\begin{figure}[H]
    \centering
    \resizebox{0.80\textwidth}{!}{\input{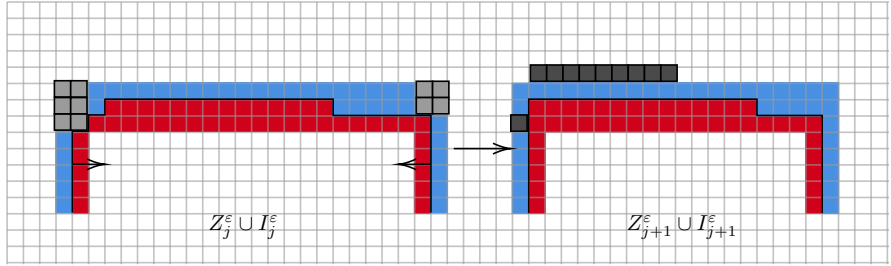}}
    \caption{The case $\gamma<2$: all the sloped sides are short and $Z^\varepsilon_{j+1}\cup I^\varepsilon_{j+1}\subsetneq R_{I^\varepsilon_{j+1}\cup \partial^+I^\varepsilon_{j+1}}$. To make it easier for the reader, $Z^\varepsilon_{j}\cup I^\varepsilon_{j}$ and $Z^\varepsilon_{j+1}\cup I^\varepsilon_{j+1}$ are shown non-overlapping, differently from the other figures (in light gray, we represent the surfactant at step $j$ that is rearranged at step $j+1$ as the dark gray particles on the right hand side).}
    \label{Figure_5_intro}
\end{figure}
The energy \eqref{eq:reduced_energy} favours minimizers $u^\e_{j+1}$ in which both $\I^\e_{j+1}$ and $\I^\e_{j+1}\cup\Z^\e_{j+1}$ are discrete rectangles, and the balance between these two geometric tendencies makes the description harder. In order to overcome this problem we need to construct proper barriers to the motion which eventually help us in obtaining a continuum description in Theorem \ref{teo:final teo A_j surrounded}.\\

We can summarize the results described above by saying that they match the main features of surfactant-induced-pinning phenomena as those observed in physical systems involving surfactant-mediated interface motion. In this context of evaporating or surfactant-laden droplets, the removal or redistribution of surfactant can indeed lead either to smooth motion of the contact line or to its partial arrest (see, e.g., \cite{BaSi}). In other words, in phase separation phenomena in presence of surfactant, mass conservation or slow exchange of surfactant may hinder domain evolution and induce metastable configurations. These observations are consistent with the two limiting regimes captured by our discrete model: for $\gamma>2$ the surfactant effectively evaporates and the crystal evolves by crystalline mean curvature flow, whereas for $\gamma<2$ the conservation of surfactant mass produces constrained, partially pinned motions.

\section{Formulation of the problem}\label{sec:formulation of the probelm}
\paragraph{Notation}
We denote by $\NN$ the sets of natural numbers starting from $1$. We denote by $\e\ZZ^2$ the square lattice with lattice spacing $\e>0$. We will call the points of $\e\ZZ^2$ with the letters $p, q$. When dealing with more than two points we name them as $p^i$, and we denote by $p^i = (p^i_1, p^i_2)$ their components. We write $p\preccurlyeq q$ to say that $p_1\le q_1$ and $p_2\le q_2$. The symmetric difference of two sets $A, B\subset \mathbb{R}^2$ is denoted by $A \triangle B$, their Hausdorff distance by $d_{\mathcal{H}}(A,B)$. For $i=1, 2$ we denote by $e_i$  the standard basis of $\rr^2$. Given $p\in \e\ZZ^2$ we define $\mathcal{N}(p):=\{p\pm\e e_i:\:i=1, 2\}$ so that, in particular, $\#\mathcal{N}(p) = 4$ for every $p\in\e\ZZ^2$. For \( x \in \mathbb{R} \), we denote by \( \lfloor x \rfloor \) and \( \lceil x \rceil \) the floor and ceiling of \( x \), respectively. When no ambiguity arises, we write \( [x] \) to denote the closest integer to \( x \). In the proofs we will often use the symbol $c$ to denote a generic constant, whose value may change form line to line. 
%All the inequalities that follow hold true for $\e$ sufficiently small. 

\paragraph{Setting of the problem} On the scaled square lattice $\e\ZZ^2$ we consider the set \[\A_\e\coloneqq\Bigl\{u\colon\e\ZZ^2\to\{\pm1,0\}\text{ such that }\#\{p\in\e\ZZ^2:u(p) = 0\}<+\infty\Bigl\}\]
and the family of energies $\E_\e:\A_\e\mapsto\rr$ defined as follows
\begin{equation}\label{eq:energy}
\E_\e(u) = \sum_{n.n.}\e(1-u(p)u(q)-k(1-(u(p)u(q))^2)),    
\end{equation}
where $n.n.$ means that the sum is taken over those $p, q\in \e\ZZ^2$ such that $|p-q| = \e$ and $k\in (1/3, 1)$.
Given $I, J\subset\e\ZZ^2$ and $u\in\A_\e$ we also use the following notation for a local version of \eqref{eq:energy}, namely
\begin{equation}\label{eq:local_energy}
\E_\e(u, I, J):=\sum
_{\substack{n.n.:\\p\in I, q\in J\\p\preccurlyeq  q}}\e\Bigl(1-u(p)u(q)-k\bigl(1-(u(p)u(q))^2\bigl)\Bigl),
\end{equation}
where the constraint $p\preccurlyeq q$ is needed to avoid double counting.
In the case $J = \e\ZZ^2$ we will simply write $\E_\e(u, I)$ in place of $\E_\e(u, I, \e\ZZ^2)$. In the following, for $I\subset\e\ZZ^2$ we set \begin{equation}\label{eq:set_A}
A_I:=\bigcup_{p\in I}Q_\e(p), \quad Q_\e(p):=p+\e[-1/2, 1/2]^2.
\end{equation} %5
For $u^\e\in\A_\e,$ and for any sequence $(u^\e_j)_j\subset\A_\e$, we set
\begin{equation}\label{eq:I_j}
    \begin{split}
          &I_{u^\e}:=\{p\in\e\ZZ^2:u^\e(p)=1\},\;\;Z_{u^\e}:= \{p\in\e\ZZ^2:u^\e(p)=0\},\;\;A_{u^\e}:=A_{I_{u^\e}},\\
        &I_j^\e = I_{u_{j}^\e},\,Z^\e_j  = \Z_{u^\e_j},\, A^\e_j = A_{u_j^\e}.
    \end{split}
\end{equation}
Given $I\subset\e\ZZ^2$ and $s=1, +\infty$ we define the discrete $L^s$-distance of a point $p\in\e\ZZ^2$ from $I$ as
\begin{equation}\label{eq:distance_1}
d^\e_s(p, I) = \inf\{||p-q||_s:q\in I\},
\end{equation}
where $||p||_1 = |p_1|+|p_2|$ and $||p||_\infty = |p_1|\vee|p_2|$. We also introduce  the discrete $L^s$-distance of $p$ from the discrete boundary of $I$
\begin{equation}\label{eq:distance_2}
d^\e_s(p, \partial I) := \begin{cases}
    \inf\{||p-q||_s:q\in I\}&\text{ if }p\not\in I,\\
    \inf\{||p-q||_s:q\in\e\ZZ^2\setminus I\}&\text{ if }p\in I.
\end{cases}    
\end{equation}
We observe that, up to a set of negligible measure, we can extend this distance to all $x\in\rr^2$ by setting 
\[d^\e_s(x, \partial I) := d^\e_s(p, \partial I)  \text{ if }x\in Q_\e(p).\]
The discrete diameter of a set $I\subset\e\ZZ^2$ is given by $\diam(I):=\sup\{d^\e_\infty(p, q):p,\,q\in I\}$, while its exterior and interior boundary are
\[\partial^+ I\coloneqq\{p\in\e\ZZ^2\setminus I:d^\e_1(p, \partial I) = \e\},\quad\partial^-I\coloneqq\{p\in I:d^\e_1(p, \partial I) = \e\},\] respectively.
%If $A = A_I\subset\R^2$ for some set $I\subset\e\ZZ^2$, then, with a slight abuse of notation, we will adopt the notation $\partial^\pm A_I := \partial^\pm I\subset\e\ZZ^2$. We will write instead $\partial A\subset\rr^2$ to denote the topological boundary of $A.$

For $\gamma>0$ we introduce the dissipation 
$\dis_{\e, \gamma}:\:\A_\e\times\A_\e\to [0,+\infty]$ defined as \[\dis_{\e, \gamma}(u, w):=\dis^1_\e(\I_{u}, \I_w)+\e^\gamma\dis^0_\e(Z_{u}, Z_w),\] where $\dis^1_\e$ and $\dis^0_\e$ are defined as
\begin{equation}\label{eq:dis_1}
\dis_\e^1(\I_{u},\I_{w}):=\sum_{p\in\I_{u}\triangle \I_{w}}\e^2d^\e_1(p, \partial\I_{w}) = \int_{A_{\I_{u}}\triangle A_{\I_w}}d^\e_1(x, \partial A_{\I_w})\dx    
\end{equation}
 and 
\begin{equation}\label{eq:dis_0}
\dis^0_\e(\Z_{u}, \Z_{w}):=|\#\Z_{u}-\#\Z_{w}|.
\end{equation}
For $\tau,\gamma>0$ we  also introduce the total energy $\enF:\A_\e\times\A_\e\to\rr$  given by
\begin{equation}\label{eq:functional}
\enF(u, w)\coloneqq \E_\e(u)+\frac{1}{\tau}\dis_{\e, \gamma}(u, w).
\end{equation}
In the following we always consider $\tau = \zeta\e$, for a given parameter $\zeta>0$.

\begin{definition}\label{def:minimizing_movement}
    Given $u^\e_0\in\A_\e$, we say that a sequence $(u^\e_j)_j\subset\A_\e$ defined for every $j\in\NN\cup\{0\}$ is a minimizing movement with starting datum $u^\e_0$ if for every $j$ it holds 
\begin{equation}\label{eq:definition_u_j}    u^\e_{j+1}\in\text{argmin}\Bigl(u\mapsto \enF(u, u_j)\Bigl).
\end{equation}
In the Sections \ref{sec:gamma>2} and \ref{sec:gamma<2} we will always suppose that the set $I^\e_0$ associated to the initial datum $u^\e_0$ is the discretization of an octagon, in the sense of Definition \ref{def:discrete_octagon} in Section \ref{sec:gamma>2}.
\end{definition}

In the following proofs in order to deduce some conditions on the geometry of the sets $A_j$, it will often be useful to compare a minimizer $u_j$ with a new function $\tilde{u}_j$ obtained from $u_j$ changing its values only at a few points. For $\I\subset\e\ZZ^2$ we use the notation $u_j(\I)\mapsto\xi$ to indicate that for every point $p\in\I$ we replace the value of $u_j$ at $p$ with the new value $\xi\in\{\pm1, 0\}$, and that we do not modify $u_j$ anywhere else.

In all the pictures that follow, $Z_u$ is colored in blue, $\I_u$ is colored in red and $\{p:u(p)=-1\}$ is colored in green.

\subsection{Discrete sets}
\begin{definition}\label{def:discrete_segment}
    Let $p, q\in\e\ZZ^2$ and $n\in\NN\cup\{0\}$ be such that 
    \[q = p+n\e e_i\text{ for }i=1\text{ or } i=2,\quad\text{ or }\;q = p+n\e(e_1\pm e_2).\]
    We define the (discrete) segment connecting $p$ and $q$ as $\dseg{p}{q}:=\{p+s\e e_i:s = 0, \dots, n\}\subset\e\ZZ^2$ in the first case, and as $\dseg{p}{q}:=\{p+s\e(e_1\pm e_2):s = 0, \dots, n\}\subset\e\ZZ^2$ in the second case.
\end{definition}

\begin{definition}\label{def:discrete_path}

    We say that a finite ordered set $\pi = \{p^1, \dots, p^n\}\subset\e\ZZ^2$ is a \emph{weakly continuous discrete path} (resp. \emph{strongly continuous discrete path}) of \emph{length} $\text{len}(\pi) = n$ connecting $p^1$ and $p^n$ if $d^\e_\infty(p^i, p^{i+1}) = 1$ (resp. $d^\e_1(p^i, p^{i+1}) = 1$) for every $i = 1, \dots, n-1.$ We also denote by $\Gamma^w_{p, q}(\I)$ and $\Gamma^s_{p, q}(\I)$ the space of all paths $\pi$ connecting $p$ and $q$, which are weakly and strongly connected respectively, and such that $\pi\subset\I$.
\end{definition}

\begin{definition}\label{def:connection}
    We say that a set $\I\subset\e\ZZ^2$ is \emph{weakly connected} (resp. \emph{strongly connected}) if for every pair $p, q\in\I$ there exists $\pi\in\Gamma^w_{p, q}(I)$ (resp. $\pi\in\Gamma_{p, q}^s(I)$).
\end{definition}

\begin{definition}\label{def:convexity}
    We say that a set $\I\subset\e\ZZ^2$ is $horizontally$ (resp. $vertically$) $convex$ if for every $p\in\I,$ and every $n\in\NN$ such that the point $q := p+n\e e_1 (\text{resp.}\ q:= p+n\e e_2)$ belongs to $\I$, then the entire discrete segment $\dseg{p}{q}$ is contained in $\I$.
\end{definition}

\begin{definition}\label{def:staircase_set}
     We say that $\I\subset\e\ZZ^2$ is a \emph{staircase set} if it is strongly connected, and horizontally and vertically convex. We say that $I$ is a discrete rectangle if $A_I\subset\rr^2$ is a rectangle. Moreover we define $R_\I\subset\e\ZZ^2$ as the smallest discrete rectangle which contains $\I$, and we say that $\I$ is a \emph{non degenerate staircase set} if the set $R_\I\setminus\I=:T$ has four (not empty) strongly connected components $T_i, i=1, \dots, 4.$ We label the components $T_i$ clockwise setting $T_1$ to be the lowest left component (see Figure \ref{fig:staircasedef}). 
\begin{figure}[H]
    \centering
    \resizebox{0.40\textwidth}{!}{\tikzset{every picture/.style={line width=0.75pt}} %set default line width to 0.75pt        

\begin{tikzpicture}[x=0.75pt,y=0.75pt,yscale=-1,xscale=1]
%uncomment if require: \path (0,300); %set diagram left start at 0, and has height of 300

%Shape: Grid [id:dp7768420158190448] 
\draw  [draw opacity=0] (250.33,50) -- (481.33,50) -- (481.33,280) -- (250.33,280) -- cycle ; \draw  [color={rgb, 255:red, 155; green, 155; blue, 155 }  ,draw opacity=0.6 ] (250.33,50) -- (250.33,280)(260.33,50) -- (260.33,280)(270.33,50) -- (270.33,280)(280.33,50) -- (280.33,280)(290.33,50) -- (290.33,280)(300.33,50) -- (300.33,280)(310.33,50) -- (310.33,280)(320.33,50) -- (320.33,280)(330.33,50) -- (330.33,280)(340.33,50) -- (340.33,280)(350.33,50) -- (350.33,280)(360.33,50) -- (360.33,280)(370.33,50) -- (370.33,280)(380.33,50) -- (380.33,280)(390.33,50) -- (390.33,280)(400.33,50) -- (400.33,280)(410.33,50) -- (410.33,280)(420.33,50) -- (420.33,280)(430.33,50) -- (430.33,280)(440.33,50) -- (440.33,280)(450.33,50) -- (450.33,280)(460.33,50) -- (460.33,280)(470.33,50) -- (470.33,280)(480.33,50) -- (480.33,280) ; \draw  [color={rgb, 255:red, 155; green, 155; blue, 155 }  ,draw opacity=0.6 ] (250.33,50) -- (481.33,50)(250.33,60) -- (481.33,60)(250.33,70) -- (481.33,70)(250.33,80) -- (481.33,80)(250.33,90) -- (481.33,90)(250.33,100) -- (481.33,100)(250.33,110) -- (481.33,110)(250.33,120) -- (481.33,120)(250.33,130) -- (481.33,130)(250.33,140) -- (481.33,140)(250.33,150) -- (481.33,150)(250.33,160) -- (481.33,160)(250.33,170) -- (481.33,170)(250.33,180) -- (481.33,180)(250.33,190) -- (481.33,190)(250.33,200) -- (481.33,200)(250.33,210) -- (481.33,210)(250.33,220) -- (481.33,220)(250.33,230) -- (481.33,230)(250.33,240) -- (481.33,240)(250.33,250) -- (481.33,250)(250.33,260) -- (481.33,260)(250.33,270) -- (481.33,270) ; \draw  [color={rgb, 255:red, 155; green, 155; blue, 155 }  ,draw opacity=0.6 ]  ;
%Shape: Rectangle [id:dp08990030979303709] 
\draw   (280.33,80) -- (450.33,80) -- (450.33,250) -- (280.33,250) -- cycle ;
%Straight Lines [id:da15929727620333267] 
\draw [color={rgb, 255:red, 208; green, 2; blue, 27 }  ,draw opacity=1 ][line width=0.75]    (320.33,80) -- (390.33,80) -- (390.33,90) -- (410.33,90) -- (410.33,110) -- (420.33,110) -- (420.33,120) -- (450.33,120) -- (450.33,160) -- (440.33,160) -- (440.33,190) -- (430.33,190) -- (430.33,200) -- (420.33,200) -- (420.33,220) -- (400.33,220) -- (400.33,230) -- (400.33,250) -- (330.33,250) -- (330.33,230) -- (320.33,230) -- (320.33,220) -- (310.33,220) -- (310.33,210) -- (300.33,210) -- (300.33,200) -- (290.33,200) -- (290.33,190) -- (280.33,190) -- (280.33,140) -- (290.33,140) -- (290.33,120) -- (310.33,120) -- (310.33,90) -- (320.33,90) -- cycle ;

% Text Node
\draw (362.33,153.4) node [anchor=north west][inner sep=0.75pt]  [font=\small]  {${\textcolor[rgb]{0.82,0.01,0.11}{I}}$};
% Text Node
\draw (425.33,90.0) node [anchor=north west][inner sep=0.75pt]  [font=\footnotesize]  {$T_{3}$};
% Text Node
\draw (425.33,220.4) node [anchor=north west][inner sep=0.75pt]  [font=\footnotesize]  {$T_{4}$};
% Text Node
\draw (287.33,220.4) node [anchor=north west][inner sep=0.75pt]  [font=\footnotesize]  {$T_{1}$};
% Text Node
\draw (287.33,90.0) node [anchor=north west][inner sep=0.75pt]  [font=\footnotesize]  {$T_{2}$};
% Text Node
\draw (442.33,53.4) node [anchor=north west][inner sep=0.75pt]  [font=\normalsize]  {$R_{I}$};

\end{tikzpicture}}
    \caption{In red an example of non degenerate staircase set.}
    \label{fig:staircasedef}
\end{figure}

    \noindent Finally, we denote by
    \begin{equation}
    \label{horizontal_slices}
        H^i = \dseg{p^{h, i}}{q^{h, i}},\,\quad \text{for}\ i=1 \dots, n_h,
    \end{equation}
    \begin{equation}
    \label{vertical_slices}
        V^i = \dseg{p^{v, i}}{q^{v, i}}\,\quad \text{for}\ i=1, \dots, n_v,
    \end{equation} 
    for some $n_h, n_v \in \NN$,
    the horizontal and vertical slices of $\I$, respectively. We label $H^i$ in ascending order starting from the lowest slice, i.e. writing $p^{i} = (p^{i}_1, p^{i}_2)$, it holds $p^{h, i}_2 = q^{h, i}_2$ for every $i$, and $p^{h,i}_2<p^{h, j}_2$ if $i<j$. We label the slices $V^i$ from left to right, so that $p^{v, i}_1<p^{v, j}_1$ if $i<j$. We also set \begin{align}\label{eq:parallel_sides_and_slices}
        \PP_1 := H^{1},\,\PP_2 := V^{1},\,\PP_3 := H^{n_h},\,\PP_4 := V^{n_v},\,
    \end{align}
    to denote the most ``external'' slices. 
We observe that the perimeter of $A_\I$ is \begin{equation}\label{eq:length_perimeter}
        Per(A_\I) = 2\e(n_h+n_v),
\end{equation}
while it holds
\begin{equation}\label{eq:length_external_boundary}
     \#\partial^+\I = \frac{Per(A_	I)}{\e}-\#\{p\in\ZZ^2\setminus\I:\#(\mathcal{N}(p)\cap\I) = 2\}.
\end{equation}
With a slight abuse of notation, for $I\subset \e\ZZ^2$ we write $Per(I)$ instead of $Per(A_I).$
\end{definition}

\section{Connection and inclusion of the minimizer}\label{sec:connection}
We consider the minimizing movement scheme defined by \eqref{def:minimizing_movement}. The goal of this section is to prove that $\I_{j+1}^\e$ is connected and contained into  $\I^\e_j$, where the sets $I_j^\e$ are given by \eqref{eq:I_j}. In order to prove such a result we need $I_j^\e$ to be a ``sufficiently regular set'' as explained in the next definition. From now on, in the rest of this article we will systematically omit the dependence on $\e$ when no confusion is possible; for instance we write $I_j$ in place of $I^\e_j$, and $(u_j)_j\subset\A_\e$ in place of $(u^\e_j)_j\subset\A_\e$.

\begin{definition}\label{def:large_staircase_set}
    Let $\bar{c}>0$ be a positive constant. We say that $(\I^\e)_\e,\,\e>0$ is a \emph{family of staircase sets of width $\bar{c}$} if for every $\e>0$ it holds that $\I^\e\subset\e\ZZ^2$ is a staircase set such that 
    \[\min\{\#\PP_i^\e:\:i=1, \dots, 4\}\ge \frac{\bar{c}}{\e}.\]
    We observe that if there exists a constant $\bar{R}>0$ such that $\sup_\e\diam(\I^\e)\le\bar{R}$, then there exists a constant $\tilde{c}$, which depends only on $\bar{c}$ and $\bar{R}$, such that for every $\e$ and for every pair of points $p, q\in\partial^-\I^\e$ it holds   \begin{equation}\label{eq:3_geodesic_distance}
    \inf\{\len(\pi):\:\pi\in\Gamma^w_{p, q}(\partial^-\I^\e)\}\le\tilde{c}\cdot\inf\{\len(\pi):\:\pi\in\Gamma^w_{p, q}(\I^\e)\}.
    \end{equation}
\end{definition}

\begin{definition}
\label{def: corner unit}
    Let $u\in\A_\e$. We say that $p\in \varepsilon\ZZ^2$ is a \emph{surfactant in the corner} if, up to rotation, it holds 
    \[u(p) = u(p+\e e_2) = u(p +\e e_1) = 0,\,\text{ and }u(p-\e e_1) = u(p-\e e_2)\neq 0.\]
    
    \noindent Moreover, given $u_j\in\A_\e,$ we define 
    \[\one_j:=\{p\in\partial^-\I_{j}:\#(\mathcal{N}(p)\cap\I_j) = 2\},\;\;\zero_j:=\{p\in\partial^+\I_j:\#(\mathcal{N}(p)\cap\I_j) = 2\}.\]
\end{definition}

\begin{remark}\label{rem:rectangle}
Let $G_i$ for $i=1,\dots,n$ be the weakly connected components of $I$, so that $\I = \cup_{i=1}^nG_i$. Then there exists $p\in\e\ZZ^2\setminus I$ such that $\#\bigl(\mathcal{N}(p)\cap\I\bigl)\ge 2$ if and only if there exists one component $G_i$ which is not a rectangle.
\end{remark}

\subsection{Preliminaries}
In this subsection we prove some auxiliary results which will be used several times in the following.

%The first one will allow us to deduce some informations about the geometry of the sets $\I_j.$ For instance suppose that $\I_0$ is an octagon. Then a consequence of the second part of the next Lemma is that if $\I_1\cup\Z_1$ is not a rectangle, then the connected components of $\I_1$ are quasi-octagons. A detailed proof of this fact is given in Section \ref{sec:gamma>2}.

\begin{lemma}\label{lemma:shape_optimality}
    {\rm[Shape optimality]} Let $u_0,\,u_1\in\A_\e$ be such that $u_1$ is a minimizer of $\enF(\cdot, u_0)$. Then, for every $\e>0$ the following properties hold true.
    \begin{itemize}
        \item[1.1)] Let $p \in (\partial^+\I_1)\cap\I_0$ be a surfactant in the corner (Definition \ref{def: corner unit}) for $u_1$. Then, every weakly connected component of the set $\Z_1\cup \I_1$ is a discrete rectangle.
        \item[1.2)]If $p\in\zero_1\cap I_0$ then $u_1(p) = 0$.
    \end{itemize}
    
    Now let $\I$ be a strongly connected component of $\I_1$, suppose that $I_1\subset I_0$, that $I$ is a staircase set, and let $H^{i} = \dseg{p^{i}}{ q^{i}},\,i=1, \dots, n_h$ be the horizontal slices of $\I$. Assume that there exists $i\ge 2$ such that $p^{i}_1+2\e e_1\le p^{i-1}_1$ (so that in particular $\{p^i, p^i+\e e_1\}\subset H^i\cap\partial^-I$). Then (at least) one of the following holds:
    \begin{itemize}
        \item[(i)] $\dseg{p^i-\e e_2}{p^{i-1}-\e e_1}\subset\{u_0\neq 1\}$,
        \item[(ii)] $\dseg{p^i-\e e_2}{p^{i-1}-\e e_1}\subset\Z_1$ and $\mathcal{N}(p^{i-1}-\e e_1-\e e_2)\cap\Z_1 = \{p^{i-1}-\e e_1\}$,
        \item[(iii)] the weakly connected components of $\I_1\cup\Z_1$ are rectangles and $\partial^+I_1\subset Z_1$.
    \end{itemize}
    Finally suppose that case (ii) occurs, that the set $I_1\cup Z_1$ has only one strongly connected component and that $\dseg{p^i-\e e_2}{p^{i-1}-\e e_1}\not\subset\{u_0\neq 1\}$. Then it holds $i=2$.
\end{lemma}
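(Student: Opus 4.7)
All claims in Lemma \ref{lemma:shape_optimality} are local shape-optimality assertions for the minimizer $u_1$, and my plan is to prove each by contradiction: assuming the stated conclusion fails, I will construct a competitor $\tilde u\in\A_\e$ obtained from $u_1$ by flipping finitely many spins, and show $\enF(\tilde u,u_0)<\enF(u_1,u_0)$. The crucial quantitative lever throughout is that any $r\in I_0\setminus I_1$ contributes $\e^2 d^\e_1(r,\partial I_0)\ge\e^3$ to the symmetric-difference sum defining $\dis^1_\e(I_1,I_0)$; hence, whenever the competitor places $r$ into $I_{\tilde u}\cap I_0$, this term is removed and $\dis^1_\e$ strictly decreases by at least $\e^3$. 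After dividing by $\tau=\zeta\e$ this supplies at least $\e^2/\zeta>0$ of strict advantage to $\enF$. I will therefore design each competitor so that $\#Z_{\tilde u}=\#Z_1$ (hence $\dis^0_\e$ is unchanged, regardless of $\gamma$) and the bulk change $\E_\e(\tilde u)-\E_\e(u_1)\le 0$; the $\dis^1_\e$-saving then closes the argument.

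\textbf{Parts 1.1 and 1.2.} Since $p\in\partial^+I_1$ already rules out $u_1(p)=1$ in 1.2, I only need to exclude $u_1(p)=-1$. Take $\tilde u$ with $u_1(p)$ replaced by $+1$: then $I_{\tilde u}=I_1\cup\{p\}$ and $\#Z_{\tilde u}=\#Z_1$. A per-edge accounting gives $\E_\e(\tilde u)-\E_\e(u_1)\le 0$: the two $I_1$-neighbors of $p$ save $2\e$ each, while the other two neighbors (in $\{-1,0\}$, by the $\zero_1$ assumption) cost at most $+2\e$ each; thus the bulk change is between $-4\e$ and $0$. Combined with the strict $\dis^1_\e$-saving from $p\in I_0\setminus I_1$, this yields the contradiction. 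For 1.1, I first note that the corner hypothesis $u_1(p-\e e_1)=u_1(p-\e e_2)\neq 0$ together with $p\in\partial^+I_1$ forces $u_1(p-\e e_1)=u_1(p-\e e_2)=+1$: the alternative $-1$ would leave $p$ with no $I_1$-neighbors. Suppose for contradiction that some weakly connected component of $Z_1\cup I_1$ is not a rectangle; by Remark \ref{rem:rectangle} there exists $q$ with $u_1(q)=-1$ and $m:=\#(\mathcal{N}(q)\cap(Z_1\cup I_1))\ge 2$. My competitor relocates a surfactant particle from $p$ to $q$: set $u_1(p)\to+1$ and $u_1(q)\to 0$. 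Then $\#Z_{\tilde u}=\#Z_1$, $p\in I_0\setminus I_1\to I_{\tilde u}\cap I_0$ so $\dis^1_\e$ drops by at least $\e^3$, and a direct computation (accounting for the $2(1-k)\e$ saving at $p$, the $(1+k)\e$ saving per $I_1$-neighbor of $q$ after the flip, and the $(1-k)\e$ cost per $\{-1\}$-neighbor of $q$) yields a bulk change of $[(2-m)(1-k)-m_1(1+k)]\e\le 0$, where $m_1$ is the number of $I_1$-neighbors of $q$. The $\dis^1_\e$-saving then forces $\enF(\tilde u,u_0)<\enF(u_1,u_0)$.

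\textbf{Staircase part.} The segment $S:=\dseg{p^i-\e e_2}{p^{i-1}-\e e_1}$ is horizontal at height $p^{i-1}_2$, below $H^i$ and left of $H^{i-1}$, with every point $r\in S$ outside $I_1$ and satisfying $u_1(r)\in\{-1,0\}$ and $r+\e e_2\in H^i\subset I_1$. I plan to show (i)$\vee$(ii)$\vee$(iii) by assuming all three fail and deriving a contradiction. Failure of (i) gives some $r^*\in S$ with $r^*\in I_0$; failure of (iii) gives (via the contrapositive of 1.1 or the violated complete-wetting) a point $q$ suitable to absorb a surfactant particle; failure of (ii) gives either $u_1(r^*)=-1$ or a violation of the neighbor condition at the inner corner $p^{i-1}-\e e_1-\e e_2$. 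In the first case I would construct a two-site competitor flipping $u_1(r^*)\to+1$ and $u_1(q)\to 0$, patterned after the proof of 1.1: the $\dis^1_\e$-saving of $\ge\e^3$ from $r^*\in I_0\setminus I_1$ is balanced against a bulk change whose favourable sign follows from $r^*+\e e_2\in I_1$ (a $2\e$ upper-edge saving) and the staircase concavity controlling the remaining neighbours of $r^*$. In the second case the violated neighbour condition yields an improvable local configuration at the inner corner which is fixed by a further small spin flip via a parallel argument. The final statement (i.e.\ $i=2$ under connectedness of $I_1\cup Z_1$ and failure of (i) in (ii)) then follows by iterating the above downward through the staircase: the analogous segments at strata $i'>i$ would have to satisfy (ii) by the same reasoning, but the connectedness of $I_1\cup Z_1$ combined with the shape of the staircase forces the concave step to be at the very bottom, i.e., $i=2$.

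\textbf{Main obstacle.} The hardest step will be the careful case analysis at the endpoints of $S$ (in particular the right endpoint $p^{i-1}-\e e_1$, which is adjacent to the concave corner and whose neighbour configuration is the most constrained), and ensuring that in every boundary subcase the two-site competitor is both well-defined (i.e.\ $\tilde u\in\A_\e$) and enjoys the non-positive bulk change: the interplay between the $I_1$-neighbour just above, the surfactant neighbours along $S$, and the staircase geometry inherited from $H^i$ and $H^{i-1}$ requires a lengthy but routine combinatorial bookkeeping which I expect to occupy the bulk of the proof.
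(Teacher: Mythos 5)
Your parts 1.1 and 1.2 track the paper's argument closely: the same two-site competitors, the same $\#Z$-preservation that neutralizes $\dis^0_\e$, and the same strict $\dis^1_\e$ saving from $p\in I_0\setminus I_1$. Your per-edge accounting for 1.1 is correct.

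The staircase part, however, has a real gap. The paper does not assume all of (i), (ii), (iii) fail; it case-splits on the value $u_0(p^{i-1}-\e e_1)$ at the \emph{right endpoint} of $S$. Horizontal convexity of $I_0$ is exactly what upgrades $u_0(p^{i-1}-\e e_1)\neq 1$ to the full conclusion (i): if some other $r^*\in S$ satisfied $u_0(r^*)=1$ then, since $p^{i-1}\in I_0$, horizontal convexity would force $p^{i-1}-\e e_1\in I_0$ too. In the complementary case $u_0(p^{i-1}-\e e_1)=1$ one has $p^{i-1}-\e e_1\in\zero_1\cap I_0$ (its two $I$-neighbors being $p^{i-1}$ and $p^{i-1}-\e e_1+\e e_2$), so 1.2 gives $u_1(p^{i-1}-\e e_1)=0$, and \emph{all} the subsequent competitors flip this specific endpoint; being the unique point of $S$ with two $I_1$-neighbors is exactly what makes the bulk energy change controllable. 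Your plan of flipping a generic $r^*\in S\cap I_0$ to $+1$ does not achieve this: such an $r^*$ has at most one $I_1$-neighbor (the one above it in $H^i$), and if its other three neighbors carry $-1$ the bulk energy increases by more than the $\dis^1_\e$ saving covers, and the appeal to staircase concavity does not repair it. Your sketch for the final $i=2$ claim is also unconvincing: you propose to iterate downward through strata $i'>i$, but those lie above $H^i$, not below; the paper's argument is a direct two-case check at $p^{i-2}-\e e_1$ (one level lower), flipping $u_1(p^{i-1}-\e e_1)\to 1$ against $u_1(p^{i-2}-\e e_1)\to 0$ when the latter is $\neq 0$, and otherwise against the $-1$-to-$0$ transition that case (ii) produces along $\dseg{p^{i-1}-\e e_2}{p^{i-2}-\e e_1}$.
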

\begin{proof}
    1.1) Let $\Z_1\cup\I_1 =: \cup_{i=1}^nG_i$, where $G_i$ are the weakly connected components of $\Z_1\cup\I_1$, and let $p$ be a surfactant in the corner for $u_1$. Suppose that there exists $\bar{i}$ such that $G_{\bar{i}}$ is not a rectangle. Then from Remark \ref{rem:rectangle} there exists $q\in\partial^+G_{\bar{i}}$ such that $\#(\mathcal{N}(q)\cap G_{\bar{i}})\ge 2.$ We define a competitor $\tilde{u}_1$ replacing $u_1(p)\mapsto 1$ and $u_1(q)= 0$. Since the dissipation of $\tilde{u}_1$ is strictly smaller than the dissipation of $u_1$, and since $\E_\e(\tilde{u}_1)\le \E_\e(u_1)$, we conclude that $u_1$ cannot be a minimizer.\\
    1.2) If by contradiction we assume that $u_1(p) = -1$ then, replacing $u_1(p) \mapsto 1$, we do not increase the energy, and we strictly decrease the dissipation.\vspace{5pt}\\
    We now prove the second part of the Lemma. If $u_0(p^{i-1}-\e e_1) \neq 1$ then, since $\I_0$ is a staircase set (and in particular horizontally convex), and since $\I_1\subset \I_0$, we deduce that $\dseg{p^i-\e e_2}{p^{i-1}-\e e_1}\subset\{u_0\neq 1\}$, which is (i). Suppose instead that $u_0(p^{i-1}-\e e_1) = 1.$ Since $p^{i-1}-\e e_1\in\zero_1\cap I_0$, then, in view of 1.2), it holds $u_1(p^{i-1}-\e e_1) = 0.$ If there exists $\bar{p}\in\dseg{p^i-\e e_2}{p^{i-1}-\e e_1}$ such that $u_1(\bar{p}) = -1,$ then, up to possibly changing $\bar{p}$, we may suppose that $u_1(\bar{p}+\e e_1) = 0$. Replacing $u_1(p^{i-1}-\e e_1)\mapsto1$ and $u_1(\bar{p})\mapsto 0$ (see Figure \ref{fig:(ii)_shape_optimality}) we strictly reduce the dissipation, and we do not increase the energy, which is in contradiction with the minimality of $u_1$.

    \begin{figure}[H]
         \centering
  \resizebox{0.50\textwidth}{!}{\tikzset{every picture/.style={line width=0.75pt}} %set default line width to 0.75pt        

\begin{tikzpicture}[x=0.75pt,y=0.75pt,yscale=-1,xscale=1]
%uncomment if require: \path (0,300); %set diagram left start at 0, and has height of 300

%Shape: Rectangle [id:dp7922410344546789] 
\draw  [color={rgb, 255:red, 208; green, 2; blue, 27 }  ,draw opacity=1 ][fill={rgb, 255:red, 208; green, 2; blue, 27 }  ,fill opacity=1 ] (190.33,100.33) -- (250.33,100.33) -- (250.33,110.33) -- (190.33,110.33) -- cycle ;
%Shape: Rectangle [id:dp7498964545835717] 
\draw  [color={rgb, 255:red, 208; green, 2; blue, 27 }  ,draw opacity=1 ][fill={rgb, 255:red, 208; green, 2; blue, 27 }  ,fill opacity=1 ] (360.33,100.33) -- (410.33,100.33) -- (410.33,110.33) -- (360.33,110.33) -- cycle ;
%Shape: Rectangle [id:dp7711841044405253] 
\draw  [color={rgb, 255:red, 208; green, 2; blue, 27 }  ,draw opacity=1 ][fill={rgb, 255:red, 208; green, 2; blue, 27 }  ,fill opacity=1 ] (250.33,120.33) -- (270.33,120.33) -- (270.33,130.33) -- (250.33,130.33) -- cycle ;
%Shape: Rectangle [id:dp11798452436707729] 
\draw  [color={rgb, 255:red, 208; green, 2; blue, 27 }  ,draw opacity=1 ][fill={rgb, 255:red, 208; green, 2; blue, 27 }  ,fill opacity=1 ] (420.33,120.33) -- (440.33,120.33) -- (440.33,130.33) -- (420.33,130.33) -- cycle ;
%Shape: Square [id:dp3767200777763354] 
\draw  [color={rgb, 255:red, 208; green, 2; blue, 27 }  ,draw opacity=1 ][fill={rgb, 255:red, 208; green, 2; blue, 27 }  ,fill opacity=1 ] (170.33,80.33) -- (180.33,80.33) -- (180.33,90.33) -- (170.33,90.33) -- cycle ;
%Shape: Square [id:dp3188765604279463] 
\draw  [color={rgb, 255:red, 208; green, 2; blue, 27 }  ,draw opacity=1 ][fill={rgb, 255:red, 208; green, 2; blue, 27 }  ,fill opacity=1 ] (180.33,90.33) -- (190.33,90.33) -- (190.33,100.33) -- (180.33,100.33) -- cycle ;
%Shape: Square [id:dp6862336531898738] 
\draw  [color={rgb, 255:red, 208; green, 2; blue, 27 }  ,draw opacity=1 ][fill={rgb, 255:red, 208; green, 2; blue, 27 }  ,fill opacity=1 ] (240.33,110.33) -- (250.33,110.33) -- (250.33,120.33) -- (240.33,120.33) -- cycle ;
%Shape: Square [id:dp31780829405294353] 
\draw  [color={rgb, 255:red, 208; green, 2; blue, 27 }  ,draw opacity=1 ][fill={rgb, 255:red, 208; green, 2; blue, 27 }  ,fill opacity=1 ] (340.33,80.33) -- (350.33,80.33) -- (350.33,90.33) -- (340.33,90.33) -- cycle ;
%Shape: Square [id:dp7378414567598567] 
\draw  [color={rgb, 255:red, 208; green, 2; blue, 27 }  ,draw opacity=1 ][fill={rgb, 255:red, 208; green, 2; blue, 27 }  ,fill opacity=1 ] (350.33,90.33) -- (360.33,90.33) -- (360.33,100.33) -- (350.33,100.33) -- cycle ;
%Shape: Rectangle [id:dp18616905572223974] 
\draw  [color={rgb, 255:red, 208; green, 2; blue, 27 }  ,draw opacity=1 ][fill={rgb, 255:red, 208; green, 2; blue, 27 }  ,fill opacity=1 ] (400.33,110.33) -- (420.33,110.33) -- (420.33,120.33) -- (400.33,120.33) -- cycle ;
%Shape: Rectangle [id:dp7631308611350408] 
\draw  [color={rgb, 255:red, 74; green, 144; blue, 226 }  ,draw opacity=1 ][fill={rgb, 255:red, 74; green, 144; blue, 226 }  ,fill opacity=1 ] (190.33,110.33) -- (220.33,110.33) -- (220.33,120.33) -- (190.33,120.33) -- cycle ;
%Shape: Square [id:dp20987124318163974] 
\draw  [color={rgb, 255:red, 74; green, 144; blue, 226 }  ,draw opacity=1 ][fill={rgb, 255:red, 74; green, 144; blue, 226 }  ,fill opacity=1 ] (230.33,110.33) -- (240.33,110.33) -- (240.33,120.33) -- (230.33,120.33) -- cycle ;
%Shape: Square [id:dp19448441998903598] 
\draw  [color={rgb, 255:red, 126; green, 211; blue, 33 }  ,draw opacity=1 ][fill={rgb, 255:red, 126; green, 211; blue, 33 }  ,fill opacity=1 ] (220.33,110.33) -- (230.33,110.33) -- (230.33,120.33) -- (220.33,120.33) -- cycle ;
%Shape: Rectangle [id:dp9908098176021894] 
\draw  [color={rgb, 255:red, 74; green, 144; blue, 226 }  ,draw opacity=1 ][fill={rgb, 255:red, 74; green, 144; blue, 226 }  ,fill opacity=1 ] (360.33,110.33) -- (400.33,110.33) -- (400.33,120.33) -- (360.33,120.33) -- cycle ;
%Shape: Grid [id:dp20857275137218212] 
\draw  [draw opacity=0] (140.33,60.33) -- (460.44,60.33) -- (460.44,151.33) -- (140.33,151.33) -- cycle ; \draw  [color={rgb, 255:red, 155; green, 155; blue, 155 }  ,draw opacity=0.77 ] (140.33,60.33) -- (140.33,151.33)(150.33,60.33) -- (150.33,151.33)(160.33,60.33) -- (160.33,151.33)(170.33,60.33) -- (170.33,151.33)(180.33,60.33) -- (180.33,151.33)(190.33,60.33) -- (190.33,151.33)(200.33,60.33) -- (200.33,151.33)(210.33,60.33) -- (210.33,151.33)(220.33,60.33) -- (220.33,151.33)(230.33,60.33) -- (230.33,151.33)(240.33,60.33) -- (240.33,151.33)(250.33,60.33) -- (250.33,151.33)(260.33,60.33) -- (260.33,151.33)(270.33,60.33) -- (270.33,151.33)(280.33,60.33) -- (280.33,151.33)(290.33,60.33) -- (290.33,151.33)(300.33,60.33) -- (300.33,151.33)(310.33,60.33) -- (310.33,151.33)(320.33,60.33) -- (320.33,151.33)(330.33,60.33) -- (330.33,151.33)(340.33,60.33) -- (340.33,151.33)(350.33,60.33) -- (350.33,151.33)(360.33,60.33) -- (360.33,151.33)(370.33,60.33) -- (370.33,151.33)(380.33,60.33) -- (380.33,151.33)(390.33,60.33) -- (390.33,151.33)(400.33,60.33) -- (400.33,151.33)(410.33,60.33) -- (410.33,151.33)(420.33,60.33) -- (420.33,151.33)(430.33,60.33) -- (430.33,151.33)(440.33,60.33) -- (440.33,151.33)(450.33,60.33) -- (450.33,151.33)(460.33,60.33) -- (460.33,151.33) ; \draw  [color={rgb, 255:red, 155; green, 155; blue, 155 }  ,draw opacity=0.77 ] (140.33,60.33) -- (460.44,60.33)(140.33,70.33) -- (460.44,70.33)(140.33,80.33) -- (460.44,80.33)(140.33,90.33) -- (460.44,90.33)(140.33,100.33) -- (460.44,100.33)(140.33,110.33) -- (460.44,110.33)(140.33,120.33) -- (460.44,120.33)(140.33,130.33) -- (460.44,130.33)(140.33,140.33) -- (460.44,140.33)(140.33,150.33) -- (460.44,150.33) ; \draw  [color={rgb, 255:red, 155; green, 155; blue, 155 }  ,draw opacity=0.77 ]  ;
%Straight Lines [id:da5842132734250354] 
\draw    (170.33,80.33) -- (170.33,90.33) -- (180.33,90.33) -- (180.33,100.33) -- (190.33,100.33) -- (190.33,110.33) -- (240.33,110.33) -- (240.33,120.33) -- (250.33,120.33) -- (250.33,130.33) -- (270.33,130.33) ;
%Straight Lines [id:da2822421197936268] 
\draw    (340.33,80.33) -- (340.33,90.33) -- (350.33,90.33) -- (350.33,100.33) -- (360.33,100.33) -- (360.33,110.33) -- (400.33,110.33) -- (400.33,120.33) -- (420.33,120.33) -- (420.33,130.33) -- (440.33,130.33) ;
%Straight Lines [id:da6474745830377989] 
\draw    (280.33,110.33) -- (328.33,110.33) ;
\draw [shift={(330.33,110.33)}, rotate = 180] [color={rgb, 255:red, 0; green, 0; blue, 0 }  ][line width=0.75]    (10.93,-3.29) .. controls (6.95,-1.4) and (3.31,-0.3) .. (0,0) .. controls (3.31,0.3) and (6.95,1.4) .. (10.93,3.29)   ;

% Text Node
\draw (192.33,96.73) node [anchor=north west][inner sep=0.75pt]  [font=\scriptsize]  {$p^{i}$};
% Text Node
\draw (242.33,106.73) node [anchor=north west][inner sep=0.75pt]  [font=\scriptsize]  {$p^{i-1}$};
% Text Node
\draw (362.33,95.73) node [anchor=north west][inner sep=0.75pt]  [font=\scriptsize]  {$p^{i}$};
% Text Node
\draw (412.33,107.73) node [anchor=north west][inner sep=0.75pt]  [font=\scriptsize]  {$p^{i-1}$};
% Text Node
\draw (222.33,111.73) node [anchor=north west][inner sep=0.75pt]  [font=\scriptsize]  {$\overline{p}$};
% Text Node
\draw (391.33,111.73) node [anchor=north west][inner sep=0.75pt]  [font=\scriptsize]  {$\overline{p}$};
% Text Node
\draw (202.33,70.73) node [anchor=north west][inner sep=0.75pt]    {$u_{1}$};

\end{tikzpicture}}
    \caption{$\dseg{p^i-\e e_2}{p^{i-1}-\e e_1}\subset\Z_1.$ The configuration on the right is the one that in (ii) of Lemma \ref{lemma:shape_optimality} strictly reduces the dissipation without increasing the energy.} \label{fig:(ii)_shape_optimality}
     \end{figure}
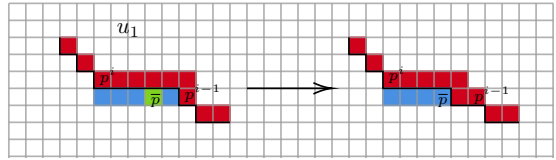    
    Therefore if $p^{i-1}-\e e_1\in I_0$ we have $\dseg{p^i-\e e_2}{p^{i-1}-\e e_1}\subset\Z_1.$
    Now, if moreover $u_1(p^{i-1}-\e e_1+\e e_2) = 0$, then case (iii) occurs. Indeed we have $p^{i-1}-\e e_1\in\zero_1\cap I_0$ and therefore from $1.1)$ we deduce that the weakly connected components of $\I_1\cup\Z_1$ are rectangles. Moreover, if by contradiction it were $\partial^+I_1\not\subset Z_1$, then there would exist $p\in\partial^+I_1$ such that $u_1(p) = -1$. But then the competitor $\tilde{u}_1$ obtained by replacing $u_1(p)\mapsto 0$ and $u_1(p^{i-1}-\e e_1)\mapsto 1$ has strictly lower energy and strictly lower dissipation with respect to $u_1$, which contradicts the minimality of $u_1$. Therefore we conclude $\partial^+I_1\subset Z_1$, which completes the proof of (iii). Finally, if $u_1(p^{i-1}-\e e_1-\e e_2) = -1$, and if there exists 
    \[q\in\mathcal{N}(p^{i-1}-\e e_1-\e e_2)\cap(\Z_1\setminus\{p^{i-1}-\e e_1\}),\]
    then we could replace $u_1(p^{i-1}-\e e_1)\mapsto 1$ and $u_1(p^{i-1}-\e e_1-\e e_2)\mapsto 0$  (see Figure \ref{fig:(ii)_1}) strictly reducing the dissipation of $u_1,$ and not increasing its energy, which is a contradiction.
     \begin{figure}[H]
         \centering
  \resizebox{0.50\textwidth}{!}{\tikzset{every picture/.style={line width=0.75pt}} %set default line width to 0.75pt        

\begin{tikzpicture}[x=0.75pt,y=0.75pt,yscale=-1,xscale=1]
%uncomment if require: \path (0,300); %set diagram left start at 0, and has height of 300

%Shape: Square [id:dp757685986532148] 
\draw  [color={rgb, 255:red, 74; green, 144; blue, 226 }  ,draw opacity=1 ][fill={rgb, 255:red, 74; green, 144; blue, 226 }  ,fill opacity=1 ] (400.33,120.33) -- (410.33,120.33) -- (410.33,130.33) -- (400.33,130.33) -- cycle ;
%Shape: Square [id:dp9260703194281521] 
\draw  [color={rgb, 255:red, 74; green, 144; blue, 226 }  ,draw opacity=1 ][fill={rgb, 255:red, 74; green, 144; blue, 226 }  ,fill opacity=1 ] (410.33,120.33) -- (420.33,120.33) -- (420.33,130.33) -- (410.33,130.33) -- cycle ;
%Shape: Square [id:dp31452718600293894] 
\draw  [color={rgb, 255:red, 74; green, 144; blue, 226 }  ,draw opacity=1 ][fill={rgb, 255:red, 74; green, 144; blue, 226 }  ,fill opacity=1 ] (239.33,120.33) -- (249.33,120.33) -- (249.33,130.33) -- (239.33,130.33) -- cycle ;
%Shape: Square [id:dp2907019961215873] 
\draw  [color={rgb, 255:red, 126; green, 211; blue, 33 }  ,draw opacity=1 ][fill={rgb, 255:red, 126; green, 211; blue, 33 }  ,fill opacity=1 ] (230.33,120.33) -- (240.33,120.33) -- (240.33,130.33) -- (230.33,130.33) -- cycle ;
%Shape: Rectangle [id:dp7922410344546789] 
\draw  [color={rgb, 255:red, 208; green, 2; blue, 27 }  ,draw opacity=1 ][fill={rgb, 255:red, 208; green, 2; blue, 27 }  ,fill opacity=1 ] (190.33,100.33) -- (250.33,100.33) -- (250.33,110.33) -- (190.33,110.33) -- cycle ;
%Shape: Rectangle [id:dp7498964545835717] 
\draw  [color={rgb, 255:red, 208; green, 2; blue, 27 }  ,draw opacity=1 ][fill={rgb, 255:red, 208; green, 2; blue, 27 }  ,fill opacity=1 ] (360.33,100.33) -- (410.33,100.33) -- (410.33,110.33) -- (360.33,110.33) -- cycle ;
%Shape: Rectangle [id:dp7711841044405253] 
\draw  [color={rgb, 255:red, 208; green, 2; blue, 27 }  ,draw opacity=1 ][fill={rgb, 255:red, 208; green, 2; blue, 27 }  ,fill opacity=1 ] (250.33,120.33) -- (270.33,120.33) -- (270.33,130.33) -- (250.33,130.33) -- cycle ;
%Shape: Rectangle [id:dp11798452436707729] 
\draw  [color={rgb, 255:red, 208; green, 2; blue, 27 }  ,draw opacity=1 ][fill={rgb, 255:red, 208; green, 2; blue, 27 }  ,fill opacity=1 ] (420.33,120.33) -- (440.33,120.33) -- (440.33,130.33) -- (420.33,130.33) -- cycle ;
%Shape: Square [id:dp3767200777763354] 
\draw  [color={rgb, 255:red, 208; green, 2; blue, 27 }  ,draw opacity=1 ][fill={rgb, 255:red, 208; green, 2; blue, 27 }  ,fill opacity=1 ] (170.33,80.33) -- (180.33,80.33) -- (180.33,90.33) -- (170.33,90.33) -- cycle ;
%Shape: Square [id:dp3188765604279463] 
\draw  [color={rgb, 255:red, 208; green, 2; blue, 27 }  ,draw opacity=1 ][fill={rgb, 255:red, 208; green, 2; blue, 27 }  ,fill opacity=1 ] (180.33,90.33) -- (190.33,90.33) -- (190.33,100.33) -- (180.33,100.33) -- cycle ;
%Shape: Square [id:dp6862336531898738] 
\draw  [color={rgb, 255:red, 208; green, 2; blue, 27 }  ,draw opacity=1 ][fill={rgb, 255:red, 208; green, 2; blue, 27 }  ,fill opacity=1 ] (240.33,110.33) -- (250.33,110.33) -- (250.33,120.33) -- (240.33,120.33) -- cycle ;
%Shape: Square [id:dp31780829405294353] 
\draw  [color={rgb, 255:red, 208; green, 2; blue, 27 }  ,draw opacity=1 ][fill={rgb, 255:red, 208; green, 2; blue, 27 }  ,fill opacity=1 ] (340.33,80.33) -- (350.33,80.33) -- (350.33,90.33) -- (340.33,90.33) -- cycle ;
%Shape: Square [id:dp7378414567598567] 
\draw  [color={rgb, 255:red, 208; green, 2; blue, 27 }  ,draw opacity=1 ][fill={rgb, 255:red, 208; green, 2; blue, 27 }  ,fill opacity=1 ] (350.33,90.33) -- (360.33,90.33) -- (360.33,100.33) -- (350.33,100.33) -- cycle ;
%Shape: Rectangle [id:dp18616905572223974] 
\draw  [color={rgb, 255:red, 208; green, 2; blue, 27 }  ,draw opacity=1 ][fill={rgb, 255:red, 208; green, 2; blue, 27 }  ,fill opacity=1 ] (400.33,110.33) -- (420.33,110.33) -- (420.33,120.33) -- (400.33,120.33) -- cycle ;
%Shape: Rectangle [id:dp7631308611350408] 
\draw  [color={rgb, 255:red, 74; green, 144; blue, 226 }  ,draw opacity=1 ][fill={rgb, 255:red, 74; green, 144; blue, 226 }  ,fill opacity=1 ] (190.33,110.33) -- (220.33,110.33) -- (220.33,120.33) -- (190.33,120.33) -- cycle ;
%Shape: Square [id:dp20987124318163974] 
\draw  [color={rgb, 255:red, 74; green, 144; blue, 226 }  ,draw opacity=1 ][fill={rgb, 255:red, 74; green, 144; blue, 226 }  ,fill opacity=1 ] (230.33,110.33) -- (240.33,110.33) -- (240.33,120.33) -- (230.33,120.33) -- cycle ;
%Shape: Square [id:dp19448441998903598] 
\draw  [color={rgb, 255:red, 74; green, 144; blue, 226 }  ,draw opacity=1 ][fill={rgb, 255:red, 74; green, 144; blue, 226 }  ,fill opacity=1 ] (220.33,110.33) -- (230.33,110.33) -- (230.33,120.33) -- (220.33,120.33) -- cycle ;
%Shape: Rectangle [id:dp9908098176021894] 
\draw  [color={rgb, 255:red, 74; green, 144; blue, 226 }  ,draw opacity=1 ][fill={rgb, 255:red, 74; green, 144; blue, 226 }  ,fill opacity=1 ] (360.33,110.33) -- (400.33,110.33) -- (400.33,120.33) -- (360.33,120.33) -- cycle ;
%Shape: Grid [id:dp20857275137218212] 
\draw  [draw opacity=0] (140.33,60.33) -- (450.44,60.33) -- (450.44,151.33) -- (140.33,151.33) -- cycle ; \draw  [color={rgb, 255:red, 155; green, 155; blue, 155 }  ,draw opacity=0.77 ] (140.33,60.33) -- (140.33,151.33)(150.33,60.33) -- (150.33,151.33)(160.33,60.33) -- (160.33,151.33)(170.33,60.33) -- (170.33,151.33)(180.33,60.33) -- (180.33,151.33)(190.33,60.33) -- (190.33,151.33)(200.33,60.33) -- (200.33,151.33)(210.33,60.33) -- (210.33,151.33)(220.33,60.33) -- (220.33,151.33)(230.33,60.33) -- (230.33,151.33)(240.33,60.33) -- (240.33,151.33)(250.33,60.33) -- (250.33,151.33)(260.33,60.33) -- (260.33,151.33)(270.33,60.33) -- (270.33,151.33)(280.33,60.33) -- (280.33,151.33)(290.33,60.33) -- (290.33,151.33)(300.33,60.33) -- (300.33,151.33)(310.33,60.33) -- (310.33,151.33)(320.33,60.33) -- (320.33,151.33)(330.33,60.33) -- (330.33,151.33)(340.33,60.33) -- (340.33,151.33)(350.33,60.33) -- (350.33,151.33)(360.33,60.33) -- (360.33,151.33)(370.33,60.33) -- (370.33,151.33)(380.33,60.33) -- (380.33,151.33)(390.33,60.33) -- (390.33,151.33)(400.33,60.33) -- (400.33,151.33)(410.33,60.33) -- (410.33,151.33)(420.33,60.33) -- (420.33,151.33)(430.33,60.33) -- (430.33,151.33)(440.33,60.33) -- (440.33,151.33)(450.33,60.33) -- (450.33,151.33) ; \draw  [color={rgb, 255:red, 155; green, 155; blue, 155 }  ,draw opacity=0.77 ] (140.33,60.33) -- (450.44,60.33)(140.33,70.33) -- (450.44,70.33)(140.33,80.33) -- (450.44,80.33)(140.33,90.33) -- (450.44,90.33)(140.33,100.33) -- (450.44,100.33)(140.33,110.33) -- (450.44,110.33)(140.33,120.33) -- (450.44,120.33)(140.33,130.33) -- (450.44,130.33)(140.33,140.33) -- (450.44,140.33)(140.33,150.33) -- (450.44,150.33) ; \draw  [color={rgb, 255:red, 155; green, 155; blue, 155 }  ,draw opacity=0.77 ]  ;
%Straight Lines [id:da5842132734250354] 
\draw    (170.33,80.33) -- (170.33,90.33) -- (180.33,90.33) -- (180.33,100.33) -- (190.33,100.33) -- (190.33,110.33) -- (240.33,110.33) -- (240.33,120.33) -- (250.33,120.33) -- (250.33,130.33) -- (270.33,130.33) ;
%Straight Lines [id:da2822421197936268] 
\draw    (340.33,80.33) -- (340.33,90.33) -- (350.33,90.33) -- (350.33,100.33) -- (360.33,100.33) -- (360.33,110.33) -- (400.33,110.33) -- (400.33,120.33) -- (420.33,120.33) -- (420.33,130.33) -- (440.33,130.33) ;
%Straight Lines [id:da03154783394193672] 
\draw    (280.33,110.33) -- (328.33,110.33) ;
\draw [shift={(330.33,110.33)}, rotate = 180] [color={rgb, 255:red, 0; green, 0; blue, 0 }  ][line width=0.75]    (10.93,-3.29) .. controls (6.95,-1.4) and (3.31,-0.3) .. (0,0) .. controls (3.31,0.3) and (6.95,1.4) .. (10.93,3.29)   ;

% Text Node
\draw (192.33,96.73) node [anchor=north west][inner sep=0.75pt]  [font=\scriptsize]  {$p^{i}$};
% Text Node
\draw (242.33,106.73) node [anchor=north west][inner sep=0.75pt]  [font=\scriptsize]  {$p^{i-1}$};
% Text Node
\draw (362.33,95.73) node [anchor=north west][inner sep=0.75pt]  [font=\scriptsize]  {$p^{i}$};
% Text Node
\draw (412.33,107.73) node [anchor=north west][inner sep=0.75pt]  [font=\scriptsize]  {$p^{i-1}$};
% Text Node
\draw (242.33,121.73) node [anchor=north west][inner sep=0.75pt]  [font=\scriptsize]  {$q$};
% Text Node
\draw (412.33,121.73) node [anchor=north west][inner sep=0.75pt]  [font=\scriptsize]  {$q$};
% Text Node
\draw (192.33,65.73) node [anchor=north west][inner sep=0.75pt]    {$ \begin{array}{l}
u_{1}\\
\end{array}$};

\end{tikzpicture}}
    \caption{$\mathcal{N}(p^{i-1}-\e e_1-\e e_2)\cap Z_1=\{p^{i-1}-\e e_1\}$. The configuration on the right is the one that in (ii) of Lemma \ref{lemma:shape_optimality} strictly reduces the dissipation without increasing the energy.} \label{fig:(ii)_1}
     \end{figure}
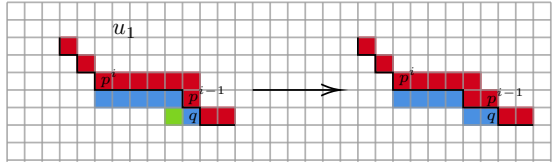
    We are left to prove the final statement of the Lemma. Suppose by contradiction that $i\ge 3$. If $u_1(p^{i-2}-\e e_1)\neq 0$ then by replacing $u_1(p^{i-1}-\e e_1)\mapsto 1$ and $u_1(p^{i-2}-\e e_1)\mapsto 0$ we strictly reduce both the energy and the dissipation, which is a contradiction. We may suppose therefore that $u_1(p^{i-2}-\e e_1) = 0$. Since (ii) holds, we have $u_1(p^{i-1}-\e e_2) = -1$, therefore there exists a point $p'\in\dseg{p^{i-1}-\e e_2}{p^{i-2}-\e e_1}$ such that $u_1(p') = -1$ and $u_1(p'+\e e_1) = 0$. By replacing $u_1(p^{i-1}-\e e_1)\mapsto 1$ and  $u_1(p')\mapsto 0$ we do not increase the energy of $u_1$, and we strictly decrease its dissipation, which is a contradiction .
\end{proof}

We recall that if $u\in\A_\e$ and $p \in \e\ZZ^2$ is such that $u(p)=0$, then the interaction between $p$ and $q\in\mathcal{N}(p)$ costs $1-k$ regardless of the value $u(q).$ This property allows to compute the energy of the set $\Z_u$ in terms of its mass and its perimeter, as shown in the next Lemma. We recall that for $I\subset\e\ZZ^2$ we set $Per(I) = Per(A_I).$
\begin{lemma}\label{lemma:energy_surfactant}
    {\rm[Energy of a set of surfactant]} Let $u\in A_\e$. The following two properties hold.
    \begin{itemize}
        \item[(i)] $\E_\e(u, \Z_u) = 2\e(1-k)\#\Z_u+1/2(1-k)Per(\Z_u).$
        \item[(ii)] Assume further that the set $\Z_u\cup\I_u$ has a strongly connected component $G$ such that $\partial^-G\subset\Z_u$. Let $\I_u\cap G = \cup_{i=1}^nB_i,$ where $B_i$ are the strongly connected components of $I_u\cap G$. Then
        \[\E_\e(u, G)=  2\e(1-k)\#(G\cap \Z_u)+1/2(1-k)\bigl(Per(G)+\sum_{i=1}^nPer(B_i)\bigl).\]
    \end{itemize}
\end{lemma}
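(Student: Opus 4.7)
The key observation is that the pair interaction $f(s) := 1 - s - k(1-s^2)$ entering \eqref{eq:energy} satisfies $f(0) = 1-k$ and $f(1) = 0$. Hence every nearest-neighbor edge incident to at least one point of $\Z_u$ (where $u(p)u(q)=0$) contributes exactly $\e(1-k)$ to $\E_\e$, while every edge connecting two points where $u$ takes the same nonzero value contributes $0$. Thus $\E_\e(u,\Z_u)$ reduces to $\e(1-k)$ times the number of nearest-neighbor edges having at least one endpoint in $\Z_u$.

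For (i), I would classify these edges into $N_{int}$ edges internal to $\Z_u$ (both endpoints in $\Z_u$) and $N_{bd}$ boundary edges (exactly one endpoint in $\Z_u$). By counting incidences, each $p \in \Z_u$ has four neighbors, so $4\#\Z_u = 2N_{int}+N_{bd}$. Since $\e N_{bd} = Per(\Z_u)$ by definition of the discrete perimeter, eliminating $N_{int}$ yields
\[
\E_\e(u,\Z_u) \;=\; \e(1-k)(N_{int}+N_{bd}) \;=\; 2\e(1-k)\#\Z_u + \tfrac12(1-k)Per(\Z_u).
\]

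For (ii), the same counting strategy applies but requires the preliminary structural claim that the hypothesis $\partial^-G \subset \Z_u$ forces $\partial^+ B_i \subset G \cap \Z_u$ for every $i$. Indeed, no point of $B_i \subset \I_u$ can lie in $\partial^-G$, so all four nearest neighbors of any $p\in B_i$ belong to $G$; moreover, two distinct strongly connected components of $\I_u \cap G$ cannot themselves be nearest neighbors, so these four neighbors lie either in $B_i$ itself or in $G\cap\Z_u$. This identifies the boundary edges of each $B_i$ with $\Z_u$--$B_i$ edges, providing $N_{ZB} := \sum_i Per(B_i)/\e$ such edges. Together with the $N_{ZZ}$ internal $\Z_u$--$\Z_u$ edges and the $N_{bd}(G) = Per(G)/\e$ boundary edges of $G$ (whose $G$-endpoint lies in $\partial^-G \subset \Z_u$, hence is a surfactant point), these exhaust all edges contributing to $\E_\e(u,G)$, each with coefficient $\e(1-k)$. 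The incidence identity $4\#(G\cap\Z_u) = 2N_{ZZ} + N_{ZB} + N_{bd}(G)$, combined with the elimination of $N_{ZZ}$ exactly as in (i), then produces the stated formula.

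The main geometric ingredient is the structural claim $\partial^+ B_i \subset G \cap \Z_u$, which both rules out edges between distinct components $B_i$ and $B_j$ and ensures that every boundary edge of $B_i$ is a $\Z_u$--$B_i$ edge inside $G$; once this is secured, the remainder of the proof is pure bookkeeping of nearest-neighbor incidences, entirely parallel to (i).
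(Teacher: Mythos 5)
Your proof is correct and takes essentially the same route as the paper: both reduce the computation to the observation that every edge incident to a surfactant point contributes exactly $\e(1-k)$ and then count such edges via the incidence identity (the paper phrases this by splitting directions and invoking a left/right symmetry; your degree-sum formula $4\#\Z_u = 2N_{int}+N_{bd}$ is the same bookkeeping). For (ii) the paper reduces directly to (i) applied to $G\cap\Z_u$ together with the perimeter identity $Per(G\cap\Z_u)=Per(G)+\sum_i Per(B_i)$, whereas you re-run the count from scratch; both hinge on the same structural fact $\partial^+B_i\subset G\cap\Z_u$, which you usefully make explicit and the paper leaves implicit inside the perimeter identity.
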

\begin{proof}
    For ease of notation, we set
\[f_u(p, q) = \begin{cases}
    0&\text{ if }u(p)u(q)=1,\\
    2&\text{ if }u(p) u(q) = -1,\\
    1-k &\text{ if }u(p) u(q) = 0,
\end{cases}\]
    so that $\E_\e(u) = \sum_{n.n.}\e f_u(p,q).$ We observe that
    \[\E_\e(u, \Z_u) = \sum_{p\in \Z_u}\e f_u(p,p+\e e_1)+\e f_u(p,p+\e e_2)+\sum_{\substack{p\in\partial^-\Z_u\\p-\e e_1\not\in \Z_u}}\e f_u(p,p-\e e_1)+\sum_{\substack{p\in\partial^-\Z_u\\p-\e e_2\not\in \Z_u}}\e f_u(p,p-\e e_2)\]
    and that
    \[\sum_{\substack{p\in\partial^-\Z_u\\p-\e e_i\not\in \Z_u}}\e f_u(p,p-\e e_i) = \sum_{\substack{p\in\partial^-\Z_u\\p+\e e_i\not\in \Z_u}}\e f_u(p,p+\e e_i)\;\;\text{ for }i=1, 2.\]
    Claim (i) follows since 
    \[(1-k)Per(\Z_u) = \Bigl(\sum_{\substack{i=1, 2,\;p\in\partial^-\Z_u\\p-\e e_i\not\in \Z_u}}\e f_u(p,p-\e e_i)+\sum_{\substack{i=1, 2,\;p\in\partial^-\Z_u\\p+\e e_i\not\in \Z_u}}\e f_u(p,p+\e e_i)\Bigl).\]
    The second claim is a straightforward consequence of the first one, since $f_u(p,q) = 0$ if $u(p) = u(q) = 1, $ and since 
    \[Per(G\cap Z_u) = Per(G)+Per(I_u\cap G) = Per(G)+\sum_{i=1}^n Per(B_i).\]
\end{proof}
We conclude this subsection with the following Lemma.
\begin{lemma}\label{lemma:surfactant_placement}
     Let $u_0,\,u_1\in\A_\e$ be such that $u_1$ is a minimizer of $\enF(\cdot, u_0)$, and let $\mu\in(0, 1]$ be independent on $\e$. Suppose that $\I_1\subset\I_0$ and that for every point $a\in\partial^-\I_1$ it holds $d^\e_1(a, \partial\I_0)\le\e^\mu$. Then, there exists $\bar{\e}$ depending only on $\mu$ such that for every $\e\le\bar{\e}$, for every pair of points $p,\,q$ such that $u_1(p) = -1$ and $u_1(q) = 0$ it holds $\#\bigl(\nn(p)\cap\I_1\bigl)\le\#\bigl(\nn(q)\cap\I_1\bigl).$
\end{lemma}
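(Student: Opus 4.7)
The proof is by contradiction via a one-step exchange argument. Suppose there exist $p, q \in \e\ZZ^2$ with $u_1(p) = -1$, $u_1(q) = 0$, and $a_p := \#(\nn(p)\cap \I_1) > \#(\nn(q)\cap \I_1) =: a_q$. Define the competitor $\tilde u_1$ by setting $\tilde u_1(p) = 0$, $\tilde u_1(q) = -1$, and $\tilde u_1 \equiv u_1$ on $\e\ZZ^2\setminus\{p,q\}$. Since neither $p$ nor $q$ lies in $\I_1$, this swap preserves $\I_{\tilde u_1} = \I_1$; it also preserves $\#\Z_{\tilde u_1} = \#\Z_1$. Consequently $\dis_{\e,\gamma}(\tilde u_1, u_0) = \dis_{\e,\gamma}(u_1, u_0)$, and the minimality of $u_1$ forces $\E_\e(\tilde u_1) \ge \E_\e(u_1)$.

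Setting $z_r := \#\{s \in \nn(r): u_1(s) = 0\}$ and $m_r := \#\{s \in \nn(r): u_1(s) = -1\}$ so that $a_r + z_r + m_r = 4$, a direct edge-by-edge calculation (using $f(-1,1)=2$, $f(0,\cdot)=f(\pm1,0)=1-k$, $f(\pm 1,\pm 1)=0$) gives, in the case $q \notin \nn(p)$,
\[\Delta\E_\e := \E_\e(\tilde u_1) - \E_\e(u_1) = -2\e(a_p - a_q) + \e(1-k)(z_q - z_p),\]
with an extra $+\e(1-k)$ correction when $q \in \nn(p)$. Imposing $\Delta\E_\e \ge 0$ together with $a_p - a_q \ge 1$ and using $k > 1/3$ (so that $2/(1-k) > 3$) forces $z_q - z_p \ge 4$, hence $z_q = 4$ and $z_p = 0$. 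In other words, the only way the lemma can fail is for $q$ to be entirely surrounded by $0$-particles, i.e., an interior point of the surfactant cluster.

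The remaining step is to exclude this interior-surfactant configuration by a global rearrangement. By Lemma \ref{lemma:energy_surfactant}, the contribution of $\Z_1$-incident bonds to $\E_\e$ depends only on $\#\Z_1$ and $Per(\Z_1)$, so the cluster of $0$'s containing $q$ may be locally reshaped without affecting the dissipation. The hypothesis $d^\e_1(a, \partial \I_0) \le \e^\mu$ on $\partial^- \I_1$ implies that $\partial \I_1$ lies in an $\e^\mu$-neighborhood of $\partial \I_0$, so that $\partial^+ \I_1$ has length of order $\e^{-1}$ and contains many positions not currently occupied by a $0$ but adjacent to a $-1$ in $\I_0\setminus\I_1$. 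One then builds a modified competitor that removes one $0$-particle from the thick region around $q$ and places it at such a boundary position, counting the net change in $+1/-1$, $+1/0$, $-1/0$ and $0/0$ bonds to show the net energy change is strictly negative; this contradicts minimality for $\e \le \bar\e(\mu)$ sufficiently small. The main obstacle is precisely the identification of this target position and the verification of the strict energy inequality — this is where the threshold $\bar\e(\mu)$ enters, as the existence of room along $\partial^+\I_1$ for the reabsorbed $0$-particle depends on the geometric width $\e^\mu$.
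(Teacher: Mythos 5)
Your opening exchange argument is correct and coincides with the paper's: the same swap $u_1(p)\mapsto 0$, $u_1(q)\mapsto -1$, the same bond count, the same conclusion that the only consistent configuration is $a_p=1$, $a_q=0$, $z_q=4$, $z_p=0$ (and the adjacent case $q\in\nn(p)$ is excluded by a separate elementary count). The gap is entirely in your final step, and it is genuine.

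The rearrangement you sketch — lift a $0$ out of the interior of the surfactant cluster and deposit it on $\partial^+\I_1$ — does not work in the parameter range the paper treats. Since the pair-energy $1-u(p)u(q)-k\bigl(1-(u(p)u(q))^2\bigr)$ equals $1-k$ whenever one of the two cells is $0$, removing an interior $0$ surrounded by $0$'s changes the energy by exactly zero; the whole balance sits on the landing site $r$. If $u_1(r)=-1$ with $a_r$ $1$-neighbors and $z_r$ $0$-neighbors, turning $r$ into a $0$ changes the energy by $\e\bigl(4(1-k)-2a_r-(1-k)z_r\bigr)$. For the generic boundary profile $a_r=1$, $z_r=0$ (which is precisely the profile you already established for $p$ itself) this is $\e(2-4k)$, nonnegative for every $k\le 1/2$. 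So for $k$ near $1/3$ the move raises the energy and gives no contradiction; you would need a target with $a_r\ge 2$ or $z_r\ge 1$, whose existence you neither state nor prove. A further warning sign: your rearrangement has zero dissipation cost (moving $0$'s affects neither $\dis^1_\e$ nor $\#\Z_1$), so the hypothesis $d^\e_1(a,\partial\I_0)\le\e^\mu$ on $\partial^-\I_1$ plays no role in your argument, even though it is the essential nontrivial ingredient of the statement.

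The paper closes the case $z_q=4$, $z_p=0$, $a_p=1$ differently. Let $p-\e e_2$ denote the unique $1$-neighbor of $p$; it lies in $\partial^-\I_1$, so the hypothesis gives $d^\e_1(p-\e e_2,\partial\I_0)\le\e^\mu$. Now compare with the competitor obtained by setting $u_1(p-\e e_2)\mapsto 0$ and $u_1(q)\mapsto -1$. This preserves $\#\Z_1$, and since the removed cell is within $\e^\mu$ of $\partial\I_0$ the dissipation term changes by at most $c\e^{\mu+1}$, while the energy strictly drops by a $k$-dependent positive constant times $\e$ unless all three remaining neighbors of $p-\e e_2$ equal $1$. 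Hence those three cells lie in $\I_1$, so $p-\e e_2\pm\e e_1\in\partial^-\I_1$, the hypothesis applies to them as well, and $p\pm\e e_1$ are $-1$-cells with the same local profile as $p$. Iterating in the $e_1$-direction produces an infinite row of $-1$'s sitting on an infinite row of $1$'s, contradicting the boundedness of $\I_0$. The mechanism is thus not a global reshuffle of the surfactant but an infinite-descent argument: the failure of the lemma would force $\partial^-\I_1$ to contain a doubly-infinite straight segment, which is impossible in a bounded set. This is where the hypothesis on $\partial^-\I_1$ is actually spent, and it is the step your proposal is missing.
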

\begin{proof}
    We argue by contradiction. Let $p,q$ be such that $u_1(p)=-1$ and $u_1(q)=0$. Let $s_p:=\#\bigl(\mathcal{N}(p)\cap\I_1\bigl)\ge 1$ and  $s_q:=\#(\mathcal{N}(q)\cap\I_1)<s_p$. Consider the competitor $\tilde{u}_1$ obtained replacing $u_1(p)\mapsto 0$ and $u_1(q)\mapsto -1.$ It holds \begin{equation}\label{eq:3.qualcosa}
        \E_\e(\tilde{u}_1) = \E_\e(u_1) -2\e s_p+\e(1-k)\#(\mathcal{N}(p)\cap\{u_1\neq 0\})+2\e s_q-\e(1-k)\#(\mathcal{N}(q)\cap\{u_1\neq0\}).
    \end{equation}
    Therefore if $s_p\ge 2$ and $s_q=0$ or $s_q=1,$ then 
    \[\E_\e(\tilde{u}_1)-\E_\e(u_1)\le -2\e(s_p-s_q)+4\e(1-k)-s_q\e(1-k)<0\]
     which contradicts the minimality of $u_1$. If instead $s_p=1,\,s_q=0$, and $\#(\mathcal{N}(p)\cap\{u_1\neq 0\})-\#(\mathcal{N}(q)\cap\{u_1\neq0\})<4$ then we find
    \[ -2\e +\e(1-k)\Bigl(\#(\mathcal{N}(p)\cap\{u_1\neq 0\})-\#(\mathcal{N}(q)\cap\{u_1\neq0\})\Bigl)<0.\]
    We are left to consider the case $s_p=1,\,s_q=0,\,\#(\mathcal{N}(p)\cap\{u_1\neq 0\})=4$ and $\#(\mathcal{N}(q)\cap\{u_1\neq 0\})= 0.$ Without loss of generality we can suppose that $u_1(p-\e e_2) = 1$. We observe that 
    \[u_1(p-\e e_2-\e e_1) = u_1(p-\e e_2+\e e_1) = u_1(p-2\e e_2) = 1.\]
    Indeed otherwise, since by assumption it holds $d^\e_1(p-\e e_2, \partial\I_0)\le\e^\mu$, then replacing $u_1(p-\e e_2)\mapsto 0$ and $u_1(q)\mapsto -1$ we would reduce the energy by at least $\e(1-k)$, and increase the dissipation by at most $c\e^{\mu+1}$, for some positive constant $c$ independent on $\e.$ Since $\e(1-k)>c\e^{\mu+1}$ this is a contradiction. In particular $p-\e e_2\pm\e e_1\in\partial^-\I_1$, and therefore by assumption $d^\e_1(p-\e e_2\pm\e e_1, \partial I_0)\le\e^\mu$.  Iterating the argument we find that for every $n\in\ZZ$  it holds $u_1(p+n\e e_1)= -1$, and $u_1(p-\e e_2 +n\e e_1)= 1$, that is $p-\e e_2 +n\e e_1\in\partial^-\I_1$. This contradicts $\I_1\subset\I_0$, since $\I_0$ is bounded.
\end{proof}
\begin{remark}\label{rem:blu_on_the_boundary}
A direct consequence of the previous Lemma, which we use in the following, is that if $\#\Z_1\le \#\partial^+\I_1$ then $\Z_1\subset\partial^+\I_1.$ \end{remark}

\subsection{Connectedness}\label{subsec:connectedness}
In all the statements of this section we assume that $u_0,\,u_1\in\A_\e$, and that $u_1$ is a minimizer of $\enF(\cdot, u_0)$. We require that the family $(I_0^\e)_\e$ satisfies the following assumption:
\begin{equation*}
\tag{H}\label{H}
\parbox{0.9\textwidth}{
there exist constants $\overline{R}$, $\overline{c}$ such that $(I_0^\varepsilon)_\varepsilon$ are staircase sets of width $\overline{c}$, all contained in a ball of radius $\overline{R}$, as in Definition~\ref{def:large_staircase_set}.
}
\end{equation*}
%\begin{itemize}
    %\item[(H)] there exist constants $\overline{R}$ $\overline{c}$, such that the $(\I^\e_0)_\e$ are staircase sets of width $\bar{c}$, all contained in a ball of radius $\bar{R}$ as in Definition \ref{def:large_staircase_set}. 
%\end{itemize} 
In what follows we say that $u_0$ satisfies \eqref{H} if $(I^\e_0)_\e$ satisfies \eqref{H}. Under this assumption, we prove that $\I^\e_1$ is a strongly connected staircase set contained in $\I^\e_0$ for every $\e$ sufficiently small. We shall omit the dependence on $\e$, whenever it is clear from the context.

We give a quick outline of the rest of the section. First of all, in Proposition \ref{prop:horizontal_convexity} we show that, as long as $\I_j$ is a staircase set, then $\I_{j+1}$ is a staircase set, and $\I_{j+1}\subset \I_j$.
Then in Proposition \ref{prop:hausdorff_distance_from_boundary} we show that for every $\mu<1/4$ we have $\I_0\setminus \{p\in\I_0:d^\e_1(p, \partial\I_0)\le\e^\mu\}\subset\I_1$, in particular the boundaries of $\I_0$ and $\I_1$ are uniformly close to each other. As a consequence, for $\e$ sufficiently small, any minimizer $u_1$ can we written as $\I_1 = \I\cup(\cup_{i=1}^nB_i)$, where $\I$ is the connected set which contains $\I_0\setminus \{p\in\I_0:d^\e_1(p, \partial\I_0)\le\e^\mu\}$ (and therefore contains ``the most of'' $\I_1$), while the $B_i$ are the other connected components, all lying within a distance smaller than $\e^\mu$ from the boundary of $\I_0$. The proof of the connectedness follows showing that $B_i = \emptyset$ for every $i$. To this aim we need some auxiliary results.\vspace{5pt}\\
First we set $\I_1\cup \Z_1 := C\cup(\cup_{i=1}^n G_i)$ to be the decomposition in (strongly) connected components of this set, so that $\I\subset C$ and $B_i\subset G_i$. In Lemma \ref{lemma:optimal_position_connected_components} we prove that if a component $G_i$ is too small, then it can be translated in such a way to reduce the dissipation and/or the energy of $u_1$.\vspace{5pt}\\
Then, in the first step of Proposition \ref{prop:connectedness} we show that if instead there is a component $G_i$ which is too large, then we can replace the value of $u_1$ at every point in the space between $C$ and $G_i$ with the value $1$, and we can rearrange the surfactant which was lying in this region to form a new configuration which has a lower energy. As a result of Lemma \ref{lemma:optimal_position_connected_components} and the first step of Proposition \ref{prop:connectedness}, we have that the set $\I_1\cup\Z_1$ is strongly connected.\vspace{5pt}\\
In the second step of the proof of Proposition \ref{prop:connectedness} we prove that all the components $B_i$ must be contained inside the rectangle $R_{\I}$. We will prove that otherwise there would exist a slice of a component $B_{\bar{i}}$ which is too close to the boundary of $\I_0$ (in view of the previous step), which therefore can be eliminated paying an amount of dissipation which is negligible with respect to the reduction of the energy.\vspace{5pt}\\
In the final step of the proof of Proposition \ref{prop:connectedness} we prove that there cannot exist any component $B_i\subset R_{\I},$ and the connectedness of $I_1$ follows.

\begin{proposition}\label{prop:horizontal_convexity} Let $u_0, u_1 \in\A_\e$ such that $u_1$ is a minimizer for $\enF(\cdot,u_0)$ and  $\I_0$ is a staircase set. Then $\I_1\subset \I_0$ and every weakly connected component of $\I_1$ is a staircase set.
\end{proposition}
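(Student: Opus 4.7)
The proof has two steps, to be carried out in order: (a) the inclusion $\I_1\subset \I_0$; (b) the staircase property of each weakly connected component of $\I_1$, which crucially exploits both (a) and the horizontal and vertical convexity of $\I_0$.

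For (a), my plan is an argument by contradiction combined with an outermost-point peeling. Assume $B:=\I_1\setminus \I_0\ne\emptyset$ and choose $p\in B$ maximizing the discrete $L^1$-distance $d^\e_1(p,\partial \I_0)$ among points of $B$. By this extremal choice, $p$ has at least one neighbor outside $\I_1$ (otherwise all four neighbors of $p$ would lie in $\I_1$ and at least one of them would be strictly further from $\I_0$ than $p$, since $p\notin\I_0$, contradicting maximality). The competitor $\tilde u_1$ that agrees with $u_1$ everywhere except at $p$, where $\tilde u_1(p):=u_0(p)\in\{-1,0\}$, produces the following changes: $\dis^1_\e/\tau$ drops by at least $\e\,d^\e_1(p,\partial \I_0)/\zeta$, the surfactant dissipation $\e^\gamma\dis^0_\e/\tau$ varies by at most $\e^{\gamma-1}/\zeta$, and $\E_\e$ varies by $O(\e)$ with a sign controlled by a finite case analysis on $u_0(p)$ and on the values of $u_1$ at the neighbors of $p$ (restricted by the fact that at least one neighbor is not in $\I_1$). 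Iterating the argument on the outermost layer of $B$ one obtains a net strict decrease of $\enF$, contradicting the minimality of $u_1$.

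For (b), fix a weakly connected component $\I$ of $\I_1$. I would prove horizontal convexity, vertical convexity, and strong connectedness separately, each by a local modification argument. For horizontal convexity: if $p,q\in \I$ sit on the same row with a missing site $r$ between them, then horizontal convexity of $\I_0$ combined with the inclusion just proved forces $r\in \I_0$; filling the gap ($u_1(r)\mapsto 1$), possibly combined with a local rearrangement of a neighboring surfactant atom as in Lemma~\ref{lemma:shape_optimality}, reduces the perimeter of $\I_1$ by at least $2\e$, strictly decreases $\dis^1_\e$ (since $r$ leaves $\I_1\triangle \I_0$), and changes $\dis^0_\e$ only by $O(\e^{\gamma-1})$. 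Vertical convexity is symmetric. For strong connectedness: if two strong components of $\I$ are only weakly connected via a diagonal pair $(p,p+\e(e_1+e_2))$, then strong connectedness of $\I_0$ forces at least one of $p+\e e_1,p+\e e_2$ to belong to $\I_0$, and Lemma~\ref{lemma:shape_optimality}.1.2 imposes $u_1$ to vanish at that site; a further single-site modification swapping this surfactant with water then contradicts minimality.

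The main obstacle is step (a). The three contributions of $\enF$ scale differently in $\e$ and the sign of the variation of $\E_\e$ under a single-site modification is not universally controlled: the surfactant-mediated interactions carry weight $1-k$, which can be small when $k$ is close to $1$, so an unfavorable local configuration of $u_1$ around $p$ can produce an energy increase of order $\e$ that competes directly with the $\e/\zeta$ gain in $\dis^1_\e$. The staircase assumption on $\I_0$ enters precisely here, through the clean decomposition of $\e\ZZ^2\setminus \I_0$ into bounded external regions (N, S, E, W and the four diagonal ones) in which an iterative peeling can be implemented, if necessary through multi-site competitors rather than single-site ones, so that the perimeter and Almgren--Taylor--Wang gains accumulate and jointly dominate any possible unfavorable variation of $\E_\e$ and $\dis^0_\e$.
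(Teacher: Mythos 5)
There is a genuine gap in step (a), and you have in fact put your finger on it yourself without resolving it. Your competitor $\tilde u_1(p):=u_0(p)$ is a \emph{single-site} modification, so in the typical case $u_0(p)=0$ (or more generally whenever the swap turns a $1$ into a $0$ or vice versa) the surfactant count $\#\Z_1$ changes by one, and the term $\tfrac{\e^\gamma}{\tau}\dis^0_\e$ varies by exactly $\e^{\gamma-1}/\zeta$. Against this you only have a $\dis^1_\e/\tau$ gain bounded below by $\e\,d^\e_1(p,\partial\I_0)/\zeta\ge\e^2/\zeta$, plus an $\E_\e$ change of magnitude $O(\e)$ whose sign you cannot pin down from the mere fact that one neighbor of $p$ lies outside $\I_1$. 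Since the proposition must hold for \emph{every} $\gamma>0$ (it feeds both Section 4 with $\gamma>2$ and Section 5 with $\gamma<2$), the $\e^{\gamma-1}/\zeta$ loss cannot be written off as lower order, and your proposed ``iterative peeling'' does not help, because the same loss is incurred at every peeling step. The paper avoids the issue completely by never changing $\#\Z_1$: when $u_1(p-\e e_1)=u_1(p-\e e_2)=-1$ it replaces $u_1(p)\mapsto-1$, and otherwise it performs a \emph{paired swap} $u_1(p)\mapsto 0$, $u_1(p-\bar s\,\e e_2-\bar t\,\e e_1)\mapsto -1$ with the other endpoint chosen as the extremal corner of the contiguous surfactant chain, so that $\dis^0_\e$ is identically unchanged and drops out of the comparison entirely.

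A second, related difference is the choice of the point $p$. You pick a maximizer of $d^\e_1(\cdot,\partial\I_0)$, which only gives you that some neighbor of $p$ is outside $\I_1$; the paper instead picks $p$ so that (after a rotation/reflection) the \emph{entire} lower-left quadrant $\{p-t\e e_1-s\e e_2:\,t,s\in\NN\}$ is disjoint from $\I_1$. This stronger extremal property is what makes the energy sign analysis clean: it forces $u_1(p-\e e_1),u_1(p-\e e_2)\in\{-1,0\}$ and, when the swap is used, forces the partner site $p-\bar s\,\e e_2-\bar t\,\e e_1$ to have its lower and left neighbors equal to $-1$, which is exactly what keeps $\E_\e$ from increasing. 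Your "finite case analysis on $u_0(p)$ and the neighbors of $p$'' does not close because a distance-maximizing $p$ need not have its neighbors in $\{-1,0\}$ in any particular directions, and, as you note, an unfavorable local configuration can raise $\E_\e$ by $O(\e)$. The same two issues (surfactant-count drift and lack of control on $\E_\e$) recur in your step (b), where the paper again uses paired replacements (e.g.\ $u_1(\dseg{p'+\e e_2}{q'+\e e_2})\mapsto 1$ together with an equal number of sites moved to value $0$ along the opposite side, in case (i)) precisely so that $\#\Z_1$ is preserved and $\dis^0_\e$ never appears in the inequality.
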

\begin{proof}
\emph{Step 1.} We first prove the inclusion. We observe that since $\#\Z_0<\infty$, and since $I_0$ satisfies assumption \eqref{H}, then $\Z_0\cup\I_0$ is bounded. Since $u_1$ is a minimizer of $\enF(\cdot, u_0)$ it follows that also $I_1\cup Z_1$ is bounded. Now, by contradiction, suppose that $\I_1\not\subset \I_0$, i.e. there exists $p \in \e\mathbb{Z}^{2}$ such that $u_0(p)\neq 1$ and $u_{1}(p)=1$. Moreover, up to rotation and reflection, we can find $p$ as above such that for every $t,s \in\mathbb{N}$, it holds $u_{1}(p-t\e e_{1}-s\e e_{2})\neq 1$. If $u_{1}(p-\e e_2)=u_{1}(p-\e e_1)=-1$, then the function $\tilde{u}_{1}\in\A_\e$ obtained replacing $u_1(p)\mapsto-1$ has a strictly smaller dissipation and $\E_\e(\tilde{u}_1)\le \E_\e(u_1)$, which contradicts the minimality of $u_1.$ Therefore, without loss of generality, we can assume that $u_{1}(p-\e e_2)=0$. We define $\overline{s}:=\max\{s \in \mathbb{N}: u_{1}(p-s\e e_{2})=0\}$ and $\overline{t}:=\max\{t \in \mathbb{N}: u_{1}(p-t\e e_{1}-\bar{s}\e e_2)=0\}$, and we define $\tilde{u}_1$ replacing $u_1(p)\mapsto0$ and $u_1(p-\bar{s}\e e_{2}-\overline{t}\e e_{1})\mapsto-1$. As in the previous case, the function $\tilde{u}_{1}$ has a strictly smaller dissipation and $\E_\e(\tilde{u}_1)\le \E_\e(u_1)$, which is not possible.
\vspace{5pt}
\\
\emph{Step 2.} By definition of staircase set we have to prove that every weakly connected component of $\I_1$ is horizontally and vertically convex. We only show the horizontal convexity, since the argument for the vertical convexity is analogous. Without loss of generality we prove the claim assuming that $\I_1$ is weakly connected, and we show that for every pair of points $p, q\in \I_1$ such that $q= p+\e ne_1$ for some $n\in\NN$, then $\dseg{p}{q}\subset\I_1$. We argue by contradiction assuming that there exists $m<n$ such that $u_1(p+\e me_1) \neq 1$. We observe that since $\I_0$ is horizontally convex, and since $\I_1\subset I_0$ is weakly connected, we can find two points $p'\in \I_1$ and $q' = p'+\e n'e_1\in \I_1$, where $n'\in\NN\cup\{0\}$ (i.e. possibly $p' = q'$), such that $\dseg{p'}{q'}\subset \I_1$, and satisfying one of the following:
    \begin{itemize}
        \item[] $p'-\e e_1-\e e_2\in \I_1,\;q'+\e e_1-\e e_2\in \I_1$ and $\dseg{p'-\e e_2}{q'-\e e_2}\subset\{u_1\neq 1\}$;
        \item[] $p'-\e e_1+\e e_2\in \I_1,\;q'+\e e_1+\e e_2\in \I_1$ and $\dseg{p'+\e e_2}{q'+\e e_2}\subset\{u_1\neq 1\}.$
    \end{itemize}
 Without loss of generality we can assume that the second situation occurs. First we observe that $\dseg{p'+\e e_2}{q'+\e e_2}\subset\{u_1 = 0\}.$ In fact otherwise we could find $\dseg{p''}{ q''}\subset\dseg{p'+\e e_2}{q'+\e e_2}\cap\{u_1=-1\}$ (possibly with $p'' = q''$) such that $u_1(p''-\e e_1)\neq -1\neq u_1(q''+\e e_1).$ In this case, we can define a competitor $\tilde{u}_1$ by replacing $u_1(\dseg{p''}{q''})\mapsto 1$, finding that $\E_\e(\tilde{u}_1)\le \E_\e(u_1)$ and $\tilde{u}_1$ has a strictly lower dissipation than $u_1$, which contradicts the minimality of $u_1$.
    
    We are left with the following three cases.
    \begin{itemize}
        \item[(i)] Suppose that $p'+\e e_2$ is a surfactant in the corner unit for $u_1$, according to Definition \ref{def: corner unit}. Then, by Lemma \ref{lemma:shape_optimality}-1.1) the set $\Z_1\cup \I_1$ is a discrete rectangle $R$. The horizontal upper side $\ell :=\dseg{a}{b}$ of $R$ has length at least equal to $n'+3$. We define $\tilde{u}_1\in\A_\e$ replacing $u(\dseg{p'+\e e_2}{q'+\e e_2}) \mapsto 1,\;u(\dseg{a+\e e_2}{ a+n'\e e_1+\e e_2}) \mapsto 0$. We observe that $\tilde{u}_1$ has strictly smaller dissipation than $u_1$, and $\E_\e(\tilde{u}_1)\le \E_\e(u_1),$ which contradicts the minimality of $u_1$.
        \item[(ii)] We suppose that $p'+\e e_2$ is not a surfactant in the corner, and that $n'\ge 1$. If $u_1(p'+2\e e_2) = 1$, then consider any point $\bar{p}\in\partial^+(I_1\cup Z_1)$. Replacing $u_1(\bar{p})\mapsto 0$ and $u_1(p+\e e_2)\mapsto1$ we strictly reduce the dissipation, and we do not increase the energy of $u_1$, which contradicts the minimality of $u_1$. We may therefore suppose that $u_1(p'+2\e e_2) = -1.$ We observe that $u_1(p'-\e e_1+2\e e_2)= u_1(p'+2\e e_2) = u_1(p'+\e e_1+2\e e_2)=-1.$ In fact, if for example $u_1(p'-\e e_1+2\e e_2)=0$  we could replace $u_1(p'+\e e_2)\mapsto 1$ and $u_1(p'+2\e e_2)\mapsto 0$ (see Figure \ref{fig:h_convexification}) obtaining a strictly smaller dissipation, and not increasing the energy.
        \begin{figure}[H]
         \centering
            \resizebox{0.60\textwidth}{!}{\input{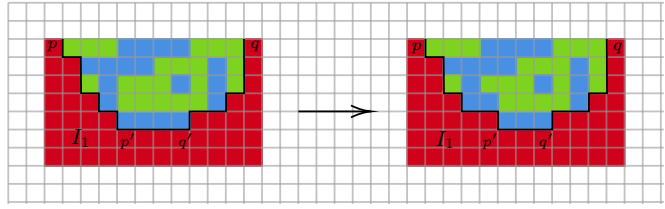}}
            \caption{The configuration on the right is the one that in (ii) of Proposition \ref{prop:horizontal_convexity} reduces the dissipation without increasing the energy when $u_{1}(p'+2\varepsilon e_2)=-1$.} \label{fig:h_convexification}
        \end{figure} 
        Inductively, we conclude that $\dseg{p'-\e e_1+2\e e_2}{q'+\e e_1+2\e e_2}\subset\{u_1 = -1\}$. But again, this is absurd since in this case we could replace $u_1(\dseg{p'+\e e_2}{q'+\e e_2})\mapsto 1$ and $u_1(\dseg{p'+2\e e_2}{q'+2\e e_2})\mapsto 0,$ strictly decreasing the dissipation, and not increasing the energy.
        \item[(iii)] We are left with the case $p' = q'$ (i.e. $n'=0$). As in the previous step we deduce that $u_1(p'+2\e e_2) = -1$. We replace $u_1(p'+\e e_2)\mapsto 1$ and $u_1(p'+2\e e_2)\mapsto 0$. This transformation strictly reduces the dissipation, and does not increase the energy.
    \end{itemize}
\end{proof}

The next Proposition provides a first rough lower bound on the distance between $\partial A_{\I_j}$ and $\partial A_{\I_{j+1}}.$
For $\I\subset\e\ZZ^2,\,A\subset\rr^2$ and $\beta>0$ we set $\mathscr{C}_{\e^\beta}(\I)\coloneqq\{p\in \I:d^\e_1(p, \partial\I)\le\e^\beta\}$ and $\mathscr{C}_{\e^\beta}(A_I)\coloneqq A_{\mathscr{C}_{\e^\beta}(I)}$. To simplify the notation, we write $\mathscr{C}_{\e^\beta}:=\mathscr{C}_{\e^\beta}(A_0).$

\begin{proposition}\label{prop:hausdorff_distance_from_boundary}
Let $u_0,u_1\in\A_\e$ be such that $u_0$ verifies \eqref{H} and $u_1$ is a minimizer of $\enF(\cdot, u_0)$, and let $\mu\in\R$ be such that $0<\mu<1/4$.  Set $\mathscr{C}_{\e^\mu}(\I_0):=\{p\in \I_0:d^\e_1(p, \partial\I_0)\le\e^\mu\}$. Then, there exists $\bar{\e}>0$ which depends only on $\bar{c}$ and $\mu$ such that for every $\e\le\bar{\e}$ it holds that $\I_0\setminus\mathscr{C}_{\e^\mu}(\I_0)\subset \I_1$; in particular $d_{\mathcal{H}}(\partial A_0,\partial A_1)\le \e^{\mu}$.
\end{proposition}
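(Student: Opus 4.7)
The proof is by contradiction, combining three ingredients: the basic dissipation estimate obtained by testing the minimality of $u_1$ against the competitor $u_0$ itself; the structural information on $I_1$ from Proposition~\ref{prop:horizontal_convexity} ($I_1\subset I_0$ and each weakly connected component of $I_1$ is a staircase set); and the local flipping argument underlying Lemma~\ref{lemma:shape_optimality}, used to rule out ``interior holes'' of $I_0\setminus I_1$.

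\textbf{Main steps.} First, from the minimality of $u_1$ and $\mathcal{D}_{\varepsilon,\gamma}(u_0,u_0)=0$ we get
\[
\mathcal{E}_\varepsilon(u_1)+\frac{1}{\tau}\mathcal{D}_{\varepsilon,\gamma}(u_1,u_0)\le \mathcal{E}_\varepsilon(u_0)\le C,
\]
where $C$ depends only on the constants $\bar c,\bar R$ of \eqref{H}. Recalling $\tau=\zeta\varepsilon$ and that each term in $\mathcal{F}^{\tau,\gamma}_\varepsilon$ is non-negative, this yields the fundamental estimate
\[
\mathcal{D}^1_\varepsilon(I_1,I_0)=\sum_{p\in I_0\setminus I_1}\varepsilon^{2}\,d^\varepsilon_1(p,\partial I_0)\le C\zeta\,\varepsilon.
\]
Suppose now, by contradiction, that there exists $p_0\in I_0\setminus I_1$ with $D:=d^\varepsilon_1(p_0,\partial I_0)\ge\varepsilon^\mu$. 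If all four nearest neighbors of $p_0$ lay in $I_1$, then the competitor obtained by flipping $u_1(p_0)\mapsto 1$ would decrease the energy by at least $4(1-k)\varepsilon$ (if $u_1(p_0)=0$) or $8\varepsilon$ (if $u_1(p_0)=-1$), and would also strictly decrease the dissipation by $\varepsilon^{2+\mu}/\tau\ge\varepsilon^{1+\mu}/\zeta$, contradicting the minimality of $u_1$. Hence at least one neighbor of $p_0$ lies in $I_0\setminus I_1$.

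Iterating this flip argument and exploiting the horizontal and vertical convexity of each component of $I_1$, one constructs a path $p_0,p_1,\ldots,p_N\subset I_0\setminus I_1$ with $|p_i-p_{i-1}|_1=\varepsilon$ and $p_N\in\partial^- I_0$. The path can be chosen so that $p_i$ stays in the $\ell^1$-ball $B(p_0,D)\subset I_0$, hence $d^\varepsilon_1(p_i,\partial I_0)\ge D-i\varepsilon$. Summing contributions of the first $\lfloor D/(2\varepsilon)\rfloor$ points of the path,
\[
\mathcal{D}^1_\varepsilon(I_1,I_0)\ge \sum_{i=0}^{\lfloor D/(2\varepsilon)\rfloor}\varepsilon^2\,\Bigl(\tfrac{D}{2}\Bigr)\;\ge\;\tfrac{1}{4}\,D^2\ge \tfrac{1}{4}\varepsilon^{2\mu}.
\]
Comparing with the upper bound gives $\varepsilon^{2\mu-1}\le 4C\zeta$, which fails for $\varepsilon$ sufficiently small as soon as $2\mu<1$; in particular for $\mu<1/4$. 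The Hausdorff-distance statement follows immediately, since a point in the interior of $\partial A_0\triangle\partial A_1$ at distance $>\varepsilon^\mu$ from $\partial A_0$ corresponds, up to cell-identification, to a lattice point in $I_0\setminus I_1$ at discrete $\ell^1$-distance $>\varepsilon^\mu$ from $\partial I_0$.

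\textbf{Main obstacle.} The delicate point is the construction of the escape path when $I_1$ has several weakly connected components: $p_0$ could in principle be ``trapped'' by four different staircase components bordering the four axis rays through it, so that no straight-line ray stays in $I_0\setminus I_1$ until reaching $\partial I_0$. To handle this one exploits that, by horizontal and vertical convexity of each component, any enclosure of a pocket of $I_0\setminus I_1$ by $I_1$ requires at least two components on opposite sides; combined with the shape-optimality rules of Lemma~\ref{lemma:shape_optimality} (in particular the fact that surfactants in corners force rectangular enclosures), this either forces the pocket to be so thin that $p_0$ cannot be deep, or produces a long strongly-connected segment of $I_0\setminus I_1$ along which the above sum argument still delivers a contribution of order $\varepsilon^{2\mu}$. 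The stricter bound $\mu<1/4$ (against the naive threshold $\mu<1/2$) provides the safety margin needed to absorb the perimeter-to-area losses incurred in these structural estimates.
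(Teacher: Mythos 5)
The core estimate in your argument is wrong by a factor of $\e$. The sum
\[
\sum_{i=0}^{\lfloor D/(2\e)\rfloor}\e^2\,\tfrac{D}{2}
\]
has roughly $D/(2\e)$ summands, each of size $\e^2 D/2$, so it equals roughly $\e D^2/4$, not $D^2/4$. With the correct factor, comparing with the upper bound $\dis^1_\e(I_1,I_0)\le C\zeta\e$ gives $\e D^2/4\le C\zeta\e$, i.e.\ $D\le 2\sqrt{C\zeta}$ --- a bound that is uniform in $\e$ and never shrinks to $\e^\mu$. The contradiction you claim evaporates.

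\textbf{Why no fix by improving the path.} A one-dimensional escape path of length $\sim D/\e$ contributes at most $O(\e D^2)$ to the weighted-volume dissipation, which is always consistent with the a priori bound $C\zeta\e$ for any fixed $D>0$. Even an optimistic two-dimensional version (showing a whole $\ell^1$-ball of radius $\sim D$ lies in $I_0\setminus I_1$) gives $O(D^3)$, but there is no mechanism in your argument that forces the full ball to leave $I_1$: a thin strip of $I_0\setminus I_1$ cutting deep across $I_0$ has small weighted volume and is not excluded by dissipation alone. The competitor/flip argument you invoke only tells you \emph{some} neighbor of $p_0$ lies in $I_0\setminus I_1$; it neither guarantees the path marches outward without backtracking, nor that the path stays inside any prescribed ball, and the ``main obstacle'' paragraph acknowledges but does not close this gap.

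\textbf{What the paper does that you do not.} The paper's proof first uses the dissipation bound only to obtain a rough area estimate $|A_0\setminus A_1|\le c\e^{1/2}$. The key additional ingredient, which your proposal lacks, is an \emph{energy} comparison: the paper chooses a good depth level set $\hat O$ by a pigeonhole/mean-value argument, constructs a competitor that fills $\hat O\setminus\partial^-\hat O$ with phase $1$ and relocates the displaced surfactant to a far-away nearly-square region, and compares energies via Lemma~\ref{lemma:energy_surfactant}. This gives a \emph{perimeter} estimate $\e\#\bigl(\partial^+I_1\cap(\hat O\setminus\partial^-\hat O)\bigr)\le c\e^{1/4}$, which is what actually prevents thin, deep excursions. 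Combined with the quasi-geodesic property~\eqref{eq:3_geodesic_distance} of staircase sets, this forces the deep level set $\hat O$ (which lies at distance $\gtrsim\e^{1/4}$ from $\partial I_0$) to be contained in $I_1$, yielding the $\mu<1/4$ threshold. Without an energy/perimeter comparison, the dissipation inequality alone cannot rule out deep pockets, so the conclusion is out of reach for your approach.
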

    \begin{proof}   
        Since $\enF(u_1)\le\enF(u_0)$, for every $\beta>0$ we have \begin{equation}\label{eq:3.1}
        \tau(\E_\e(u_0)-\E_\e(u_1))\ge \dis^1_\e(\I_1, \I_0)\ge\int_{(A_0\setminus\mathscr{C}_{\e^\beta})\setminus A_1}d^\e_1(x, \partial I_0)\dx\ge \e^\beta|(A_0\setminus \mathscr{C}_{\e^\beta})\setminus A_1|.
        \end{equation} 
        From (\ref{eq:3.1}), since $\tau=\e\zeta$, it holds
        \begin{equation}\label{eq:3.30}
            |(A_0\setminus\mathscr{C}_{\e^\beta})\setminus A_1|\le c\e^{1-\beta}.
        \end{equation}
     Moreover we observe that $|(A_0\setminus\mathscr{C}_{\e^\beta})\setminus A_1|\ge|A_0\setminus A_1|-c\e^\beta$ for a positive constant depending only on the perimeter of $A_0$. Then, with $\beta=1/2$ we deduce that
        \begin{equation}\label{eq:5}
            |A_0\setminus A_1|\le c(\e^{1-\beta}+\e^\beta)\le c\e^{1/2} \text{ as }\e\to 0.
        \end{equation}
    Now we set $\beta, \nu>0$ so that $\e^{\beta-1}$ and $N:=\e^{-\nu}$ are integers. We set $O_0 := \I_0\setminus \mathscr{C}_{\e^\beta}(\I_0)$, and inductively for every $n = 1, \dots, N$ we define $O_n:=O_{n-1}\setminus\partial^-O_{n-1}.$ By the mean value theorem there exists $\bar{n}$ such that
    \begin{align}\label{eq:3}
        \#(\partial^-O_{\bar{n}}\setminus \I_1)\le \frac{1}{N}\#\bigcup_{n=1}^N(\partial^-O_n\setminus \I_1)\le \frac{1}{N\e^2}|(A_0\setminus\mathscr{C}_{\e^\beta})\setminus A_1)|\le \e^{\nu-\beta-1}
    \end{align}
    and we set $\hat{O}:=O_{\bar{n}}.$ We observe that there exists a constant $c$ such that         \begin{equation}\label{eq:3.distance_from_boundary}
            d_{\mathcal{H}}(\partial A_{0}, \partial A_{\hat{O}})\le c(\e^\beta+\e^{1-\nu}),
            \end{equation}
        and relation \eqref{eq:3_geodesic_distance} still holds for the set $\hat{O}$ (possibly with a bigger constant $\tilde{c}$, which we will not rename).
        
We set $z:=\#\{p\in (\hat{O}\setminus\partial^-\hat{O}):u_1(p)= 0\}$. Let $r>0$ be such that $||p||_1< r\e$ for every $p\in \I_1\cup\Z_1$.
Consider the discrete square $Q$ whose lower basis is the segment $\dseg{(2r\e; 0)}{ (2r\e+\lfloor\sqrt{z}\rfloor\e-\e; 0)},$ and the square $Q'$ whose basis is the segment $\dseg{(2r\e; 0)}{(2r\e+\lfloor\sqrt{z}\rfloor\e; 0)}$. Then let $\hat{Q}\subset\e\ZZ^2$ be a set satisfying 
         \begin{equation}\label{eq:3.25}
             Q\subset\hat{Q}\subset Q', \#\hat{Q} = z,
         \end{equation} 
         and such that $A_{\hat{Q}}$
         has the lowest possible perimeter compatible with the constraint (\ref{eq:3.25}). We define
        \[\tilde{u}_1(p) = \begin{cases}
            1&\text{ if }p\in \I_1\cup(\hat{O}\setminus\partial^-\hat{O}),\\
            0&\text{ if }p\in \bigl(\hat{Q}\cup\Z_1\bigl)\setminus\bigl(\hat{O} \setminus\partial^-\hat{O}\bigl),\\
            -1&\text{ otherwise}
        \end{cases}\]
        (see figure (\ref{fig:Hausdorff_distance_boundary})).

\begin{figure}
    \centering
    \resizebox{0.950\textwidth}{!}{\input{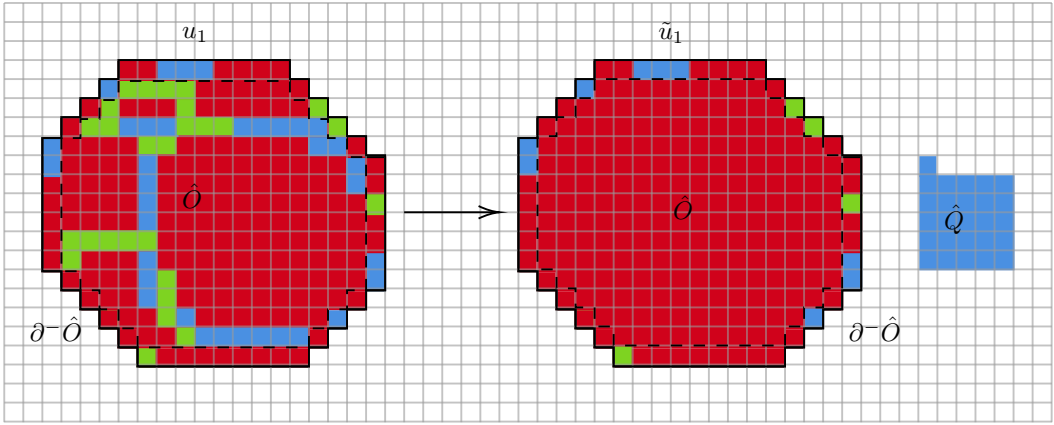}}
    \caption{The configuration on the right is the one that in Proposition \ref{prop:hausdorff_distance_from_boundary} leaves unchanged the dissipation term due to the surfactant, reduces the dissipation term due to the phase $1$ and increases the energy.}
    \label{fig:Hausdorff_distance_boundary}
\end{figure}
        %\begin{figure}[ht]
            %\centering
            %\includegraphics[width=0.9\linewidth]{Hausdorff_distance.jpeg}
            %\caption{The map $\tilde{u}_1$. We observe that the number of surfactant unit is preserved after this transformation.}
            %\label{fig:Hausdorff_distance_boundary}
        %\end{figure}
Since $\dis^0_\e(u_1, u_0) = \dis^0_\e(\tilde{u}_1, u_0),$ and $\dis^1_\e(u_1, u_0)\ge\dis^1_\e(\tilde{u}_1, u_0),$ from the estimate $\enF(u_1)\le\enF(\tilde{u}_1)$ we deduce that $\E_\e(u_1)\le \E_\e(\tilde{u}_1),$ that is
        \begin{align*}
            &\E_\e\Bigl(u_1, (\hat{O}\setminus\partial^-\hat{O})^C, (\hat{O}\setminus\partial^-\hat{O})^C\Bigl)+\E_\e(u_1, \hat{O}\setminus\partial^-\hat{O}, \hat{O})\le\\
            &\le \E_\e\Bigl(\tilde{u}_1, \bigl((\hat{Q}\cup\hat{O})\setminus\partial^-\hat{O}\bigl)^C, \bigl((\hat{Q}\cup\hat{O})\setminus\partial^-\hat{O}\bigl)^C\Bigl)+\E_\e(\tilde{u}_1, \hat{O}\setminus\partial^-\hat{O}, \hat{O})+\E_\e(\tilde{u}_1, \hat{Q}).
        \end{align*}
        We observe that, since $\hat{Q}\cup\partial^+\hat{Q}\subset\{u_1=-1\}$, we have 
        \[\E_\e\Bigl(u_1, (\hat{O}\setminus\partial^-\hat{O})^C, (\hat{O}\setminus\partial^-\hat{O})^C\Bigl) = \E_\e\Bigl(\tilde{u}_1, \bigl((\hat{Q}\cup\hat{O})\setminus\partial^-\hat{O}\bigl)^C, \bigl((\hat{Q}\cup\hat{O})\setminus\partial^-\hat{O}\bigl)^C\Bigl).\]
        It follows that 
        \begin{equation}\label{eq:0}
            \E_\e(u_1, \hat{O}\setminus\partial^-\hat{O}, \hat{O})\le \E_\e(\tilde{u}_1, \hat{O}\setminus\partial^-\hat{O}, \hat{O})+\E_\e(\tilde{u}_1, \hat{Q}).
        \end{equation}
        By Lemma \ref{lemma:energy_surfactant} we have
        \begin{equation}\label{eq:1}
        \begin{split}
        \E_\e(u_1, \hat{O}\setminus\partial^-\hat{O}, \hat{O})&\ge 2\e(1-k) z+\frac{1}{2}(1-k)P\bigl(\Z_1\cap (\hat{O}\setminus\partial^-\hat{O})\bigl)+\\    &+2\e\#\Bigl(\partial^+\I_1\cap\bigl(\hat{O}\setminus\partial^-\hat{O}\bigl)\cap\{u_1=-1\}\Bigl)\ge\\
        &\ge 2\e(1-k)z+\frac{1}{2}(1-k)\e\#\Bigl(\partial^+\I_1\cap\bigl(\hat{O}\setminus\partial^-\hat{O}\bigl)\cap\Z_1\Bigl)+\\  &+2\e\#\Bigl(\partial^+\I_1\cap\bigl(\hat{O}\setminus\partial^-\hat{O}\bigl)\cap\{u_1=-1\}\Bigl)\ge\\
        &\ge 2\e(1-k)z+c\e\#\Bigl(\partial^+\I_1\cap(\hat{O}\setminus\partial^-\hat{O})\Bigl)
        \end{split}
        \end{equation}
        
        Moreover, observing that $Per(\hat{Q})\le 4(\sqrt{z}+1)\e$, again from Lemma \ref{lemma:energy_surfactant}, it follows \begin{equation}\label{eq:2}
            \E_\e(\tilde{u}_1, \hat{Q})\le 2z(1-k)\e+2(\sqrt{z}+1)(1-k)\e
        \end{equation}
        and from (\ref{eq:3}) we have
        \begin{equation}\label{eq:4}
            \E_\e(\tilde{u}_1, \hat{O}\setminus\partial^-\hat{O}, \hat{O})\le c\e\#\partial^-\hat{O}\le c \e^{\nu-\beta}.
        \end{equation}
        Finally, combining \eqref{eq:0}, \eqref{eq:1},  \eqref{eq:2} and \eqref{eq:4} we get
        \begin{equation*}
            2\e(1-k)z+c\e\#\Bigl(\partial^+\I_1\cap(\hat{O}\setminus\partial^-\hat{O})\Bigl)\le c\e^{\nu-\beta}+2z(1-k)\e+c(\sqrt{z}+1)\e,\end{equation*}
            that is            \begin{equation}\label{eq:3.0} c\e\#\Bigl(\partial^+\I_1\cap(\hat{O}\setminus\partial^-\hat{O})\Bigl)\le c\e^{\nu-\beta}+c(\sqrt{z}+1)\e.
        \end{equation}
        Since $z\le\frac{1}{\e^2}|A_0\setminus A_1|$, from (\ref{eq:5}), \eqref{eq:3.0}, if we set $\nu = 3/4, \beta = 1/4$ then we find
        \begin{equation}\label{eq:3.10}     \e\#\Bigl(\partial^+\I_1\cap(\hat{O}\setminus\partial^-\hat{O})\Bigl)\le c(\sqrt{z}\e + \e^{\nu-\beta}+\e)\le c(\e^{1/4}+\e^{\nu-\beta})\le c\e^{1/4},
        \end{equation}
        as $\e\to 0$. Finally, from (\ref{eq:3.distance_from_boundary}), we have $d_{\mathcal{H}}(\partial A_{\I_0}, \partial A_{\hat{O}})\le c\e^{1/4}$ for some positive constant $c>0.$ 
        Fix $\mu, \mu'$ such that $0<\mu<\mu'<1/4$. We claim that for $\e$ sufficiently small the set $\I_0\setminus\mathscr{C}_{\e^\mu}(\I_0)=\{p\in\I_0:\:d^\e_1(p, \partial^+\I_0) >\e^{\mu}\}$ is contained in $\I_1.$ If we show this, the proof is complete.
        
        Let $I_1\cap\hat{O} =\cup_{i=1}^n B_i$, the $B_i$ being the strongly connected components of $I_1\cap\hat{O},$ and let $\ell_i:=\#(B_i\cap\partial^-\hat{O}).$ We observe that from Proposition \ref{prop:horizontal_convexity} each set $B_i$ is a staircase set, and that as $\e\to 0$ either $\e^{\mu'-1}\ge\ell_i$ or $\ell_i\ge\#(\partial^-\hat{O})-\e^{\mu'-1}.$ Indeed if $\e^{\mu'-1}<\ell_i<\#(\partial^-\hat{O})-\e^{\mu'-1}$ then, since $\hat{O}$ satisfies (\ref{eq:3_geodesic_distance}), we have
        \begin{equation}\label{eq:3.14}
            \#(\partial^+B_i\cap\bigl(\hat{O}\setminus\partial^-\hat{O}\bigl)\Bigl)\ge c\min\{\ell_i, \#\bigl(\partial^-\hat{O}\setminus B_i\bigl)\}\ge\e^{\mu'-1},
        \end{equation}
        which is impossible in view of (\ref{eq:3.10}). 
        
         Finally, since from (\ref{eq:3}) we deduce $\sum_i\ell_i\ge\#(\partial^-\hat{O})-\e^{-3/4}$, equation \eqref{eq:3.14} together with (\ref{eq:3.10}) and \eqref{eq:3_geodesic_distance} implies that there has to be $\bar{i}$ such that $\ell_{\bar{i}}\ge\#(\partial^-\hat{O})-\e^{\mu'-1}.$ We conclude observing that $\I_0\setminus\mathscr{C}_{\e^\mu}(\I_0)\subset B_{\bar{i}}\subset A_1$ as $\e\to 0.$
    \end{proof}

\begin{lemma}\label{lemma:optimal_position_connected_components}
Let $u_0,u_1\in\A_\e$ be such that $u_0$ verifies \eqref{H} and $u_1$ is a minimizer of $\enF(\cdot, u_0)$, and let $\nu\in(0, 1]$ be a constant independent on $\e$. Then there exists $\bar{\e}>0$ (depending on $\nu,\,\bar{c},\,\bar{R}$) such that for every $\e\le\bar{\e}$ the following properties hold true for $u_1$.
\begin{itemize}
    \item[(i)]  The set $\I_1\cup\Z_1$ does not have any weakly connected component $G$ with $\diam(G)\le\e^\nu$;
    \item[(ii)] If the set $\I_1\cup\Z_1$ is a rectangle then $\I_1$ does not have any weakly connected component $G$ with $\diam(G)\le\e^\nu$.
\end{itemize}
\end{lemma}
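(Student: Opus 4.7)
Both parts are proved by contradiction: if a weakly connected component $G$ as in the statement exists, I construct a competitor $\tilde u_1\in\A_\e$ with $\enF(\tilde u_1,u_0)<\enF(u_1,u_0)$, violating the minimality of $u_1$.

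For part (i), the key preliminary observation, which follows from Proposition \ref{prop:hausdorff_distance_from_boundary} applied with some $\mu\in(0,1/4)$, is that $I_0\setminus\mathscr{C}_{\e^\mu}(I_0)\subset I_1$ for $\e$ sufficiently small. Hence there is a unique weakly connected component $C$ of $I_1\cup Z_1$ containing this ``core''; since $G\ne C$ is a distinct component, $G$ is 8-separated from $C$ and from every other weakly connected component, and in particular all 4-neighbors of $G$ outside $G$ have value $-1$. The competitor $\tilde u_1$ is obtained by translating $G$ (with its internal $\{0,1\}$ pattern) by a vector $v\in\e\ZZ^2$ for which $G+v$ is also 8-separated from $G$, $C$, and every other weakly connected component of $I_1\cup Z_1$. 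A direct computation shows that this move leaves the energy $\E_\e$ unchanged (all interactions along the 4-boundaries of $G$ and $G+v$ are determined by pairs of the form $(\{0,1\}\text{-value},-1)$) and preserves $\#Z_1$, hence $\dis^0_\e$. The direction of $v$ is chosen so that the $+1$-points of $G$ become strictly closer to, or enter, $I_0$, strictly decreasing $\dis^1_\e$; in the subcase $G\subset I_0$ (so $I_1\cap G$ already contributes $0$ to $\dis^1_\e$), one instead chooses $v$ so that the $Z_1\cap G$ points move closer to $\partial I_0$. The existence of an admissible $v$ follows because $\diam(G)\le\e^\nu\ll\bar c$ leaves abundant lattice room around $G$, and because $C$ is a staircase set of width $\ge \bar c - O(\e^\mu)$ by Proposition \ref{prop:horizontal_convexity} and assumption \eqref{H}.

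For part (ii), since $R:=I_1\cup Z_1$ is a rectangle and $I_1\subset I_0$ (Proposition \ref{prop:horizontal_convexity}), Lemma \ref{lemma:energy_surfactant}(ii) yields
\[
\E_\e(u_1)=2\e(1-k)\#Z_1+\tfrac12(1-k)\bigl(Per(R)+\sum_i Per(B_i)\bigr),
\]
where the $B_i$ are the strongly connected components of $I_1$. A small weakly connected component $G$ of $I_1$ adds a contribution $Per(G)\ge 4\e$ to $\sum_i Per(B_i)$. The competitor $\tilde u_1$ is constructed by absorbing $G$ into the surfactant phase (setting $\tilde u_1=0$ on $G$) and simultaneously placing $\#G$ new $+1$-cells adjacent to the largest component $B_{\max}$, selected among cells of $Z_1$ at a distance from $\partial I_0$ comparable to that of $G$. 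This swap preserves $\#Z_1$ (hence $\dis^0_\e$), strictly decreases $\sum_i Per(B_i)$ by at least $Per(G)$ minus a lower-order correction coming from the attachment to $B_{\max}$, and changes $\dis^1_\e$ by $o(\e)$ thanks to the distance matching; the energy decrease of order $\e$ therefore dominates and yields the contradiction.

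\textbf{Main obstacle.} The principal technical difficulty in both parts is to control the change of $\dis^1_\e$. In (i) one needs to verify that the geometric room required by both the 8-separation and the dissipation-decreasing requirements is actually available; this involves a case analysis on whether $G$ lies inside, outside, or straddles $\partial I_0$, and on how close $C$ comes to $\partial I_0$. In (ii) the added strip of $+1$-cells must be placed so that its contribution to $\dis^1_\e$ approximately cancels that of the removed $G$, a matching that is delicate when $G$ sits deep inside $I_0$. Once these issues are settled, the translation invariance of the energy in (i) and the explicit perimeter-based energy formula in (ii) drive the contradiction.
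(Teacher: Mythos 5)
Your high-level idea for part (i) — translate $G$, exploit translation-invariance of $\E_\e$, and decrease $\dis^1_\e$ — matches the paper's, but two of your claims do not hold up. First, demanding that $G+v$ be $8$-separated from $G$ itself is both unnecessary and harmful. It is unnecessary because $\E_\e$ is already invariant under a single lattice step $v=\pm\e e_i$ (where $G$ and $G+v$ overlap) as long as $\partial^+(G+v)$ does not touch another weakly connected component; this is precisely the criterion the paper records in equation \eqref{eq:3.13}, and the paper's proof moves $G$ one step at a time. It is harmful because, by Proposition \ref{prop:hausdorff_distance_from_boundary}, $G$ is confined to a strip of width $O(\e^\mu)$ between $\partial A_0$ and $\partial A_{I}$, so a translation by $|v|\ge\diam(G)+2\e$ into the interior would immediately hit the main component $I$; translating sideways does not control the dissipation. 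Second, the stated direction of $v$ is wrong. Since $I_1\subset I_0$ by Proposition \ref{prop:horizontal_convexity}, the $+1$-cells of $G$ are already in $I_0$, so ``become strictly closer to, or enter, $I_0$'' is vacuous; and moving the cells of $Z_1\cap G$ toward $\partial I_0$ has no effect at all on $\dis^1_\e$, which only sums over $I_0\setminus I_1$. The mechanism that actually works (and drives the paper's argument) is the opposite: move the $+1$-cells of $G$ \emph{deeper} into $I_0$, i.e.\ toward $I$. Because $\dis^1_\e$ penalizes points of $I_0\setminus I_1$ proportionally to their depth $d^\e_1(\cdot,\partial I_0)$, shifting the $1$-cells of $G$ to deeper positions trades shallow vacancies for deep ones and strictly lowers the dissipation, until $G$ merges with $I$ and the energy drops.

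For part (ii) you depart from the paper, which again uses one-step translations (now with vacated cells filled by $0$ rather than $-1$, so that $\#Z_1$ and the rectangle $R$ are preserved while $\sum_i Per(B_i)$ does not increase). Your swap — setting $G$ to $0$ and planting $\#G$ new $+1$-cells near $B_{\max}$ — is plausible in spirit, but the quantitative control is off. The ``lower-order correction'' from attaching cells to $B_{\max}$ is $2\e$, which is the \emph{same} order as $Per(G)\ge 4\e$ (the net decrease is at least $2\e$, which is still fine, but calling it lower-order is misleading). More seriously, the dissipation estimate does not close: since $\tau=\zeta\e$, to have the energy drop of order $\e$ dominate the total $\enF=\E_\e+\frac{1}{\tau}\dis_{\e,\gamma}$ one needs $\Delta\dis^1_\e=o(\e^2)$, not the asserted $o(\e)$. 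With $\#G$ as large as $O(\e^{2\nu-2})$ and each cell contributing up to $\e^{2+\mu}$, ``distance comparable to that of $G$'' gives no such bound; the matching would have to be essentially exact, and it is unclear there are enough $Z_1$-cells near $B_{\max}$ at precisely the right depths. Finally, invoking Lemma \ref{lemma:energy_surfactant}(ii) for the whole rectangle $R=I_1\cup Z_1$ requires $\partial^-R\subset Z_1$, which you have not verified. The paper's translation argument sidesteps all of these issues.
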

\begin{proof}
    \emph{First claim.} We argue by contradiction. From Proposition \ref{prop:hausdorff_distance_from_boundary} there exists $\mu\in(0, 1/4)$ such that $d_{\mathcal{H}}(\partial A_1, \partial A_0)\le \e^\mu.$ We prove that we can perform ``translations'' of $G$ which do not increase the dissipation of $u_1.$
    We denote by $I$ the component of $\I_1$ which contains $\I_0\setminus\mathscr{C}_{\e^\mu}(I_0)$, and we denote by $H^i = \dseg{p^{h, i}}{q^{h, i}}, \,i=1, \dots, n_h$ and $V^i = \dseg{p^{v, i}}{q^{v, i}}:\:i=1, \dots, n_v$ the horizontal and vertical slices of $\I_0$ respectively. We consider the map $\mathcal{T}^+_i[u_1]\in\A_\e$, for $i=1, 2,\,$ which translates the component $G$ in the direction $e_i$, defined as
    \begin{equation}\label{eq:translation_1}
       \mathcal{T}^+_i[u_1](q):=\begin{cases}
           u_1(q-\e e_i)&\text{ if    }q\in G+\e e_i,\\
           -1&\text{ if      }q\in G\setminus (G+\e e_i),\\
           u_1(q)&\text{ otherwise}.
       \end{cases} 
    \end{equation}
    and $\mathcal{T}_i^-[u_1]\in\A_\e$, for $i=1, 2,\,$ which translates the component $G$ in the direction $-e_i$, defined as   
    \begin{equation}\label{eq:translation_2}
       \mathcal{T}^-_i[u_1](q):=\begin{cases}
           u_1(q+\e e_i)&\text{ if    }q\in G-\e e_i,\\
           -1&\text{ if      }q\in G\setminus (G-\e e_i),\\
           u_1(q)&\text{ otherwise}.
       \end{cases} 
    \end{equation}
    We observe that $\E_\e(\mathcal{T}^\pm_i[u_1])<\E_\e(u_1)$ if and only if after the translation the boundary of the set $G$ intersects the boundary of an other component of $\I_1\cup\Z_1$, i.e.     \begin{equation}\label{eq:3.13}    \E_\e(\mathcal{T}^\pm_i[u_1])<\E_\e(u_1)\iff \Bigl(\partial^+(G\pm\e e_i)\Bigl)\cap \Bigl((\I_1\cup\Z_1)\setminus G\Bigl)\neq\emptyset,
    \end{equation}
    and otherwise $\E_\e(\mathcal{T}^\pm_i[u_1])=\E_\e(u_1)$ (see Figure \ref{fig:example1}).
    \begin{figure}[H]
    \centering
    \resizebox{0.700\textwidth}{!}{\tikzset{every picture/.style={line width=0.75pt}} %set default line width to 0.75pt        

\begin{tikzpicture}[x=0.75pt,y=0.75pt,yscale=-1,xscale=1]
%uncomment if require: \path (0,300); %set diagram left start at 0, and has height of 300

%Shape: Rectangle [id:dp5883651403521728] 
\draw  [color={rgb, 255:red, 74; green, 144; blue, 226 }  ,draw opacity=1 ][fill={rgb, 255:red, 74; green, 144; blue, 226 }  ,fill opacity=1 ] (350.33,60) -- (410.33,60) -- (410.33,100) -- (350.33,100) -- cycle ;
%Shape: Rectangle [id:dp48404794920579897] 
\draw  [color={rgb, 255:red, 74; green, 144; blue, 226 }  ,draw opacity=1 ][fill={rgb, 255:red, 74; green, 144; blue, 226 }  ,fill opacity=1 ] (150.33,50) -- (210.33,50) -- (210.33,90) -- (150.33,90) -- cycle ;
%Straight Lines [id:da0048558245673929745] 
\draw [color={rgb, 255:red, 208; green, 2; blue, 27 }  ,draw opacity=1 ][fill={rgb, 255:red, 208; green, 2; blue, 27 }  ,fill opacity=1 ]   (80.33,180) -- (80.33,130) -- (140.33,130) -- (140.33,110) -- (161.33,110) -- (230.33,110) -- (230.33,180) ;
%Shape: Rectangle [id:dp39666260749658455] 
\draw  [color={rgb, 255:red, 208; green, 2; blue, 27 }  ,draw opacity=1 ][fill={rgb, 255:red, 208; green, 2; blue, 27 }  ,fill opacity=1 ] (160.33,60) -- (200.33,60) -- (200.33,80) -- (160.33,80) -- cycle ;
%Shape: Rectangle [id:dp26848389211041757] 
\draw  [color={rgb, 255:red, 74; green, 144; blue, 226 }  ,draw opacity=1 ][fill={rgb, 255:red, 74; green, 144; blue, 226 }  ,fill opacity=1 ] (140.33,100) -- (230.33,100) -- (230.33,110) -- (140.33,110) -- cycle ;
%Shape: Rectangle [id:dp2432509765194757] 
\draw  [color={rgb, 255:red, 74; green, 144; blue, 226 }  ,draw opacity=1 ][fill={rgb, 255:red, 74; green, 144; blue, 226 }  ,fill opacity=1 ] (130.33,120) -- (80.33,120) -- (80.33,130) -- (130.33,130) -- cycle ;
%Shape: Rectangle [id:dp15300141654279886] 
\draw  [color={rgb, 255:red, 74; green, 144; blue, 226 }  ,draw opacity=1 ][fill={rgb, 255:red, 74; green, 144; blue, 226 }  ,fill opacity=1 ] (130.33,110) -- (130.33,130) -- (140.33,130) -- (140.33,110) -- cycle ;
%Shape: Square [id:dp9748473022503994] 
\draw  [color={rgb, 255:red, 208; green, 2; blue, 27 }  ,draw opacity=1 ][fill={rgb, 255:red, 208; green, 2; blue, 27 }  ,fill opacity=1 ] (170.33,80) -- (180.33,80) -- (180.33,90) -- (170.33,90) -- cycle ;
%Straight Lines [id:da7200147731098376] 
\draw [color={rgb, 255:red, 208; green, 2; blue, 27 }  ,draw opacity=1 ][fill={rgb, 255:red, 208; green, 2; blue, 27 }  ,fill opacity=1 ]   (280.33,180) -- (280.33,130) -- (340.33,130) -- (340.33,110) -- (361.33,110) -- (430.33,110) -- (430.33,180) ;
%Shape: Rectangle [id:dp538279849572691] 
\draw  [color={rgb, 255:red, 208; green, 2; blue, 27 }  ,draw opacity=1 ][fill={rgb, 255:red, 208; green, 2; blue, 27 }  ,fill opacity=1 ] (360.33,70) -- (400.33,70) -- (400.33,90) -- (360.33,90) -- cycle ;
%Shape: Rectangle [id:dp2857684459128226] 
\draw  [color={rgb, 255:red, 74; green, 144; blue, 226 }  ,draw opacity=1 ][fill={rgb, 255:red, 74; green, 144; blue, 226 }  ,fill opacity=1 ] (340.33,100) -- (430.33,100) -- (430.33,110) -- (340.33,110) -- cycle ;
%Shape: Rectangle [id:dp031085464670442153] 
\draw  [color={rgb, 255:red, 74; green, 144; blue, 226 }  ,draw opacity=1 ][fill={rgb, 255:red, 74; green, 144; blue, 226 }  ,fill opacity=1 ] (330.33,120) -- (280.33,120) -- (280.33,130) -- (330.33,130) -- cycle ;
%Shape: Rectangle [id:dp0887489730472355] 
\draw  [color={rgb, 255:red, 74; green, 144; blue, 226 }  ,draw opacity=1 ][fill={rgb, 255:red, 74; green, 144; blue, 226 }  ,fill opacity=1 ] (330.33,110) -- (330.33,130) -- (340.33,130) -- (340.33,110) -- cycle ;
%Shape: Square [id:dp07574665647073853] 
\draw  [color={rgb, 255:red, 208; green, 2; blue, 27 }  ,draw opacity=1 ][fill={rgb, 255:red, 208; green, 2; blue, 27 }  ,fill opacity=1 ] (370.33,90) -- (380.33,90) -- (380.33,100) -- (370.33,100) -- cycle ;
%Shape: Grid [id:dp8448748127501751] 
\draw  [draw opacity=0] (30.33,10) -- (450.44,10) -- (450.44,200.33) -- (30.33,200.33) -- cycle ; \draw  [color={rgb, 255:red, 155; green, 155; blue, 155 }  ,draw opacity=0.6 ] (30.33,10) -- (30.33,200.33)(40.33,10) -- (40.33,200.33)(50.33,10) -- (50.33,200.33)(60.33,10) -- (60.33,200.33)(70.33,10) -- (70.33,200.33)(80.33,10) -- (80.33,200.33)(90.33,10) -- (90.33,200.33)(100.33,10) -- (100.33,200.33)(110.33,10) -- (110.33,200.33)(120.33,10) -- (120.33,200.33)(130.33,10) -- (130.33,200.33)(140.33,10) -- (140.33,200.33)(150.33,10) -- (150.33,200.33)(160.33,10) -- (160.33,200.33)(170.33,10) -- (170.33,200.33)(180.33,10) -- (180.33,200.33)(190.33,10) -- (190.33,200.33)(200.33,10) -- (200.33,200.33)(210.33,10) -- (210.33,200.33)(220.33,10) -- (220.33,200.33)(230.33,10) -- (230.33,200.33)(240.33,10) -- (240.33,200.33)(250.33,10) -- (250.33,200.33)(260.33,10) -- (260.33,200.33)(270.33,10) -- (270.33,200.33)(280.33,10) -- (280.33,200.33)(290.33,10) -- (290.33,200.33)(300.33,10) -- (300.33,200.33)(310.33,10) -- (310.33,200.33)(320.33,10) -- (320.33,200.33)(330.33,10) -- (330.33,200.33)(340.33,10) -- (340.33,200.33)(350.33,10) -- (350.33,200.33)(360.33,10) -- (360.33,200.33)(370.33,10) -- (370.33,200.33)(380.33,10) -- (380.33,200.33)(390.33,10) -- (390.33,200.33)(400.33,10) -- (400.33,200.33)(410.33,10) -- (410.33,200.33)(420.33,10) -- (420.33,200.33)(430.33,10) -- (430.33,200.33)(440.33,10) -- (440.33,200.33)(450.33,10) -- (450.33,200.33) ; \draw  [color={rgb, 255:red, 155; green, 155; blue, 155 }  ,draw opacity=0.6 ] (30.33,10) -- (450.44,10)(30.33,20) -- (450.44,20)(30.33,30) -- (450.44,30)(30.33,40) -- (450.44,40)(30.33,50) -- (450.44,50)(30.33,60) -- (450.44,60)(30.33,70) -- (450.44,70)(30.33,80) -- (450.44,80)(30.33,90) -- (450.44,90)(30.33,100) -- (450.44,100)(30.33,110) -- (450.44,110)(30.33,120) -- (450.44,120)(30.33,130) -- (450.44,130)(30.33,140) -- (450.44,140)(30.33,150) -- (450.44,150)(30.33,160) -- (450.44,160)(30.33,170) -- (450.44,170)(30.33,180) -- (450.44,180)(30.33,190) -- (450.44,190)(30.33,200) -- (450.44,200) ; \draw  [color={rgb, 255:red, 155; green, 155; blue, 155 }  ,draw opacity=0.6 ]  ;
%Straight Lines [id:da4101079434094006] 
\draw    (360.33,90) -- (360.33,70) -- (400.33,70) -- (400.33,90) -- (380.33,90) -- (380.33,100) -- (370.33,100) -- (370.33,90) -- cycle ;
%Straight Lines [id:da5988245863132065] 
\draw    (430.33,110) -- (340.33,110) -- (340.33,130) -- (280.33,130) -- (280.33,180) ;
%Straight Lines [id:da27564339699115925] 
\draw [color={rgb, 255:red, 208; green, 2; blue, 27 }  ,draw opacity=1 ]   (430.33,180) -- (250.33,180) -- (250.33,110) -- (300.33,110) -- (300.33,80) -- (320.33,80) -- (320.33,40) -- (430.33,40) -- cycle ;
%Straight Lines [id:da042574638783290486] 
\draw    (160.33,80) -- (160.33,60) -- (200.33,60) -- (200.33,80) -- (180.33,80) -- (180.33,90) -- (170.33,90) -- (170.33,80) -- cycle ;
%Straight Lines [id:da8791467165296426] 
\draw [color={rgb, 255:red, 208; green, 2; blue, 27 }  ,draw opacity=1 ]   (230.33,180) -- (50.33,180) -- (50.33,110) -- (100.33,110) -- (100.33,80) -- (120.33,80) -- (120.33,40) -- (230.33,40) -- cycle ;
%Straight Lines [id:da2588839615365698] 
\draw    (230.33,110) -- (140.33,110) -- (140.33,130) -- (80.33,130) -- (80.33,180) ;
%Straight Lines [id:da45200737052467244] 
\draw    (240.33,70) -- (268.33,70) ;
\draw [shift={(270.33,70)}, rotate = 180] [color={rgb, 255:red, 0; green, 0; blue, 0 }  ][line width=0.75]    (10.93,-3.29) .. controls (6.95,-1.4) and (3.31,-0.3) .. (0,0) .. controls (3.31,0.3) and (6.95,1.4) .. (10.93,3.29)   ;

% Text Node
\draw (162.33,13.4) node [anchor=north west][inner sep=0.75pt]  [font=\small]  {$u_{1}$};
% Text Node
\draw (172.33,143.4) node [anchor=north west][inner sep=0.75pt]    {$I$};
% Text Node
\draw (52.33,133.4) node [anchor=north west][inner sep=0.75pt]    {$I_{0}$};
% Text Node
\draw (177,65.4) node [anchor=north west][inner sep=0.75pt]  [font=\scriptsize]  {$G$};
% Text Node
\draw (352.33,13.4) node [anchor=north west][inner sep=0.75pt]  [font=\small]  {$\mathcal{T}_{2}^{-}[ u_{1}]$};
% Text Node
\draw (372.33,143.4) node [anchor=north west][inner sep=0.75pt]    {$I$};
% Text Node
\draw (256.33,133.4) node [anchor=north west][inner sep=0.75pt]    {$I_{0}$};
% Text Node
\draw (372.33,73.4) node [anchor=north west][inner sep=0.75pt]  [font=\scriptsize]  {$G$};

\end{tikzpicture}}
    \caption{The configuration on the right is obtained by doing a vertical translation in direction $-e_2$ of the component $G$. Since $\Bigl(\partial^+(G-\varepsilon e_2)\Bigl)\cap \Bigl((\I_1\cup\Z_1)\setminus G\Bigl)\neq\emptyset$, this configuration reduces the energy.}
    \label{fig:example1}
    \end{figure}
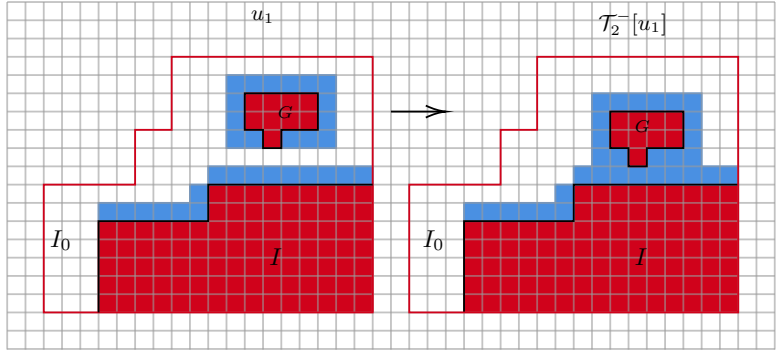
Since the general case follows essentially the same line of proof, we assume for simplicity that for every $p\in G$ it holds $p^{h, n_h}_1+2\e^\mu\le p_1\le q^{h, n_h}_1-2\e^\mu$ and $p_2\ge p^{h, 1}_2-\e^\mu$. As in Figure \eqref{fig:example1} we may translate the component $G$ downwards until it intersects the ``large'' component $I$. In particular the fact that $\diam(G)\le\e^\nu$ and that $u_0$ verifies assumption \eqref{H} ensures that the component $G$ does not surround $\I$, and that the dissipation does not increase while moving $G$ downwards. More precisely, we define inductively a sequence of competitors $(u_{1, j})_j$ such that $u_{1, 0} = u_1$, and for every $j$ we have $u_{1, j+1} = \mathcal{T}^-_2[u_{1, j}]$. We observe that in view of Proposition \ref{prop:hausdorff_distance_from_boundary}, there exists the minimum index $\bar{j}$ such that \eqref{eq:3.13} is satisfied with $u_{1, \bar{j}}$ in place of $u_1$, and $u_{1, \bar{j}+1} = \mathcal{T}^-_2[u_{1, \bar{j}}]$, and moreover $\bar{j}\le\e^{\mu-1}$. 
Since $\I_0$ is a staircase set with width $\bar{c}$, and since $\diam(G)\le\e^\nu$, we have that $\dis^1_\e(u_{1, j+1}, u_0)<\dis^1_\e(u_{1, j}, u_0)$ for every $j = 0, \dots, \bar{j}-1$.
We conclude that $\enF(\tilde{u}_{\bar{j}})<\enF(\tilde{u}_1).$
\vspace{5pt}\\
\emph{Second claim.} Suppose now that $\I_1\cup\Z_1$ is a rectangle. In this case we define the translations $\mathcal{T}^\pm_{i},$ for $i=1, 2$, as \eqref{eq:translation_1} and \eqref{eq:translation_2}, with the only difference that for every $q\in G\setminus (G\mp\e e_1)$ we substitute $0$ instead of $-1$. 
We proceed as in the proof of the first claim, constructing a sequence of competitors \( u_{1,j} \). This sequence can be chosen so that the number of zero-valued cells remains constant, i.e.,
\[
\#\{u_{1, j+1} = 0\} = \#\{u_{1, j} = 0\} \quad \text{for all } j,
\]
and such that
\[
\#\big(\{u_{1, j+1} = 1\} \cap \partial^-(\{u_{1, j+1}= 1\} \cup \Z_1)\big) \le \#\big(\{u_{1, j} = 1\} \cap \partial^-(\{u_{1, j}= 1\} \cup \Z_1)\big).
\]
As a consequence, we obtain \( \E_\varepsilon(\tilde{u}^1_{j+1}) \le \E_\varepsilon(\tilde{u}^1_j) \) for every $j$, and in particular we can conclude with the same argument of the first part of the proof.
\end{proof}

In the next auxiliary lemma we show that the length of the upper horizontal slices of $\I_1$ and $\I_0$ are of the same order, and the euclidean distance between them is less than $c\e$, for a constant $c>0,$ depending only on the constant $\bar{c}$ of our regularity assumption \eqref{H}. We observe that this is a refinement of the estimate that we get in Proposition \ref{prop:hausdorff_distance_from_boundary}.

\begin{lemma}\label{lemma:distance_horizontal_sides}
    Let $u_0,u_1\in\A_\e$ be such that $u_0$ verifies \eqref{H} and $u_1$ is a minimizer of $\enF(\cdot, u_0)$, and let $\mu\in\R$ be such that $0<\mu<1/4$. Let $\I$ be the only connected component of $\I_1$ which contains $\I_0\setminus\mathscr{C}_{\e^\mu}(\I_0)$. Suppose that not all the connected components of $\I_1\cup\Z_1$ are rectangles. We denote by $H^{0, 1}= \dseg{p'}{q'}$ the lower horizontal slice of $\I_0$ of length $P_{0, 1}:=\e(\#H^{0, 1})\ge\bar{c}$ (the constant of \ref{H}),
    $H^{1, i} = \dseg{p^{i}}{q^{i}},\,i=1, \dots, n_h$ the horizontal slices of $\I$ and by $S$ the set 
    \begin{equation}\label{eq:S}
    	S:=\{p\in\partial^-\I:p-\e e_2\in\partial^+\I\;\text{ and }\;p'_1+2\e^\mu\le p_1\le q'_1-2\e^\mu\}.
    \end{equation}
     Then it holds that $S\subset H^{1,1}\cup H^{1,2}$ and there exists a constant $c>0$, depending only on $\zeta$, $k$ and $\bar{c}$ such that     \begin{equation}\label{eq:3_dist_hor_sides}
    |p_2-p'_2|\le  c\e\;\;\text{ for every }\;\;p = (p_1, p_2)\in S.
    \end{equation}
In particular for every $p\in I_0\setminus R_{I}$ it holds that $d^\e_1(p, \partial I_0)\le c\e$. 
    \end{lemma}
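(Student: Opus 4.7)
The plan is to first establish the sharp height bound $|p_2-p'_2|\le c\e$ via a competitor argument exploiting the minimality of $u_1$, and then derive the inclusion $S\subset H^{1,1}\cup H^{1,2}$ together with the ``in particular'' consequence. As a preliminary observation, Proposition~\ref{prop:horizontal_convexity} gives $I\subset I_0$ with $I$ a staircase set, and Proposition~\ref{prop:hausdorff_distance_from_boundary} yields $I_0\setminus\mathscr{C}_{\e^\mu}(I_0)\subset I$. For $p\in S$, the cell $p-\e e_2$ belongs to $\partial^+I\cap I_0$ by vertical convexity of $I_0$, hence to $\mathscr{C}_{\e^\mu}(I_0)$; since $p_1\in[p'_1+2\e^\mu,\,q'_1-2\e^\mu]$, the nearest portion of $\partial I_0$ to $p-\e e_2$ is the lower horizontal side $H^{0,1}$, giving the soft preliminary bound $p_2-p'_2\le \e^\mu+\e$.

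To refine this to $c\e$, I argue by contradiction. Suppose some $p\in S$ satisfies $p_2-p'_2\ge n\e$ with $n$ large, and let $y_1$ denote the height of the lowest horizontal slice $H^{1,1}$ of $I$, so that $p'_2\le y_1\le p_2$. The key move is to construct a competitor $\tilde u_1$ by adding to $I$ all the cells with the $x$-range of $H^{1,1}$ at heights $y\in[p'_2,\,y_1-\e]$ with value $1$, simultaneously relocating any surfactant $0$-cells originally sitting in the inserted block so that $\#Z_{\tilde u_1}=\#Z_{u_1}$. The hypothesis that not all components of $I_1\cup Z_1$ are rectangles, combined with the contrapositive of the first part of Lemma~\ref{lemma:shape_optimality}, excludes the presence of surfactants in the corner in $\partial^+I_1\cap I_0$, making the rearrangement feasible. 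Weighing the changes in $\enF(\cdot,u_0)$: the energy $\E_\e$ grows by at most of order $n\e$ from the bounded number of new lateral boundary interactions per added row; the dissipation $\tfrac{1}{\tau}\dis^1_\e$ decreases by order $n^2\bar{c}\,\e/\zeta$ (each row of order $\bar{c}/\e$ cells at discrete distance $j$ from $\partial I_0$ contributes a saving of order $\bar{c}\,j\,\e/\zeta$, summed over $j=1,\ldots,n$); and $\dis^0_\e$ is preserved by construction. For $n$ above a threshold $c=c(\zeta,k,\bar{c})$, the net change is strictly negative, contradicting the minimality of $u_1$, and forcing $|p_2-p'_2|\le c\e$ for every $p\in S$.

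Once the quantitative height bound holds, the inclusion $S\subset H^{1,1}\cup H^{1,2}$ follows from a localized version of the same competitor argument: if $p\in S\cap H^{1,k}$ with $k\ge 3$, horizontal convexity of $I$ together with $p-\e e_2\notin I$ forces $p_1$ to lie outside the $x$-range of $H^{1,k-1}$; a one-cell fill-in competitor at $(p_1,y_{k-1})$ combined with the Hausdorff estimate then produces a strictly improving configuration, a contradiction. For the final consequence, applying the height bound to each of the four sides of $R_I$ (the argument being symmetric under rotation and reflection) shows that every side of $R_I$ lies within $c\e$ of the corresponding side of $I_0$; hence any $p\in I_0\setminus R_I$ is trapped in a strip of width $c\e$ between $\partial I_0$ and $\partial R_I$, so $d_1^\e(p,\partial I_0)\le c\e$ up to a redefinition of $c$.

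The main obstacle lies in the surfactant-preserving competitor construction of the refinement step: one must verify that the $0$-cells can be relocated without creating surfactants in the corner (excluded by the hypothesis) or destroying the staircase structure of $\tilde I_1$, and one must sharply estimate the new lateral interactions, which may involve $0$-cells (contributing $\e(1-k)$ per edge) or $-1$-cells (contributing $2\e$ per edge). This careful accounting is precisely what makes the constant $c$ depend on all three of $\zeta$, $k$, and $\bar c$ through the exact energy--dissipation balance.
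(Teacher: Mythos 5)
Your plan reverses the order of the two claims (height bound first, inclusion second), and this reversal introduces a genuine gap. The competitor you build for the height bound is a block occupying the $x$-range of $H^{1,1}$ at heights $y\in[p'_2,\,y_1-\e]$; the number of rows in that block is $(y_1-p'_2)/\e$, not $n=(p_2-p'_2)/\e$. So the dissipation you save scales with $(y_1-p'_2)^2$, and the contradiction you extract only forces the height of the \emph{bottom slice} $H^{1,1}$ to satisfy $y_1-p'_2\le c\e$. It says nothing about a point $p\in S$ sitting in a higher slice $H^{1,k}$, whose column terminates at $p_2>y_1$ — precisely the configuration the inclusion $S\subset H^{1,1}\cup H^{1,2}$ is designed to rule out. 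You then try to establish the inclusion \emph{after} the height bound, but the ``one-cell fill-in at $(p_1,y_{k-1})$'' is too weak for this: adding the single cell $p-\e e_2$ saves dissipation of order $\e(p_2-p'_2)/\zeta$ against an $O(\e)$ energy cost, which only shows $p_2-p'_2\lesssim 1$, i.e.\ $p_2-p'_2\le C$, not $\le C\e$. So the inclusion is never actually proved, and the height-bound argument that relies on it is incomplete. The chain is circular.

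The paper's order is essential and not interchangeable. The inclusion $S\subset H^{1,1}\cup H^{1,2}$ is established first by a combinatorial/diagonal argument (not a competitor): assuming $\bar p\in H^{1,\bar i}\cap S$ with $\bar i\ge 3$, one invokes Lemma~\ref{lemma:shape_optimality} (using that not all components of $I_1\cup Z_1$ are rectangles to rule out cases (ii)--(iii)) to conclude $u_1(p+\e e_1-\e e_2)=1$ for all $p\in S'\setminus(H^{1,1}\cup H^{1,2})$; moving along the diagonal $\bar p(t)=\bar p-t\e(e_1+e_2)$ then produces boundary points of $I$ whose $L^1$ distance to $\partial I_0$ grows linearly in $t$, contradicting Proposition~\ref{prop:hausdorff_distance_from_boundary} at scale $t\sim\e^{\mu-1}$. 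Only with $S\subset H^{1,1}\cup H^{1,2}$ in hand can the quantitative bound be proved, and there the paper uses a \emph{single-row} competitor (replacing $u_1(H^{1,1}-\e e_2)\mapsto 1$ and carefully cascading the displaced surfactant downward), balancing a cost of at most $4\e$ against a savings $\gtrsim (P_{0,1}-8\e^\mu)d/\zeta$, where $d=|p'_2-p^1_2|$. Your multi-row block is a plausible variant for that second step, but it cannot substitute for the missing diagonal argument. If you want to keep a competitor-based style, you must first establish $S\subset H^{1,1}\cup H^{1,2}$ independently — the shape-optimality constraints of Lemma~\ref{lemma:shape_optimality} together with the Hausdorff estimate are the right tools, and there is no shortcut via a one-cell competitor.
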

\begin{proof}
    \emph{First part.} In order to prove that $S \subset H^{1,1}\cup H^{1,2}$, we argue by contradiction, supposing that there exists $\bar{i}\ge 3$ and $\bar{p}\in H^{1,\bar{i}}\cap S.$ Since $\I$ is a staircase set, without loss of generality we can assume that $p'_1+2\e^\mu\le\bar{p}_1<p^{1}_1.$
    Consider the set 
    \[S':=\{p\in\partial^-\I:p-\e e_2\in\partial^+\I\;\text{ and }\;p'_1+\e^\mu\le p_1\le p^{1}_1\}.\]
    We claim that for every $p\in S'\setminus (H^{1,1}\cup H^{1, 2})$ it holds $u_1(p+\e e_1-\e e_2) = 1.$ Otherwise, there exists $\bar{p}\in S'\cap H^{1,j}$, for some $j\ge 3$ such that the segment $L:=\dseg{\bar{p}-\e e_2}{p^{j-1}-\e e_1}$ has length at least $2$.
    \begin{figure}[H]
    \centering
    \resizebox{0.40\textwidth}{!}{\tikzset{every picture/.style={line width=0.75pt}} %set default line width to 0.75pt        

\begin{tikzpicture}[x=0.75pt,y=0.75pt,yscale=-1,xscale=1]
%uncomment if require: \path (0,300); %set diagram left start at 0, and has height of 300

%Straight Lines [id:da40607496399986465] 
\draw [color={rgb, 255:red, 208; green, 2; blue, 27 }  ,draw opacity=1 ][fill={rgb, 255:red, 208; green, 2; blue, 27 }  ,fill opacity=1 ]   (170.02,90.12) -- (170.02,110.12) -- (200.02,110.12) -- (200.02,120.12) -- (220.02,120.12) -- (220.02,130.12) -- (260.02,130.12) -- (260.02,120.12) -- (260.02,110.12) -- (230.02,110.12) -- (210.02,110.12) -- (210.02,100.12) -- (180.02,100) -- (180.02,90) -- cycle ;
%Shape: Rectangle [id:dp2407841603545552] 
\draw  [color={rgb, 255:red, 74; green, 144; blue, 226 }  ,draw opacity=1 ][fill={rgb, 255:red, 74; green, 144; blue, 226 }  ,fill opacity=1 ] (170.02,110.12) -- (200.02,110.12) -- (200.02,120.12) -- (170.02,120.12) -- cycle ;
%Shape: Square [id:dp37971948703904657] 
\draw  [color={rgb, 255:red, 74; green, 144; blue, 226 }  ,draw opacity=1 ][fill={rgb, 255:red, 74; green, 144; blue, 226 }  ,fill opacity=1 ] (210.02,120.12) -- (220.02,120.12) -- (220.02,130.12) -- (210.02,130.12) -- cycle ;
%Shape: Square [id:dp23768959515616006] 
\draw  [color={rgb, 255:red, 126; green, 211; blue, 33 }  ,draw opacity=1 ][fill={rgb, 255:red, 126; green, 211; blue, 33 }  ,fill opacity=1 ] (200.02,120.12) -- (210.02,120.12) -- (210.02,130.12) -- (200.02,130.12) -- cycle ;
%Shape: Grid [id:dp37467713232843136] 
\draw  [draw opacity=0] (100.02,70) -- (320.33,70) -- (320.33,171) -- (100.02,171) -- cycle ; \draw  [color={rgb, 255:red, 155; green, 155; blue, 155 }  ,draw opacity=0.77 ] (100.02,70) -- (100.02,171)(110.02,70) -- (110.02,171)(120.02,70) -- (120.02,171)(130.02,70) -- (130.02,171)(140.02,70) -- (140.02,171)(150.02,70) -- (150.02,171)(160.02,70) -- (160.02,171)(170.02,70) -- (170.02,171)(180.02,70) -- (180.02,171)(190.02,70) -- (190.02,171)(200.02,70) -- (200.02,171)(210.02,70) -- (210.02,171)(220.02,70) -- (220.02,171)(230.02,70) -- (230.02,171)(240.02,70) -- (240.02,171)(250.02,70) -- (250.02,171)(260.02,70) -- (260.02,171)(270.02,70) -- (270.02,171)(280.02,70) -- (280.02,171)(290.02,70) -- (290.02,171)(300.02,70) -- (300.02,171)(310.02,70) -- (310.02,171)(320.02,70) -- (320.02,171) ; \draw  [color={rgb, 255:red, 155; green, 155; blue, 155 }  ,draw opacity=0.77 ] (100.02,70) -- (320.33,70)(100.02,80) -- (320.33,80)(100.02,90) -- (320.33,90)(100.02,100) -- (320.33,100)(100.02,110) -- (320.33,110)(100.02,120) -- (320.33,120)(100.02,130) -- (320.33,130)(100.02,140) -- (320.33,140)(100.02,150) -- (320.33,150)(100.02,160) -- (320.33,160)(100.02,170) -- (320.33,170) ; \draw  [color={rgb, 255:red, 155; green, 155; blue, 155 }  ,draw opacity=0.77 ]  ;
%Straight Lines [id:da8791582722318156] 
\draw    (290.02,104) -- (273.78,104.3) ;
\draw [shift={(271.78,104.33)}, rotate = 358.95] [color={rgb, 255:red, 0; green, 0; blue, 0 }  ][line width=0.75]    (10.93,-3.29) .. controls (6.95,-1.4) and (3.31,-0.3) .. (0,0) .. controls (3.31,0.3) and (6.95,1.4) .. (10.93,3.29)   ;
%Straight Lines [id:da1489864531438877] 
\draw    (290,145) -- (274.33,145) ;
\draw [shift={(272.33,145)}, rotate = 360] [color={rgb, 255:red, 0; green, 0; blue, 0 }  ][line width=0.75]    (10.93,-3.29) .. controls (6.95,-1.4) and (3.31,-0.3) .. (0,0) .. controls (3.31,0.3) and (6.95,1.4) .. (10.93,3.29)   ;
%Straight Lines [id:da8450450524629359] 
\draw [color={rgb, 255:red, 0; green, 0; blue, 0 }  ,draw opacity=1 ]   (170.02,89.88) -- (170.02,110) -- (200.02,110) -- (200.02,120) -- (220.02,120.12) -- (220.02,130.12) -- (260.02,130) ;
%Straight Lines [id:da06374128005516322] 
\draw [color={rgb, 255:red, 208; green, 2; blue, 27 }  ,draw opacity=1 ]   (140.02,90) -- (140.02,140) -- (160.02,140.12) -- (160.02,150.12) -- (260.02,150.12) ;
%Shape: Brace [id:dp8335319686292004] 
\draw   (180,120) .. controls (179.95,122.67) and (181.26,124.02) .. (183.93,124.07) -- (183.93,124.07) .. controls (187.74,124.14) and (189.63,125.5) .. (189.59,128.17) .. controls (189.63,125.5) and (191.56,124.2) .. (195.37,124.27)(193.66,124.24) -- (195.37,124.27) .. controls (198.04,124.32) and (199.4,123) .. (199.44,120.33) ;

% Text Node
\draw (295.02,139.52) node [anchor=north west][inner sep=0.75pt]  [font=\scriptsize]  {$H^{0,1}$};
% Text Node
\draw (202.02,107.52) node [anchor=north west][inner sep=0.75pt]  [font=\tiny]  {$p^{2}$};
% Text Node
\draw (182.02,100.52) node [anchor=north west][inner sep=0.75pt]  [font=\tiny]  {$\overline{p}$};
% Text Node
\draw (162.02,139.4) node [anchor=north west][inner sep=0.75pt]  [font=\tiny]  {$p'$};
% Text Node
\draw (295.02,99.52) node [anchor=north west][inner sep=0.75pt]  [font=\scriptsize]  {$H^{1,3}$};
% Text Node
\draw (142.02,125.4) node [anchor=north west][inner sep=0.75pt]  [font=\footnotesize]  {$I_{0}$};
% Text Node
\draw (252.02,116.4) node [anchor=north west][inner sep=0.75pt]  [font=\footnotesize]  {$I$};
% Text Node
\draw (184,129.4) node [anchor=north west][inner sep=0.75pt]  [font=\footnotesize]  {$L$};
% Text Node
\draw (222.02,117.4) node [anchor=north west][inner sep=0.75pt]  [font=\tiny]  {$p^{1}$};

\end{tikzpicture}}
    \caption{An example of $\overline{p}\in S'\cap H^{1,3}$.}
    \label{fig:distance_horizontal_sides}
    \end{figure}
    Since $L\not\subset\{u_0\neq 1\}$, and since by our assumption not every weakly connected component of $\I_1\cup\Z_1$ is a rectangle, we deduce by Lemma \ref{lemma:shape_optimality} that $L\subset\Z_1$ and that $u_1(p^{j-1}-\e e_2) = -1$  (see Figure \ref{fig:distance_horizontal_sides}) . Since $j-1\geq 2$ we can apply Lemma \ref{lemma:shape_optimality} to the segment $L':=\dseg{p^{j-1}-\e e_2}{p^{j-1}+\e e_1-\e e_2}$. More precisely, since $u_0(p^{j-1}-\e e_2)=1$, if on one hand $\#L'=1$, by Lemma \ref{lemma:shape_optimality}-1.2), $u_1(p^{j-1}-\e e_2)\neq -1$, which is a contradiction. On the other hand, if $\#L'\geq 2$, we get $L'\not\subset \{u_0\neq 1\}$ and since by our assumption not every weakly connected components of $I_1 \cup Z_1$ is a rectangle then,  by Lemma  \ref{lemma:shape_optimality}-(ii), we get $L'\subset Z_1$ which is again a contradiction. Therefore the claim is proved.
    
    Now, consider $\bar{p}(t):=\bar{p}-t\e e_1-t\e e_2$ for $t = 0, \dots, \e^{\mu-1}.$  In view of the claim, we have that for every $t$ there exists $q(t)\in S'$ such that $q_2(t)\le\bar{p}_2(t)$ (where we denote by $\bar{p}_2(t)$ the second component of $\bar{p}(t)$).
    Now observe that for every $p\in S'$ it holds
    \[d^\e_1(p, \partial\I_0) = |p_2-p'_2|.\]
    In particular it follows 
    \[d^\e_1\bigl(q(\mu-1), \partial\I_0\bigl) \ge d^\e_1\bigl(\bar{p}(\mu-1), \partial\I_0\bigl) = \e^\mu+d^\e_1(\bar{p},\partial\I_0)>\e^\mu,\]
    which contradicts Proposition \ref{prop:hausdorff_distance_from_boundary}.\vspace{5pt}\\
    \emph{Second part.} We now prove \eqref{eq:3_dist_hor_sides}. Suppose for simplicity that $H^{1, 1}\subset H^{1,2}-\e e_2$ (in the other cases the proof is similar). It is therefore sufficient to prove (\ref{eq:3_dist_hor_sides}) for every point $p\in H^{1,2}$. To do so, we define a competitor $\tilde{u}_1$ obtained by replacing $u_1(H^{1, 1}-\e e_2)\mapsto 1.$ Since we want that $\#Z_1= \#\{\tilde{u}_1=0\},$ in the following construction we explain how to define $\{\tilde{u}_1=0\}$ in such a way that the energy of $\tilde{u}_{1}$ is not ``too much bigger'' than the energy of $u_1$. 

    Set $p^{1, 0}=p^{1},\;q^{1, 0}=q^1$ and 
    \[h_0:=\min\{h\in\NN:\dseg{p^{1,0}-h\e e_2}{q^{1,0}-h\e e_2}\}\not\subset\Z_1.\]Now, if $\dseg{p^{1,0}-h_0\e e_2}{q^{1,0}-h_0\e e_2}\subset\{u_1=-1\}$ we interrupt the process. Otherwise, we observe that there exist $p^{1, 1}, q^{1, 1}$, possibly with $p^{1, 1}=q^{1, 1},$ such that $(H^{1,1}-h_0\e e_2)\cap\Z_1 = \dseg{p^{1, 1}}{q^{1, 1}}.$ Indeed if this is not the case, then there exist two points $\tilde{p}, \tilde{q}\in\dseg{p^{1,0}-h_0\e e_2}{q^{1, 0}-h_0\e e_2}$ such that $\dseg{\tilde{p}}{\tilde{q}}\subset\{u_1=-1\}$ and $u_1(\tilde{p}-\e e_1) = u_1(\tilde{q}+\e e_1) = 0$. But this is not possible because replacing $u_1(\dseg{\tilde{p}}{\tilde{q}})\mapsto1$ we would strictly reduce the dissipation, while not increasing the energy. 

    Inductively we define 
    \[h_{i+1}:=\min\{h\in\NN:\dseg{p^{1,i}-h\e e_2}{q^{1,i}-h\e e_2}\}\not\subset\Z_1,\]
    and sequences 
    $p^{1,i}, q^{1,i}$ such that 
        \begin{equation*}
        \begin{aligned}
            &p^{1,i+1}_1\ge p^{1,i}_1,\;q^{1,i+1}_1\le q^{1,i}_1,\;q^{1,i+1}_1-p^{1,i+1}_1<q^{1,i}_1-p^{1,i}_1,\\
            &\dseg{p^{1,i+1}}{q^{1,i+1}} = \dseg{p^{1,i}-h_i\e e_2}{q^{1,i}-h_i\e e_2}\cap\Z_1.
        \end{aligned}
        \end{equation*}
        We stop this process at index $i=n$ such that $\dseg{p^{1,n}-h_n\e e_2}{q^{1,n}-h_n\e e_2}\subset\{u_1=-1\}.$
    
We now build a competitor $\tilde{u}_1$ distinguishing between two cases. If $\dseg{p^{1, 0}-\e e_2}{q^{1, 0}-\e e_2}\subset\{u_1=-1\}$ then we replace $u_1(\dseg{p^{1, 0}-\e e_2}{q^{1, 0}-\e e_2})\mapsto 1$. Otherwise, we define $\tilde{u}_1$ as follows. We denote by 
\[\tilde{Z}:=\bigcup_{i=0}^{n}\bigcup_{j=1}^{h_i-1}\dseg{p^{1, i}-j\e e_2}{q^{1, i}-j\e e_2},\]
which is the set of the surfactant cells lying ``below'' the segment $H^{1, 1}$. Then we replace
\begin{equation}\label{eq:replace_2}
            \begin{cases}
                u_1(\dseg{p^{1, 0}-\e e_2}{q^{1,0}-\e e_2})\mapsto 1,\\
                u_1(\tilde{Z}-\e e_2) = 0.
            \end{cases}
        \end{equation}
    see Figure \eqref{fig:Figure_Lemma3_11}. 
    \begin{figure}[H]
    \centering
    \resizebox{0.70\textwidth}{!}{\input{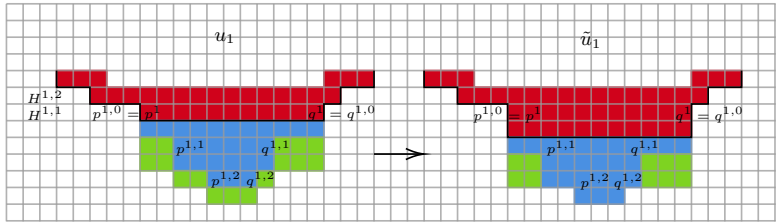}}
    \caption{On the right an example of competitor $\tilde{u}_1$ in Lemma \ref{lemma:distance_horizontal_sides} when $n=2$.}
    \label{fig:Figure_Lemma3_11}
    \end{figure}
        We can repeat the same procedure on the sides $\dseg{p^2-\e e_2}{p^1-\e e_1}$ and $\dseg{q^1+\e e_1}{q^2-\e e_2}$: i.e. we replace
        \[u_1(\dseg{p^2-\e e_2}{p^1-\e e_1}\cup\dseg{q^1+\e e_1}{q^2-\e e_2})\mapsto 1,\]
        and we move downwards the cells of surfactant which were adjacent to these two segments following the same strategy as in \eqref{eq:replace_2}. 

        If we call $\tilde{\I}_1:=\{\tilde{u}_1=1\}$, and $d:=|p'_2-p^1_2|$, we have \begin{equation}\label{eq:energy_1}
            \E_\e(\tilde{u}_1)\le \E_\e(u_1)+4\e,\;\; \#\{\tilde{u}_1=0\} = \#\{u_1=0\},
        \end{equation}
        and 
        \begin{equation}\label{eq:dis}
            \frac{1}{\tau}\dis^1_\e(\tilde{\I}_1, \I_0)\le\frac{1}{\tau}\dis^1_\e(\I_1, \I_0)-\frac{P_{0, 1}-8\e^\mu}{\zeta}d.
        \end{equation}
        By minimality of $u_1$ we deduce that
        \begin{equation}\label{eq:3.estimate}
        \begin{split}
            0\le \E_\e(\tilde{u}_1)+\frac{1}{\tau}&\dis^1_\e(\tilde{I}_1, I_0) - \E_\e(u_1)-\frac{1}{\tau}\dis^1_\e(I_1, I_0)\le 4\e-\frac{P_{0,1}-8\e^\mu}{\zeta}d\\
            &\implies \frac{d}{\e}\le\frac{4\zeta}{P_{0, 1}-8\e^\mu}\le\Bigl\lfloor\frac{4\zeta}{P_{0, 1}}\Bigl\rfloor+1
        \end{split}
        \end{equation}
        where the last inequality holds since $d/\e$ is integer. Hence, by triangular inquality we get \eqref{eq:3_dist_hor_sides}.

        The fact that for every point $p\in I_0\setminus R_I$ it holds $d^\e_1(p, \partial I_0)\le c\e$ is a geometric consequence of \eqref{eq:3_dist_hor_sides}.
\end{proof}

We are finally ready to prove that $\I_1$ is connected.

\begin{proposition}\label{prop:connectedness}
    Let $\gamma>0$. Let $u_0,u_1\in\A_\e$ be such that $u_0$ verifies \eqref{H} and $u_1$ is a minimizer of $\enF(\cdot, u_0)$, and let $\mu\in\R$ be such that $0<\mu<1/4$. Then there exists $\bar{\e}$ such that the set $\I_1$ is a strongly connected staircase set contained in $\I_0$ for every $\e\le\bar{\e}$.
\end{proposition}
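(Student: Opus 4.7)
The plan is to follow the three-step scheme sketched in the discussion preceding the statement, combining all the auxiliary results of the section. By Proposition \ref{prop:horizontal_convexity} we already know that $\I_1 \subset \I_0$ and that every weakly connected component of $\I_1$ is a staircase set, so the remaining content is the strong connectedness. By Proposition \ref{prop:hausdorff_distance_from_boundary}, for $\e$ small enough there exists a unique weakly connected component $\I$ of $\I_1$ containing $\I_0 \setminus \mathscr{C}_{\e^\mu}(\I_0)$, and we may write $\I_1 = \I \cup \bigcup_{i=1}^n B_i$ with the $B_i$'s the remaining weakly connected components (all satisfying $\diam(B_i) \le 2\e^\mu$ by the Hausdorff bound). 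Decomposing $\I_1 \cup \Z_1$ into its strongly connected components $C \cup \bigcup_j G_j$ with $\I \subset C$, the goal reduces to showing $n = 0$.

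The first step is to prove that $\I_1 \cup \Z_1$ is strongly connected, i.e.\ $\bigcup_j G_j = \emptyset$. By Lemma \ref{lemma:optimal_position_connected_components}(i), for any fixed $\nu \in (0,1)$ no $G_j$ can have diameter $\le \e^\nu$, so each $G_j$ is ``large''. I would then pick any $G_{\bar j}$ and consider the region between $G_{\bar j}$ and $C$: since both components lie inside $\I_0$ and $\I_0$ is a staircase set of width $\bar c$, the surfactant cells of $Z_1\cap G_{\bar j}$ that face $C$ can be rearranged. Concretely, I would build a competitor $\tilde u_1$ that fills the thin corridor between $C$ and $G_{\bar j}$ with value $1$ and redistributes the displaced surfactant adjacent to $\partial^+ (C\cup G_{\bar j})$, as in the constructions of Lemma \ref{lemma:distance_horizontal_sides}. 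A bookkeeping of the interaction terms using Lemma \ref{lemma:energy_surfactant} shows that the energy decrease is of order $\e\cdot\diam(G_{\bar j})\gtrsim \e^{1+\nu}$ (since at least one long interface is eliminated), while the dissipation variation is $0$ for $\dis^0_\e$ (the number of surfactant cells is preserved) and a lower order perturbation for $\dis^1_\e$, contradicting the minimality of $u_1$.

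The second step is to rule out $B_i \not\subset R_\I$. Fix such a $B_i$. Since $B_i$ is a staircase set contained in $\I_0\setminus R_\I$ and, by the first step, surrounded by surfactant of $C$, Lemma \ref{lemma:distance_horizontal_sides} applied to the slices of $B_i$ shows that at least one horizontal or vertical slice of $B_i$ is within distance $c\e$ from $\partial \I_0$. Eliminating this slice by setting the affected values of $u_1$ to $-1$ (and rearranging the incident surfactant cells near $\partial^+\I$ without changing $\#\Z_1$) reduces the energy by at least $c\e$ coming from the cancellation of the perimeter contribution of $\partial B_i$, while the change in $\dis^1_\e$ is at most $c\e^2\cdot\e$ because every removed cell lies within $O(\e)$ from $\partial \I_0$; dividing by $\tau=\zeta\e$ this yields a variation of order $\e^2$, which is negligible compared to the $O(\e)$ energy gain, contradicting minimality.

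Finally, in the third step I would exclude the remaining case $B_i \subset R_\I$. In this situation $B_i$ is an internal ``hole'' of phase $1$ separated from $\I$ by a surfactant corridor inside $C$. Since $\I_1\cup\Z_1 = C$ is strongly connected from the first step, one can choose a thin strip of surfactant cells of $C$ adjacent to $B_i$ and consider the competitor which converts $B_i$ into $-1$ cells and redistributes the freed surfactant along $\partial^+\I$ using Lemma \ref{lemma:surfactant_placement} and Remark \ref{rem:blu_on_the_boundary}; this strictly decreases both the Ising-type and surfactant contributions to $\E_\e$ (the boundary $\partial B_i$ is eliminated) while preserving $\#\Z_1$, and the additional $\dis^1_\e$ cost is bounded by $c\e^2\cdot\diam(B_i)\le c\e^{2+\mu}$, again negligible with respect to the $O(\e)$ energy gain. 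The main obstacle throughout is the careful bookkeeping of the rearrangement of surfactant cells in each competitor so that $\dis^0_\e$ is kept constant (whence independent of the scaling parameter $\gamma$) and the geometric constraint that the new surfactant configuration still satisfies the placement optimality coming from Lemma \ref{lemma:surfactant_placement}; this is why one needs the a priori Hausdorff control of Proposition \ref{prop:hausdorff_distance_from_boundary} and the slice estimate of Lemma \ref{lemma:distance_horizontal_sides} to make the comparisons tight enough.
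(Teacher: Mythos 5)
Your three-step outline matches the skeleton that the paper's discussion before the proposition lays out, and your Steps 1--2 are in the spirit of the paper's argument. However there are concrete gaps.

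In Step 1, the paper (see \eqref{eq:3.32}) splits into the case where the corridor $S_\e$ is dominated by $\{u_1=-1\}$ cells and the case where it is dominated by $Z_1$ cells, and the two cases are treated very differently. Your claim that ``the energy decrease is of order $\e\cdot\diam(G_{\bar j})$'' is only clean in the first case. In the second case, filling the corridor with phase $1$ does not eliminate any interfacial energy (a $1/0$ interface is replaced by nothing, but the $z$ displaced surfactant cells must be re-placed, and the square $\hat Q$ they occupy carries its own perimeter cost of order $\e\sqrt z$). The paper's actual argument in that case is a quantitative contradiction between $z\ge c\e^{2\nu-2}$ (forced by minimality, cf. \eqref{eq:conclusion}) and $z\le c\e^{\nu+\mu-2}$ (forced by geometry), using $\nu<\mu$. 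Also, putting the displaced surfactant ``adjacent to $\partial^+(C\cup G_{\bar j})$'' as you propose breaks the clean energy bookkeeping: the paper deliberately sends it to a remote square $\hat Q$ so that the new surfactant only touches $-1$ cells, which is exactly what makes Lemma \ref{lemma:energy_surfactant} applicable.

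The real problem is Step 3. Your move of ``converting $B_i$ into $-1$ cells and redistributing the freed surfactant'' does not have the cost--benefit balance you claim. First, since $B_i$ is surrounded by surfactant after Step 1, setting $B_i$ to $-1$ merely trades a $1/0$ interface for a $0/-1$ interface of the same energy cost $(1-k)$ per link; there is no automatic $O(\e)$ energy gain, only whatever is saved by the re-placement of the surfactant, which depends delicately on its shape. Second, your dissipation bound $c\e^2\cdot\diam(B_i)\le c\e^{2+\mu}$ undercounts: $B_i$ can contain up to $O(\e^{\mu-2})$ cells (the $\e^\mu$-shell around $\partial I_0$ has macroscopic circumference), each at distance up to $\e^\mu$ from $\partial I_0$, so $\dis^1_\e$ changes by up to $c\e^{2\mu}$, and dividing by $\tau=\zeta\e$ gives $c\e^{2\mu-1}/\zeta$, which for $\mu<1/4$ \emph{diverges} as $\e\to 0$. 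This is precisely why the paper's Step 3 does not remove $B_i$ at once. Instead it uses the structural rigidity coming from Lemma \ref{lemma:shape_optimality} (cases (i)--(iii), and the final ``$i=2$'' conclusion) to pin down the boundary of $I$ near $T_1$, locates a strongly connected path $\bar\pi$ inside $Z_1$ joining $B$ to $I$, and then performs \emph{local} swaps of two cell values (e.g.\ $u_1(p^{I,h,j}-\e e_1-\e e_2)\mapsto 0$, $u_1(p^{I,h,j}-\e e_1)\mapsto 1$) that strictly lower the dissipation without increasing the energy. That local-move strategy is the missing idea; the bulk-removal competitor you propose cannot close the argument.
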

\begin{proof}
In view of Proposition \ref{prop:horizontal_convexity} we only need to prove that $\I_1$ is strongly connected.\\
\emph{Step 1.} We prove that the set $\I_1\cup\Z_1$ has only one strongly connected component.\\
From Proposition \ref{prop:hausdorff_distance_from_boundary} we know that there exists $\bar{\e}$ such that for every $\e\le\bar
\e$ it holds $d_{\mathcal{H}}(\partial A_0, \partial A_1)\le \e^\mu.$ Now set $\I_1=\I\bigcup(\cup_{i=1}^n B_i)$, where $\I$ is the strongly connected component of $\I_1$ which contains $\I_0\setminus\mathscr{C}_{\e^\mu}(\I_0)$, and the $B_i$ are the other strongly connected components of $\I_1.$ Then let $\I_1\cup\Z_1 = C\bigcup(\cup_{i=1}^n G_i)$ the decomposition of $\I_1\cup\Z_1$ in strongly connected components such that $\I\subset C$, and $B_i\subset G_i$. We want to prove that $G_i = \emptyset $ for every $i$, that is $I_1\cup Z_1$ is strongly connected. 

Assume by contradiction that $G_i\neq \emptyset$ for some $i$, and rename $G:=G_i$ for simplicity. We fix $\nu\in(0, \mu)$. In view of Lemma \ref{lemma:optimal_position_connected_components} we deduce that, up to taking a smaller $\bar{\e},$ for $\e\le\bar{\e}$ it holds $\diam(G_i)>\e^\nu$. Consider two points $p^G, q^G\in \partial^-G_i$ such that $d^\e_\infty(p^G, q^G)=\max\{d^\e_\infty(p, q):p, q\in G\}$, and let
    \[p^\I\in\argmin_{q\in \partial^-\I} d^\e_\infty(p^G, q),\quad q^\I\in\argmin_{q\in \partial^-\I} d^\e_\infty(q^G, q).\]
    Now we set $\pi_\I$ as the (weakly continuous) path of minimal length belonging to $\Gamma^w_{p^\I, q^\I}(\partial^-\I)$, and we set $\pi_p$ and $\pi_q$ as two (weakly continuous) paths of minimal length belonging to $\Gamma^w_{p^\I, p^G}(\I_0)$ and $\Gamma^w_{q^\I, q^G}(\I_0)$ respectively (see Figure \ref{fig:cammini}) . 
    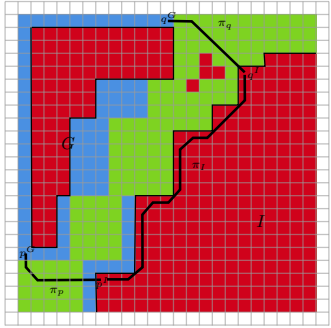
\begin{figure}[H]
          \centering
    \resizebox{0.30\textwidth}{!}{\tikzset{every picture/.style={line width=0.75pt}} %set default line width to 0.75pt        

\begin{tikzpicture}[x=0.75pt,y=0.75pt,yscale=-1,xscale=1]
%uncomment if require: \path (0,300); %set diagram left start at 0, and has height of 300

%Shape: Rectangle [id:dp49838967572130666] 
\draw  [draw opacity=0][fill={rgb, 255:red, 126; green, 211; blue, 33 }  ,fill opacity=1 ] (360.33,50) -- (591,50) -- (591,280) -- (360.33,280) -- cycle ;
%Straight Lines [id:da049717046885050964] 
\draw [draw opacity=0][fill={rgb, 255:red, 208; green, 2; blue, 27 }  ,fill opacity=1 ]   (371,60) -- (371,230) -- (390.33,230) -- (390.33,190) -- (400.33,190) -- (400.33,130) -- (420.33,130) -- (420.33,100) -- (480.33,100) -- (480.33,60) -- cycle ;
%Shape: Rectangle [id:dp12906507502776543] 
\draw  [draw opacity=0][fill={rgb, 255:red, 74; green, 144; blue, 226 }  ,fill opacity=1 ] (421,240) -- (451,240) -- (451,250) -- (421,250) -- cycle ;
%Shape: Rectangle [id:dp715474467198044] 
\draw  [draw opacity=0][fill={rgb, 255:red, 74; green, 144; blue, 226 }  ,fill opacity=1 ] (451,190) -- (451,240) -- (440.33,240) -- (440.33,190) -- cycle ;
%Shape: Rectangle [id:dp970460660589571] 
\draw  [draw opacity=0][fill={rgb, 255:red, 74; green, 144; blue, 226 }  ,fill opacity=1 ] (421,240) -- (421,280) -- (410.33,280) -- (410.33,240) -- cycle ;
%Shape: Rectangle [id:dp37027896269128746] 
\draw  [draw opacity=0][fill={rgb, 255:red, 74; green, 144; blue, 226 }  ,fill opacity=1 ] (371,50) -- (371,240) -- (360.33,240) -- (360.33,50) -- cycle ;
%Shape: Rectangle [id:dp2471405984803018] 
\draw  [draw opacity=0][fill={rgb, 255:red, 74; green, 144; blue, 226 }  ,fill opacity=1 ] (371,230) -- (400.33,230) -- (400.33,240) -- (371,240) -- cycle ;
%Shape: Rectangle [id:dp12485021281485686] 
\draw  [draw opacity=0][fill={rgb, 255:red, 74; green, 144; blue, 226 }  ,fill opacity=1 ] (400.33,190) -- (400.33,230) -- (390.33,230) -- (390.33,190) -- cycle ;
%Shape: Rectangle [id:dp5117810534842478] 
\draw  [draw opacity=0][fill={rgb, 255:red, 74; green, 144; blue, 226 }  ,fill opacity=1 ] (400.33,130) -- (430.33,130) -- (430.33,190) -- (400.33,190) -- cycle ;
%Shape: Rectangle [id:dp548654563970258] 
\draw  [draw opacity=0][fill={rgb, 255:red, 74; green, 144; blue, 226 }  ,fill opacity=1 ] (431,100) -- (431,130) -- (420.33,130) -- (420.33,100) -- cycle ;
%Shape: Rectangle [id:dp3660200027585343] 
\draw  [draw opacity=0][fill={rgb, 255:red, 74; green, 144; blue, 226 }  ,fill opacity=1 ] (431,100) -- (460.33,100) -- (460.33,130) -- (431,130) -- cycle ;
%Shape: Rectangle [id:dp9140565657147022] 
\draw  [draw opacity=0][fill={rgb, 255:red, 208; green, 2; blue, 27 }  ,fill opacity=1 ] (510,80) -- (510,100) -- (500,100) -- (500,80) -- cycle ;
%Shape: Rectangle [id:dp0058458695196437205] 
\draw  [draw opacity=0][fill={rgb, 255:red, 208; green, 2; blue, 27 }  ,fill opacity=1 ] (510,90) -- (520,90) -- (520,100) -- (510,100) -- cycle ;
%Shape: Rectangle [id:dp0177965071320223] 
\draw  [draw opacity=0][fill={rgb, 255:red, 208; green, 2; blue, 27 }  ,fill opacity=1 ] (490,100) -- (500,100) -- (500,110) -- (490,110) -- cycle ;
%Shape: Rectangle [id:dp2010396972873859] 
\draw  [draw opacity=0][fill={rgb, 255:red, 74; green, 144; blue, 226 }  ,fill opacity=1 ] (371,50) -- (480.33,50) -- (480.33,60) -- (371,60) -- cycle ;
%Straight Lines [id:da5325854717203187] 
\draw [draw opacity=0][fill={rgb, 255:red, 208; green, 2; blue, 27 }  ,fill opacity=1 ]   (591,80) -- (551,80) -- (551,90) -- (531,90) -- (531,120) -- (510.33,120) -- (510.33,140) -- (480.33,140) -- (480.33,190) -- (451,190) -- (451,250) -- (421,250) -- (421,280) -- (591,280) -- cycle ;
%Straight Lines [id:da19855255352167722] 
\draw [line width=1.5]    (366.33,234.67) -- (366.44,245.33) -- (375.44,255.33) -- (424.33,255) ;
%Shape: Grid [id:dp1538745900176941] 
\draw  [draw opacity=0] (360.33,50) -- (591,50) -- (591,280) -- (360.33,280) -- cycle ; \draw  [draw opacity=0] (360.33,50) -- (360.33,280)(370.33,50) -- (370.33,280)(380.33,50) -- (380.33,280)(390.33,50) -- (390.33,280)(400.33,50) -- (400.33,280)(410.33,50) -- (410.33,280)(420.33,50) -- (420.33,280)(430.33,50) -- (430.33,280)(440.33,50) -- (440.33,280)(450.33,50) -- (450.33,280)(460.33,50) -- (460.33,280)(470.33,50) -- (470.33,280)(480.33,50) -- (480.33,280)(490.33,50) -- (490.33,280)(500.33,50) -- (500.33,280)(510.33,50) -- (510.33,280)(520.33,50) -- (520.33,280)(530.33,50) -- (530.33,280)(540.33,50) -- (540.33,280)(550.33,50) -- (550.33,280)(560.33,50) -- (560.33,280)(570.33,50) -- (570.33,280)(580.33,50) -- (580.33,280)(590.33,50) -- (590.33,280) ; \draw  [draw opacity=0] (360.33,50) -- (591,50)(360.33,60) -- (591,60)(360.33,70) -- (591,70)(360.33,80) -- (591,80)(360.33,90) -- (591,90)(360.33,100) -- (591,100)(360.33,110) -- (591,110)(360.33,120) -- (591,120)(360.33,130) -- (591,130)(360.33,140) -- (591,140)(360.33,150) -- (591,150)(360.33,160) -- (591,160)(360.33,170) -- (591,170)(360.33,180) -- (591,180)(360.33,190) -- (591,190)(360.33,200) -- (591,200)(360.33,210) -- (591,210)(360.33,220) -- (591,220)(360.33,230) -- (591,230)(360.33,240) -- (591,240)(360.33,250) -- (591,250)(360.33,260) -- (591,260)(360.33,270) -- (591,270)(360.33,280) -- (591,280) ; \draw  [draw opacity=0]  ;
%Shape: Grid [id:dp6916122011561021] 
\draw  [draw opacity=0] (350.33,40) -- (600.44,40) -- (600.44,290.33) -- (350.33,290.33) -- cycle ; \draw  [color={rgb, 255:red, 155; green, 155; blue, 155 }  ,draw opacity=0.6 ] (350.33,40) -- (350.33,290.33)(360.33,40) -- (360.33,290.33)(370.33,40) -- (370.33,290.33)(380.33,40) -- (380.33,290.33)(390.33,40) -- (390.33,290.33)(400.33,40) -- (400.33,290.33)(410.33,40) -- (410.33,290.33)(420.33,40) -- (420.33,290.33)(430.33,40) -- (430.33,290.33)(440.33,40) -- (440.33,290.33)(450.33,40) -- (450.33,290.33)(460.33,40) -- (460.33,290.33)(470.33,40) -- (470.33,290.33)(480.33,40) -- (480.33,290.33)(490.33,40) -- (490.33,290.33)(500.33,40) -- (500.33,290.33)(510.33,40) -- (510.33,290.33)(520.33,40) -- (520.33,290.33)(530.33,40) -- (530.33,290.33)(540.33,40) -- (540.33,290.33)(550.33,40) -- (550.33,290.33)(560.33,40) -- (560.33,290.33)(570.33,40) -- (570.33,290.33)(580.33,40) -- (580.33,290.33)(590.33,40) -- (590.33,290.33)(600.33,40) -- (600.33,290.33) ; \draw  [color={rgb, 255:red, 155; green, 155; blue, 155 }  ,draw opacity=0.6 ] (350.33,40) -- (600.44,40)(350.33,50) -- (600.44,50)(350.33,60) -- (600.44,60)(350.33,70) -- (600.44,70)(350.33,80) -- (600.44,80)(350.33,90) -- (600.44,90)(350.33,100) -- (600.44,100)(350.33,110) -- (600.44,110)(350.33,120) -- (600.44,120)(350.33,130) -- (600.44,130)(350.33,140) -- (600.44,140)(350.33,150) -- (600.44,150)(350.33,160) -- (600.44,160)(350.33,170) -- (600.44,170)(350.33,180) -- (600.44,180)(350.33,190) -- (600.44,190)(350.33,200) -- (600.44,200)(350.33,210) -- (600.44,210)(350.33,220) -- (600.44,220)(350.33,230) -- (600.44,230)(350.33,240) -- (600.44,240)(350.33,250) -- (600.44,250)(350.33,260) -- (600.44,260)(350.33,270) -- (600.44,270)(350.33,280) -- (600.44,280)(350.33,290) -- (600.44,290) ; \draw  [color={rgb, 255:red, 155; green, 155; blue, 155 }  ,draw opacity=0.6 ]  ;
%Straight Lines [id:da28979109054279784] 
\draw [line width=1.5]    (476,55) -- (494.44,55.33) -- (535.33,94.67) ;
%Straight Lines [id:da3002738617287982] 
\draw [line width=1.5]    (429,254.67) -- (445.33,254.67) -- (456.44,245.33) -- (456.33,204.67) -- (464.44,195.33) -- (476.44,195.33) -- (485.44,185.33) -- (485.44,154.33) -- (495.44,145.33) -- (506.44,145.33) -- (535.44,116.33) -- (535.33,96.67) ;
%Straight Lines [id:da77853636740119] 
\draw    (370.33,60) -- (371,230) -- (390.33,230) -- (390.33,190) -- (400.33,190) -- (400.33,130) -- (420.33,130) -- (420.33,100) -- (480.33,100) -- (480.33,60) -- (371,60) ;
%Straight Lines [id:da5326953085255018] 
\draw    (421,280) -- (420.33,250) -- (450.33,250) -- (451,190) -- (480.33,190) -- (480.33,140) -- (510.33,140) -- (510.33,120) -- (530.33,120) -- (530.33,90) -- (551,90) -- (550.33,80) -- (590.33,80) ;

% Text Node
\draw (360,227.4) node [anchor=north west][inner sep=0.75pt]  [font=\tiny]  {$p^{G}$};
% Text Node
\draw (469,46.4) node [anchor=north west][inner sep=0.75pt]  [font=\tiny]  {$q^{G}$};
% Text Node
\draw (419,251.4) node [anchor=north west][inner sep=0.75pt]  [font=\tiny]  {$p^{I}$};
% Text Node
\draw (536,89.4) node [anchor=north west][inner sep=0.75pt]  [font=\tiny]  {$q^{I}$};
% Text Node
\draw (383,259.4) node [anchor=north west][inner sep=0.75pt]  [font=\scriptsize]  {$\pi _{p}$};
% Text Node
\draw (513,53.4) node [anchor=north west][inner sep=0.75pt]  [font=\scriptsize]  {$\pi _{q}$};
% Text Node
\draw (493,163.4) node [anchor=north west][inner sep=0.75pt]  [font=\scriptsize]  {$\pi _{I}$};
% Text Node
\draw (543,203.4) node [anchor=north west][inner sep=0.75pt]    {$I$};
% Text Node
\draw (392.33,143.4) node [anchor=north west][inner sep=0.75pt]    {$G$};

\end{tikzpicture}}
    \caption{The thick black line represents the weakly continuous paths $\pi_I$, $\pi_p$ and $\pi_q$ connecting $G$ and $I$.}
    \label{fig:cammini}
   \end{figure}

        Now let $\Gamma:=\pi_\I\cup\pi_p\cup\pi_q\cup G;$ we observe that $\R^2\setminus A_\Gamma$ has at least two connected components, and we denote by $S$ the bounded component bordering $A_{\pi_\I}$. 
    %\begin{figure}
    %\centering
    %\resizebox{0.70\textwidth}{!}{\input{A_S}}
    %\caption{}
    %\label{fig:bounded connected component}
    %\end{figure}
Finally let $S_\e:=S\cap\e\ZZ^2$ and set $z:=\#(\Z_1\cap S_\e)$ (see the left hand side of Figure \ref{fig:connectedness}).

Let $r$ be such that $||p||_1\le r\e$ for all $p\in \I_1\cup\Z_1$, and consider the discrete square $Q$ whose lower basis is the segment $\dseg{(2r\e; 0)}{(2r\e+\lfloor\sqrt{z}\rfloor\e-\e; 0)},$ and the square $Q'$ whose lower basis is the segment $[(2r\e; 0), (2r\e+\lfloor\sqrt{z}\rfloor\e; 0)]$. Then let $\hat{Q}\subset\e\ZZ^2$ be any set such that 
    \begin{equation}\label{eq:3.square}
        Q\subset\hat{Q}\subset Q',\;\; \#\hat{Q} = z,
    \end{equation}
    and $A_{\hat{Q}}$ has the lowest possible perimeter consistent with (\ref{eq:3.square}).
    
    We define a new competitor of $u_1$ as 
\begin{equation}\label{eq:3.11}
        \tilde{u}_1(p) = \begin{cases}
            1&\text{ if } p\in \I_1\cup S_\e,\\
            0&\text{ if }p\in\Bigl(\Z_1\setminus S_\e\Bigl)\cup\hat{Q},\\
            -1&\text{ otherwise},
        \end{cases}
    \end{equation}
    see also Figure (\ref{fig:connectedness}).
    \begin{figure}[H]
          \centering
    \resizebox{0.75\textwidth}{!}{\input{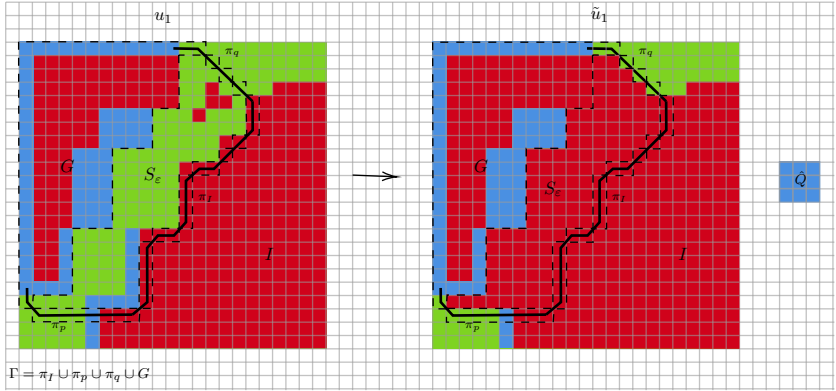}}
    \caption{On the right an example of competitor $\tilde{u}_1$. We represent $\Gamma$ as the region enclosed between the two dashed closed lines which is not $S_\varepsilon$. }
    \label{fig:connectedness}
   \end{figure}
    %\begin{figure}[ht]
       % \centering
        %\includegraphics[width=0.8\linewidth]{Connectedness.jpeg}
        %\caption{Transition from $u_1$ to $\tilde{u}_1$. The black solid line represents $\pi_p$ and $\pi_q$.} 
        %\label{fig:connectedness}
    %\end{figure}
    Now since $G\subset\mathscr{C}_{\e^\mu}(I_0)$ we have $\text{len}(\pi_p)\le\e^\mu$ and $\text{len}(\pi_q)\le\e^\mu$. Remembering that $\diam(G)\ge\e^\nu$, and that $\nu\in(0, \mu)$, 
    by triangular inequality we get (taking also into account the links between the end points of the path $\partial^+I\cap S_\e$ with $p^{I}$ and $q^{I}$ respectively)   
    \begin{equation}\label{eq:mah_0}
        \#(\partial^+\I\cap S_\e)\ge \frac{1}{\e}\Bigl(d^\e_\infty(p^B, q^B)-\len(\pi_p)-\len(\pi_q)-2\e\Bigl)\ge\frac{1}{\e}\Bigl(d^\e_\infty(p^B, q^B)-c\e^\mu-2\e\Bigl)\ge c\frac{\e^\nu}{\e},
    \end{equation}
    for some positive constant $c$ depending only on the choice of $\nu;$ in particular either 
    \begin{equation}\label{eq:3.32}
    \#(\partial^+\I\cap S_\e\cap\Z_1)\ge\frac{c\e^\nu}{2\e}\quad\text{ or }\quad\#(\partial^+\I\cap S_\e\cap\{u_1=-1\})\ge\frac{c\e^\nu}{2\e}.
    \end{equation}
	Now we claim that if the second case of \eqref{eq:3.32} occurs, then $\E_\e(\tilde{u}_1)<\E_\e(u_1)$, which contradicts the minimality of $u_1$ because  $\dis^1(\tilde{u}_1, u_0)\le\dis^1(u_1, u_0)$. The proof of this fact follows the one given in Proposition \ref{prop:hausdorff_distance_from_boundary}. With the same strategy used to prove \eqref{eq:0} we find 
    \begin{equation}\label{eq:energy_estimate_100}
        \E_\e(u_1)-\E_\e(\tilde{u}_1) = \E_\e(u_1, S_\e, S_\e\cup\partial^+S_\e)) -  \E_\e(\tilde{u}_1, S_\e, S_\e\cup\partial^+S_\e)) - \E_\e(\tilde{u}_1, \hat{Q}).
    \end{equation}  
	It holds
	\begin{equation}\label{eq:energy_tilde}
		\begin{aligned}
		&\E_\e(\tilde{u}_1, S_\e, S_\e\cup\partial^+S_\e) = \E_\e(\tilde{u}_1, \partial^-S_\e, \partial^+S_\e) \le\\ &\le\E_\e(\tilde{u}_1, \partial^-S_\e, \partial^-G) + \E_\e(\tilde{u}_1, \partial^-S_\e, \pi_p) + \E_\e(\tilde{u}_1, \partial^-S_\e, \pi_q)\le \E_\e(\tilde{u}_1, \partial^-S_\e, \partial^-G) + c\e^\mu,
		\end{aligned}
	\end{equation}
	where in the first equality we used that $S_\e\subset\{\tilde{u}_1=1\}$, in the last inequality we used that $\max\{\len(\pi_p), \len(\pi_q)\}\le\e^\mu$. Moreover we have
	\begin{equation}\label{eq:energy_u_1}
		\begin{aligned}
		&\E_\e(u_1, S_\e, S_\e\cup\partial^+ S_\e)\ge \E_\e(u_1, S_\e\cap\{u_1 = -1\}, \partial^-G) +\\ & + \E_\e(u_1, S_\e\cap\partial^+I\cap \{u_1 = -1\}, \partial^-I) + \E_\e(u_1, S_\e\cap Z_1, S_\e\cup\partial^+S_\e).
		\end{aligned}
	\end{equation}
	We observe that since $G$ is a connected component of $I_1\cup\Z_1$ then $\partial^+G\subset\{u_1 = -1\}$. It follows that 
	\begin{equation}\label{eq:boundary_G}
	\E_\e(u_1, S_\e\cap\{u_1 = -1\}, \partial^-G) - \E_\e(\tilde{u}_1, \partial^-S_\e, \partial^-G)\ge 0.	\end{equation}
	Putting all together, from \eqref{eq:energy_estimate_100}, \eqref{eq:energy_tilde}, \eqref{eq:energy_u_1} and \eqref{eq:boundary_G} we deduce that 
	\begin{equation}\label{eq:intermediate_estimate}
		\begin{aligned}
		\E_\e(u_1) - \E_\e(\tilde{u}_1) &\ge \E_\e(u_1, S_\e\cap\partial^+I\cap\{u_1 = -1\}, \partial^-I) + 
		\\
		& + \E_\e(u_1, S_\e\cap Z_1, S_\e\cup\partial^+S_\e) - c\e^\mu - \E_\e(\tilde{u}_1, \hat{Q}).
		\end{aligned}
	\end{equation}
	Now, if the second case of \eqref{eq:3.32} occurs, then 
	\begin{equation}\label{eq:intermedia}\E_\e(u_1, S_\e\cap\partial^+I\cap\{u_1 = -1\}, \partial^-I)\ge c\e^\nu.
	\end{equation} Since $\hat{Q}$ is a set of minimal perimeter among all the subsets of $\e\ZZ^2$ having exactly $z$ cells, from Lemma \ref{lemma:energy_surfactant} we deduce 
	\begin{equation}\label{eq:zero_estimate}
		\E_\e(u_1, S_\e\cap Z_1, S_\e\cup\partial^+S_\e) - \E_\e(\tilde{u}_1, \hat{Q})\ge 0.
	\end{equation}
	In particular from \eqref{eq:intermediate_estimate}, \eqref{eq:intermedia} and \eqref{eq:zero_estimate} we deduce 
	\[\E_\e(u_1) - \E_\e(\tilde{u}_1) \ge c\e^\nu - c\e^\mu>0\]
	for $\e$ sufficiently small, which is a contradiction with the minimality of $u_1$.
	
	Suppose instead that the first situation of \eqref{eq:3.32} occurs. It follows from Lemma \ref{lemma:energy_surfactant} that 
	\begin{equation}\label{eq:boundary_I}
		\E_\e(u_1, S_\e\cap Z_1, S_\e\cup\partial^+ S_\e)\ge 2z\e(1-k) + c\e\#(\partial^+I\cap S_\e\cap Z_1) \ge 2z\e(1-k) +c\e^\nu. 
	\end{equation}
	Again from Lemma \ref{lemma:energy_surfactant} we have 
    \begin{equation}\label{eq:en_estim}\E_\e(\tilde{u}_1,\hat{Q})\le 2z(1-k)\e+c\e\sqrt{z}.
    \end{equation}
    By the minimality of $u_1$, we have $\E_\e(u_1)\le\E_\e(\tilde{u}_1)$, and then we conclude from \eqref{eq:intermediate_estimate},  \eqref{eq:boundary_I} and \eqref{eq:en_estim} that 
    \begin{equation}\label{eq:conclusion}
    0\geq \E_\e(u_1)-\E_\e(\tilde{u}_1) \ge c\e^\nu-c\e^\mu - c\e\sqrt{z}\implies z\ge c\e^{2\nu-2},
    \end{equation}
    where we used that $c\e^\nu-c\e^\mu\ge c\e^\nu$ for $\e$ sufficiently small, since $\nu<\mu$. But the conclusion of \eqref{eq:conclusion} is absurd since, recalling that $z=\#(Z_1\cap S_\e)$ ,
    \[z\le\#S_\e\le c \#\Bigl((\partial^+I)\cap S_\e\Bigl)\frac{d_\hh(\partial A_1, \partial A_I)}{\e}\le c\e^{\nu+\mu-2}<c\e^{2\nu-2}.\]
    \vspace{5pt}\\
    \emph{Step 2.} Suppose that $C$ is a rectangle. Then, in view of Lemma \ref{lemma:optimal_position_connected_components}-(ii), the set $C$ cannot contain any weakly connected component $B_i$ with diameter smaller than $\e^\nu,$ and arguing similarly to the previous step we deduce immediately that $C$ cannot contain components $B_i$ with diameter larger than $\e^\nu$ either. Therefore $I_1$ is connected, and in this case the proof is complete.
    
    From now on we may suppose that $C$ is not a rectangle. We set $T := R_I\setminus\I = \cup_{j=1}^4T_j$ as in Definition \ref{def:staircase_set}. We replace $T_j$ with $T_j\cap\I_0$ without renaming it. From Lemma \ref{lemma:distance_horizontal_sides} for every point $p\in I_0\setminus R_I$ it holds that \begin{equation}\label{eq:3.31}
        d^\e_1(p, \partial I_0)\le d_{\mathcal{H}}(R_{I_0}, R_{I})\le c\e.
    \end{equation}
    In this step we claim that every component $B_i$ is contained in one of the $T_j$.
    We argue by contradiction. Suppose that, for instance, $B_1\not\subset R_I$. Let $H^{B,1}:=\dseg{p^{B, h, 1}}{q^{B, h, 1}}$ be the lowest horizontal slice of $B_1$ which we can suppose that lies below $R_I$. Up to replacing $B_1$ with a different component, we can assume that 
    \begin{equation}\label{eq:3.40}
        \{q^{B,h, 1}-n\e e_1-m\e e_2:\:(n, m)\in\NN\times\NN\}\cap\I_1 = \emptyset,\text{ and }\{p^{B,h, 1}-n\e e_1:n\in\NN\}\cap\I_1 = \emptyset.
    \end{equation}
    Let $N:=\#H^{B,1}$, and suppose first that $\dseg{p^{B,h, 1}-\e e_2}{q^{B, h, 1}-\e e_2}\subset\{u_1=0\}.$ We prove that we can replace every point belonging to the slice $H^{B,1}$ with either the value $0$ or $-1$, in such a way that we lose enough energy to compensate the increment of dissipation.
    We construct a sequence $u_{1, i}$ for $i=0, \dots, N-1$ which at step $i$ replaces $u_1(p^{B, h, 1}+i\e e_1)$ with either the value $0$ or $-1$ as follows. \\
    There exists $z_0\in \Z_1$ which is below the slice $H^{B, 1},$ and such that $u_1(z_0-\e e_1) = u_1(z_0-\e e_2) = -1$. We define a new function $u_{1,0}$ by replacing $u_1(z_0)\mapsto -1$ and $u_1(p^{B,h, 1})\mapsto 0$ (see Figure \ref{fig:2Figure_Lemma3_11}).

    \begin{figure}[H]
          \centering
    \resizebox{0.60\textwidth}{!}{\tikzset{every picture/.style={line width=0.75pt}} %set default line width to 0.75pt        

\begin{tikzpicture}[x=0.75pt,y=0.75pt,yscale=-1,xscale=1]
%uncomment if require: \path (0,300); %set diagram left start at 0, and has height of 300

%Straight Lines [id:da9992281575275496] 
\draw [color={rgb, 255:red, 208; green, 2; blue, 27 }  ,draw opacity=1 ][fill={rgb, 255:red, 208; green, 2; blue, 27 }  ,fill opacity=1 ]   (140.02,80) -- (140.02,110) -- (150.02,110) -- (150.02,120) -- (160.02,120) -- (160.02,150) -- (170.02,150) -- (170.02,160) -- (230.02,160) -- (230.02,170) -- (240.02,170) -- (240.02,200) -- (250.02,200) -- (250.02,150) -- (180.02,150) -- (180.02,110) -- (160.02,110) -- (160.02,80) -- cycle ;
%Straight Lines [id:da3418807134770008] 
\draw [color={rgb, 255:red, 208; green, 2; blue, 27 }  ,draw opacity=1 ][fill={rgb, 255:red, 208; green, 2; blue, 27 }  ,fill opacity=1 ]   (200.02,230) -- (160.02,230) -- (160.02,220) -- (150.02,220) -- (150.02,180) -- (190.02,180) -- (190.02,190) -- (210.02,190) -- (210.02,220) -- (200.02,220) -- cycle ;
%Shape: Rectangle [id:dp6953723449496039] 
\draw  [color={rgb, 255:red, 74; green, 144; blue, 226 }  ,draw opacity=1 ][fill={rgb, 255:red, 74; green, 144; blue, 226 }  ,fill opacity=1 ] (160.02,230) -- (200.02,230) -- (200.02,240) -- (160.02,240) -- cycle ;
%Shape: Rectangle [id:dp7603923264831722] 
\draw  [color={rgb, 255:red, 74; green, 144; blue, 226 }  ,draw opacity=1 ][fill={rgb, 255:red, 74; green, 144; blue, 226 }  ,fill opacity=1 ] (170.02,240) -- (200.02,240) -- (200.02,250) -- (170.02,250) -- cycle ;
%Shape: Rectangle [id:dp1580527242565677] 
\draw  [color={rgb, 255:red, 74; green, 144; blue, 226 }  ,draw opacity=1 ][fill={rgb, 255:red, 74; green, 144; blue, 226 }  ,fill opacity=1 ] (170.02,250) -- (190.02,250) -- (190.02,260) -- (170.02,260) -- cycle ;
%Straight Lines [id:da3495263047687718] 
\draw [color={rgb, 255:red, 208; green, 2; blue, 27 }  ,draw opacity=1 ][fill={rgb, 255:red, 208; green, 2; blue, 27 }  ,fill opacity=1 ]   (360.02,80) -- (360.02,110) -- (370.02,110) -- (370.02,120) -- (380.02,120) -- (380.02,150) -- (390.02,150) -- (390.02,160) -- (450.02,160) -- (450.02,170) -- (460.02,170) -- (460.02,200) -- (470.02,200) -- (470.02,150) -- (400.02,150) -- (400.02,110) -- (380.02,110) -- (380.02,80) -- cycle ;
%Straight Lines [id:da146616550118999] 
\draw [color={rgb, 255:red, 208; green, 2; blue, 27 }  ,draw opacity=1 ][fill={rgb, 255:red, 208; green, 2; blue, 27 }  ,fill opacity=1 ]   (420.02,230) -- (390.02,230) -- (390.02,220) -- (380.02,220) -- (370.02,220) -- (370.02,180) -- (410.02,180) -- (410.02,190) -- (430.02,190) -- (430.02,220) -- (420.02,220) -- cycle ;
%Shape: Rectangle [id:dp22625322801665748] 
\draw  [color={rgb, 255:red, 74; green, 144; blue, 226 }  ,draw opacity=1 ][fill={rgb, 255:red, 74; green, 144; blue, 226 }  ,fill opacity=1 ] (380.02,230) -- (420.02,230) -- (420.02,240) -- (380.02,240) -- cycle ;
%Shape: Rectangle [id:dp6604594837131371] 
\draw  [color={rgb, 255:red, 74; green, 144; blue, 226 }  ,draw opacity=1 ][fill={rgb, 255:red, 74; green, 144; blue, 226 }  ,fill opacity=1 ] (390.02,240) -- (420.02,240) -- (420.02,250) -- (390.02,250) -- cycle ;
%Shape: Rectangle [id:dp008111902941346893] 
\draw  [color={rgb, 255:red, 74; green, 144; blue, 226 }  ,draw opacity=1 ][fill={rgb, 255:red, 74; green, 144; blue, 226 }  ,fill opacity=1 ] (400.02,250) -- (410.02,250) -- (410.02,260) -- (400.02,260) -- cycle ;
%Shape: Square [id:dp9970427814853567] 
\draw  [color={rgb, 255:red, 74; green, 144; blue, 226 }  ,draw opacity=1 ][fill={rgb, 255:red, 74; green, 144; blue, 226 }  ,fill opacity=1 ] (380.02,220) -- (390.02,220) -- (390.02,230) -- (380.02,230) -- cycle ;
%Shape: Square [id:dp8076587488454768] 
\draw  [color={rgb, 255:red, 126; green, 211; blue, 33 }  ,draw opacity=1 ][fill={rgb, 255:red, 126; green, 211; blue, 33 }  ,fill opacity=1 ] (390.02,250) -- (400.02,250) -- (400.02,260) -- (390.02,260) -- cycle ;
%Shape: Grid [id:dp8725284125625891] 
\draw  [draw opacity=0] (90.02,60) -- (500.44,60) -- (500.44,270.33) -- (90.02,270.33) -- cycle ; \draw  [color={rgb, 255:red, 155; green, 155; blue, 155 }  ,draw opacity=0.77 ] (90.02,60) -- (90.02,270.33)(100.02,60) -- (100.02,270.33)(110.02,60) -- (110.02,270.33)(120.02,60) -- (120.02,270.33)(130.02,60) -- (130.02,270.33)(140.02,60) -- (140.02,270.33)(150.02,60) -- (150.02,270.33)(160.02,60) -- (160.02,270.33)(170.02,60) -- (170.02,270.33)(180.02,60) -- (180.02,270.33)(190.02,60) -- (190.02,270.33)(200.02,60) -- (200.02,270.33)(210.02,60) -- (210.02,270.33)(220.02,60) -- (220.02,270.33)(230.02,60) -- (230.02,270.33)(240.02,60) -- (240.02,270.33)(250.02,60) -- (250.02,270.33)(260.02,60) -- (260.02,270.33)(270.02,60) -- (270.02,270.33)(280.02,60) -- (280.02,270.33)(290.02,60) -- (290.02,270.33)(300.02,60) -- (300.02,270.33)(310.02,60) -- (310.02,270.33)(320.02,60) -- (320.02,270.33)(330.02,60) -- (330.02,270.33)(340.02,60) -- (340.02,270.33)(350.02,60) -- (350.02,270.33)(360.02,60) -- (360.02,270.33)(370.02,60) -- (370.02,270.33)(380.02,60) -- (380.02,270.33)(390.02,60) -- (390.02,270.33)(400.02,60) -- (400.02,270.33)(410.02,60) -- (410.02,270.33)(420.02,60) -- (420.02,270.33)(430.02,60) -- (430.02,270.33)(440.02,60) -- (440.02,270.33)(450.02,60) -- (450.02,270.33)(460.02,60) -- (460.02,270.33)(470.02,60) -- (470.02,270.33)(480.02,60) -- (480.02,270.33)(490.02,60) -- (490.02,270.33)(500.02,60) -- (500.02,270.33) ; \draw  [color={rgb, 255:red, 155; green, 155; blue, 155 }  ,draw opacity=0.77 ] (90.02,60) -- (500.44,60)(90.02,70) -- (500.44,70)(90.02,80) -- (500.44,80)(90.02,90) -- (500.44,90)(90.02,100) -- (500.44,100)(90.02,110) -- (500.44,110)(90.02,120) -- (500.44,120)(90.02,130) -- (500.44,130)(90.02,140) -- (500.44,140)(90.02,150) -- (500.44,150)(90.02,160) -- (500.44,160)(90.02,170) -- (500.44,170)(90.02,180) -- (500.44,180)(90.02,190) -- (500.44,190)(90.02,200) -- (500.44,200)(90.02,210) -- (500.44,210)(90.02,220) -- (500.44,220)(90.02,230) -- (500.44,230)(90.02,240) -- (500.44,240)(90.02,250) -- (500.44,250)(90.02,260) -- (500.44,260)(90.02,270) -- (500.44,270) ; \draw  [color={rgb, 255:red, 155; green, 155; blue, 155 }  ,draw opacity=0.77 ]  ;
%Straight Lines [id:da8810150353033346] 
\draw    (140.02,80) -- (140.02,200) -- (250.02,200) ;
%Straight Lines [id:da7464949022285227] 
\draw    (360.02,80) -- (360.02,200) -- (470.02,200) ;
%Straight Lines [id:da5764088604944738] 
\draw    (140.02,110) -- (150.02,110) -- (150.02,120) -- (160.02,120) -- (160.02,150) -- (170.02,150) -- (170.02,160) -- (230.02,160) -- (230.02,170) -- (240.02,170) -- (240.02,200) ;
%Straight Lines [id:da5643443928777174] 
\draw    (360.02,110) -- (370.02,110) -- (370.02,120) -- (380.02,120) -- (380.02,150) -- (390.02,150) -- (390.02,160) -- (450.02,160) -- (450.02,170) -- (460.02,170) -- (460.02,200) ;
%Straight Lines [id:da5645444190553567] 
\draw    (260.02,140) -- (328.02,140) ;
\draw [shift={(330.02,140)}, rotate = 180] [color={rgb, 255:red, 0; green, 0; blue, 0 }  ][line width=0.75]    (10.93,-3.29) .. controls (6.95,-1.4) and (3.31,-0.3) .. (0,0) .. controls (3.31,0.3) and (6.95,1.4) .. (10.93,3.29)   ;

% Text Node
\draw (172.02,183.4) node [anchor=north west][inner sep=0.75pt]  [font=\footnotesize]  {$B_{1}$};
% Text Node
\draw (382.02,183.4) node [anchor=north west][inner sep=0.75pt]  [font=\footnotesize]  {$B_{1}$};
% Text Node
\draw (162,218.4) node [anchor=north west][inner sep=0.75pt]  [font=\scriptsize]  {$p^{B,h,1}$};
% Text Node
\draw (192,218.4) node [anchor=north west][inner sep=0.75pt]  [font=\scriptsize]  {$q^{B,h,1}$};
% Text Node
\draw (170,252.4) node [anchor=north west][inner sep=0.75pt]  [font=\scriptsize]  {$z_{0}$};
% Text Node
\draw (390,252.4) node [anchor=north west][inner sep=0.75pt]  [font=\scriptsize]  {$z_{0}$};
% Text Node
\draw (412.02,217.4) node [anchor=north west][inner sep=0.75pt]  [font=\scriptsize]  {$q^{B,h,1}$};
% Text Node
\draw (192.02,73.4) node [anchor=north west][inner sep=0.75pt]    {$u_{1}$};
% Text Node
\draw (422.02,73.4) node [anchor=north west][inner sep=0.75pt]    {$u_{1,0}$};
% Text Node
\draw (382.02,218.4) node [anchor=north west][inner sep=0.75pt]  [font=\scriptsize]  {$p^{B,h,1}$};
% Text Node
\draw (142.02,93.4) node [anchor=north west][inner sep=0.75pt]    {$I$};
% Text Node
\draw (122.02,163.4) node [anchor=north west][inner sep=0.75pt]    {$R_{I}$};
% Text Node
\draw (342.02,163.4) node [anchor=north west][inner sep=0.75pt]    {$R_{I}$};
% Text Node
\draw (362.02,93.4) node [anchor=north west][inner sep=0.75pt]    {$I$};

\end{tikzpicture}}
    \caption{The case $\dseg{p^{B,h, 1}-\e e_2}{q^{B, h, 1}-\e e_2}\subset\{u_1=0\}$. On right side an example of how to get the first element of the sequence $u_{1,i}$.}
    \label{fig:2Figure_Lemma3_11}
   \end{figure}
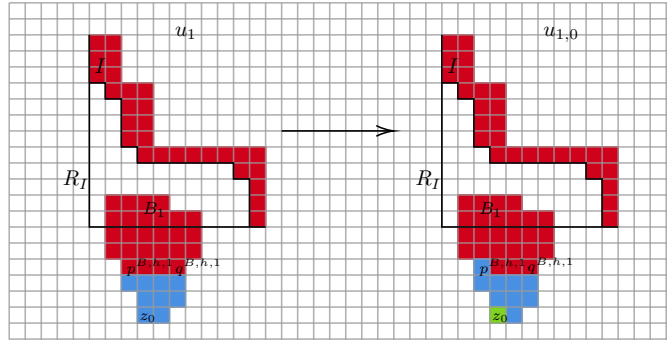
    
    We have that $\E_\e(u_{1,0})\le \E_\e(u_1).$ We proceed inductively for $i=1, \dots, N-1$, constructing a sequence of functions $u_{1,i}$. At step $i$ we find an element $z_i\in \{u_{1, i-1}=0\}$ which is below the slice $H^{B, 1}$ and such that $u_{1, i-1}(z_i-\e e_1) = u_{1, i-1}(z_i-\e e_2) = -1$, and we define $u_{1,i}$ replacing $u_{1,i-1}(p^{B,h, 1}+i\e e_1)\mapsto 0$ and $u_{1,i-1}(z_i)\mapsto -1$. We observe that $\E_\e(u_{1, i})\le \E_\e(u_{1, i-1})$, and that for $i=N-1$ it holds \begin{equation}\label{eq:energy_estim}
        \E_\e(u_{1,N-1})\le\E_\e(u_{1,N-2})-\e(1-k)\implies \E_\e(u_{1,N-1})\le\E_\e(u_1)-\e(1-k).
    \end{equation}
    Moreover, from the first step of this proof we have $|q^{B, h, 1}-p^{B,h, 1}| = (N-1)\e\le\e^\nu,$ therefore we can estimate the dissipation of $u_{1,N-1}$ with   \begin{equation}\label{eq:3.energy_diss_estimate} \dis^1_\e(u_{1,N-1}, u_0)-\dis^1_\e(u_1, u_0)\le \frac{(N-1)\e\cdot c\e}{\zeta}\le \frac{c}{\zeta}\e^{\nu+1},
    \end{equation}
    where constant $c$ is the one in (\ref{eq:3.31}). From \eqref{eq:energy_estim} and \eqref{eq:3.energy_diss_estimate} we deduce that $\mathcal{F}^{\tau, \gamma}_\e(u_{1,N-1})<\mathcal{F}^{\tau, \gamma}_\e(u_1)$, which contradicts the minimality of $u_1.$ 

    Suppose now that there exists $r\in\dseg{p^{B,h, 1}-\e e_2}{q^{B,h, 1}-\e e_2}\cap\{u_1=-1\}.$ From Lemma \ref{lemma:surfactant_placement} we have that every $p\in\Z_1$ satisfies 
    \begin{equation}\label{eq:3.50}
        \#(\mathcal{N}(p)\cap\I_1) \ge 1.
    \end{equation}
    Arguing as in the proof of Lemma \ref{lemma:distance_horizontal_sides} there exist two points $p', q'$ such that 
    \begin{equation}\label{eq:dseg}
    	\dseg{p^{B,h, 1}-\e e_2}{q^{B,h, 1}-\e e_2}\cap Z_1 = \dseg{p'}{q'}.
    \end{equation}
    From \eqref{eq:3.50} and \eqref{eq:3.40} it follows \begin{equation}\label{eq:3.34}
    \dseg{p^{B,h, 1}-2\e e_2}{q^{B,h, 1}-2\e e_2}\cup\{p^{B,h, 1}-\e e_1-\e e_2\}\cup\{q^{B, h, 1}+\e e_1-\e e_2\}\subset\{u_1=-1\}.
    \end{equation}
    Therefore, we may translate the segment $\dseg{p'}{q'}$ in such a way that $p' = p^{B, h, 1}$, without changing the energy of $u_1$. More precisely, we first define the competitor 
    \begin{equation}\label{eq:new_minimizer}
    	u_1'(p):=\begin{cases}
    		0&\text{ if }p\in \dseg{p^{B, h, 1}-\e e_2}{p^{B, h, 1}-\e e_2+(q'_1-p'_1)e_1},\\
    		-1&\text{ if }p\in \dseg{p^{B, h, 1}-\e e_2+(q'_1-p'_1+\e)e_1}{q^{B, h, 1}-\e e_2},\\
    		u_1(p)&\text{ otherwise}.	\end{cases}
    \end{equation}
    From \eqref{eq:dseg} and \eqref{eq:3.34} we deduce that $\enF(u_1', u_0) = \enF(u_1, u_0)$, therefore $u_1'$ is a minimizer. 
    Then, starting from $u'_1$ we define a new competitor $\tilde{u}_1$ setting 
        \begin{equation}\label{eq:tilde_competitor}
        	\tilde{u}_1(p) = \begin{cases}
            u_1'(p-\e e_2)&\text{ if }p\in\dseg{p^{B,h, 1}}{q^{B,h, 1}},\\
            -1&\text{ if }p\in\dseg{p^{B,h, 1}-\e e_2}{q^{B,h, 1}-\e e_2},\\
            u_1'(p)&\text{ otherwise}.
        \end{cases}
    \end{equation}
        Since $u_1'(p^{B, h, 1} -\e e_2) = 0$, and in view of \eqref{eq:3.34}, we have $\E_\e(\tilde{u}_1)-\E_\e(u_1)\le -\e(1-k)$ (see Figure \ref{fig:3Figure_Lemma3_11}).
        \begin{figure}[H]
          \centering
    \resizebox{0.70\textwidth}{!}{\input{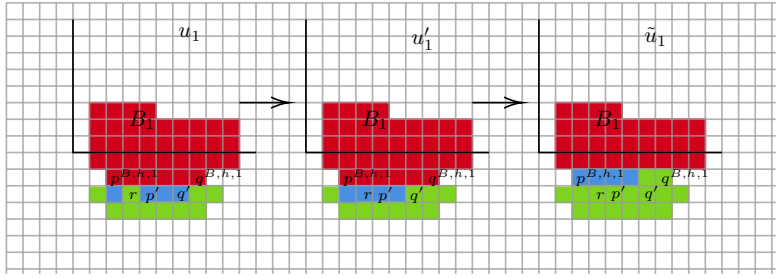}}
    \caption{The case $\dseg{p^{B,h, 1}-\e e_2}{q^{B,h, 1}-\e e_2}\cap\{u_1=-1\}\neq \emptyset$. An example of the construction of the competitor $\tilde{u}_1$ which reduces the total energy.}
    \label{fig:3Figure_Lemma3_11}
   \end{figure}
        
        Estimate (\ref{eq:3.energy_diss_estimate}) holds with $\tilde{u}_1$ in place of $u_{1, N-1}$, and therefore $\enF(\tilde{u}_1, u_0)<\enF(u_1, u_0)$, which contradicts the minimality of $u_1$.
        
        Finally, we may now suppose that $\dseg{p^{B, h, 1}-\e e_2}{q^{B, h, 1}-\e e_2}\cap Z_1 = \emptyset$. If $u_1(p^{B, h, 1}-\e e_1) = -1$ we define a competitor $\tilde{u}_1$ as in \eqref{eq:tilde_competitor} with $u_1$ in place of $u_1'$. It holds $\E_\e(\tilde{u}_1)\le\E_\e(u_1)-2\e$, and since estimate \eqref{eq:3.energy_diss_estimate} holds with $\tilde{u}_1$ in place of $u_{1, N-1}$ we contradict the minimality of $u_1$.
        
        Therefore, now we may suppose that $u_1(p^{B, h, 1}-\e e_1) = 0$. 
        Denote by $H^{B,2}:=\dseg{p^{B, h, 2}}{q^{B, h, 2}}$ the second horizontal slice of $B_1$. For simplicity we assume that $p^{B, h, 2}_1 < p^{B, h, 1}_1$, since the other situations are similar. We observe that \eqref{eq:3.40} and \eqref{eq:3.50} still hold. Therefore, we have that $\bar{p}:=p^{B, h, 2}-\e e_1-\e e_2$ satisfies  $u_1(\bar{p}) = -1$. Indeed $u_1(\bar{p})\neq 1$ because of (\ref{eq:3.40}), and $u_1(\bar{p})\neq 0$ again because of (\ref{eq:3.40}) and (\ref{eq:3.50}). Consider the segment $S:=\dseg{p^{B, h, 2}-\e e_2}{p^{B, h, 1}-\e e_1}\cap Z_1$. Let $n:=\#S$, and define the competitor $\tilde{u}_1$ by
        \[\tilde{u}_1(p) = \begin{cases}
        	-1&\text{ if }p\in\dseg{p^{B, h, 2}-\e e_2}{q^{B, h, 1}-n\e e_1}\\
            0&\text{ if }p\in \dseg{q^{B, h, 1}-(n-1)\e e_1}{q^{B, h, 1}},\\
            u_1(p)&\text{ otherwise},
        \end{cases}\]
        see Figure \ref{fig:4Figure_Lemma3_11}.

        \begin{figure}[H]
          \centering
          \resizebox{0.70\textwidth}{!}{\tikzset{every picture/.style={line width=0.75pt}} %set default line width to 0.75pt        

\begin{tikzpicture}[x=0.75pt,y=0.75pt,yscale=-1,xscale=1]
%uncomment if require: \path (0,300); %set diagram left start at 0, and has height of 300

%Straight Lines [id:da04292979047944878] 
\draw [color={rgb, 255:red, 208; green, 2; blue, 27 }  ,draw opacity=1 ][fill={rgb, 255:red, 208; green, 2; blue, 27 }  ,fill opacity=1 ]   (150.02,170) -- (150.02,160) -- (140.02,160) -- (110.44,160.33) -- (110.44,120.33) -- (180.44,120.33) -- (180.02,130) -- (240.02,130) -- (240.02,160) -- (210.02,160) -- (210.02,170) ;
%Shape: Rectangle [id:dp705848513410267] 
\draw  [color={rgb, 255:red, 126; green, 211; blue, 33 }  ,draw opacity=1 ][fill={rgb, 255:red, 126; green, 211; blue, 33 }  ,fill opacity=1 ] (150.44,170.33) -- (210.02,170.33) -- (210.02,180) -- (150.44,180) -- cycle ;
%Shape: Square [id:dp6066260510206823] 
\draw  [color={rgb, 255:red, 126; green, 211; blue, 33 }  ,draw opacity=1 ][fill={rgb, 255:red, 126; green, 211; blue, 33 }  ,fill opacity=1 ] (200.02,170.33) -- (209.68,170.33) -- (209.68,180) -- (200.02,180) -- cycle ;
%Shape: Rectangle [id:dp6687564710496207] 
\draw  [color={rgb, 255:red, 74; green, 144; blue, 226 }  ,draw opacity=1 ][fill={rgb, 255:red, 74; green, 144; blue, 226 }  ,fill opacity=1 ] (120.02,160) -- (150.02,160) -- (150.02,170) -- (120.02,170) -- cycle ;
%Shape: Rectangle [id:dp6601891534937542] 
\draw  [color={rgb, 255:red, 126; green, 211; blue, 33 }  ,draw opacity=1 ][fill={rgb, 255:red, 126; green, 211; blue, 33 }  ,fill opacity=1 ] (100.44,160.33) -- (120.44,160.33) -- (120.44,170.33) -- (100.44,170.33) -- cycle ;
%Straight Lines [id:da8286831617965374] 
\draw [color={rgb, 255:red, 208; green, 2; blue, 27 }  ,draw opacity=1 ][fill={rgb, 255:red, 208; green, 2; blue, 27 }  ,fill opacity=1 ]   (290.44,160.33) -- (290.44,120.33) -- (360.02,121) -- (360.02,131) -- (420.02,131) -- (420.44,160.33) ;
%Shape: Rectangle [id:dp43781125973025936] 
\draw  [color={rgb, 255:red, 126; green, 211; blue, 33 }  ,draw opacity=1 ][fill={rgb, 255:red, 126; green, 211; blue, 33 }  ,fill opacity=1 ] (330.02,171) -- (389.02,171) -- (389.02,180.33) -- (330.02,180.33) -- cycle ;
%Shape: Square [id:dp8037010922852987] 
\draw  [color={rgb, 255:red, 126; green, 211; blue, 33 }  ,draw opacity=1 ][fill={rgb, 255:red, 126; green, 211; blue, 33 }  ,fill opacity=1 ] (380.02,171) -- (390.02,171) -- (390.02,181) -- (380.02,181) -- cycle ;
%Shape: Rectangle [id:dp7174732853038097] 
\draw  [color={rgb, 255:red, 126; green, 211; blue, 33 }  ,draw opacity=1 ][fill={rgb, 255:red, 126; green, 211; blue, 33 }  ,fill opacity=1 ] (300.02,161) -- (360.44,161) -- (360.44,170.33) -- (300.02,170.33) -- cycle ;
%Shape: Rectangle [id:dp45479781857448087] 
\draw  [color={rgb, 255:red, 126; green, 211; blue, 33 }  ,draw opacity=1 ][fill={rgb, 255:red, 126; green, 211; blue, 33 }  ,fill opacity=1 ] (280.44,161) -- (300.02,161) -- (300.02,170.33) -- (280.44,170.33) -- cycle ;
%Shape: Rectangle [id:dp700283579218012] 
\draw  [color={rgb, 255:red, 74; green, 144; blue, 226 }  ,draw opacity=1 ][fill={rgb, 255:red, 74; green, 144; blue, 226 }  ,fill opacity=1 ] (360.44,160.33) -- (390.44,160.33) -- (390.44,170.33) -- (360.44,170.33) -- cycle ;
%Shape: Grid [id:dp8725284125625891] 
\draw  [draw opacity=0] (70.44,50.33) -- (461.44,50.33) -- (461.44,211.33) -- (70.44,211.33) -- cycle ; \draw  [color={rgb, 255:red, 155; green, 155; blue, 155 }  ,draw opacity=0.77 ] (70.44,50.33) -- (70.44,211.33)(80.44,50.33) -- (80.44,211.33)(90.44,50.33) -- (90.44,211.33)(100.44,50.33) -- (100.44,211.33)(110.44,50.33) -- (110.44,211.33)(120.44,50.33) -- (120.44,211.33)(130.44,50.33) -- (130.44,211.33)(140.44,50.33) -- (140.44,211.33)(150.44,50.33) -- (150.44,211.33)(160.44,50.33) -- (160.44,211.33)(170.44,50.33) -- (170.44,211.33)(180.44,50.33) -- (180.44,211.33)(190.44,50.33) -- (190.44,211.33)(200.44,50.33) -- (200.44,211.33)(210.44,50.33) -- (210.44,211.33)(220.44,50.33) -- (220.44,211.33)(230.44,50.33) -- (230.44,211.33)(240.44,50.33) -- (240.44,211.33)(250.44,50.33) -- (250.44,211.33)(260.44,50.33) -- (260.44,211.33)(270.44,50.33) -- (270.44,211.33)(280.44,50.33) -- (280.44,211.33)(290.44,50.33) -- (290.44,211.33)(300.44,50.33) -- (300.44,211.33)(310.44,50.33) -- (310.44,211.33)(320.44,50.33) -- (320.44,211.33)(330.44,50.33) -- (330.44,211.33)(340.44,50.33) -- (340.44,211.33)(350.44,50.33) -- (350.44,211.33)(360.44,50.33) -- (360.44,211.33)(370.44,50.33) -- (370.44,211.33)(380.44,50.33) -- (380.44,211.33)(390.44,50.33) -- (390.44,211.33)(400.44,50.33) -- (400.44,211.33)(410.44,50.33) -- (410.44,211.33)(420.44,50.33) -- (420.44,211.33)(430.44,50.33) -- (430.44,211.33)(440.44,50.33) -- (440.44,211.33)(450.44,50.33) -- (450.44,211.33)(460.44,50.33) -- (460.44,211.33) ; \draw  [color={rgb, 255:red, 155; green, 155; blue, 155 }  ,draw opacity=0.77 ] (70.44,50.33) -- (461.44,50.33)(70.44,60.33) -- (461.44,60.33)(70.44,70.33) -- (461.44,70.33)(70.44,80.33) -- (461.44,80.33)(70.44,90.33) -- (461.44,90.33)(70.44,100.33) -- (461.44,100.33)(70.44,110.33) -- (461.44,110.33)(70.44,120.33) -- (461.44,120.33)(70.44,130.33) -- (461.44,130.33)(70.44,140.33) -- (461.44,140.33)(70.44,150.33) -- (461.44,150.33)(70.44,160.33) -- (461.44,160.33)(70.44,170.33) -- (461.44,170.33)(70.44,180.33) -- (461.44,180.33)(70.44,190.33) -- (461.44,190.33)(70.44,200.33) -- (461.44,200.33)(70.44,210.33) -- (461.44,210.33) ; \draw  [color={rgb, 255:red, 155; green, 155; blue, 155 }  ,draw opacity=0.77 ]  ;
%Straight Lines [id:da28378896758630134] 
\draw    (100.02,60) -- (100.02,140) -- (250.02,140) ;
%Straight Lines [id:da8084171355568426] 
\draw    (280.02,61) -- (280.44,140.33) -- (430.44,140.33) ;
%Straight Lines [id:da0059680726459047095] 
\draw    (240.44,100.33) -- (268.44,100.33) ;
\draw [shift={(270.44,100.33)}, rotate = 180] [color={rgb, 255:red, 0; green, 0; blue, 0 }  ][line width=0.75]    (10.93,-3.29) .. controls (6.95,-1.4) and (3.31,-0.3) .. (0,0) .. controls (3.31,0.3) and (6.95,1.4) .. (10.93,3.29)   ;
%Shape: Brace [id:dp32816616734456516] 
\draw   (120.44,170.33) .. controls (120.44,174.45) and (122.5,176.51) .. (126.62,176.51) -- (126.62,176.51) .. controls (132.5,176.51) and (135.44,178.57) .. (135.44,182.69) .. controls (135.44,178.57) and (138.38,176.51) .. (144.27,176.51)(141.62,176.51) -- (144.27,176.51) .. controls (148.38,176.51) and (150.44,174.45) .. (150.44,170.33) ;
%Shape: Brace [id:dp18241046995518184] 
\draw   (300.44,170.33) .. controls (300.44,174.45) and (302.5,176.51) .. (306.62,176.51) -- (306.62,176.51) .. controls (312.5,176.51) and (315.44,178.57) .. (315.44,182.69) .. controls (315.44,178.57) and (318.38,176.51) .. (324.27,176.51)(321.62,176.51) -- (324.27,176.51) .. controls (328.38,176.51) and (330.44,174.45) .. (330.44,170.33) ;

% Text Node
\draw (346.02,70.4) node [anchor=north west][inner sep=0.75pt]    {$\tilde{u}_{1}$};
% Text Node
\draw (331,160.4) node [anchor=north west][inner sep=0.75pt]  [font=\scriptsize]  {$p^{B,h,1}$};
% Text Node
\draw (382,160.4) node [anchor=north west][inner sep=0.75pt]  [font=\scriptsize]  {$q^{B,h,1}$};
% Text Node
\draw (336.02,121.4) node [anchor=north west][inner sep=0.75pt]  [font=\footnotesize]  {$B_{1}$};
% Text Node
\draw (292.02,149.4) node [anchor=north west][inner sep=0.75pt]  [font=\scriptsize]  {$p^{B,h,2}$};
% Text Node
\draw (412.02,148.4) node [anchor=north west][inner sep=0.75pt]  [font=\scriptsize]  {$q^{B,h,2}$};
% Text Node
\draw (310.02,185.4) node [anchor=north west][inner sep=0.75pt]  [font=\footnotesize]  {$S$};
% Text Node
\draw (151,159.4) node [anchor=north west][inner sep=0.75pt]  [font=\scriptsize]  {$p^{B,h,1}$};
% Text Node
\draw (202,159.4) node [anchor=north west][inner sep=0.75pt]  [font=\scriptsize]  {$q^{B,h,1}$};
% Text Node
\draw (192.02,73.4) node [anchor=north west][inner sep=0.75pt]    {$u_{1}$};
% Text Node
\draw (156.02,122.4) node [anchor=north west][inner sep=0.75pt]  [font=\footnotesize]  {$B_{1}$};
% Text Node
\draw (112.02,148.4) node [anchor=north west][inner sep=0.75pt]  [font=\scriptsize]  {$p^{B,h,2}$};
% Text Node
\draw (232.02,147.4) node [anchor=north west][inner sep=0.75pt]  [font=\scriptsize]  {$q^{B,h,2}$};
% Text Node
\draw (130.02,184.4) node [anchor=north west][inner sep=0.75pt]  [font=\footnotesize]  {$S$};

\end{tikzpicture}}
          \caption{The case $\dseg{p^{B, h, 1}-\e e_2}{q^{B, h, 1}-\e e_2}\cap Z_1 = \emptyset$. The configuration of $B_1$ on the right reduces the total energy.}
          \label{fig:4Figure_Lemma3_11}
        \end{figure}
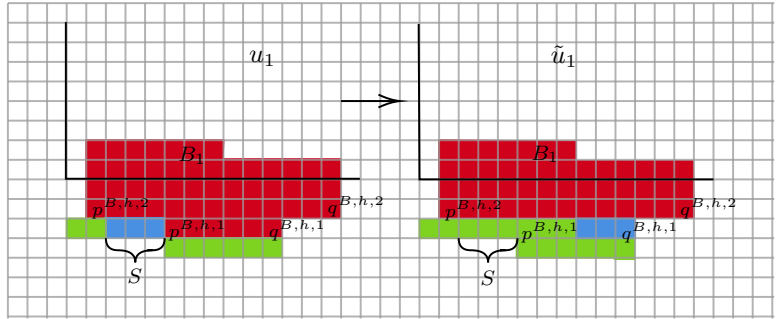

        In this case we have $\E_\e(\tilde{u}_1)\le\E_\e(u_1)-\e(1-k)$, and again estimate \eqref{eq:3.energy_diss_estimate} holds for $\tilde{u}_1$ in place of $u_{1,N-1}$, which contradicts the minimality of $u_1$.
    \vspace{5pt}\\
    \emph{Step 3.} Up to renaming the components $B_i$, we can suppose that $B_1, \dots, B_{n}$ are all contained in $T_1$ (which was defined at the beginning of Step 2). Since $B_i$ is strongly connected for every $i$, up to replacing $T_1$ with one of its strongly connected components, we can suppose that $T_1$ is strongly connected too. Since furthermore we assumed that $T_1\subset I_0$, in particular we have also that  \begin{equation}\label{eq:3.90}\partial^+\I\cap T_1\subset\I_0.\end{equation}
    We complete the proof under the assumption that the set $T_1$ coincides with the lower left component of $R_I\setminus I$, since the proof of the general case follows the same argument.
        Denote by $H^{\I,i} = \dseg{p^{\I, h, i}}{q^{\I, h, i}}:\:i=1, \dots, n_h,\,V^{\I, i} = \dseg{p^{\I, v, i}}{q^{\I, v, i}}:\:i=1, \dots, n_v$ the horizontal and vertical slices of $\I$. Suppose that there exist two points $\{p, q\}\subset(\partial^-I\cap\partial^+T_1)$ such that $q = p+\e e_1$ and $\{p-\e e_2, q-\e e_2\}\subset (\partial^+I\cap\partial^-T_1)$. In view of \eqref{eq:3.90}, and since we are assuming that $I_1\cup Z_1$ is not a rectangle, we deduce that case (ii) of Lemma \ref{lemma:shape_optimality} occurs. From the final claim of that Lemma we deduce that $\{p, q\}\subset H^{I, 2}$. Similarly, if there exist two points $\{p, q\}\subset(\partial^-I\cap\partial^+T_1)$ such that $q = p-\e e_2$ and $\{p-\e e_1, q-\e e_1\}\subset (\partial^+I\cap\partial^-T_1)$ then again from Lemma \ref{lemma:shape_optimality} we deduce that $\{p, q\}\subset V^{I, 2}$. As a geometrical consequence, it follows that there exists an index $l$ such that $p^{I, v, 2} = p^{I, h, l}$ and that 
    \begin{equation}\label{eq:boundary_regularity_horizontal}
    	p^{I, h, j} = p^{I, h, 2}+(j-2)(-\e e_1+\e e_2),\;\;\text{ for every }\;\;j\in 2, \dots, l,
    \end{equation}
    and similarly there exists an index $m$ such that $p^{I, h, 2} = p^{I, v, m}$ and that 
    \begin{equation}\label{eq:boundary_regularity_vertical}
    	p^{I, v, j} = p^{I, v, 2}+(j-2)(\e e_1-\e e_2),\;\;\text{ for every }\;\;j\in 2, \dots, m.
    \end{equation}
    Let $B:=\cup_{i=1}^{n}B_i$. Since by assumption $B$ is not empty, and since $I\cup B$ is not weakly connected, as a direct geometrical consequence we deduce that $l>2$. 
    For simplicity, we complete the proof under the assumption that $\#\dseg{p^{I, h, 2}-\e e_2}{p^{I, h, 1}-\e e_1} = 1 = \#\dseg{p^{I, v, 2}-\e e_1}{p^{I, v, 1}-\e e_2}$.
    
    Since $I_1\cup Z_1$ is strongly connected, there exists a path $\bar{\pi}$ of minimal length connecting $B$ and $I$ entirely contained in $I_1\cup Z_1$, i.e. there exist points $p^B\in B, p^I\in I$ and $\bar{\pi}\in\Gamma^s_{p^B,p^\I}(\Z_1\cup\{p^B, p^\I\})$. Suppose first that there exists an index $j\in \{1, \dots, l-1\}$ such that $p^{I, h, j}-\e e_1\in\bar{\pi}.$ Since $\bar{\pi}$ is strongly connected, without loss of generality we can suppose that $p^{I, h, j}-2\e e_1\in Z_1.$ We have to distinguish between the case $j=1$ and $j\neq 1$. In this latter case, if also $p^{I, h, j}-\e e_1-\e e_2\in Z_1$ then the cell $p^{I, h, j}-\e e_1$ is a surfactant in the corner (Definition \ref{def: corner unit}) and therefore, in view of Lemma \ref{lemma:shape_optimality}-1.1), we deduce that every weakly connected component of $Z_1\cup I_1$ is a rectangle, which is a contradiction with our initial assumption. We may suppose therefore that $u_1(p^{I, h, j}-\e e_1-\e e_2) = -1.$ Now, in view of \eqref{eq:boundary_regularity_horizontal} together with the assumption $j\ge 2$, we have that $p^{I, h, j}-\e e_2\in\zero_1\cap I_0$, and therefore, from Lemma \ref{lemma:shape_optimality}-1.2) we have that $p^{I, h, j}-\e e_2\in Z_1$. Now, by replacing $u_1(p^{I, h, j}-\e e_1-\e e_2)\mapsto 0$ and $u_1(p^{I, h, j}-\e e_1)\mapsto 1$ (see Figure \ref{fig:5Figure_Lemma3_11}) we strictly reduce the dissipation without increasing the energy, which contradicts the minimality of $u_1$.

       \begin{figure}[H]
          \centering
          \resizebox{0.70\textwidth}{!}{\input{small_components_final_part}}
          \caption{An example of substitution in the case $l=11$ and $j=8$, with $u_1(p^{I,h,j}-\e e_1-\e e_2)=-1$.}
          \label{fig:5Figure_Lemma3_11}
        \end{figure}
    Suppose now that $j=1.$ Since $l\ge 3$, we deduce that $p^{I, h, 2}-\e e_1\in\zero_1\cap I_0$ and therefore, again by Lemma \ref{lemma:shape_optimality}-1.2), it holds $p^{I, h, 2}-\e e_1\in Z_1.$ We conclude with the same reasoning as above.

    We are left to consider the case in which for every index $j\in\{1, \dots, l-1\}$ it holds $p^{I, h, j}-\e e_1\not\in\bar{\pi}.$ In particular $\bar{\pi}\setminus R_I\neq\emptyset$. We observe that for every $c>0$ there exists $\e(c)$ such that for every $\e\le\e(c)$ and for every $q\in B$ it holds \[
        d^\e_1(q, \partial I_0)\ge c \e,\]
    since otherwise we could argue as in step two, starting from \eqref{eq:3.31}, ``removing'' either the lowest or the leftmost slice of $B$, contradicting the minimality of $u_1$. This last consideration together with the fact that $\bar{\pi}\setminus R_I\neq\emptyset$ implies that there exists a point $\bar{p}\in\bar{\pi}\setminus R_I$ such that $\#\nn(\bar{p})\cap\{u_1 = -1\}\ge 2$ and $\nn(\bar{p})\cap I_1 = \emptyset.$ Consider now the point $\bar{q}:=p^{I, h, 1}-2\e e_1$. If $\bar{q}\in Z_1$ then we can conclude with the same argument above. If instead $u_1(\bar{q}) = -1$ then we can define a new competitor $\tilde{u}_1$ by replacing $u_1(\bar{p})\mapsto -1$ and $u_1(\bar{q})\mapsto 0$. We have that $\tilde{u}_1$ is a minimizer of $\enF(\cdot, u_0)$. Even in this case we find a contradiction with the same argument above.
\end{proof}

\section{The case \texorpdfstring{$\gamma >2$}{gamma>2}}\label{sec:gamma>2}
In this section we study the minimizing movements as in Definition \ref{def:minimizing_movement} assuming that $\gamma>2$. We will always assume that the starting set $I^\e_0$ associated to $u^\e_0$ is a discrete octagon, in the sense of the following definitions.
\begin{definition}\label{def:wulff_shape}
	[Wulff-type shape] We denote by $\mathcal{W}$ the set of all the convex octagons contained in $\rr^2$ whose sides are parallel to $e_1, e_2, e_1+e_2$ or $e_1-e_2$. We say that an element of $\mathcal{W}$ is \emph{non degenerate} if it has exactly eight sides of positive length. For simplicity, we refer to these Wulff-type shapes as \emph{octagons}. For $i=1, \dots, 4,$ we denote by $P_i$ (resp. $D_i$) the length of the sides of the octagon which are parallel either to $e_1$ or $e_2$ (resp. either to $e_1 + e_2$ or $e_1-e_2$). We set $P_1$ (resp. $D_1$) to be the length of the lower side parallel to $e_1$ (resp. to $e_1 - e_2$) and we label all lengths $P_i$ and $D_i$ in clockwise order.
\end{definition}

\begin{definition}\label{def:discrete_octagon}[Discrete Wulff-type shape] We say that $I\subset \e\ZZ^2$ is a discrete octagon if there exists $A\in\mathcal{W}$ such that $I = A\cap\e\ZZ^2$ (see Figure \ref{fig:disc_oct}).
    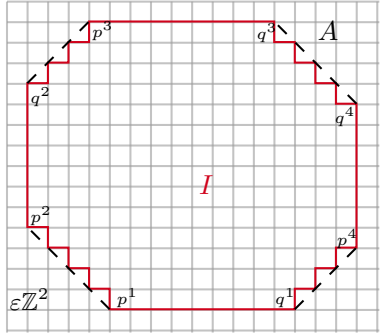
\begin{figure}[H]
       \centering
       \resizebox{0.35\textwidth}{!}{\tikzset{every picture/.style={line width=0.75pt}} %set default line width to 0.75pt        

\begin{tikzpicture}[x=0.75pt,y=0.75pt,yscale=-1,xscale=1]
%uncomment if require: \path (0,300); %set diagram left start at 0, and has height of 300

%Shape: Grid [id:dp07754212788698733] 
\draw  [draw opacity=0] (260.44,90.33) -- (441.44,90.33) -- (441.44,251.33) -- (260.44,251.33) -- cycle ; \draw  [color={rgb, 255:red, 155; green, 155; blue, 155 }  ,draw opacity=0.6 ] (260.44,90.33) -- (260.44,251.33)(270.44,90.33) -- (270.44,251.33)(280.44,90.33) -- (280.44,251.33)(290.44,90.33) -- (290.44,251.33)(300.44,90.33) -- (300.44,251.33)(310.44,90.33) -- (310.44,251.33)(320.44,90.33) -- (320.44,251.33)(330.44,90.33) -- (330.44,251.33)(340.44,90.33) -- (340.44,251.33)(350.44,90.33) -- (350.44,251.33)(360.44,90.33) -- (360.44,251.33)(370.44,90.33) -- (370.44,251.33)(380.44,90.33) -- (380.44,251.33)(390.44,90.33) -- (390.44,251.33)(400.44,90.33) -- (400.44,251.33)(410.44,90.33) -- (410.44,251.33)(420.44,90.33) -- (420.44,251.33)(430.44,90.33) -- (430.44,251.33)(440.44,90.33) -- (440.44,251.33) ; \draw  [color={rgb, 255:red, 155; green, 155; blue, 155 }  ,draw opacity=0.6 ] (260.44,90.33) -- (441.44,90.33)(260.44,100.33) -- (441.44,100.33)(260.44,110.33) -- (441.44,110.33)(260.44,120.33) -- (441.44,120.33)(260.44,130.33) -- (441.44,130.33)(260.44,140.33) -- (441.44,140.33)(260.44,150.33) -- (441.44,150.33)(260.44,160.33) -- (441.44,160.33)(260.44,170.33) -- (441.44,170.33)(260.44,180.33) -- (441.44,180.33)(260.44,190.33) -- (441.44,190.33)(260.44,200.33) -- (441.44,200.33)(260.44,210.33) -- (441.44,210.33)(260.44,220.33) -- (441.44,220.33)(260.44,230.33) -- (441.44,230.33)(260.44,240.33) -- (441.44,240.33)(260.44,250.33) -- (441.44,250.33) ; \draw  [color={rgb, 255:red, 155; green, 155; blue, 155 }  ,draw opacity=0.6 ]  ;
%Straight Lines [id:da44147834662389684] 
\draw [color={rgb, 255:red, 208; green, 2; blue, 27 }  ,draw opacity=1 ][line width=0.75]    (270.33,130) -- (270.33,200) -- (280.33,200) -- (280.33,210) -- (290.33,210) -- (290.33,220) -- (300.33,220) -- (300.33,230) -- (310.33,230) -- (310.33,240) -- (400.33,240) -- (400.33,230) -- (410.33,230) -- (410.33,220) -- (420.33,220) -- (420.33,210) -- (430.33,210) -- (430.33,140) -- (420.33,140) -- (420.33,130) -- (410.33,130) -- (410.33,120) -- (400.33,120) -- (400.33,110) -- (390.33,110) -- (390.33,100) -- (300.33,100) -- (300.33,110) -- (290.33,110) -- (290.33,120) -- (280.33,120) -- (280.33,130) -- cycle ;
%Straight Lines [id:da5722454048286466] 
\draw  [dash pattern={on 4.5pt off 4.5pt}]  (270.33,130) -- (300.33,100) ;
%Straight Lines [id:da9670764259613872] 
\draw  [dash pattern={on 4.5pt off 4.5pt}]  (400.33,240) -- (430.33,210) ;
%Straight Lines [id:da8171218856942098] 
\draw  [dash pattern={on 4.5pt off 4.5pt}]  (310.33,240) -- (270.33,200) ;
%Straight Lines [id:da5459149063985377] 
\draw  [dash pattern={on 4.5pt off 4.5pt}]  (430.33,140) -- (390.33,100) ;

% Text Node
\draw (352.33,173.4) node [anchor=north west][inner sep=0.75pt]  [font=\normalsize,color={rgb, 255:red, 208; green, 2; blue, 27 }  ,opacity=1 ]  {$I$};
% Text Node
\draw (410,98.4) node [anchor=north west][inner sep=0.75pt]    {$A$};
% Text Node
\draw (389.33,228.4) node [anchor=north west][inner sep=0.75pt]  [font=\tiny]  {$q^{1}$};
% Text Node
\draw (270.33,188.4) node [anchor=north west][inner sep=0.75pt]  [font=\tiny]  {$p^{2}$};
% Text Node
\draw (300.33,98.4) node [anchor=north west][inner sep=0.75pt]  [font=\tiny]  {$p^{3}$};
% Text Node
\draw (418.33,140.4) node [anchor=north west][inner sep=0.75pt]  [font=\tiny]  {$q^{4}$};
% Text Node
\draw (260.33,228.4) node [anchor=north west][inner sep=0.75pt]  [font=\small]  {$\varepsilon\mathbb{Z}^2$};
% Text Node
\draw (312.33,228.4) node [anchor=north west][inner sep=0.75pt]  [font=\tiny]  {$p^{1}$};
% Text Node
\draw (270.33,130.4) node [anchor=north west][inner sep=0.75pt]  [font=\tiny]  {$q^{2}$};
% Text Node
\draw (380.33,99.4) node [anchor=north west][inner sep=0.75pt]  [font=\tiny]  {$q^{3}$};
% Text Node
\draw (419.33,199.4) node [anchor=north west][inner sep=0.75pt]  [font=\tiny]  {$p^{4}$};

\end{tikzpicture}}
       \caption{In red an example of discrete octagon $I$.}
    \label{fig:disc_oct}
    \end{figure}
Since a discrete octagon is a staircase set, we adopt notation \eqref{eq:parallel_sides_and_slices} to refer to the sides of $I$ which are parallel to $e_1$ and $e_2$. If $\PP_i = \dseg{p^i}{q^i}$ for $i=1, \dots, 4$, then we set $\DD_1:=\dseg{p^2}{p^1}$ to be the ``diagonal side'' of $I$ which is parallel to $e_1-e_2$, and we define $\DD_i:i=2, 3, 4$ analogously to be the other ``diagonal sides'' of $I,$ labeled clockwise. If $P_i$ and $D_i$ are the side lengths of the smallest octagon $A\in\mathcal{W}$ which contains $A_I$, in the following we say that $P_i$ (resp. $D_i$) is the length of $\PP_i$ (resp. $\DD_i$). 
\end{definition}

\begin{remark}\label{rem:geometry_octagon}
	Let $\I\subset\e\ZZ^2$ be a staircase set and define $\PP_i$ as in \eqref{eq:parallel_sides_and_slices}.
	Then, $\I$ is an octagon if and only if for every pair $p,\,q\in\partial^-\I$ such that $p=q+\e e_i$ for $i=1$ or $i=2$, there exists $j\in\{1, \dots, 4\}$ such that $\{p, q\}\subset \PP_j.$ 
\end{remark}

Next Lemma shows that the octagonal shape is preserved during the minimizing movements.

\begin{lemma}\label{lemma:octagonal minimizer} 
     Let $\gamma>2$. Let $u_0,u_1\in\A_\e$ be such that $u_0$ verifies assumption \eqref{H} of Subsection \ref{subsec:connectedness} and $u_1$ is a minimizer of $\enF(\cdot, u_0)$. Suppose furthermore that $\I_0$ is a discrete octagon in the sense of Definition \ref{def:discrete_octagon}. Then there exists $\bar{\e}$ such that for every $\e\le\bar{\e}$ the set $\I_1$ is a discrete octagon contained in $I_0$, and it holds $\partial^{+}\I_1=\Z_1$.
\end{lemma}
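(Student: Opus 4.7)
The strategy I would take is to decompose the proof into three stages: reduce to the staircase structure via earlier results, establish complete wetting $Z_1 = \partial^+ I_1$, and then upgrade from staircase to octagon. First, I would invoke Proposition~\ref{prop:connectedness} with $u_0,u_1$ (since the initial discrete octagon $I_0$ satisfies the regularity assumption \eqref{H}) to conclude that $I_1\subset I_0$ is a strongly connected staircase set for all $\e$ small enough. It then remains to show $Z_1=\partial^+ I_1$ and that $I_1$ is a discrete octagon in the sense of Definition~\ref{def:discrete_octagon}.

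The crux of the wetting $Z_1=\partial^+I_1$ is that the assumption $\gamma>2$ makes surfactant mass changes essentially free compared to bulk energy changes: the per-cell dissipation cost is $\e^\gamma/\tau=\e^{\gamma-1}/\zeta=o(\e)$, whereas typical single-cell energy contributions are $\Theta(\e)$. I would first show $Z_1\subset\partial^+I_1$ via Lemma~\ref{lemma:surfactant_placement} combined with Remark~\ref{rem:blu_on_the_boundary}, for which it suffices to check $\#Z_1\le\#\partial^+I_1$; otherwise a surplus surfactant cell $p\notin\partial^+I_1$ has at least one $-1$ neighbor (using strong connectedness of $I_1\cup Z_1$ from the proof of Proposition~\ref{prop:connectedness}), and flipping $u_1(p)\mapsto -1$ would save at least $(1-k)\e$ of energy against an $o(\e)$ dissipation cost, contradicting minimality. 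Next, to show $\partial^+I_1\subset Z_1$, if some $p\in\partial^+I_1$ had $u_1(p)=-1$, then switching $u_1(p)\mapsto 0$ would change the energy by $(1-k)\e\,n_{-1}-(1+k)\e\,n_p$, where $n_p=\#(\nn(p)\cap I_1)\ge 1$ and $n_{-1}=\#(\nn(p)\cap\{u_1=-1\})$. Exploiting $k>1/3$, the staircase geometry of $I_1$, and the partial surfactant cover already in place, one checks this difference is $\le -c\e$ for some $c>0$, again dominating the $o(\e)$ dissipation cost.

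Once $Z_1=\partial^+I_1$ has been established, Lemma~\ref{lemma:energy_surfactant}(ii) applied to $G=I_1\cup Z_1$ yields
\begin{equation*}
\E_\e(u_1)=2\e(1-k)\#\partial^+I_1+\tfrac{1}{2}(1-k)\bigl(Per(I_1\cup\partial^+I_1)+Per(I_1)\bigr),
\end{equation*}
which, via \eqref{eq:length_perimeter} and \eqref{eq:length_external_boundary}, reduces to a functional depending only on $n_h+n_v$ and the concave-corner count of $I_1$. By Remark~\ref{rem:geometry_octagon}, $I_1$ is a discrete octagon iff it has no concave corners, i.e., iff each pair of adjacent cells of $\partial^-I_1$ on an axis lies in a common $\PP_j$. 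If some concave corner existed, a local modification (e.g.\ shifting a protruding cell to smooth the corner while keeping $\#I_1$ and $\#Z_1$ essentially fixed) would strictly decrease the sum $Per(I_1)+Per(I_1\cup\partial^+I_1)$ by $\Theta(\e)$, thereby beating both the $o(\e)$ surfactant dissipation and the $O(\e)$ bulk dissipation at comparable geometric cost, contradicting minimality of $u_1$.

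I expect Step~2 (complete wetting) to be the main obstacle: the delicate balance between the $o(\e)$ dissipation of surfactant mass changes and the $\Theta(\e)$ local energetic effects must be tracked case-by-case near thin surfactant patches at the convex corners of $I_1$, where the bookkeeping of $n_p$ and $n_{-1}$ in the local energy variation crucially relies on the restriction $k>1/3$ and on the staircase regularity of $I_1$ coming from Proposition~\ref{prop:connectedness}.
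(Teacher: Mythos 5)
Your overall decomposition (staircase via Proposition~\ref{prop:connectedness}, then complete wetting, then upgrade to octagon) differs from the paper's in a way that creates a genuine gap. The paper proceeds as: (a) $\Z_1\subset\partial^+\I_1$, (b) $\I_1$ is an octagon, (c) $\partial^+\I_1\subset\Z_1$. You propose instead to establish both inclusions of $\Z_1=\partial^+\I_1$ \emph{before} the octagonal structure. The problem is your argument for $\partial^+\I_1\subset\Z_1$. You correctly compute that switching $u_1(p)\mapsto 0$ for $p\in\partial^+\I_1$ with $u_1(p)=-1$ changes the energy by $\e\bigl[(1-k)n_{-1}-(1+k)n_p\bigr]$, where $n_p=\#(\nn(p)\cap\I_1)$. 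But if $n_p=1$ and $n_{-1}=3$ (a cell below a flat side of $\partial^- I_1$ whose three other neighbors are all $-1$), this equals $\e(2-4k)$, which is \emph{positive} for $1/3<k<1/2$. So the flip does not give a contradiction, and since the dissipation change is $o(\e)$ it cannot rescue you either. The claim ``one checks this difference is $\le -c\e$'' is the precise point where your argument breaks, and you flag it yourself as the main obstacle but do not resolve it.

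The paper's ordering is what resolves it: once $\I_1$ is known to be an octagon and $\Z_1\subset\partial^+\I_1$, the cells of the upper diagonal $\DD_{1,i}^+$ have $n_p=2$, and since they lie outside $\partial^+\I_1$ their two remaining neighbors must be $-1$, giving an energy change exactly $-4k\e<0$. This puts surfactant on all diagonals first; then for cells along $\PP_{1,i}\mp\e e_i$ one can always find a $-1$ cell adjacent to an existing $0$ cell, which forces $n_{-1}\le 2$ and a change $\le\e(1-3k)<0$. The octagonal structure is thus \emph{logically prior} to full wetting. Your proposed Stage~3 (upgrade to octagon via the energy identity of Lemma~\ref{lemma:energy_surfactant}(ii) plus a concave-corner-smoothing argument) is also a different route from the paper's Step~2, which instead applies the structural Lemma~\ref{lemma:shape_optimality} to force $S\subset\Z_1$ and $u_1(p^{h,j-1}-\e e_2)=-1$, and then flips the $+1$ cell $p^{h,j-1}\mapsto 0$ to lose $\e(1-3k)$ in energy. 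Your smoothing argument requires knowing $\Z_1=\partial^+\I_1$ to use the energy formula, which makes the two stages circularly dependent, and the detailed bookkeeping showing that a local corner modification strictly decreases $Per(\I_1)+Per(\I_1\cup\partial^+\I_1)$ while paying only $O(\e^{1+\mu})$ dissipation is not carried out.

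Your Stage~1 and the first half of Stage~2 (the $\Z_1\subset\partial^+\I_1$ direction via a direct flip costing $-2\e(1-k)+\e^{\gamma-1}/\zeta<0$) do match the paper's Step~1 and are sound.
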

     \begin{proof}
         \emph{Step 1}. From Proposition  \ref{prop:connectedness} we have that $\I_1$ is a connected staircase set contained in $\I_0$. We prove that $\Z_1\subset\partial^+\I_1.$ In fact, if $\Z_1\setminus \partial^+\I_1\neq\emptyset$, then there exists $z\in\Z_1\setminus \partial^+\I_1$ such that $\#(\mathcal{N}(z)\cap\{u_1 = -1\})\ge 2.$ If we define a new competitor $\tilde{u}_1$ replacing $u_1(z)\mapsto -1$ then we have $\dis^1_\e(\tilde{u}_1, u_0) = \dis^1_\e(u_1, u_0) $ and, since $\tau = \zeta \e$ and since $\gamma>2$,
         \[\E_\e(\tilde{u}_1)-\E_\e(u_1)+\frac{\e^\gamma}{\tau}\left(\dis^0_\e(\tilde{u}_1, u_0)-\dis^0_\e(u_1, u_0)\right)\le -2\e(1-k)+\frac{\e^{\gamma-1}}{\zeta}<0\]
         for every $\e$ sufficiently small, which contradicts the minimality of $u_1$. \vspace{5pt}\\
         \emph{Step 2}. We prove that $\I_1$ is an octagon. From Proposition \ref{prop:connectedness} we have that $\I_1$ is a staircase set. Denote by $H^i = \dseg{p^{h, i}}{q^{h, i}}:\:i=1, \dots, n_h$ its horizontal slices.
         In view of Remark \ref{rem:geometry_octagon}, we need to show that for every $p, q\in\partial^-\I_1$ such that $q=p+\e e_i$ for $i=1, 2$ there exists $j\in\{1, \dots, 4\}$ such that $\{p, q\}\subset\PP_j$. We argue by contradiction. Without loss of generality, we may suppose that there exist $p, q\in H^j$ with $1< j< n_h,\,q = p+\e e_1$, and $\{p, q\}\subset \dseg{p^{h, j}}{p^{h, j-1}+\e e_2-\e e_1}$ so that, in particular, it holds $S:=\dseg{p^{h, j}-\e e_2}{p^{h, j-1}-\e e_1}\subset\{u_1\neq 1\}.$ Since $I_0$ is an octagon, since $\#S\ge 2$, and since $I_1\subset I_0$ we deduce that $S\not\subset\{u_0\neq 1\}$ (see the shape on the left of Figure \eqref{fig:octagonal minimizer}). Therefore, from Lemma \ref{lemma:shape_optimality}, we deduce that $S\subset\Z_1$. Moreover, from the first step we deduce that if $I_1\cup\Z_1$ is a rectangle, then $u_1(p^{h, j-1}-\e e_2) = -1$. If instead $I_1\cup\Z_1$ is not a rectangle, then by Lemma \ref{lemma:shape_optimality}-(ii) we deduce again that $u_1(p^{h, j-1}-\e e_2) = -1$. Summing up, we have 
         \begin{equation}\label{eq:fig_10}
         	S\subset\Z_1\;\; \text{ and }\;\;u_1(p^{h, j-1}-\e e_2) = -1.
         \end{equation}
      We consider the competitor $\tilde{u}_1$ obtained replacing $u_1(p^{h, j-1})\mapsto 0$ (see Figure \ref{fig:octagonal minimizer}).
         
    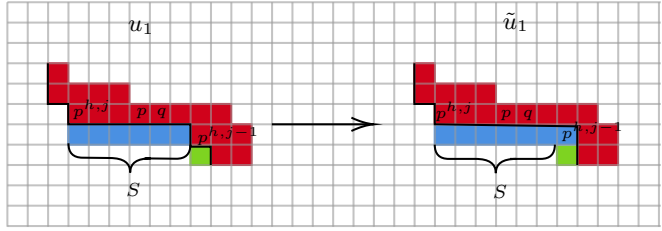
\begin{figure}[H]
    \centering    \resizebox{0.60\textwidth}{!}{\tikzset{every picture/.style={line width=0.75pt}} %set default line width to 0.75pt        

\begin{tikzpicture}[x=0.75pt,y=0.75pt,yscale=-1,xscale=1]
%uncomment if require: \path (0,300); %set diagram left start at 0, and has height of 300

%Straight Lines [id:da73314360693766] 
\draw [color={rgb, 255:red, 208; green, 2; blue, 27 }  ,draw opacity=1 ][fill={rgb, 255:red, 208; green, 2; blue, 27 }  ,fill opacity=1 ]   (100.33,120) -- (100.33,110) -- (90.33,110) -- (90.33,90) -- (100.33,90) -- (100.33,100) -- (130.33,100) -- (130.33,110) -- (180.33,110) -- (180.33,120) -- (190.33,120) -- (190.33,140) -- (170.33,140) -- (170.33,130) -- (160.33,130) -- (160.33,120) ;
%Shape: Rectangle [id:dp23344445646343803] 
\draw  [color={rgb, 255:red, 74; green, 144; blue, 226 }  ,draw opacity=1 ][fill={rgb, 255:red, 74; green, 144; blue, 226 }  ,fill opacity=1 ] (100.33,120) -- (160.33,120) -- (160.33,130) -- (100.33,130) -- cycle ;
%Shape: Rectangle [id:dp019323404221835316] 
\draw  [color={rgb, 255:red, 126; green, 211; blue, 33 }  ,draw opacity=1 ][fill={rgb, 255:red, 126; green, 211; blue, 33 }  ,fill opacity=1 ] (160.33,131) -- (170.33,131) -- (170.33,140) -- (160.33,140) -- cycle ;
%Straight Lines [id:da16538626270926482] 
\draw [color={rgb, 255:red, 208; green, 2; blue, 27 }  ,draw opacity=1 ][fill={rgb, 255:red, 208; green, 2; blue, 27 }  ,fill opacity=1 ]   (280.33,120) -- (280.33,109) -- (270.33,109) -- (270.33,90) -- (280.33,90) -- (280.33,100) -- (310.33,100) -- (310.33,110) -- (360.33,110) -- (360.33,120) -- (370.33,120) -- (370.33,140) -- (350.33,140) -- (350.33,129) -- (350.33,121) ;
%Shape: Rectangle [id:dp7263937500509711] 
\draw  [color={rgb, 255:red, 74; green, 144; blue, 226 }  ,draw opacity=1 ][fill={rgb, 255:red, 74; green, 144; blue, 226 }  ,fill opacity=1 ] (280.33,121) -- (350.33,121) -- (350.33,130) -- (280.33,130) -- cycle ;
%Shape: Rectangle [id:dp9089879472071049] 
\draw  [color={rgb, 255:red, 126; green, 211; blue, 33 }  ,draw opacity=1 ][fill={rgb, 255:red, 126; green, 211; blue, 33 }  ,fill opacity=1 ] (340.33,131) -- (350.33,131) -- (350.33,140) -- (340.33,140) -- cycle ;
%Shape: Grid [id:dp4186806710582941] 
\draw  [draw opacity=0] (70.44,60) -- (391.33,60) -- (391.33,170.33) -- (70.44,170.33) -- cycle ; \draw  [color={rgb, 255:red, 155; green, 155; blue, 155 }  ,draw opacity=0.6 ] (70.44,60) -- (70.44,170.33)(80.44,60) -- (80.44,170.33)(90.44,60) -- (90.44,170.33)(100.44,60) -- (100.44,170.33)(110.44,60) -- (110.44,170.33)(120.44,60) -- (120.44,170.33)(130.44,60) -- (130.44,170.33)(140.44,60) -- (140.44,170.33)(150.44,60) -- (150.44,170.33)(160.44,60) -- (160.44,170.33)(170.44,60) -- (170.44,170.33)(180.44,60) -- (180.44,170.33)(190.44,60) -- (190.44,170.33)(200.44,60) -- (200.44,170.33)(210.44,60) -- (210.44,170.33)(220.44,60) -- (220.44,170.33)(230.44,60) -- (230.44,170.33)(240.44,60) -- (240.44,170.33)(250.44,60) -- (250.44,170.33)(260.44,60) -- (260.44,170.33)(270.44,60) -- (270.44,170.33)(280.44,60) -- (280.44,170.33)(290.44,60) -- (290.44,170.33)(300.44,60) -- (300.44,170.33)(310.44,60) -- (310.44,170.33)(320.44,60) -- (320.44,170.33)(330.44,60) -- (330.44,170.33)(340.44,60) -- (340.44,170.33)(350.44,60) -- (350.44,170.33)(360.44,60) -- (360.44,170.33)(370.44,60) -- (370.44,170.33)(380.44,60) -- (380.44,170.33)(390.44,60) -- (390.44,170.33) ; \draw  [color={rgb, 255:red, 155; green, 155; blue, 155 }  ,draw opacity=0.6 ] (70.44,60) -- (391.33,60)(70.44,70) -- (391.33,70)(70.44,80) -- (391.33,80)(70.44,90) -- (391.33,90)(70.44,100) -- (391.33,100)(70.44,110) -- (391.33,110)(70.44,120) -- (391.33,120)(70.44,130) -- (391.33,130)(70.44,140) -- (391.33,140)(70.44,150) -- (391.33,150)(70.44,160) -- (391.33,160)(70.44,170) -- (391.33,170) ; \draw  [color={rgb, 255:red, 155; green, 155; blue, 155 }  ,draw opacity=0.6 ]  ;
%Straight Lines [id:da1899796292951692] 
\draw    (90.33,90) -- (90.33,110) -- (100.33,110) -- (100.33,120) -- (160.33,120) -- (160.33,131) -- (170.33,131) -- (170.33,140) ;
%Straight Lines [id:da5611682554104754] 
\draw    (270.33,90) -- (270.33,110) -- (280.33,110) -- (280.33,120) -- (350.33,121) -- (350.33,131) -- (350.33,140) ;
%Straight Lines [id:da6503386553903419] 
\draw    (200.33,120) -- (248.33,120) ;
\draw [shift={(250.33,120)}, rotate = 180] [color={rgb, 255:red, 0; green, 0; blue, 0 }  ][line width=0.75]    (10.93,-3.29) .. controls (6.95,-1.4) and (3.31,-0.3) .. (0,0) .. controls (3.31,0.3) and (6.95,1.4) .. (10.93,3.29)   ;
%Shape: Brace [id:dp3033937211196801] 
\draw  [line width=0.75]  (100.44,129.33) .. controls (100.39,134) and (102.7,136.36) .. (107.37,136.41) -- (120.83,136.56) .. controls (127.5,136.63) and (130.8,139) .. (130.75,143.67) .. controls (130.8,139) and (134.16,136.71) .. (140.83,136.78)(137.83,136.75) -- (153.26,136.92) .. controls (157.93,136.97) and (160.28,134.67) .. (160.33,130) ;
%Shape: Brace [id:dp6167059402720724] 
\draw  [line width=0.75]  (280.33,130) .. controls (280.33,134.67) and (282.66,137) .. (287.33,137) -- (300.33,137) .. controls (307,137) and (310.33,139.33) .. (310.33,144) .. controls (310.33,139.33) and (313.66,137) .. (320.33,137)(317.33,137) -- (332.33,137) .. controls (337,137) and (339.33,134.67) .. (339.33,130) ;

% Text Node
\draw (127.33,147.4) node [anchor=north west][inner sep=0.75pt]  [font=\scriptsize]  {$S$};
% Text Node
\draw (128.33,67.4) node [anchor=north west][inner sep=0.75pt]  [font=\small]  {$u_{1}$};
% Text Node
\draw (312.33,63.4) node [anchor=north west][inner sep=0.75pt]  [font=\small]  {$\tilde{u}_{1}$};
% Text Node
\draw (101.33,106.4) node [anchor=north west][inner sep=0.75pt]  [font=\tiny]  {$p^{h,j}$};
% Text Node
\draw (162.33,119.4) node [anchor=north west][inner sep=0.75pt]  [font=\tiny]  {$p^{h,j-1}$};
% Text Node
\draw (132.33,110.4) node [anchor=north west][inner sep=0.75pt]  [font=\tiny]  {$p$};
% Text Node
\draw (142.33,110.4) node [anchor=north west][inner sep=0.75pt]  [font=\tiny]  {$q$};
% Text Node
\draw (341.33,117.4) node [anchor=north west][inner sep=0.75pt]  [font=\tiny]  {$p^{h,j-1}$};
% Text Node
\draw (311.33,111.4) node [anchor=north west][inner sep=0.75pt]  [font=\tiny]  {$p$};
% Text Node
\draw (322.33,111.4) node [anchor=north west][inner sep=0.75pt]  [font=\tiny]  {$q$};
% Text Node
\draw (279.33,107.4) node [anchor=north west][inner sep=0.75pt]  [font=\tiny]  {$p^{h,j}$};
% Text Node
\draw (307.33,148.4) node [anchor=north west][inner sep=0.75pt]  [font=\scriptsize]  {$S$};

\end{tikzpicture}}
    \caption{The configuration on the right reduces the total energy.}
    \label{fig:octagonal minimizer}
\end{figure} From Proposition \ref{prop:hausdorff_distance_from_boundary}, $d_{\mathcal{H}}(\partial A_0,\partial A_1)\leq \e^\mu$, with $\mu \in (0,1/4)$, and by construction of $\tilde{u}_1$, we get \begin{align*}
     \E_\e(\tilde{u}_1)-&\E_\e(u_1) \le -2\e+3\e(1-k) = \e(1-3k)<0,\\         &\frac{1}{\tau}\Bigl(\dis^1_\e(\tilde{u}_1, u_0)-\dis^1_\e(u_1, u_0)\Bigl)\le \frac{\e^{\mu+1}}{\zeta},\\       &\frac{\e^\gamma}{\tau}\Bigl(\dis^0_\e(\tilde{u}_1, u_0)-\dis^0_\e(u_1, u_0)\Bigl)\le\frac{\e^{\gamma-1}}{\zeta},
     \end{align*}
     therefore $\enF(\tilde{u}_1)<\enF(u_1)$ as $\e\to 0$, which contradicts the minimality of $u_1$.\vspace{5pt}\\
     \emph{Step 3.} We are left to prove that $\partial^+I_1 = Z_1.$ Denote by $\PP_{1, i}:=\dseg{p^{1, i}}{q^{1, i}}$ the sides of $I_1$ parallel to the axes and by $\DD_{1,i}$ the sides of $I_1$ parallel to $e_1\pm e_2$. Consider the upper diagonal of $\DD _{1,1}$ defined as the discrete segment $\DD_{1, 1}^+=\dseg{p^{1, 2}-\e e_2}{p^{1, 1}-\e e_1}$, according to definition \eqref{def:discrete_segment}. If there exists $p\in\DD_{1, 1}^+\cap\{u_1 = -1\}$ then the competitor $\tilde{u}_1$ obtained by replacing $u_1(p)\mapsto 0$ satisfies $\E_\e(\tilde{u}_1) = \E_\e(u_1)-4\e+4\e(1-k)$ and $\e^\gamma/\tau\Bigl|\dis^0_\e(\tilde
     {u}_1, u_0) -\dis^0_\e(u_1, u_0)\Bigl| \le \e^{\gamma-1}/\zeta.$ Therefore we deduce $\enF(\tilde{u}_1)<\enF(u_1)$, which contradicts the minimality of $u_1$. Similarly we deduce that $\cup_{i=1}^4\DD_{1, i}^+\subset\Z_1.$ Finally, in order to prove that $(\PP_{1, 1}-\e e_2)\subset\Z_1,$ we can argue similarly as in the proof of the second step. 
\end{proof}

\begin{remark}\label{rmk: octagonal initial data}
    From Lemma \ref{lemma:octagonal minimizer} we deduce that, as long as $u_j$ satisfies the assumptions of Proposition \ref{prop:connectedness}, then $\Z_{j+1} = \partial^+\I_{j+1}$ for every $j\ge 1$. Therefore, for simplicity, from now on we suppose that $\#\Z_0= \#\partial^+\I_0$, i.e. our initial datum is an octagon completely surrounded by surfactant.
\end{remark}

\begin{definition}\label{def:sides_movement}
Let $\I_j,\,\I_{j+1}$ be two discrete octagons such that $\I_{j+1}\subset\I_j.$ Let $A_j$ and $A_{j+1}$ be the smallest octagons (in the sense of Definition \ref{def:wulff_shape}) containing $A_{\I_j}$ and $A_{\I_{j+1}}$ respectively. For $s=j,\,j+1$ denote by $\{\PP_{s, i}: i=1,\dots,4\}$ and $\{\DD_{s, i}:\:i=1, \dots, 4\}$ the parallel and the diagonal sides of $A_s$. 
We denote by $a_{j,i}$ the Euclidean distance between the (parallel) straight lines containing the sides $\PP_{j,i}$ and $\PP_{j+1,i}$, and by $b_{j,i}$ the Euclidean distance between the (parallel) straight lines containing the sides $\DD_{j,i}$ and $\DD_{j+1,i}$. We define $\alpha_{j,i}, \beta_{j,i} \in \NN\cup\{0\}$ as 
       \begin{equation}
         \begin{split}
           \label{eq: spostamenti}
           &\alpha_{j,i}=\frac{a_{j,i}}{\e},\ \quad i=1,\dots,4\\
           &\beta_{j,i}=\frac{\sqrt{2}b_{j,i}}{\e},\ \quad i=1,\dots,4.
         \end{split}
      \end{equation}
Denoting by $P_{s, i}$ and $D_{s, i}$ for $i=1, \dots, 4$ and $s= j, j+1$ the lengths of $\PP_{s,i}$ and $\DD_{s,i}$ respectively we have that
     \begin{equation}
     \label{eq: Lung. lati}
       \begin{split}
           &P_{j+1,i}=P_{j,i}+2a_{j,i}-\sqrt{2}(b_{j,i-1}+b_{j,i}) = P_{j, i}+2\e\alpha_{j, i}-\e(\beta_{j, i-1}+\beta_{j, i}),\\
           &D_{j+1,i}=D_{j,i}+2 b_{j,i}-\sqrt{2}(a_{j,i}+a_{j,i+1}) = D_{j,i}+\sqrt{2}\e \beta_{j,i}-\sqrt{2}\e(\alpha_{j,i}+\alpha_{j,i+1}),
       \end{split}
    \end{equation}
    where index $i\in\{1, 2, 3, 4\}$ is understood modulo $4$.
\end{definition}

\begin{theorem}\label{teo:free_surfactant_movement}
    Let $A\subset\rr^2$ be a non degenerate octagon, and
    for all $\e>0$, let $A_\e$ be a non degenerate octagon such that $\lim_{\e\rightarrow 0^+}d_\mathcal{H}(A_\e,A)=0$. 
    Let $(u_0^\e)_\e$ be such that $u_0^\e\in\A_\e$ for every $\e$, and $\I^\e_0= A_\e\cap\e\ZZ^2.$ Let $u^\e_j$ be a minimizing movement with initial datum $u^\e_0$, and parameter $\gamma>2$. For every $t\ge 0$ we denote by $A_\e(t)$ the set     
    \begin{equation*}
    A_\e(t) = \bigcup_{i\in\I^\e_{\lfloor t/\tau\rfloor}}Q_\e(i).
    \end{equation*}
    Then, it holds that $A_\e(t)$ converges as $\e\to 0$ locally uniformly in time to $A(t)$
    which is an octagon for every $t\ge 0$ satisfying $A(0) = A$. \\
    Denoting by $\PP_i(t),\,\DD_i(t)$ its parallel and diagonal sides, and by $P_i(t)$ and $D_i(t)$ their lengths, then every side of $A(t)$ moves inwards remaining parallel to the side itself. We denote by $v_{\PP_i}(t)$  and $v_{\DD_i}(t)$ the velocity of its parallel and diagonal sides respectively. The velocity of each side satisfies the following differential inclusion
    \begin{equation}\label{eq:free_surfactant_sides_velocity}
    \begin{aligned}
    &v_{\PP_i}(t)\begin{cases}
        =\frac{1}{\zeta}\bigg\lfloor\frac{2\zeta(1-k)}{P_i(t)}\bigg\rfloor&\text{ if }\frac{2\zeta(1-k)}{P_i(t)}\not\in\NN,\\\rule{0pt}{1.4em}
        \in \Bigl[\frac{2(1-k)}{P_i(t)}-\frac{1}{\zeta},\,\frac{2(1-k)}{P_i(t)}\Bigl]&\text{ otherwise.}
    \end{cases}\\
    &v_{\DD_i}(t)\begin{cases}
        =\frac{\sqrt{2}}{2\zeta}\bigg\lfloor\frac{2\sqrt{2}\zeta(1-k)}{D_i(t)}\bigg\rfloor&\text{ if }\frac{2\sqrt{2}\zeta(1-k)}{D_i(t)}\not\in\NN,\\\rule{0pt}{1.4em}
        \in\Bigl[\frac{2(1-k)}{D_i(t)}-\frac{\sqrt{2}}{2\zeta},\,\frac{2(1-k)}{D_i(t)}\Bigl]&\text{ otherwise}.
    \end{cases}
    \end{aligned}
    \end{equation}
\end{theorem}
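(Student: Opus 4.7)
The plan is to prove the theorem in three stages.

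\medskip
\noindent\emph{Step 1 (Octagonal propagation).} By Lemma \ref{lemma:octagonal minimizer} and induction on $j$, starting from the hypothesis that $I_0^\e$ is a discrete octagon, every $I_j^\e$ is a discrete octagon with $Z_j^\e = \partial^+I_j^\e$. The Hausdorff-distance estimate of Proposition \ref{prop:hausdorff_distance_from_boundary}, combined with the refinement in Lemma \ref{lemma:distance_horizontal_sides}, guarantees that each side length can shrink by at most $O(\e)$ per step, so the regularity assumption \eqref{H} is preserved on any compact time interval before extinction, and the induction closes.

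\medskip
\noindent\emph{Step 2 (Explicit energy and dissipation).} Once $I_{j+1}^\e \subset I_j^\e$ is a discrete octagon, it is determined by the $8$ non-negative integer displacements $\alpha_{j,i}, \beta_{j,i}$ of Definition \ref{def:sides_movement}. Using Lemma \ref{lemma:energy_surfactant} with $Z = \partial^+I$, together with the combinatorial identities $\#\partial^+I = \sum_i P_i/\e + \sum_i D_i/(\sqrt{2}\e)$ and $Per(Z\cup I) = Per(I) + 8\e$ (obtained by direct enumeration of the surfactant strips and staircases), one finds
\[ \E_\e(u_j) = (1-k)\Bigl[3\textstyle\sum_i P_{j,i} + 2\sqrt{2}\textstyle\sum_i D_{j,i}\Bigr] + 4\e(1-k), \]
which is exactly the $\Gamma$-limit anisotropic perimeter \eqref{BEG_riscalato_gamma_limite} evaluated on $A_{I_j^\e}$. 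Substituting \eqref{eq: Lung. lati} and using $\sum_i(\beta_{j,i-1}+\beta_{j,i}) = 2\sum_i\beta_{j,i}$, a cancellation yields
\[ \E_\e(u_{j+1}) - \E_\e(u_j) = -2\e(1-k)\Bigl(\textstyle\sum_i\alpha_{j,i} + \textstyle\sum_i\beta_{j,i}\Bigr), \]
so the energy is exactly linear in the displacements. A direct computation gives, up to $O(\e^3)$ corner corrections,
\[ \dis^1_\e(u_{j+1},u_j) = \textstyle\sum_i \frac{P_{j,i}\e^2\alpha_{j,i}(\alpha_{j,i}+1)}{2} + \textstyle\sum_i \frac{D_{j,i}\e^2\beta_{j,i}(\beta_{j,i}+1)}{2\sqrt{2}}, \]
while $\e^\gamma\dis^0_\e/\tau = O(\e^{\gamma-2}) = o(1)$ precisely because $\gamma > 2$.

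\medskip
\noindent\emph{Step 3 (Decoupled minimization and passage to the limit).} Modulo $o(\e)$ terms the minimization of $\enF$ splits into $8$ independent one-dimensional integer problems of the form $\min_{n\in\NN\cup\{0\}}[-c_1 n + c_2 n(n+1)/2]$, whose minimizer is $\lfloor c_1/c_2\rfloor$ when $c_1/c_2 \notin \NN$ and takes either of the tied values $c_1/c_2 - 1, c_1/c_2$ otherwise. Specializing to each side yields $\alpha_{j,i}^* = \lfloor 2\zeta(1-k)/P_{j,i}\rfloor$ and $\beta_{j,i}^* = \lfloor 2\sqrt{2}\zeta(1-k)/D_{j,i}\rfloor$ in the generic case. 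Converting to normal velocities via $v_{\PP_i} = \alpha^*\e/\tau = \alpha^*/\zeta$ and $v_{\DD_i} = \beta^*(\e/\sqrt{2})/\tau = \beta^*/(\sqrt{2}\zeta)$ reproduces exactly \eqref{eq:free_surfactant_sides_velocity}. The resulting piecewise-constant system of $8$ coupled ODEs admits uniform Lipschitz bounds on the side lengths, so the standard compactness argument for minimizing movements of \cite{BGN} (see also \cite[Sect.~4.1.5]{BS}) yields locally uniform convergence of $A_\e(t)$ to an octagon-valued flow $A(t)$ satisfying \eqref{eq:free_surfactant_sides_velocity}, with the differential-inclusion formulation covering the resonant cases $2\zeta(1-k)/P_i(t)\in\NN$ and $2\sqrt{2}\zeta(1-k)/D_i(t)\in\NN$.

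\medskip
The main technical obstacle is the decoupling of the dissipation in Step 2: cells near the four diagonal corners of $I_j^\e$ are accessible from both an adjacent parallel side and an adjacent diagonal, so a careful combinatorial bookkeeping is needed to verify that cross-terms are indeed $O(\e^3)$ and that the minimization splits cleanly into $8$ independent one-dimensional problems.
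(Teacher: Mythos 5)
Your proof is correct and follows a genuinely different route from the paper's. The paper never writes down the full discrete energy or the full dissipation: instead, for each side it constructs two competitors $u^+_{j+1}$ and $u^-_{j+1}$ (obtained from the true minimizer by adding back one row or removing one more row) and extracts two-sided bounds on $\alpha_{j,i}$ and $\beta_{j,i}$ from the minimality inequality $\enF(u_{j+1},u_j)\le\enF(u^\pm_{j+1},u_j)$. The only quantitative input it needs beyond the energy jump $\pm 2\e(1-k)$ is the coarse estimate $d_{\mathcal H}(\partial A_j,\partial A_{j+1})\le\e^\mu$ from Proposition~\ref{prop:hausdorff_distance_from_boundary}, which controls the length of the strips appearing in the dissipation difference. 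Your approach instead computes the exact discrete energy of a surrounded octagon, $\E_\e(u_j)=(1-k)\bigl[3\sum_iP_{j,i}+2\sqrt2\sum_iD_{j,i}\bigr]+4\e(1-k)$ (which checks out: $Per(I)=\sum_iP_i+\sqrt2\sum_iD_i$, $Per(Z\cup I)=Per(I)+8\e$, $\#\partial^+I=\sum_iP_i/\e+\sum_iD_i/(\sqrt2\e)$, and Lemma~\ref{lemma:energy_surfactant} gives the stated sum), observes that it is precisely the continuum anisotropic perimeter up to the constant $4\e(1-k)$, and then verifies the exact telescoping $\E_\e(u_{j+1})-\E_\e(u_j)=-2\e(1-k)(\sum_i\alpha_{j,i}+\sum_i\beta_{j,i})$ from \eqref{eq: Lung. lati}. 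Coupled with your leading-order formula for $\dis^1_\e$, the problem decouples into eight one-variable integer minimizations whose solution is the floor formula, with ties handled by the tied-minimum case. This is arguably cleaner and makes the minimization structure transparent, and it also handles $\alpha_{j,i}=0$ without the special case the paper needs (its competitor $u^+_{j+1}$ is only defined for $\alpha_{j,1}>0$).

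The one point where the paper's route buys something is exactly the place you flag as the main obstacle: your dissipation formula needs the corner cross-terms between adjacent strips to be $O(\e^3)$, so that after division by $\tau=\zeta\e$ they are $O(\e^2)\ll O(\e)$ and cannot shift the discrete minimizer. This is true (each corner overlap region contains $O(\alpha\beta)=O(1)$ cells, each contributing $O(\e^3)$), but you do not carry out the verification, so as written there is a gap. The paper avoids this bookkeeping entirely because it never needs an additive decomposition of the full dissipation: its competitors differ from $u_{j+1}$ only along one side, so all other corner contributions cancel identically in the difference $\dis^1_\e(u^\pm_{j+1},u_j)-\dis^1_\e(u_{j+1},u_j)$, and the Hausdorff estimate alone gives sharp enough constants. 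If you fill in the corner-term estimate, your argument is complete and is a legitimate (and instructive) alternative to the paper's.
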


\begin{proof}
Let $\bar{c}>0$ be a positive constant, and let $\bar{\e}$ be given by Proposition \ref{prop:connectedness}. 
Then from Proposition \ref{prop:connectedness} and Lemma \ref{lemma:octagonal minimizer}, we know that for every $\e\le\bar{\e}$ the set $\I^\e_{j+1}$ is an octagon contained into $\I^\e_j$, as long as $P^\e_{j,i}>\bar{c}$ for every $i = 1, \dots, 4.$ Moreover from Lemma \ref{lemma:octagonal minimizer} we know that $\Z_{j}^\e = \partial^+\I^\e_{j}$ for every $j$ and for $\e\le\bar{\e}$. As a result the competitors in the minimum problem will be assumed to be octagon of this type.  Moreover, since $A$ is not degenerate, we can suppose that $D^\e_{0,i}>0$ for every $i=1,\dots,4$.

From now on we omit the dependence on $\e$. For $s=j, j+1$ let us denote by $\PP_{s, i} = \dseg{p^{s, i}}{q^{s, i}}$ the sides of $I_s$ parallel to the axes, and by $P_{s, i}$ their lengths. In order to determine $\alpha_{j,1}$ we compare $u_{j+1}$ with two competitors $u^+_{j+1}$ and $u^-_{j+1}.$ We define 
\begin{equation}\label{eq:free_surfactant_second_competitor}
    u^-_{j+1}(p):=
    \begin{cases}
        0&\text{ if }p\in \PP_{j+1, 1},\\
        -1&\text{ if }p\in \PP_{j+1, 1}-\e e_2,\\
        u_{j+1}(p)&\text{ otherwise}.
    \end{cases}
\end{equation}
and, in case $\alpha_{j, 1} > 0$,
\begin{equation}\label{eq:free_surfactant_first_competitor}
    u^+_{j+1}(p):=
    \begin{cases}
        1&\text{ if }p\in \dseg{p^{j+1, 1}+\e e_1-\e e_2}{q^{j+1,1}-\e e_1-\e e_2},\\
        0&\text{ if }p\in \dseg{p^{j+1,1}+\e e_1-2\e e_2}{q^{j+1,1}-\e e_1-2\e e_2},\\
        u_{j+1}(p)&\text{ otherwise}
    \end{cases}
\end{equation}
(see Figure \ref{fig:gamma maggiore 2}).
\begin{figure}[H]
    \centering
    \resizebox{0.70\textwidth}{!}{\input{Figure1_Theorem_4.7}}
    \caption{On the right, the competitors $u^+_{j+1}$ and $u^-_{j+1}$ in order to determine $\alpha_{j,1}$.}
    \label{fig:gamma maggiore 2}
\end{figure}
Since $\Z_{j+1} = \partial^+\I_{j+1}$ we deduce that $\E_\e(u_{j+1}^+) = \E_\e(u_{j+1})+2\e(1-k)$ and $\E_\e(u_{j+1}^-) = \E_\e(u_{j+1})-2\e(1-k)$. Form Proposition \ref{prop:hausdorff_distance_from_boundary} for every $\mu<1/4$ we have that $d_{\mathcal{H}}(\partial A_j, \partial A_{j+1})\le\e^\mu.$ Since $\I_j$ and $\I_{j+1}$ are octagons, we deduce that
\[p_1^{j, 1}-\e^{\mu} \le p^{j+1, 1}_1\le p^{j, 1}_1+\e^{\mu};\ \;q^{j, 1}_1-\e^\mu\le q_1^{j+1, 1}\leq q_1^{j, 1}+\e^\mu.\]
We deduce that  $q_1^{j+1, 1}-p_1^{j+1, 1}\leq q^{j, 1}_1+\e^\mu-p_1^{j, 1}+\e^\mu=P_{j, 1}+2\e^\mu$ and \\$q_1^{j+1, 1}-p_1^{j+1, 1}\geq q_1^{j, 1}-\e^{\mu}-p_1^{j, 1}-\e^\mu=P_{j, 1}-2\e^{\mu}$, 
therefore we find the following estimates 
\begin{align}\label{eq:free_dissipation_estimate_1}
        &\frac{1}{\tau}\Bigl(\dis^1_\e(u_{j+1}^+, u_j)-\dis^1_\e(u_{j+1}, u_j)\Bigl)\le -\frac{(P_{j, 1}-2\e^\mu)\alpha_{j, 1}\e}{\zeta},\\\label{eq:free_dissipation_estimate_20}
    &\frac{1}{\tau}\Bigl(\dis^1_\e(u_{j+1}^-, u_j)-\dis^1_\e(u_{j+1}, u_j)\Bigl)\le \frac{(P_{j, 1}+2\e^\mu)(\alpha_{j, 1}+1)\e}{\zeta},
\end{align}
where the first inequality \eqref{eq:free_dissipation_estimate_1} holds in case $\alpha_{j, 1}>0.$ Moreover we have $\dis^0_\e(u_{j+1}^-, u_j) = \dis^0_\e(u_{j+1}^+, u_j) = \dis^0_\e(u_{j+1}, u_j)$. By minimality of $u_{j+1}$, from \eqref{eq:free_dissipation_estimate_1} we deduce that
\[2\e(1-k)\ge\frac{(P_{j, 1}-2\e^\mu)\alpha_{j,1}\e}{\zeta}\implies\alpha_{j, 1}\le\frac{2\zeta(1-k)}{P_{j,1}-2\e^\mu} = \frac{2\zeta(1-k)}{P_{j,1}}+\frac{2\e^\mu}{P_{j, 1}(P_{j, 1}-2\e^\mu)}\]
and from \eqref{eq:free_dissipation_estimate_20} we have that
\begin{equation}\label{eq:estimate_100}
    2\e(1-k)\le \frac{(P_{j, 1}+2\e^\mu)(\alpha_{j, 1}+1)\e}{\zeta}\implies \alpha_{j, 1}\ge \frac{2\zeta(1-k)}{P_{j, 1}+2\e^\mu}-1 = \frac{2\zeta(1-k)}{P_{j, 1}}-\frac{4\zeta(1-k)\e^\mu}{P_{j, 1}(P_{j, 1}+2\e^\mu)}.
\end{equation}
In particular, if $\alpha_{j, 1}>0,$ we deduce that 
\begin{equation}\label{eq:free_dissipation_alpha}
    \alpha_{j,1} \begin{cases}
    =\Bigl\lfloor\frac{2\zeta(1-k)}{P_{j, 1}}\Bigl\rfloor&\text{ if }\;\;\text{dist}(2\zeta(1-k)/P_{j, 1}, \NN)<\frac{4\zeta(1-k)\e^\mu}{P_{j, 1}(P_{j, 1}-2\e^\mu)},\\
    \in \Bigl\{\Bigl[\frac{2\zeta(1-k)}{P_{j, 1}}\Bigl],\Bigl[\frac{2\zeta(1-k)}{P_{j, 1}}\Bigl]-1\Bigl\}&\text{ otherwise}.
\end{cases}
\end{equation}
In case $\alpha_{j, 1} = 0,$ we deduce from \eqref{eq:estimate_100} that
\[\frac{2\zeta(1-k)}{P_{j, 1}}\le 1+\frac{4\zeta(1-k)\e^\mu}{P_{j, 1}(P_{j, 1}+2\e^{\mu})},\]
therefore \eqref{eq:free_dissipation_alpha} holds also in this case.
We may argue similarly for every $\alpha_{j,i}$. 

We now estimate $\beta_{j, 1}$ (and similarly every $\beta_{j, i}$). Suppose first that $\beta_{j, 1}>0$. Denoting by $\DD_{j+1, 1}:=\dseg{p^{j+1, 2}}{p^{j+1, 1}}$ and $\DD^+_{j+1, 1}:=\dseg{p^{j+1, 2}-\e e_2}{p^{j+1, 1}-\e e_1}$, we define $u_{j+1}^+$ and $u^-_{j+1}$ as
\begin{equation}
    u_{j+1}^+(p) = \begin{cases}
        1&\text{ if }\;p\in\DD_{j+1, 1}^+,\\
        0&\text{ if }\;p\in(\DD_{j+1, 1}^+-\e e_2)\cup\{p^{j+1, 2}-\e e_2-\e e_1\},\\
        u_{j+1}(p)&\text{ otherwise},
    \end{cases}
\end{equation}
and 
\begin{equation}
    u_{j+1}^-(p) = \begin{cases}
        -1&\text{ if }\;p\in(\DD_{j+1, 1}-\e e_2)\cup\{p^{j+1, 2}-\e e_1\},\\
        0&\text{ if }\;p\in \DD_{j+1, 1},\\
        u_{j+1}(p)&\text{ otherwise}.
    \end{cases}
\end{equation}
(see Figure \ref{fig:gamma_mag2_obliqui}).
\begin{figure}[H]
    \centering
    \resizebox{0.50\textwidth}{!}{\input{Figure2_Theorem_4.7}}
    \caption{On the right, the competitors $u^+_{j+1}$ and $u^-_{j+1}$ in order to determine $\beta_{j,1}$.}
    \label{fig:gamma_mag2_obliqui}
\end{figure}
\noindent A straightforward computation shows that 
\[\E_\e(u^+_{j+1}) = \E_\e(u_{j+1})+2\e(1-k),\;\text{ and }\;\E_\e(u^-_{j+1}) = \E_\e(u_{j+1})-2\e(1-k).\]
Moreover 
\[|\#\{u^+_{j+1}=0\}-\#Z_{j+1}| = |\#\{u^-_{j+1}=0\}-\#Z_{j+1}|= 1,\]
therefore 
\begin{equation}\label{eq:dis_estimate}
    \frac{\e^\gamma}{\tau}\dis^0_\e(u^\pm_{j+1}, u_j) \le \frac{\e^\gamma}{\tau}\dis^0_\e(u_{j+1}, u_j) +\frac{\e^{\gamma-1}}{\zeta}.
\end{equation}
Again, since $\I_j$ and $\I_{j+1}$ are octagons, and since $d_{\mathcal{H}}(\partial A_j, \partial A_{j+1})\le\e^\mu$, we deduce that 
$D_{j+1, 1}\ge D_{j, 1}-2\sqrt{2}\e^\mu$, and
\begin{align}\label{eq:free_dissipation_estimate_2}
        &\frac{1}{\tau}\Bigl(\dis^1_\e(u_{j+1}^+, u_j)-\dis^1_\e(u_{j+1}, u_j)\Bigl)\le -\frac{(D_{j, 1}-2\sqrt{2}\e^\mu)\beta_{j, 1}\e}{\sqrt{2}\zeta},\\\label{eq:free_dissipation_estimate_beta_2}
    &\frac{1}{\tau}\Bigl(\dis^1_\e(u_{j+1}^-, u_j)-\dis^1_\e(u_{j+1}, u_j)\Bigl)\le \frac{(D_{j, 1}+2\sqrt{2}\e^\mu)(\beta_{j, 1}+1)\e}{\sqrt{2}\zeta}.
\end{align}
In particular by minimality of $u_{j+1}$, we find that 
\[\frac{2\sqrt{2}\zeta(1-k)-\sqrt{2}\e^{\gamma-2}}{D_{j, 1}+2\sqrt{2}\e^\mu}\le\beta_1\le\frac{2\sqrt{2}\zeta(1-k)+\sqrt{2}\e^{\gamma-2}}{D_{j, 1}-2\sqrt{2}\e^\mu}.\]
We observe that $\e^{\gamma-2}\to 0$ as $\e\to 0$ since $\gamma>2$. As before we have
\begin{equation}\label{eq:free_dissipation_beta}
    \beta_{j, 1} \begin{cases}
    =\Bigl\lfloor\frac{2\sqrt{2}\zeta(1-k)}{D_{j, 1}}\Bigl\rfloor&\text{ if }\;\;\text{dist}\Bigl(\frac{2\sqrt{2}\zeta(1-k)}{D_{j, 1}}, \NN\Bigl)<\frac{\sqrt{2}\e^{\gamma-2}D_{j, 1}+8\zeta(1-k)\e^\mu}{D_{j, 1}(D_{j, 1}-2\sqrt{2}\e^\mu)},\\
    \in \Bigl\{\Bigl[\frac{2\sqrt{2}\zeta(1-k)}{D_{j, 1}}\Bigr],\Bigl[\frac{2\sqrt{2}\zeta(1-k)}{D_{j, 1}}\Bigl]-1\Bigl\}&\text{ otherwise}
\end{cases}
\end{equation}
and therefore the inwards velocity of the side $\DD_{j+1, 1}$ is given by $\beta_{j, 1}/\sqrt{2}$. In case $\beta_{j, 1} = 0,$ from \eqref{eq:free_dissipation_estimate_beta_2}, arguing as in the case $\alpha_{j, 1} = 0,$ we deduce \eqref{eq:free_dissipation_beta}.

Recursively repeating the argument above, we find sequences $D^\e_{j, i},\,P^\e_{j, i}$ and $\alpha^\e_{j, i},\,\beta^\e_{j, i}$ which satisfy (\ref{eq: Lung. lati}), i.e.  
 \begin{equation}
     \label{eq: Lung. lati_1}
       \begin{split}
           &P^\e_{j+1,i}=P^\e_{j,i}+2\e \alpha_{j,i}-\e(\beta_{j,i}+\beta_{j,i+1})\\           &D^\e_{j+1,i}=D^\e_{j,i}+\sqrt{2}\e\beta_{j,i}-\sqrt{2}\e(\alpha_{j,i}+\alpha_{j,i+1}).
       \end{split}
\end{equation}
For $i=1, \dots, 4$ we define $D_i^\e(t)$ as the linear interpolation in $[j\tau, (j+1)\tau]$ of the values $D^\e_{j,i}$ and $D^\e_{j+1,i}$. We define analogously $P_i^\e(t)$ as the linear interpolation in $[j\tau, (j+1)\tau]$ of the values $P^\e_{j,i}$ and $P^\e_{j+1,i}$. We observe that $P_i^\e(t)$ and $D_i^\e(t)$ are equibounded and uniformly Lipschitz (because of (\ref{eq: Lung. lati_1})) on all intervals $[0, T]$ such that $P_i^\e(t)\ge\bar{c}$ and $D_i^\e(t)\ge\bar{c}$. In particular they converge uniformly as $\e\to 0$ to Lipschitz functions $P_i(t)$ and $D_i(t).$ It follows that the octagons $A^\e(t)$ whose sides are $\PP^\e_i(t)$ and $\DD^\e_i(t)$ converge in the sense of Hausdorff (up to subsequences) as $\e\to 0$ to a limit octagon $A(t)$. 

To conclude, we observe that (\ref{eq:free_surfactant_sides_velocity}) follows by passing to the limit in \eqref{eq:free_dissipation_alpha} and \eqref{eq:free_dissipation_beta}, after dividing the sides displacements for the time step $\tau = \zeta\e$. For this part of the proof we refer to the end of the proof of \cite[Theorem 1]{BGN}.
\end{proof}

\section{The case \texorpdfstring{$\gamma <2$}{gamma<2}}\label{sec:gamma<2}
In this last section we study the minimizing movements in the case $\gamma<2$ and under the assumption that at the first step the surfactant is sufficient to surround the starting set $I_0^\e$. In the first part of this section we show two preliminary results.
In Lemma \ref{lemma: surfactant costante} we prove that if $\gamma<2$ then the amount of surfactant (i.e. $\#\Z^\e_j$) remains constant at each step $j$. As a consequence, in Lemma \ref{lemma:surrounded_shape_at_every_step}, we prove that the minimizer set $I_j$ is surrounded by surfactant at each time step (i.e. $\partial^+I_j\subset Z_j$).

\begin{lemma}
\label{lemma: surfactant costante}
    Let $\gamma<2$. Let $u_0, u_1\in\A_\e$ be such that $u_1$ is a minimizer of $\enF(\cdot, u_0)$. Set $C_\e=\#\Z_0^\e$. Then, there exists $\overline{\e}>0$ such that $\#\Z^\e_1=C_\e$ for every $\e\leq \overline{\e}$.
\end{lemma}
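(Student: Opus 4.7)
The strategy is a direct one–point perturbation argument: the cost of creating or removing a single surfactant cell is $O(\e)$ in the BEG energy, whereas a unit change in $\#Z$ changes the surfactant dissipation contribution by $\e^\gamma/\tau=\e^{\gamma-1}/\zeta$, which dominates $\e$ as $\e\to 0$ precisely because $\gamma<2$. We argue by contradiction, assuming $\#Z_1^\e\neq C_\e$, and split into two cases according to whether $\#Z_1^\e<C_\e$ or $\#Z_1^\e>C_\e$. In both cases the competitor differs from $u_1$ at a single site, so $I_{\tilde u_1}=I_{u_1}$ and the dissipation $\dis_\e^1$ is unchanged.

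First, since $u_0\in\A_\e$ and $u_1$ is a minimizer, $\enF(u_1,u_0)\le\enF(u_0,u_0)=\E_\e(u_0)<+\infty$; together with $\#Z_1^\e<+\infty$, this forces $I_1^\e\cup Z_1^\e$ to be contained in some ball (otherwise the interface between $I_1^\e$ and $\{u_1=-1\}$ would be infinite). Consequently one may pick a point $p\in\e\ZZ^2$ whose entire neighborhood $\{p\}\cup\mathcal{N}(p)$ is disjoint from $I_1^\e\cup Z_1^\e$, so $u_1\equiv -1$ there.

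In the case $\#Z_1^\e<C_\e$, define $\tilde u_1$ by replacing $u_1(p)\mapsto 0$ at the point $p$ selected above. The four bonds incident to $p$ pass from $(-1,-1)$ (cost $0$) to $(0,-1)$ (cost $\e(1-k)$), hence $\E_\e(\tilde u_1)-\E_\e(u_1)=4\e(1-k)$. Moreover $\#Z_{\tilde u_1}=\#Z_1^\e+1\le C_\e$, so $\dis_\e^0(\tilde u_1,u_0)=\dis_\e^0(u_1,u_0)-1$. Therefore
\[
\enF(\tilde u_1,u_0)-\enF(u_1,u_0)\;\le\;4\e(1-k)-\frac{\e^{\gamma-1}}{\zeta},
\]
which is strictly negative for $\e$ sufficiently small since $\gamma-1<1$, contradicting the minimality of $u_1$.

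In the case $\#Z_1^\e>C_\e$, pick any $p\in Z_1^\e$ and define $\tilde u_1$ by replacing $u_1(p)\mapsto -1$. Writing $n_1,n_0,n_{-1}$ for the number of neighbors of $p$ with values $1,0,-1$ respectively, a direct bond–by–bond count gives
\[
\E_\e(\tilde u_1)-\E_\e(u_1)=n_1\e(1+k)-n_{-1}\e(1-k)\;\le\;4\e(1+k)\;\le\;8\e.
\]
Since $\#Z_{\tilde u_1}=\#Z_1^\e-1\ge C_\e$, the dissipation $\dis_\e^0$ again decreases by one, and
\[
\enF(\tilde u_1,u_0)-\enF(u_1,u_0)\;\le\;8\e-\frac{\e^{\gamma-1}}{\zeta}<0
\]
for $\e$ small enough, once more contradicting minimality. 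There is no genuine obstacle here: the only point worth verifying is the boundedness of $I_1^\e$ that makes the choice of $p$ in the first case possible, and the straightforward bound $\Delta\E_\e=O(\e)$ regardless of the local configuration at the flipped cell.
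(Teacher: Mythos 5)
Your proof is correct and matches the paper's argument: a one-site (more precisely, per-site) comparison showing that a unit change in $\#Z_1$ reduces the surfactant dissipation term by $\e^{\gamma-1}/\zeta$ while costing only $O(\e)$ in BEG energy, so $\gamma<2$ makes the gain dominate as $\e\to0$. The one needless detour is the far-field choice of $p$ (and the accompanying boundedness claim for $I_1\cup Z_1$) in the case $\#Z_1<C_\e$; the paper simply replaces any cell with $u_1(p)=-1$ by $0$, since the per-flip energy increase is bounded by $4\e(1-k)$ regardless of the local configuration, so the extra geometric step is harmless but unnecessary.
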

    \begin{proof}
        We omit the dependence on $\e$. By contradiction let us assume that $\#\Z_1\neq\#\Z_0$. Assume first that $\#\Z_1= \#\Z_{0}+ n$ for some $n\in\NN$, and let $p^1,\dots,p^n \in \e\ZZ^2$ such that $u_1(p^i)=0$ for every $i=1,\dots,n$. We define the competitor $\tilde{u}_0$ replacing $u_1(p^i)\mapsto -1$.
        It holds that
        $$\E_\e(\tilde{u}_1)-\E_\e(u_1)\leq 8\e n,$$
        $$\dis^{1}_\e(\tilde{u}_1,u_0)=\dis^{1}_\e(u_1,u_0),$$
        $$\frac{\e^\gamma}{\tau}(\dis^{0}(\tilde{u}_1,u_0)-\dis^{0}(u_1,u_0))=-\frac{\e^{\gamma}}{\tau}n.$$
        Therefore, since $\tau = \zeta\e$,
        \begin{equation*}
            \begin{split}            \mathcal{F}^{\tau,\gamma}_{\e}(\tilde{u}_1,u_0)-\mathcal{F}^{\tau,\gamma}_{\e}(u_1,u_0)&\leq 8\e n-\frac{\e^{\gamma}}{\tau}n\\
                &=\e n\left(8-\frac{\e^{\gamma-2}}{\zeta}\right)
            \end{split}
        \end{equation*}
        Since $\gamma<2$ we have that $\left(8-\frac{\e^{\gamma-2}}{\zeta}\right)<0$ and we get the contradiction. In the case  $\#\Z_1= \#\Z_0-n$, we consider $p^1,\dots,p^n \in \e \ZZ^2$ such that $u_1(p^i)=-1$. We define the competitor $\tilde{u}_1$ replacing $u_1(p^i)\mapsto 0$ for every $i=1, \dots, n$.
        We conclude that $\enF(\tilde{u}_1)<\enF(u_1)$ by repeating the same computation above.
         \end{proof}

%\paragraph{Notation.}\label{par:notation_surrounded_constant_surfactant}

We set $C_\e := \#\Z^\e_j$ for every $\e>0$. In view of the last Lemma \ref{lemma: surfactant costante} we have that $C_\e$ is well defined (i.e. it does not depend on $j$). From now on we suppose that $\e^2 C_\e\to 0$ as $\e\to 0$ and, as in Section \ref{sec:gamma>2}, we suppose that the starting set $I_0^\e = \{u^\e_0 = 1\}$ is a discretization of a non degenerate octagon. Furthermore, in order to ensure that the starting set $I^\e_0$ is surrounded by surfactant, we shall require that
\begin{equation}\label{eq:5.1}
	\e C_\e\ge \sum_{i=1}^4 P_{0,i}^{\e}+D_{0,i}^{\e}/\sqrt{2}.
\end{equation}

In the following of this section, we will show that at every time step the set $\I^\e_j$ is either a discrete octagon (Definition \ref{def:discrete_octagon}) or a $\e$-quasi-rectangle, in the sense of the following Definition \ref{def:quasi_rectangle}.

\begin{definition}\label{def:quasi_rectangle}Let $\I$ be a staircase set, and let $C_\e$ be as above. We say that $\I$ is an $\e$-\emph{quasi-rectangle} if there exist sets $\tilde{I},\,\PP_i:i=1, \dots, 4$ and $\Delta_i:i=1, \dots, 4$ satisfying the properties below such that $I$ can be written as
\begin{equation}\label{eq:quasi_rectangle}
	\I = \tilde{\I}\cup(\cup_{i=1}^4\PP_i)\cup(\cup_{i=1}^4\Delta_i).
\end{equation}
In (\ref{eq:quasi_rectangle}), the $\PP_i$ are the external slices of $I$, as in \eqref{eq:parallel_sides_and_slices}. The set $\tilde{\I}$ is an octagon contained into $I\setminus(\cup_{i=1}^4\PP_i)$ such that $\frac{\tilde{D}_i}{\sqrt{2}} = \max\{\e\lceil4\sqrt{C_\e}\rceil, \e\lceil\e^{1/8-1}\rceil\}$. If we denote by $T_i:i=1, \dots, 4$ the four sets such that $R_{\tilde{I}}\setminus\tilde{I} = \cup_{i=1}^4T_i$, then the $\Delta_i$ have the property that $\tilde{\I}\cup(\cup_i\Delta_i),$ is strongly connected, and $\Delta_i\subset T_i$, for every $i=1, \dots, 4$ (see Figure \ref{fig:quasirectangle}). This set is a ``quasi''-rectangle in the sense that 
\[|R_{I}\setminus I| \le \sum_{i=1}^4|T_i|+c\e Per(I)\le4\frac{16\e^{2}C_\e}{2}+c\e\to 0\;\;\text{ as }\;\;\e\to 0.\]
\begin{figure}[H]
	\centering
	\resizebox{0.60\textwidth}{!}{\input{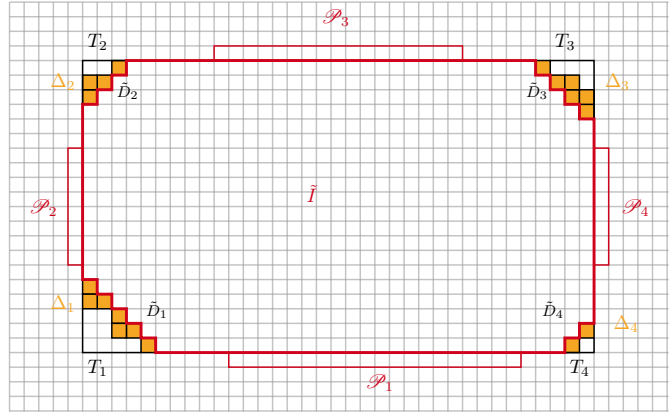}}
	\caption{An example of $\e$-quasi-rectangle; in orange we represent the set $\Delta_i\subset T_i$ for $i=1,\dots,4$.}
	\label{fig:quasirectangle}
\end{figure}
\end{definition}

\begin{remark}
    In the last Definition we required $\frac{\tilde{D}_i}{\sqrt{2}} \ge \e\lceil\e^{1/8-1}\rceil.$ The choice of the value $1/8-1 = -7/8$ is made to ensure that $\frac{\tilde{D}_i}{\sqrt{2}} \gg \e^{1/4}.$ Instead, requiring that $\frac{\tilde{D}_i}{\sqrt{2}} \ge \e\lceil 4\sqrt{C_\e}\rceil$ we ensured that $\#T_i\ge \frac{\lceil 4\sqrt{C_\e}\rceil(\lceil 4\sqrt{C_\e}\rceil+1)}{2}>8C_\e>C_\e$ for every $i$.
\end{remark}

If both $\I^\e_j$ and $\I^\e_{j+1}$ are octagons we adopt the notation of (\ref{eq: spostamenti}) and (\ref{eq: Lung. lati}) to describe the length of the sides of $\I^\e_{j+1}.$ If instead $\I^\e_{j+1} = \tilde{\I}^{\e}_{j+1}\cup_i\{\PP_{j+1,i}^{\e}\}\cup_i\{\Delta_i^\e\}$ is an $\e$-quasi-rectangle, and $\I_j^\e$ is either an octagon or an $\e$-quasi-rectangle with parallel sides $\PP_{j+1,i}^{\e}$, we set $a^\e_{j,i}$  to be the Euclidean distance between the (parallel) straight lines containing $\PP^{\e}_{j,i}$ and $\PP^{\e}_{j+1,i}$, and $\alpha^\e_{j, i}:=a^\e_{j,i}/\e$.

In the following Lemma we prove that the minimizing set $I^\e_j$ is surrounded by surfactant at every time step. We observe that the statement is true for every $\gamma>0$, and not only for $\gamma<2$. 
\begin{lemma}\label{lemma:surrounded_shape_at_every_step}
	Let $\gamma>0$. Let $u_j, u_{j+1}\in\A_\e$ be such that $u_j$ verifies assumption \eqref{H} of Subsection \ref{subsec:connectedness}, and $u_{j+1}$ is a minimizer of $\enF(\cdot, u_j)$. 
    Suppose moreover that $\partial^+I_j\subset Z_j$. Then for every $\e$ small enough it holds $\partial^+I_{j+1}\subset Z_{j+1}$ and $\# Z_{j+1}\le\#Z_j$. 
    
    Moreover, if there exists $p\in\zero_{j+1}\cap I_j$, then $\#\partial^+I_{j+1}<\#\partial^+I_j$.
\end{lemma}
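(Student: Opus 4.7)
The plan combines three tools already at hand: Lemma~\ref{lemma:surfactant_placement} on surfactant placement, which applies because Proposition~\ref{prop:connectedness} gives $I_{j+1}\subset I_j$ (a strongly connected staircase set) and Proposition~\ref{prop:hausdorff_distance_from_boundary} gives the Hausdorff bound $d_{\mathcal H}(\partial A_{j+1},\partial A_j)\le\e^\mu$; Lemma~\ref{lemma:shape_optimality} for isolated boundary configurations; and identity \eqref{eq:length_external_boundary} together with $Per(I)=Per(R_I)$ for any staircase $I$, which implies $Per$ is monotone under inclusion of staircases. Applying Lemma~\ref{lemma:surfactant_placement} to $u_{j+1}$ yields the ordering $\#(\nn(p)\cap I_{j+1})\le \#(\nn(z)\cap I_{j+1})$ for every $-1$-cell $p$ and every $0$-cell $z$. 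All three assertions will be proved by contradiction through local cell-flips of $u_{j+1}$.

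To prove $\#Z_{j+1}\le\#Z_j$, I will assume $\#Z_{j+1}>\#Z_j$ and set $\bar k:=\min_{z\in Z_{j+1}}\#(\nn(z)\cap I_{j+1})$. If $\bar k=0$, picking $z^\ast$ realising this minimum and flipping $u_{j+1}(z^\ast)\mapsto -1$ changes the energy by $-m_{z^\ast}(1-k)\e\le 0$, leaves $\dis^1_\e$ unchanged, and strictly decreases $\dis^0_\e$, contradicting minimality. If $\bar k\ge 1$ (so $Z_{j+1}\subset\partial^+I_{j+1}$), Lemma~\ref{lemma:surfactant_placement} forces $\partial^+I_{j+1}=Z_{j+1}$; combining this with $Per(I_{j+1})\le Per(I_j)$ and the hypothesis $\partial^+I_j\subset Z_j$, I will locate a boundary surfactant cell $z\in Z_{j+1}$ whose flip to $+1$ (absorbing $z$ into $I_{j+1}$) saves $(1-k)\e$ in energy, changes $\dis^1_\e$ only by $O(\e^{2+\mu})$ thanks to the Hausdorff bound, and strictly reduces $\dis^0_\e$.

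For $\partial^+I_{j+1}\subset Z_{j+1}$, suppose by contradiction that some $p^\ast\in\partial^+I_{j+1}$ has $u_{j+1}(p^\ast)=-1$, so $k_{p^\ast}\ge 1$. Lemma~\ref{lemma:surfactant_placement} then gives $k_z\ge k_{p^\ast}\ge 1$ for every $z\in Z_{j+1}$, hence $Z_{j+1}\subset\partial^+I_{j+1}$, and the first step yields $\#Z_{j+1}\le\#Z_j$. I will then consider the competitor obtained either by flipping $p^\ast\mapsto 0$ (when $\#Z_{j+1}<\#Z_j$, exploiting the strict drop in $\dis^0_\e$) or by swapping $(p^\ast\mapsto 0,\,z\mapsto -1)$ for a well-chosen $z\in Z_{j+1}$ lying inside $I_j$ (when $\#Z_{j+1}=\#Z_j$, so $\#Z$ is preserved). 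The $k_{p^\ast}\ge 1$ edges of $p^\ast$ incident to $I_{j+1}$ save $\e(1+k)k_{p^\ast}$ by passing from $1$--$(-1)$ to $1$--$0$ interactions, whereas the cost at $z$ is controlled by the ordering of Lemma~\ref{lemma:surfactant_placement}, the constraint $k\in(1/3,1)$, and the staircase structure of $\partial^+I_{j+1}$ given by Proposition~\ref{prop:connectedness}.

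For the "moreover" statement, the existence of $p\in\zero_{j+1}\cap I_j$ and Lemma~\ref{lemma:shape_optimality}-1.2) yield $u_{j+1}(p)=0$, so $p\in I_j\setminus I_{j+1}$. Here I will crucially use minimality of $u_{j+1}$: a single-cell flip analysis shows that if $I_{j+1}$ differed from $I_j$ only at $p$, flipping $p\mapsto +1$ would strictly decrease $\enF$, so the minimizer must also remove further cells adjacent to $p$, leading either to $R_{I_{j+1}}\subsetneq R_{I_j}$ (hence a strict decrease of $Per$) or to the creation of an additional concave corner so that $\#\zero_{j+1}\ge\#\zero_j+1$. In either subcase, using \eqref{eq:length_external_boundary} and perimeter monotonicity, I conclude $\#\partial^+I_{j+1}<\#\partial^+I_j$. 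The main obstacle in the plan is the swap argument in the second step: controlling the competitor uniformly over the admissible local configurations $(k_{p^\ast},m_{p^\ast},k_z,m_z)$ respecting $k_{p^\ast}\le k_z$, especially the delicate subcase $k_{p^\ast}=k_z=1$ with large $m_{p^\ast}$, will require a careful case analysis exploiting the staircase geometry of $I_{j+1}$ near $p^\ast$.
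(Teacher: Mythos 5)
Your plan diverges from the paper at the central combinatorial step, and the replacement you use does not actually work. The paper's proof hinges on the construction (Step~1 of the first part) of an \emph{injective map} $\varphi:\partial^+I_{j+1}\to\partial^+I_j$, built from the partition $B^\pm_i$ of $\partial^+I_{j+1}$ and the ``nearest point of $\partial^+I_j$ along the axis'' maps $t_i^\pm$, with injectivity established by applying Lemma~\ref{lemma:shape_optimality} to rule out collisions. This single tool delivers $\#\partial^+I_{j+1}\le\#\partial^+I_j$, and the three assertions of the Lemma then follow quickly: combined with $\partial^+I_j\subset Z_j$, the contradiction hypothesis $\partial^+I_{j+1}\not\subset Z_{j+1}$ (which by Lemma~\ref{lemma:surfactant_placement} forces $Z_{j+1}\subsetneq\partial^+I_{j+1}$) yields $\#Z_{j+1}<\#Z_j$ \emph{automatically}, so a single well-chosen flip $p^*\mapsto 0$ strictly decreases $\enF$; similarly the second and ``moreover'' parts reduce to injectivity and non-surjectivity of $\varphi$. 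You have no analogue of $\varphi$.

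What you substitute for it — ``perimeter monotonicity'' via~\eqref{eq:length_external_boundary} — is a genuine gap, because~\eqref{eq:length_external_boundary} reads $\#\partial^+I=Per(I)/\e-\#\{\text{concave corners}\}$. For nested staircase sets $I_{j+1}\subset I_j$ you do get $Per(I_{j+1})\le Per(I_j)$, but the correction term can move in the wrong direction: an octagon with $\#\zero=4$ can contain a rectangle with $\#\zero=0$ whose perimeter is only slightly smaller, making $\#\partial^+$ \emph{increase}. So perimeter monotonicity alone does not bound $\#\partial^+I_{j+1}$ by $\#\partial^+I_j$, and the same issue sinks your case analysis for the ``moreover'' statement (neither of your two subcases forces $Per/\e-\#\zero$ to drop). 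A second gap is your claim that when $\bar k\ge 1$ ``Lemma~\ref{lemma:surfactant_placement} forces $\partial^+I_{j+1}=Z_{j+1}$'': that Lemma (and Remark~\ref{rem:blu_on_the_boundary}) only gives the inclusion $Z_{j+1}\subset\partial^+I_{j+1}$ under a cardinality hypothesis; the reverse inclusion requires an extra input. Finally, the ``delicate subcase $k_{p^*}=k_z=1$ with large $m_{p^*}$'' that you flag as your main obstacle is exactly the difficulty the paper sidesteps by noting that $\#Z_{j+1}=\#Z_j$ is incompatible with $Z_{j+1}\subsetneq\partial^+I_{j+1}$ once $\#\partial^+I_{j+1}\le\#\partial^+I_j\le\#Z_j$ is known — so the swap argument, as you anticipate, cannot be closed without first establishing the boundary-count inequality that the map $\varphi$ provides.
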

\begin{proof}
From Proposition \ref{prop:connectedness} we have that for every $\e$ sufficiently small the set $I_{j+1}$ is a staircase set contained in $I_j$.\vspace{5pt}\\
\emph{First part of the proof.} We prove that $\partial^+I_{j+1}\subset Z_{j+1}$. We argue by contradiction assuming that $\partial^+I_{j+1}\not\subset Z_{j+1}$.\\

\emph{Step 1.} We prove that there exists an injective map $\varphi:\partial^+I_{j+1}\to\partial^+I_j$ (in particular $\#\partial^+I_{j+1}\le\#\partial^+I_j$). 
We denote by $H^{1, i},i=1, \dots, n_h$ and $V^{1, i},i=1, \dots, n_v$ the horizontal and vertical slices of $I_{j+1}.$

We introduce a partition of the boundary $\partial^+I_{j+1}$ in four disjoint subsets $B^\pm_i:\:i=1, 2$ as follows
\begin{equation}\label{eq:partition_partial}
    \begin{aligned}
        B^\pm_1&:=\{p\in\partial^+I_{j+1}:\:p\pm\e e_1\in\partial^-I_{j+1}\},\\
        B^\pm_2&:=\{p\in\partial^+I_{j+1}:\:p\pm\e e_2\in\partial^-I_{j+1}\}\setminus (B_1^+\cup B_1^-).
    \end{aligned}
\end{equation}
For $i=1, 2$ we define the map $t_i^\pm(p):\:\partial^+I_{j+1}\to\NN\cup\{0\}$ by
\begin{equation}\label{eq:t(p)} t_i^\pm(p):=\min\bigl\{t\in\NN\cup\{0\}:\:p\pm t\e e_i\in \partial^+I_j\bigl\}.
\end{equation}
Finally, we define the map $\varphi:\partial^+I_{j+1}\to\partial^+I_j$ as 
\begin{equation}\label{eq:phi}
    \varphi(p) = \begin{cases}
        p-t_2^-(p)\e e_2&\text{ if }p+\e e_2\in H^{1, 1}\cup H^{1, 2},\\
        p+t_2^+(p)\e e_2&\text{ if }p-\e e_2\in H^{1, n_h}\cup H^{1, n_h-1},\\
        p-t_1^-(p)\e e_1&\text{ if }p+\e e_1\in V^{1, 1}\cup V^{1, 2},\\
        p+t_1^+(p)\e e_1&\text{ if }p-\e e_1\in V^{1, n_v}\cup V^{1, n_v-1},\\
        p\mp t_i^\mp(p)\e e_i&\text{ if }p\in B^\pm_i,\text{ for }i=1, 2, \text{ and does not belong to any of the sets above.}
    \end{cases}
\end{equation}
We have to show that $\varphi$ is injective. We observe that for every pair $\{p, q\}\subset\partial^+I_{j+1}$ such that $\varphi(p)$ is defined in one of the first four cases of \eqref{eq:phi} it holds $\varphi(p) = \varphi(q)\implies p=q$. Same result holds in case $\varphi(p)$ is defined in one of the first four cases of \eqref{eq:phi}, and $\varphi(q)$ is defined in the fifth case. Therefore we have to show that for every pair $\{p, q\}$ such that $\varphi(p)$ and $\varphi(q)$ are defined in the fifth way of \eqref{eq:phi} it holds $\varphi(p) = \varphi(q)\implies p=q$. Let $T:=R_{I_{j+1}}\setminus I_{j+1} = \cup_{i=1}^4T_i$, as in Definition \ref{def:staircase_set}. Without loss of generality we can also assume that $\{p, q\}\subset \partial^+I_{j+1}\cap T_i$ for some $i\in\{1,\cdots, 4\}$. We complete the proof in case $\{p, q\}\subset \partial^+I_{j+1}\cap T_1$, and assuming by contradiction that $p\neq q$. Again without loss of generality we can assume that $p\in B^+_1$ and $q\in B^+_2$. Since $\varphi(p) = \varphi(q)$ it follows that $q_1 < p_1$, and since $q\not\in B^+_1$ we deduce that $q+\e e_1\in \partial^+I_1.$ We apply Lemma \ref{lemma:shape_optimality} to the two adjacent points $\{q+\e e_2,\,q+\e e_1+\e e_2\}$. Since by assumption $\partial^+I_{j+1}\not\subset Z_{j+1}$, we deduce that case (iii) of Lemma \ref{lemma:shape_optimality} cannot occur. Let $\bar{i}$ such that $\{q+\e e_2,\,q+\e e_1+\e e_2\}\subset H^{1, \bar{i}}.$ If case (i) of Lemma \ref{lemma:shape_optimality} occurs, then $\varphi(q) = q$. Since the two components of $\varphi(p)$ are $\varphi(p) = \Bigl(\bigl(\varphi(p)\bigl)_1, p_2\Bigl)$, and since we assumed that $\varphi(p) = \varphi(q)$, we deduce from the fact that $p\in B^+_1$ that $p = p^{1, \bar{i}-1}-\e e_1$. But then $\varphi(p) = p\neq q$, which is a contradiction. The only case left, in view of Lemma \ref{lemma:shape_optimality}, is that $\bar{i} = 2$, but in this situation we have $q+\e e_2\in H^{1, 2}$, therefore $\varphi(q)$ is defined as in the first case of \eqref{eq:phi}, which is a contradiction. The proof of the injectivity of $\varphi$ is complete.\\

\emph{Step 2.} We recall that we are assuming by contradiction that $\partial^+I_{j+1}\not\subset Z_{j+1}.$ 
Then from Lemma  \ref{lemma:surfactant_placement} we have $ Z_{j+1}\subsetneq \partial^+I_{j+1}$, and we deduce from the first step and from $\partial ^+I_j\subset Z_j$ that $\#Z_{j+1}<\#Z_j$. In case $0<\gamma<2$ this is a contradiction with Lemma \ref{lemma: surfactant costante}. For general $\gamma>0$ we argue similarly as in the proof of Lemma \ref{lemma:octagonal minimizer}. Since $ Z_{j+1}\subsetneq \partial^+I_{j+1}$ there exists $p\in\partial^+I_{j+1}\cap\{u_{j+1} = -1\}$. We may suppose for simplicity that $p\in H^{j+1, 1}-\e e_2$ (where $H^{j+1, 1}$ is the first horizontal slice of $I_{j+1}$), since the general case is similar. If there exists a cell $q\in (H^{j+1, 1}-\e e_2)\cap Z_{j+1}$ which is adjacent to $p$ (i.e. $q = p\pm\e e_1$) then we define the competitor $\tilde{u}_{j+1}$ by replacing $u_{j+1}(p)\mapsto 0$. It holds 
	\[\E_\e(\tilde{u}_{j+1})\le \E_\e(u_{j+1})+3\e(1-k)-2\e = \E_\e(u_{j+1})-\e(1-3k)<\E_\e(u_{j+1}).\]
	Moreover $\dis^1_\e(u_{j+1}, u_j) = \dis^1_\e(\tilde{u}_{j+1}, u_j)$ and, since $\#Z_{j+1}<\#Z_j$, it holds
	\[\dis^0_\e(\tilde{u}_{j+1}, u_j)<\dis^0_\e(u_{j+1}, u_j).\]
	It follows that $\enF(\tilde{u}_{j+1})<\enF(u_{j+1})$, which contradicts the minimality of $u_{j+1}$. We are left to consider the case in which for every $p\in (H^{j+1, 1}-\e e_2)\cap\{u_{j+1} = -1\}$ it holds $p\pm\e e_1\in \{u_{j+1} = -1\}$. This implies, in particular, that $(H^{j+1, 1}-\e e_2)\subset\{u_{j+1} = -1\}.$ Let $H^{j+1, 1}= \dseg{p^{j+1, 1}}{q^{j+1, 1}}$, and consider the competitor $\tilde{u}_{j+1}$ obtained by replacing $u_{j+1}(p^{j+1, 1})\mapsto 0$. We may estimate $\E_\e(\tilde{u}_{j+1})$ and $\dis^0_\e(\tilde{u}_{j+1}, u_j)$ as before, and, as a consequence of Proposition \ref{prop:hausdorff_distance_from_boundary}, we have that 
	\[\frac{1}{\tau}\dis^1_\e(\tilde{u}_{j+1}, u_j)\le \frac{1}{\tau} \dis^1_\e(u_{j+1}, u_j)+c\e^{1+\mu},\]
	for some parameter $\mu<1/4$ and for $\e$ sufficiently small. We conclude again that $\enF(\tilde{u}_{j+1})<\enF(u_{j+1})$, which contradicts the minimality of $u_{j+1}$.\vspace{5pt}\\
    \emph{Second part of the proof.} We have to show that $\#Z_{j+1}\le\#Z_j.$ We argue by contradiction assuming that $\# Z_{j+1}>\# Z_{j}$. Suppose first that $Z_{j+1} \subset \partial^+I_{j+1}.$ Then arguing as in the first step we deduce that the map $\varphi$ defined as in \eqref{eq:phi} is injective, and we have a contradiction since 
    \[\# Z_{j+1} \le \#\partial^+I_{j+1}\le \#\partial^+I_{j}\le \# Z_j <\# Z_{j+1}.\]
    If instead $\partial^+I_{j+1}\subsetneq Z_{j+1}$ then there exists a point $p\in(\partial^- Z_{j+1})\setminus(\partial^+I_{j+1})$. The new competitor $\tilde{u}_{j+1}$ obtained by replacing $u_{j+1}(p)\mapsto -1$ satisfies \[\E_\e(\tilde{u}_{j+1})<\E_\e(u_{j+1}),\;\dis^1_\e(\tilde{u}_{j+1}, u_j) = \dis^1_\e(u_{j+1}, u_j),\;\text{ and }\;\dis^0_\e(\tilde{u}_{j+1}, u_j) < \dis^0_\e(u_{j+1}, u_j),\]
    which is in contradiction with the minimality of $u_{j+1}$, and the claim follows. \vspace{5pt}\\
    \emph{Third part of the proof.} Arguing similarly as in the first step of the first part of the proof one can show using Lemma \ref{lemma:shape_optimality} that the map $\varphi$ defined in \eqref{eq:phi} is not surjective if there exists $p\in\zero_{j+1}\cap I_{j}$. We omit the details.
\end{proof}

\paragraph{First stage of the minimizing movements: pinning of the diagonals.}
\begin{remark}\label{rem:gamma<2_surf_arrangement}
In this remark we give an outline of the following of this section. 

The minimizing movements can be divided in two qualitatively different stages. In the first one the minimizer is an octagon at each step, having at least a diagonal side of length bigger than $2\sqrt{2}\e\sqrt{C_\e}$. During this stage the diagonal sides having positive length remain pinned, while the parallel sides move inwards without changing their orientation. We computed the sides displacements in Proposition \ref{prop:surrounded_shape_minimizer_beginning}. In this paragraph, before stating this Proposition, we introduce two Lemmas. 
In the first one \ref{lemma:5_optimal_placement_surfactant} we discuss the optimal placement of the surfactant set $Z_j$ around $I_j,$ and we introduce a peculiarity of the case $\gamma<2,$ i.e. that the surfactant placement is not uniquely determined, not even when $I_j$ is known (in particular it is no longer true that $Z_j = \partial^+I_j$ for every $j$). We deal with this problem as follows. First, we observe that only the cardinality of $Z_j$ (and not its geometrical shape) plays a role in the minimization problem $\argmin\{\enF(\cdot, u_j)\}$, in particular the set $I_{j+1}$ depends only on $I_j$ and on $\#Z_j = C_\e$. Second, since we require that $\e^2C_\e\to 0$ as $\e\to 0$, we have that the surfactant does not appear in the limit flow. Therefore, since our main concern is to describe the limit flow of the phase $1$ (which, as just mentioned, depends only on the sets $I_j$), in the construction of the minimizing movement $(u^\e_j)_j$, we will only describe the shape of $I_j$, and we give only an approximate description of $Z_j$. This description is the content of Lemma \ref{lemma:5_optimal_placement_surfactant}. 

The second preliminary result of this paragraph is the technical Lemma \ref{lemma:5_preliminaries}, which contains some preliminary remarks concerning the geometry of the set $I_{j+1},$ and the distance of its boundary from the boundary of $I_j$.

The second stage of the motion is completely different, and will be analyzed in the next paragraph. During this stage, the minimizer $I_j$ is an $\e$-quasi-rectangle at each time step, and in this case the sides displacements are given in Proposition \ref{prop:surrounded_shape_minimizer_final_part}. However, in this second stage a precise description of the motion is more difficult. Therefore, we studied the minimizing movements of a special class of minimizers (the ``good minimizers'', see Definition \ref{def:good_minimizer}).

In the final Theorem \ref{teo:final teo A_j surrounded} we computed the limit flow, which follows immediately from the results of Propositions \ref{prop:surrounded_shape_minimizer_beginning} and \ref{prop:surrounded_shape_minimizer_final_part}.
\end{remark} 

In the following Lemma we show a possible way to arrange the surfactant set of a minimizer $u_{j+1}$.
\begin{lemma}\label{lemma:5_optimal_placement_surfactant}
    Let $\gamma<2$. Let $u_j,\ u_{j+1} \in \mathcal{A}_\e$ be such that $u_j$ satisfies assumption \eqref{H} of Subsection \ref{subsec:connectedness}, and $u_{j+1}$ is a minimizer for $\enF(\cdot, u_j)$. Define $R_{\I_{j+1}},\,n^{j+1}_h,\,n^{j+1}_v$ as in Definition \ref{def:staircase_set}, where $n^{j+1}_h$ and $n^{j+1}_v$ denote the number of horizontal and vertical slices of $\I_{j+1}$. Then for $\e$ small enough the set $\I_{j+1}$ is a staircase set, and if \begin{equation}\label{eq:5.inclusion}
        \#\partial^+\I_{j+1}\le C_\e\le\#(R_{\I_{j+1}\cup\partial^+\I_{j+1}})-\#\I_{j+1}
    \end{equation}
    then we have $\partial^+\I_{j+1}\subset\Z_{j+1}\subset (R_{\I_{j+1}\cup\partial^+\I_{j+1}})\setminus\I_{j+1}$ and 
    \begin{equation}\label{eq:5.5}
        \E_\e(u_{j+1}) = 2\e(1-k)C_\e+2\e(1-k)(n_h^{j+1}+n^{j+1}_v+2).
    \end{equation}
    
    The set $\Z_{j+1}$ is not uniquely determined. One possible way to arrange its elements is following the steps below.
    \begin{itemize}
        \item[(i)] We set $R:=R_{\I_{j+1}\cup\partial^+\I_{j+1}}$;
        \item[(ii)]We set  $Z_{j+1}^0:=\partial^+\I_{j+1},$ $I_{j+1}^0:=\I_{j+1}\cup Z_{j+1}^0$ and, inductively, 
        \[Z_{j+1}^{i+1}:=\{p\in\partial^+I^{i}_{j+1}:\#\bigl(\mathcal{N}(p)\cap I^{i}_{j+1}\bigl) = 2\},\;\text{ and }\;I^{i+1}_{j+1}:=I^{i}_{j+1}\cup Z_{j+1}^{i+1}.\]
        We interrupt this process at the first index $\bar{i}$ such that $\sum_{i=1}^{\bar{i}}\# Z_{j+1}^{i} > C_\e.$
        \item[(iii)] We set 
        \[u_{j+1}(p) = 0\text{ for every }p\in\bigcup_{i=1}^{\bar{i}-1}Z_{j+1}^i.\] Then, if $n := C_\e-(\#I_{j+1}^{i-1}-\#\I_{1})>0$, we select randomly $p^1, \dots, p^n\in Z_{j+1}^{\bar{i}}$ and we set $u_{j+1}(p^i) = 0$ for every $i$ (see Figure \ref{fig:Placement_of_surfactant}).
    \end{itemize}
    This concludes the construction.
    
    Finally, if instead
    \begin{equation}\label{eq:5.inclusion_bis}
        C_\e>\#(R_{I_{j+1}\cup\partial^+ I_{j+1}})-\#I_{j+1},
    \end{equation}
    then we can arrange the set $Z_{j+1}$ so that $R_{I_{j+1}\cup\partial^+I_{j+1}}\setminus\I_{j+1}\subset\Z_{j+1}.$
    \begin{figure}[H]
	\centering
	\resizebox{0.80\textwidth}{!}{\input{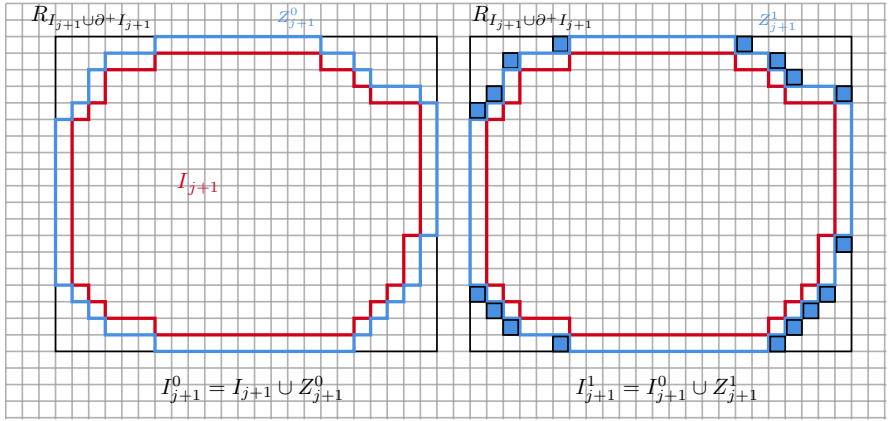}}
	\caption{A possible way to arrange the elements of $Z_{j+1}$ in the case $\gamma<2$. On the left side, the set $I^{0}_{j+1}=I_{j+1}\cup Z^{0}_{j+1}$; on the right the set $I^{1}_{j+1}$ where the colored blue squares are elements of $Z^{1}_{j+1}$ according to the definition given in (ii) of Lemma \ref{lemma:5_optimal_placement_surfactant}.}
	\label{fig:Placement_of_surfactant}
\end{figure}
    
\end{lemma}
\begin{proof}
    From Propositions \ref{prop:horizontal_convexity} and \ref{prop:connectedness}, we know that $\I_{j+1}$ is a staircase set contained into $\I_j$. We observe that $\partial^+\I_{j+1}\subset Z_{j+1}$ because of Lemma \ref{lemma:surfactant_placement}. Therefore, in view of Lemma \ref{lemma:energy_surfactant}, it holds 
    \[\E_\e(u_{j+1}) \ge 2\e(1-k)C_\e+\frac{1}{2}(1-k)[2\e( n_h^{j+1}+n^{j+1}_v)+2\e\bigl((n^{j+1}_h+2)+(n_v^{j+1}+2)\bigl)],\]
    and that the inequality is strict if $Z_{j+1}\not\subset(R_{I_{j+1}\cup\partial^+I_{j+1}})\setminus I_{j+1}.$
    To conclude we observe that for the function $u_{j+1}$ built following the steps (i)-(ii)-(iii) equality holds in (\ref{eq:5.5}).\vspace{5pt}\\
    Now we prove the last statement. Let $u_{j+1}$ be a minimizer such that \begin{equation}\label{eq:5.60}
    	R_{I_{j+1}\cup\partial^+I_{j+1}}\setminus\I_{j+1}\not\subset\Z_{j+1}
    \end{equation}
    (otherwise there is nothing to prove). Then, there exists $p\in R_{I_{j+1}\cup\partial^+I_{j+1}}\setminus\I_{j+1}$ such that $u_{j+1}(p)=-1$ and $\#(\nn(p)\cap (I_{j+1}\cup Z_{j+1}))\ge 2$. Moreover, in view of \eqref{eq:5.inclusion_bis}, there exists $q\in Z_{j+1}\setminus (R_{I_{j+1}\cup\partial^+I_{j+1}})$ such that $\#\nn(q)\cap \{u_{j+1} = -1\}\ge2$. We define a competitor $\tilde{u}_{j+1}$ by replacing $u_{j+1}(p)\mapsto 0$ and $u_{j+1}(q)\mapsto -1$. Since $\E_\e(u_{j+1})\ge \E_\e(\tilde{u}_{j+1})$, and since the dissipation of $u_{j+1}$ is equal to the dissipation of $\tilde{u}_{j+1}$ we deduce that $\tilde{u}_{j+1}$ is an other minimizer. Now if $\tilde{u}_{j+1}$ verifies the claim the proof is complete. Otherwise we can repeat inductively the same construction, until \eqref{eq:5.60} is no longer verified.
\end{proof}

Before stating the main result of this paragraph, we introduce a technical Lemma.

\begin{lemma}\label{lemma:5_preliminaries}
    Let $\gamma<2$. Let $u_j,\ u_{j+1} \in \mathcal{A}_\e$ be such that $u_j$ verifies assumption \eqref{H} of Subsection \ref{subsec:connectedness} and $u_{j+1}$ is a minimizer for $\enF(\cdot, u_j)$. Suppose that $\#\partial^+I_{j}\le\# Z_j$. 
    Let $p\in\zero_{j+1}$ (see Definition \ref{def: corner unit}). Then at least one of the following holds.
    \begin{itemize}
        \item[(i)] $p\in\partial^+\I_j$ (in particular $p\in\zero_j$);
        \item[(ii)] the set $\I_{j+1}\cup\Z_{j+1}$ is a rectangle;
    \end{itemize}
    Moreover it holds that    \begin{equation}\label{eq:5.7}
    d^\e_1(q, \partial\I_j)\ge d^\e_1(p, \partial\I_j)\;\;\text{ for all }\;q\in\one_{j+1},\;p\in\zero_{j+1}\setminus\mathcal{N}(q).
    \end{equation}
    \end{lemma}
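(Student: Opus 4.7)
The two assertions are established in sequence, the first providing structural information used in the second. Note first that the hypothesis $\#\partial^+I_j\le\#Z_j$ together with Lemma~\ref{lemma:surfactant_placement} forces $\partial^+I_j\subset Z_j$, and Lemma~\ref{lemma:surrounded_shape_at_every_step} then gives $\partial^+I_{j+1}\subset Z_{j+1}$.

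For the first assertion, let $p\in\zero_{j+1}$ and, by symmetry, assume $p-\e e_1,p-\e e_2\in I_{j+1}$. I split on whether $p\in I_j$. If $p\notin I_j$, then $p\in\partial^+I_j$ (since $p-\e e_i\in I_{j+1}\subset I_j$), and horizontal/vertical convexity of the staircase $I_j$ (Proposition~\ref{prop:horizontal_convexity}) forbid $p+\e e_i\in I_j$: otherwise $p$ would lie between two $I_j$-points on a coordinate line, forcing $p\in I_j$. Hence $\#(\mathcal N(p)\cap I_j)=2$, so $p\in\zero_j$, proving~(i).

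If instead $p\in I_j$, Lemma~\ref{lemma:shape_optimality}-1.2) gives $u_{j+1}(p)=0$, and the plan is to show $p$ is a surfactant in the corner (Definition~\ref{def: corner unit}), so that Lemma~\ref{lemma:shape_optimality}-1.1) applies and declares every weakly connected component of $I_{j+1}\cup Z_{j+1}$ a rectangle; strong connectedness of $I_{j+1}$ (Proposition~\ref{prop:connectedness}) combined with $\partial^+I_{j+1}\subset Z_{j+1}$ then forces $I_{j+1}\cup Z_{j+1}$ to be a single rectangle, yielding~(ii). The property $u_{j+1}(p+\e e_i)=0$ for $i=1,2$ follows for free whenever $p+\e e_i$ has any neighbour in $I_{j+1}$, thanks to $\partial^+I_{j+1}\subset Z_{j+1}$. \emph{The main obstacle} is the degenerate configuration in which the staircase of $I_{j+1}$ has unit-size steps in both coordinate directions at $p$, so that $p+\e e_i$ is detached from $I_{j+1}$ and could a priori carry value $-1$: a naive swap competitor (set $u_{j+1}(p)\mapsto1$ and redirect a surfactant cell to $p+\e e_1$) fails to produce a strict decrease of $\enF$, because the new $0$--$(-1)$ bonds around $p+\e e_1$ can neutralise the two $(1-k)$-bonds saved at $p$. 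I would overcome this by invoking the second part of Lemma~\ref{lemma:shape_optimality}, in its symmetric form adapted to a right-upper corner, applied to a coarser step of size $\ge2\e$ found elsewhere on the same diagonal portion of the boundary. Alternative~(i) there is excluded by $p\in I_j$ combined with horizontal convexity of $I_j$, alternative~(ii) by its final clause together with the connectedness of $I_{j+1}\cup Z_{j+1}$, and alternative~(iii) delivers~(ii); if no coarser step exists anywhere, $I_{j+1}$ is already a rectangle and~(ii) is immediate.

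For the second assertion, fix $q\in\one_{j+1}$ and $p\in\zero_{j+1}\setminus\mathcal N(q)$. The case $p\notin I_j$ is trivial, since $d^\e_1(p,\partial I_j)=\e\le d^\e_1(q,\partial I_j)$, $q$ being a point of $I_j$. If $p\in I_j$, consider the swap competitor $\tilde u_{j+1}$ equal to $u_{j+1}$ except that $\tilde u_{j+1}(p)=1$ and $\tilde u_{j+1}(q)=0$. It preserves $\#\{u=0\}$ so $\dis^0_\e$ is unchanged. In the generic sub-case—where all outward neighbours of both $p$ and $q$ carry value $0$, which holds at $q$ by $\partial^+I_{j+1}\subset Z_{j+1}$ and at $p$ by the first assertion—the nearest-neighbour energy contributions at $p$ and $q$ telescope to zero, so $\E_\e(\tilde u_{j+1})=\E_\e(u_{j+1})$, and the only change is
\[
\frac{1}{\tau}\bigl(\dis^1_\e(\tilde u_{j+1},u_j)-\dis^1_\e(u_{j+1},u_j)\bigr)=\frac{\e}{\zeta}\bigl(d^\e_1(q,\partial I_j)-d^\e_1(p,\partial I_j)\bigr).
\]
Minimality of $u_{j+1}$ forces this to be non-negative, which is the desired inequality. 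In the exceptional sub-case in which some outward neighbour of $p$ carries value $-1$, the first assertion has placed us in~(ii), and the inequality can be read off directly from the rectangular geometry of $I_{j+1}\cup Z_{j+1}$.
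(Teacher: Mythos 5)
Your splitting into $p\notin I_j$ (giving~(i)) and $p\in I_j$ (aiming for~(ii)), and your reduction of the second assertion to a $p\leftrightarrow q$ swap, follow the paper's logic closely, and the case $p\notin I_j$ of the first assertion is fine. The real problem is your handling of the ``degenerate configuration'' in the case $p\in I_j$.

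You correctly observe that the naive swap $u_{j+1}(p)\mapsto 1$, $u_{j+1}(p+\e e_1)\mapsto 0$ does not work when $u_{j+1}(p+\e e_2)=-1$: the bond $p$--$(p+\e e_2)$ jumps from cost $1-k$ to cost $2$, and the resulting $O(\e)$ energy increase cannot be offset by the $O(\e^2)$ dissipation saving. But the workaround you sketch does not close this gap. (a) You propose applying the second part of Lemma~\ref{lemma:shape_optimality} at a ``coarser step of size $\ge 2\e$ found elsewhere on the same diagonal portion of the boundary'', but a diagonal side of an octagon is a pure slope-$\pm1$ staircase, so the hypothesis $p^i_1+2\e\le p^{i-1}_1$ of that lemma is never satisfied there; there need not be any coarse step on the diagonal. (b) Your fallback ``if no coarser step exists anywhere, $I_{j+1}$ is already a rectangle'' is simply false: a discretized octagon whose sloped boundary consists entirely of unit steps is not a rectangle. (c) Even where a coarse step exists, your exclusion of alternatives~(i) and~(ii) of Lemma~\ref{lemma:shape_optimality} ``by $p\in I_j$'' and ``by connectedness'' is not a valid inference, since those alternatives concern the segment sitting at the coarse step, not the corner cell $p$ you care about.

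The idea that actually closes the gap is of an entirely different nature, and your proposal does not contain it. The third part of Lemma~\ref{lemma:surrounded_shape_at_every_step} gives $\#\partial^+I_{j+1}<\#\partial^+I_j\le C_\e$ precisely when $p\in\zero_{j+1}\cap I_j$, so there is a strict surplus of surfactant beyond $\partial^+I_{j+1}$. Lemma~\ref{lemma:5_optimal_placement_surfactant} then lets one choose (by rearranging $Z_{j+1}$ without changing $\enF$) a minimizer in which this surplus is placed so that $u_{j+1}(p+\e e_2)=0$. Only after that normalization does the swap at $p+\e e_1$ become admissible: with $u_{j+1}(p+\e e_2)=0$ the energy change at $p$ is $-2(1-k)\e$, the change at $p+\e e_1$ is at most $+2(1-k)\e$ (here one also uses that $p+\e e_1-\e e_2\in\partial^+I_{j+1}$ carries value $0$), so the energy does not increase while the dissipation strictly decreases. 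Without the surfactant-surplus and rearrangement step, the argument cannot get off the ground in the degenerate configuration, so your proof of part~(ii) has a genuine gap. Your proof of~\eqref{eq:5.7} is essentially right in spirit (and the ``generic'' telescoping computation is correct), but the ``exceptional sub-case'' is dispatched only by assertion; the paper avoids the case split altogether by computing the energy change through Lemma~\ref{lemma:energy_surfactant} and the perimeter invariance of the corner swap inside the rectangle.
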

\begin{proof}
\emph{First part.} In view of Lemma \ref{lemma:surrounded_shape_at_every_step} we have $\partial^+\I_{j+1}\subset\Z_{j+1}.$
We observe that since $I_{j+1}\subset I_j$ then $p$ always belongs to $I_j\cup\partial^+I_j$. If $p\in\partial^+ I_j$ then case (i) occurs. Therefore we may now suppose that $p\in\zero_{j+1}\cap\I_j$, and we prove (ii). Without loss of generality, we can assume that $u_{j+1}(p-\e e_i)=1$ for $i=1, 2.$ Therefore we have that $u_{j+1}(p-\e e_2+\e e_1)$ and $u_{j+1}(p-\e e_1+\e e_2)$ are either equal to zero or to one. In particular, with the notation of Lemma \ref{lemma:5_optimal_placement_surfactant}, we have that $p+\e e_2\in Z_{j+1}^0 = \partial^+I_{j+1}$ or $p+\e e_2\in Z_{j+1}^1$, and the same holds for $p+\e e_1.$ 
If $p+\e e_2\in \partial^+I_{j+1}$ then $u_{j+1}(p+\e e_2) = 0$. Suppose instead $p+\e e_2\in Z_{j+1}^1$. Then, since $p\in\zero_{j+1}\cap I_j$, by Lemma \ref{lemma:surrounded_shape_at_every_step} we have $\#\partial^+\I_{j+1}<\#\partial^+I_j\le C_\e$. From Lemma \ref{lemma:5_optimal_placement_surfactant} we deduce that either $u_{j+1}(p+\e e_2) = 0$, or we can rearrange the set $\Z_{j+1}$ so that $u_{j+1}(p+\e e_2) = 0.$ If also $u_{j+1}(p+\e e_1)=0$, then $p$ is a surfactant in the corner (Definition \ref{def: corner unit}), and this implies by Lemma \ref{lemma:shape_optimality}-1.1) that $\I_{j+1}\cup\Z_{j+1}$ is a rectangle. If instead $u_{j+1}(p+\e e_1)=-1$ then we can replace $u_{j+1}(p+\e e_1)\mapsto 0$ and $u_{j+1}(p)\mapsto 1$. This transformation strictly decreases the dissipation and does not increase the energy, which contradicts the minimality of $u_{j+1}$.\vspace{5pt}\\
\emph{Second part.} Let $p, q$ be as in (\ref{eq:5.7}). We argue by contradiction supposing $d^\e_1(p, \partial\I_j)>d^\e_1(q, \partial\I_j)\ge\e$, therefore in particular $p\in\zero_{j+1}\cap\I_j$. From the first part we deduce that $\I_{j+1}\cup\Z_{j+1}$ is a rectangle, and in particular the transformation $u_{j+1}(q)\mapsto 0$ and $u_{j+1}(p)\mapsto 1$ strictly reduces the dissipation without affecting the energy, which contradicts the minimality of $u_{j+1}$.
\end{proof}

The following Proposition shows that $I_{j+1}$ is an octagon until $I_j$ is an octagon having sufficiently long diagonal sides. In order to better understand property (i) and (ii) of the Proposition, we observe that if $\DD_{j, i}$ is the $i$-th diagonal side of an octagon $I_j$, it holds $\#\DD_{j, i} = 1$ if and only if $D_{j, i} = 0$.

\begin{proposition}\label{prop:surrounded_shape_minimizer_beginning}
    Let $\gamma<2$ and $\mu<1/4$. Let $u_j,\ u_{j+1} \in \mathcal{A}_\e$ such that $u_{j+1}$ is a minimizer for $\enF(\cdot, u_j)$. Suppose that $\I_j$ is an octagon satisfying assumption \eqref{H} of Subsection \ref{subsec:connectedness}. Suppose that there exists $i \in\{1, \dots, 4\}$ such that $D_{j,i}/\sqrt{2}\ge \max\{2\e\sqrt{C_\e}, \e^\mu\}$. Then there exists $\bar{\e}$ such that for every $\e\le\bar{\e}$ we have that the set $\I_{j+1}$ is a (possibly degenerate) octagon with the following properties:
    \begin{itemize}
        \item[(i)] $\zero_{j+1}\subset\zero_j$, in particular $\DD_{j+1, i}\subset\DD_{j, i}$ for every $i$ such that $D_{j+1, i}>0$.
        \item[(ii)] $\#\DD_{j+1, i}=1$ for every $i$ such that $D_{j, i}=0$.
        \item[(iii)] There exists a constant $c$ depending only on $\bar{c}$ (the constant of \eqref{H}) and $\mu$ such that the displacements of the parallel sides $\alpha_{j, i}$ defined as in \eqref{eq: spostamenti} satisfy \begin{equation}\label{eq:movement_parallel_sides_surrounding_surfactant}
            \alpha_{j, i} \begin{cases}
                =\Bigl\lfloor\frac{2\zeta(1-k)}{P_{j,i}}\Bigl\rfloor&\text{ if }\text{ dist}\bigl(\frac{2\zeta(1-k)}{P_{j,i}}, \NN\bigl)\ge c\e\\
                \in\Bigl\{\Bigl[\frac{2\zeta(1-k)}{P_{j, i}}\Bigl], \Bigl[\frac{2\zeta(1-k)}{P_{j,i}}\Bigl] -1\Bigl\}&\text{ otherwise.}
            \end{cases}
        \end{equation}
    \end{itemize}
\end{proposition}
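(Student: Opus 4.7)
The plan is to combine the structural results of Section~\ref{sec:connection} with the surfactant-placement analysis of Lemmas~\ref{lemma:5_optimal_placement_surfactant}--\ref{lemma:5_preliminaries} to reduce the minimization to a combinatorial problem on rectangular hulls, exactly along the heuristic outlined in the introduction in \eqref{eq:reduced_energy}--\eqref{eq:reduced_energy_tilde}. The shape of $I_{j+1}$ then follows from a corner-counting argument ruling out the ``rectangle'' alternative in Lemma~\ref{lemma:5_preliminaries}, and the displacements of the parallel sides are computed by a competitor argument analogous to the one in Theorem~\ref{teo:free_surfactant_movement}.

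\textbf{Step 1: structural inclusions.} By Proposition~\ref{prop:connectedness} the set $I_{j+1}$ is a strongly connected staircase set contained in $I_j$, and by Proposition~\ref{prop:hausdorff_distance_from_boundary} one has $d_{\mathcal H}(\partial A_j,\partial A_{j+1})\le \e^{\mu}$. The long-diagonal hypothesis $D_{j,i}/\sqrt{2}\ge \max\{2\e\sqrt{C_\e},\,\e^{\mu}\}$, combined with the Hausdorff bound, forces a corresponding diagonal region of $R_{I_{j+1}\cup\partial^+ I_{j+1}}\setminus I_{j+1}$ to contain at least $(2\sqrt{C_\e})(2\sqrt{C_\e}+1)/2>2C_\e$ lattice cells. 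Together with the inductive bound $\#\partial^+ I_{j+1}\le \#\partial^+ I_0\le C_\e$ (Lemma~\ref{lemma:surrounded_shape_at_every_step} and assumption \eqref{eq:5.1}) this verifies hypothesis \eqref{eq:5.inclusion} of Lemma~\ref{lemma:5_optimal_placement_surfactant}, yielding
\[
\partial^+ I_{j+1}\subset Z_{j+1}\subset R_{I_{j+1}\cup\partial^+ I_{j+1}}\setminus I_{j+1},
\qquad \E_\e(u_{j+1})=2\e(1-k)C_\e+2\e(1-k)(n_h^{j+1}+n_v^{j+1}+2).
\]

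\textbf{Step 2: octagonal shape and corner pinning, i.e.\ (i)--(ii).} By Remark~\ref{rem:geometry_octagon}, to prove that $I_{j+1}$ is an octagon it suffices to show that every surfactant-in-the-corner $p\in\zero_{j+1}$ lies on one of the diagonals $\DD_{j,i}$. Lemma~\ref{lemma:5_preliminaries} provides two alternatives: either $p\in\zero_j$, which is exactly what we need, or $I_{j+1}\cup Z_{j+1}$ is a full discrete rectangle. The second alternative is excluded by the same corner-counting argument used in Step~1: filling the corner region of $R_{I_{j+1}}$ associated to the long diagonal would consume at least $2C_\e$ surfactant cells, exceeding the available budget $C_\e$. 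Hence $\zero_{j+1}\subset\zero_j$, which immediately yields $\DD_{j+1,i}\subset\DD_{j,i}$ whenever $D_{j+1,i}>0$ (claim (i)). If instead $D_{j,i}=0$, then $\DD_{j,i}$ reduces to a single cell, and the inclusion $\zero_{j+1}\subset\zero_j$ forces $\#\DD_{j+1,i}=1$ (claim (ii)).

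\textbf{Step 3: displacements of parallel sides, i.e.\ (iii).} Fix $i\in\{1,\dots,4\}$ and construct two competitors $u^{-}_{j+1}$ and $u^{+}_{j+1}$ which respectively translate $\PP_{j+1,i}$ inward and outward by one lattice cell, keeping the diagonals fixed (by Step~2) and preserving the surfactant count $\#Z=C_\e$ via a local rearrangement inside the hull. Since by Step~1 the energy equals $2\e(1-k)(n_h+n_v+2)$ up to the constant $2\e(1-k)C_\e$, a unit inward (resp.\ outward) shift of $\PP_{j+1,i}$ changes $\E_\e$ by $-2\e(1-k)$ (resp.\ $+2\e(1-k)$). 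A direct computation, using $P_{j,i}-2\e^{\mu}\le P_{j+1,i}\le P_{j,i}+2\e^{\mu}$ from the Hausdorff bound, gives
\begin{align*}
\frac{1}{\tau}\bigl(\dis^{1}_\e(u^{+}_{j+1},u_j)-\dis^{1}_\e(u_{j+1},u_j)\bigr)&\le -\tfrac{(P_{j,i}-2\e^{\mu})\alpha_{j,i}\,\e}{\zeta},\\
\frac{1}{\tau}\bigl(\dis^{1}_\e(u^{-}_{j+1},u_j)-\dis^{1}_\e(u_{j+1},u_j)\bigr)&\le +\tfrac{(P_{j,i}+2\e^{\mu})(\alpha_{j,i}+1)\,\e}{\zeta},
\end{align*}
while the surfactant-rearrangement cost contributes only $O(\e^{1+\mu})$. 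Imposing $\enF(u^{\pm}_{j+1})\ge \enF(u_{j+1})$ and rearranging,
\[
\tfrac{2\zeta(1-k)}{P_{j,i}}-c\e\le \alpha_{j,i}+1\quad\text{and}\quad \alpha_{j,i}\le \tfrac{2\zeta(1-k)}{P_{j,i}}+c\e,
\]
which is precisely \eqref{eq:movement_parallel_sides_surrounding_surfactant} with $c$ depending only on $\bar c$ and $\mu$.

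\textbf{Main obstacle.} The delicate point is constructing the competitors $u^{\pm}_{j+1}$ with $\#Z=C_\e$: when $\PP_{j+1,i}$ is shifted inward, the surfactant cells previously adjacent to it must be relocated within $R_{I_{j+1}\cup\partial^+ I_{j+1}}\setminus I_{j+1}$, and this relocation must cost only $O(\e^{1+\mu})$ in dissipation in order not to destroy the $O(\e)$ balance above. The long-diagonal hypothesis ensures sufficient free space in the corner regions at Hausdorff distance $\le c\e^{\mu}$ from $\partial I_j$, so that $O(1)$ cells can be moved over a distance $O(\e^{\mu})$. Verifying this rearrangement rigorously, together with checking the non-degenerate case $\alpha_{j,i}=0$ separately as in the proof of Theorem~\ref{teo:free_surfactant_movement}, is the main technical content.
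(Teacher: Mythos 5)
Your overall approach mirrors the paper's: rule out the ``$I_{j+1}\cup Z_{j+1}$ is a rectangle'' alternative by a corner-counting argument showing the diagonal corner region of $R_{I_{j+1}}$ has more than $C_\e$ cells, deduce $\zero_{j+1}\subset\zero_j$ from Lemma~\ref{lemma:5_preliminaries}, and run the BGN-style competitor argument for the displacements. Two points where your write-up drifts from what is actually needed.

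First, the logical ordering and the shape dichotomy. You invoke Remark~\ref{rem:geometry_octagon} as ``to prove $I_{j+1}$ is an octagon it suffices to show every $p\in\zero_{j+1}$ lies on one of the diagonals $\DD_{j,i}$''; that is not what the remark says (it is a statement about \emph{adjacent pairs} of points in $\partial^- I$ belonging to a common $\PP_j$). The paper instead first establishes the dichotomy ``$I_{j+1}$ is an octagon or $I_{j+1}\cup Z_{j+1}$ is a rectangle'' directly from Lemma~\ref{lemma:shape_optimality} (cases (i)--(iii)) combined with Remark~\ref{rem:geometry_octagon} and the octagonal shape of $I_j$, then kills the rectangle case by the corner count, and only afterward applies Lemma~\ref{lemma:5_preliminaries} to get $\zero_{j+1}\subset\zero_j$. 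Your inverted order (first $\zero_{j+1}\subset\zero_j$, then octagon) can be made to work but requires an additional argument connecting corner containment to Remark~\ref{rem:geometry_octagon}, which you do not supply. Also, your count $(2\sqrt{C_\e})(2\sqrt{C_\e}+1)/2 > 2C_\e$ ignores the shrinkage of the corner region coming from $d_{\mathcal H}(\partial A_j,\partial A_{j+1})\le \e^{\mu}$; the paper introduces $\mu'=(\mu+1/4)/2$ and a constant $c_1\in(\sqrt 3/2,1)$ precisely to absorb this loss, landing on $\#T^{j+1}_1>\tfrac32 C_\e>C_\e$. Your bound is morally correct but would not survive a careful check as written.

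Second, the ``Main obstacle'' paragraph misidentifies where the cost of the surfactant rearrangement lives. Both $\dis^1_\e$ (which depends only on $I_u\triangle I_w$) and $\dis^0_\e$ (which depends only on $|\#Z_u-\#Z_w|$) are blind to \emph{repositioning} surfactant while keeping the count fixed, so the relocation costs exactly zero dissipation, not ``$O(\e^{1+\mu})$''. The real constraint is that $Z$ of the competitor must remain inside the rectangular hull $R_{I^\pm\cup\partial^+I^\pm}$ so that formula~\eqref{eq:5.5} of Lemma~\ref{lemma:5_optimal_placement_surfactant} applies and the energy change is exactly $\pm 2\e(1-k)$; the long-diagonal hypothesis (through the very corner count of Step~1) guarantees enough free room. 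Once this is noted, Remark~\ref{rem:energy_is_perimeter} shows the minimization reduces to the BGN functional $\tilde{\mathcal F}$ and the argument of Theorem~\ref{teo:free_surfactant_movement} applies verbatim, which is exactly the paper's laconic ``arguing as in the proof of Theorem~\ref{teo:free_surfactant_movement}''. Your estimates in Step~3 are correct; it is only the framing of the obstacle that is off.
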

\begin{proof}
We already know from Propositions \ref{prop:horizontal_convexity} and \ref{prop:connectedness} that $\I_{j+1}$ is a connected staircase set. We set $\delta_\e := \max\{2\e\sqrt{C_\e}, \e^\mu\}$ and $\mu':=(\mu+1/4)/2$, so that $\delta_\e\gg\e^{\mu'}$ as $\e\to 0$.
\vspace{5pt}\\
\emph{First step}. From Lemma \ref{lemma:surrounded_shape_at_every_step} we have that $\partial^+\I_{j+1}\subset \Z_{j+1}$.
Then from Lemma \ref{lemma:shape_optimality}, Remark \ref{rem:geometry_octagon}, and the fact that $\I_j$ is an octagon, it follows that either $\I_{j+1}$ is an octagon or $\I_{j+1}\cup\Z_{j+1}$ is a rectangle. We prove that $\I_{j+1}\cup Z_{j+1}$ cannot be a rectangle, and therefore, as a consequence, that $I_{j+1}$ is an octagon. Suppose, without loss of generality, that $D_{j,1}/\sqrt{2}\ge \delta_\e$.  Let $\PP_{j, 1} = \dseg{p^{j, 1}}{q^{j, 1}}$ and $\PP_{j, 2} = \dseg{p^{j, 2}}{q^{j, 2}}$ be as in Definition \ref{def:discrete_octagon}. For $\e$ sufficiently small, from Proposition \ref{prop:hausdorff_distance_from_boundary} there exist $\bar{p}^1,\bar{p}^2\in\partial^+\I_{j+1}$  such that $d^\e_1(p^{j, 1}, \bar{p}^1) \le \e^{\mu'}$ and $d^\e_1(q^{j, 2}, \bar{p}^2) \le \e^{\mu'}$. We fix a positive constant $c_1\in(\sqrt{3}/2, 1)$. For $\e$ sufficiently small it holds
    \[\overline{p}^1_1-\overline{p}^2_1\ge p^{j, 1}_1-p^{j, 2}_1-c\e^{\mu'} = \frac{D_{j, 1}}{\sqrt{2}}-c\e^{\mu'}\ge c_1\delta_\e,\text{ and similarly }\overline{p}^2_2-\overline{p}^1_2\ge c_1\delta_\e.\]Set $R_{\I_{j+1}}$ as in Definition \ref{def:staircase_set}, and $T^{j+1}:=R_{\I_{j+1}}\setminus\I_{j+1} = \cup_{i=1}^4T^{j+1}_i$, where the $T^{j+1}_i$ are the connected components of $T^{j+1}.$ 
    We observe that
    \begin{align*}
    \# T^{j+1}_1
    \ge\frac{1}{2}\frac{|\overline{p}^1_1-\overline{p}^2_1|}{\e}\cdot&\frac{|\overline{p}_2^1-\overline{p}^2_2|}{\e}\\
    &-\#\Bigl\{p\in\I_{j+1}:p_1\ge \overline{p}^2_1,\,p_2\ge \overline{p}^1_2 \;\text{ and }\;d^\e_1(p, \partial\I_j)\le\e^{\mu'}\Bigl\}
    \end{align*}
    and that
    \[\#\Bigl\{p\in\I_{j+1}:p_1\ge \overline{p}^2_1,\,p_2\ge \overline{p}^1_2 \;\text{ and }\;d^\e_1(p, \partial\I_j)\le\e^{\mu'}\Bigl\}\le c\frac{\e^{\mu'}}{\e}\cdot \frac{|\overline{p}^1-\overline{p}^2|}{\e}\le c\e^{\mu'-1}\frac{\delta_\e}{\e}.\]
    Recalling that $\delta_\e\gg\e^{\mu'}$ and that $c_1^2>3/4$, we finally deduce that
    \begin{align*}
        \# T^{j+1}_1
    \ge\frac{1}{2}\frac{|\overline{p}^1_1-\overline{p}^2_1|}{\e}\cdot\frac{|\overline{p}^1_2-\overline{p}^2_2|}{\e}-
    c\e^{\mu'-1}\frac{\delta_\e}{\e}\ge \frac{c_1^2\delta_\e^2}{2\e^2} - c\e^{\mu'-1}\frac{\delta_\e}{\e}>\frac{3}{4}\frac{\delta_\e^2}{2\e^2}\ge\frac{3}{2}C_\e>C_\e
    \end{align*}
    for $\e$ sufficiently small.
    Therefore by Lemma \ref{lemma:5_optimal_placement_surfactant} we conclude that $\I_{j+1}\cup Z_{j+1}$ cannot be a rectangle.\vspace{5pt}\\    
    \emph{Second step}. The fact that $\zero_{j+1}\subset\zero_j$ is a direct consequence of Lemma \ref{lemma:5_preliminaries} since we showed in the first step of this proof that $I_{j+1}\cup Z_{j+1}$ is not a rectangle. Statement (ii) is a direct consequence of (i).
    Finally we deduce system \eqref{eq:movement_parallel_sides_surrounding_surfactant} arguing as in the proof of Theorem \ref{teo:free_surfactant_movement}.
\end{proof}

\begin{remark}\label{rem:energy_is_perimeter}
    We observe that if $u_{j+1}$ is as in Proposition \ref{prop:surrounded_shape_minimizer_beginning}, then from \eqref{eq:5.5} of Lemma \ref{lemma:5_optimal_placement_surfactant} we have that 
    \[\E_\e(u_{j+1}) = 2(1-k)Per(A_{j+1})+const(\e),\]
    where $const(\e)$ is a term which depends only on $\e$.
    Since for $\e$ fixed this quantity is constant, it follows that, as long as the diagonal sides of $u_j$ are sufficiently long, i.e. as long as all the surfactant can be contained in $R_{I_{j+1}\cup\partial^+I_{j+1}}\setminus I_{j+1}$, then $u_{j+1}$ is also a minimizer for
 $$\tilde{\mathcal{F}}^{\gamma, \tau}_\e(u_{j+1}) := 2(1-k)Per(A_{\I_{j+1}})+\frac{1}{\tau}\dis^1_\e(u_{j+1}, u_j),$$ which is the same functional considered in \cite{BGN} (up to the multiplicative factor $2(1-k)$ in front of the perimeter energy).
\end{remark}

\paragraph{Second stage of the minimizing movements: the minimizer is an $\e$-quasi-rectangle.}
\begin{remark}\label{rem:5.surrounded_shape_minimizer_beginning}
    The proof of last Proposition shows that as long as the set $\I_{j+1}\cup\Z_{j+1}$ is not a rectangle, then $\I_{j+1}$ is an octagon, and coefficients $\alpha_{j, i}$ satisfy (\ref{eq:movement_parallel_sides_surrounding_surfactant}).
    
    Moreover, again from Proposition \ref{prop:surrounded_shape_minimizer_beginning}, we know that if the motion does not interrupt, at a certain point the set $\I_j$ will become an octagon whose diagonal sides verify $D_{j, i}/\sqrt{2}\le\max\{4\e\sqrt{C_\e}, \e^\mu\}.$ Setting $\mu = 1/8$ in particular we deduce that, if the motion does not interrupt, then after a finite number of steps the set $\I_{j+1}$ becomes an $\e$-quasi-rectangle.
    At this point a precise description of the set $\I_{j+1}$ is difficult. In fact in this case it can happen that the surfactant completely fills the set $T^{j+1}:=(R_{\I_{j+1}\cup\partial^+\I_{j+1}})\setminus\I_{j+1}$. As a result we cannot argue as in the first step of the proof of Proposition \ref{prop:surrounded_shape_minimizer_beginning} to deduce the precise shape of the set $\I_{j+1}.$
\end{remark}

In the following of this section, we study the minimizing movements of a special class of minimizers, and we show that with this choice the motion of the $\e$-quasi-rectangle mentioned in Remark \ref{rem:5.surrounded_shape_minimizer_beginning} can be approximated ``from the inside'' by the motion of a proper inscribed octagon, and ``from the outside'' by the motion of the circumscribed rectangle. 

\begin{definition}\label{def:good_minimizer}
    Let $u_j$ be an $\e$-quasi-rectangle, and let $u_{j+1}$ be a minimizer of $u\mapsto \enF(u, u_j)$.
    We say that $u_{j+1}$ is a \emph{good minimizer} if $I_{j+1} = \tilde{I}_{j+1}\cup(\cup_{i=1}^4\PP_{j+1, i})\cup (\cup_{i=1}^4 \Delta_{j+1, i})$ is an $\e$-quasi-rectangle according to Definition \ref{def:quasi_rectangle} such that for every $p\in\zero_{j+1}\cap R_{\tilde{I}_{j+1}}$ it holds
    \begin{equation}\label{eq:good_minimizer}
        d^\e_1(p, \partial\I_j)\le\min\{\alpha_{j,i},\,i=1,\,\dots,\,4\}.
    \end{equation}
    If $I_j$ is an octagon which is not an $\e$-quasi-rectangle (i.e. if at least one of its diagonal sides is longer than $\max\{\sqrt{2}\e\lceil4\sqrt{C_\e}\rceil, \sqrt{2}\e\lceil\e^{1/8-1}\rceil\}$), as a convention we say that any minimizer $u_{j+1}$ is a good minimizer.
\end{definition}

In the following Lemma we show that if $\gamma<2$ and if $I_j$ is an $\e$-quasi-rectangle surrounded by surfactant, then also $I_{j+1}$ is an $\e$-quasi-rectangle. Statement (i) gives an upper bound on the sides displacements which depends only on the size of the quasi-rectangle $I_j$. In (ii) instead, we show that as long as the $I_j$ is sufficiently large then the length of the parallel sides of the $\e$-quasi-rectangle are bounded from below. We point out that (ii) together with Proposition \ref{prop:connectedness} ensures that as long as the $\e$-quasi-rectangle $I_j^\e$ is large enough then, for sufficiently small values of $\e$, the set $I_{j+1}^\e$ is connected. Finally, (iii) shows the existence of a good minimizer.
\begin{lemma}\label{lemma:5.10}
    Let $\gamma<2$. Let $u_j\in\A_\e$  be such that $I_j$ satisfies assumption \eqref{H} of Subsection \ref{subsec:connectedness} . Suppose that $I_j = \tilde{I}_j\cup(\cup_{i=1}^4\PP_{j, i})\cup(\cup_{i=1}^4\Delta_{j, i})$ is an $\e$-quasi-rectangle, and denote by $\ell_j$ the length of the basis of the rectangle $R_{\tilde{I}_j}$. Let $u_{j+1}\in\A_\e$ be a minimizer of $\enF(\cdot, u_j)$. Then there exists $\bar{\e}$ such that for every $\e\le\bar{\e}$ the following properties hold true for the set $\I_{j+1}$.
    
    \begin{itemize}
        \item[(i)] There exists a constant $c_1$ depending only on $\ell_j$ such that $\max\{\alpha_{j,i}:\:i=1, \dots, 4\}\le c_1$.
        \item[(ii)] There is a constant $c_2$ depending only on $\ell_j$ such that $P_{j+1,i}\ge c_2$.
        \item[(iii)] There exists a minimizer $u_{j+1}$
        which is a good minimizer.
    \end{itemize}
\end{lemma}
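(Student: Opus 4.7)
The three parts of the lemma are coupled and the proof blends a competitor argument (for (i)), geometric propagation of length bounds (for (ii)), and an explicit rearrangement of the surfactant (for (iii)). As preliminary observations, Propositions \ref{prop:horizontal_convexity} and \ref{prop:connectedness} give that $\I_{j+1}$ is a connected staircase set contained in $\I_j$, Lemma \ref{lemma:surrounded_shape_at_every_step} yields $\partial^+\I_{j+1}\subset Z_{j+1}$ with $\#Z_{j+1}=C_\e$, and Proposition \ref{prop:hausdorff_distance_from_boundary} gives the qualitative estimate $d_{\mathcal{H}}(\partial A_{j+1},\partial A_j)\le \e^\mu$ for any $\mu<1/4$. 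In particular the external slices $\PP_{j+1,i}$ lie within $\e^\mu$ of the corresponding sides of $\I_j$, so (using also the $\e$-quasi-rectangle structure of $\I_j$) their lengths satisfy $P_{j+1,i}\ge c(\ell_j)>0$ for $\e$ small.

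To prove (i), for each fixed $i$ I would test the minimality of $u_{j+1}$ against the competitor $\tilde u_{j+1}$ obtained by shifting the $i$-th external slice of $\I_{j+1}$ outward by one lattice step and sliding the adjacent layer of surfactant outward by one row, so that $\#Z=C_\e$ is preserved. By Lemma \ref{lemma:energy_surfactant} the energy variation equals $2\e(1-k)$, while the dissipation reduction $\dis^1_\e(\cdot,\I_j)$ --- since the row of cells absorbed into $\I$ sits at distance $\alpha_{j,i}\e$ from $\partial \I_j$ --- equals $\alpha_{j,i}\e^2 P_{j+1,i}$. Dividing by $\tau=\zeta\e$ and invoking minimality yields $\alpha_{j,i}P_{j+1,i}\le 2\zeta(1-k)$, and combined with the preliminary lower bound $P_{j+1,i}\ge c(\ell_j)$ this gives $\alpha_{j,i}\le c_1(\ell_j)$.

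For (ii), once (i) is in hand the length $P_{j+1,i}$ is controlled by a direct geometric computation in the spirit of \eqref{eq: Lung. lati}: the external slice $\PP_{j+1,i}$ differs from $\PP_{j,i}$ only by the displacements of the two adjacent external slices, plus corrections of order $\tilde D_{j,i}/\sqrt 2=O(\e\sqrt{C_\e})+O(\e^{1/8})=o(1)$. This yields $P_{j+1,i}\ge P_{j,i}-2c_1\e-o(1)\ge c_2(\ell_j)$ for $\e$ small. For (iii) I would exploit the non-uniqueness of $Z_{j+1}$ highlighted in Lemma \ref{lemma:5_optimal_placement_surfactant}: starting from any minimizer $u_{j+1}$ I would first extract an $\e$-quasi-rectangle decomposition $\I_{j+1}=\tilde \I_{j+1}\cup(\cup_i\PP_{j+1,i})\cup(\cup_i\Delta_{j+1,i})$, using that the prescribed size $\tilde D_{j+1,i}/\sqrt 2\ge \e\lceil 4\sqrt{C_\e}\rceil$ ensures $\#T_{j+1,i}>C_\e$ so that each corner can absorb all the surfactant if necessary. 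I would then rearrange the surfactant in each corner $T_{j+1,i}$ via the layered algorithm of Lemma \ref{lemma:5_optimal_placement_surfactant}, starting from cells adjacent to $\PP_{j+1,i}$ (which lie at distance at most $\alpha_{j,i}\e$ from $\partial \I_j$) and working inward, so that every element of $\zero_{j+1}\cap R_{\tilde \I_{j+1}}$ ends up within distance $\min_i\alpha_{j,i}\e$ from $\partial \I_j$.

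The main technical obstacle is the apparent circularity between (i) and (ii): the competitor argument for (i) requires a lower bound on $P_{j+1,i}$, while (ii) uses the bound on $\alpha_{j,i}$ from (i). The way out is the preliminary qualitative bound $P_{j+1,i}\ge c(\ell_j)$ from Proposition \ref{prop:hausdorff_distance_from_boundary} --- not sharp in powers of $\e$, but sufficient --- which closes the argument for (i); the sharper bound is then obtained a posteriori via (ii). A secondary difficulty is the careful bookkeeping required when constructing competitors under the mass-conservation constraint $\#Z_{j+1}=C_\e$: rather than adding a cell freely, one must shift an entire row of surfactant outward, which requires a small geometric adjustment to account for possible mismatches in row lengths along the quasi-rectangle.
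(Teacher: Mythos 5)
Your proof of (i) has a genuine gap. You claim the preliminary bound $P_{j+1,i}\ge c(\ell_j)$ follows from $d_{\mathcal{H}}(\partial A_{j+1},\partial A_j)\le\e^\mu$ together with the quasi-rectangle structure of $\I_j$, but Hausdorff closeness controls only the \emph{positions} of boundary points, not the lengths of the extremal slices. Nothing in Proposition~\ref{prop:hausdorff_distance_from_boundary} rules out a staircase minimizer whose lowest slice $H^{j+1,1}$ has length of order $\e$: a few short low rows fanning out to a long row within vertical depth $\e^\mu$ stay within Hausdorff distance $\e^\mu$ of a flat bottom side. Your one-row competitor then yields only $\alpha_{j,1}P_{j+1,1}\lesssim 2\zeta(1-k)$, which is vacuous without a lower bound on $P_{j+1,1}$ — and that lower bound is precisely part (ii), which you intend to derive from (i). The circularity you flag is real and your proposed resolution does not close it.

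The paper avoids this by never bounding $P_{j+1,1}$ first. Using the $\zero/\one$ comparison of Lemma~\ref{lemma:5_preliminaries}, equation~\eqref{eq:5.7}, it shows the \emph{third} horizontal slice $H^{j+1,3}$ already has length $\ge\ell_j-4\delta_\e$, then tests minimality against a competitor that restores the whole set $S$ of cells below the first three slices; since $\e\#S\ge\ell_j-4\delta_\e$ and each such cell sits at distance $\ge\e(\alpha_{j,1}-1)$ from $\partial I_j$, the dissipation reduction is $\ge(\ell_j-4\delta_\e)(\alpha_{j,1}-1)\e/\zeta$, giving (i) with no reference to $P_{j+1,1}$. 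Part (ii) is then a removal competitor (delete $H^{j+1,1}$, drop the energy by $\e(1-k)$, bound the added dissipation by $(c_1+1)\e^2 L/\zeta$ using (i)); your route via \eqref{eq: Lung. lati} is also shaky because that identity is stated for octagons while at this point $\I_{j+1}$ is only known to be a staircase set. Your sketch of (iii) also starts by "extracting" an $\e$-quasi-rectangle decomposition of $\I_{j+1}$, but for a generic minimizer that decomposition need not exist; the paper instead constructs a \emph{new} minimizer by a sequence of cell swaps (replacing inward corners of $\I_{j+1}$ with phase $1$ and trading away endpoints of the bottom slice to phase $0$), verifies at each step that the total energy is unchanged, and only then identifies the quasi-rectangle structure.
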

\begin{proof}
    We denote by $\delta_\e:=\max\{\e\lceil4\sqrt{C_\e}\rceil, \e\lceil\e^{1/8-1}\rceil\}.$\\
    \emph{First claim.} We prove the claim for the displacement of the lowest parallel side, that is for $\alpha_{j, i} = \alpha_{j,1}$. For $s=j,j+1$ we denote by $H^{s, i} = \dseg{p^{s,i}}{q^{s, i}}$ for $i=1, \dots, n_h^s$ the horizontal slices of the sets $\I_{s}$. Fix $\mu\in(1/8,1/4)$. In view of Propositions \ref{prop:hausdorff_distance_from_boundary} and \ref{prop:connectedness}, there exists $\bar{\e}$ such that $\I_{j+1}$ is a connected staircase set contained into $\I_j$ with $d_{\hh}(\partial A_{j+1}, A_j)\le\e^\mu$ for every $\e\le\bar{\e}$. Observe moreover that  since $I_j$ is an $\e$-quasi-rectangle it holds $\e\#H^{j, 2}\ge \ell_j-2\delta_\e.$ First of all, we now show that $\e\#H^{j+1, 3}\ge \ell_j-4\delta_\e$. To this end, it is sufficient to prove that \[p^{j+1, 3}_1\le p^{ j, 2}_1+2\delta_\e,\;\;\text{and}\;\; q^{j+1, 3}_1\ge q^{j, 2}_1-2\delta_\e.\]
    We argue by contradiction. Suppose without loss of generality that the first inequality does not hold. 
    We set
    \[\bar{t}:=\min\{t>3:\:p^{j+1, t}_1<p^{j+1, 3}_1\}.\]
    We observe that such $\bar{t}\in\NN$ is well defined because $d_{\mathcal{H}}(\partial A_j, \partial A_{j+1})\le\e^{\mu}.$
%    we have that for every point $p$ belonging to the set $S$ defined as \[S:=\{p\in\partial^-\I:p+\e e_2\in\partial^+\I\;\text{ and }\;p^{h, j, 2}_1+2\e^\mu\le p_1\le q^{h, j, 2}_1-2\e^\mu\},\]    it holds that     \[|p_2-p^{h, j, 2}| \le d^\e_1(p, \partial I_j)\le |p_2-p^{h, j, 1}|.\]    
%Moreover from Paragraph \ref{par:vertical_horizontal_distance} \review{se togliamo il paragrafetto dobbiamo capire come dire questa cosa}we deduce that the function     \[x\mapsto d^\e_1(p^{h, j+1, \bar{t}-1}-\e e_1+x\e e_2, \partial\I_j),\text{ defined for }x = 0, \dots, \bar{t}-4\] is not increasing. 
From the geometry of $I_j$ we deduce that 
    \[d^\e_1(p^{ j+1, \bar{t}-1}-\e e_1, \partial\I_j)\ge d^\e_1(p^{ j+1, 3}-\e e_1, \partial\I_j)\ge d^\e_1(p^{j+1, 1},\partial \I_j)+\e.\]
    Since $p^{j+1, \bar{t}-1}-\e e_1\in\zero_{j+1}$ and $p^{j+1, 1}\in\one_{j+1}$ this is a contradiction in view of \eqref{eq:5.7} of Lemma \ref{lemma:5_preliminaries}.
    Summing up, we have that $|\e \#H^{j+1, 3} - \e\# H^{j, 2}|\le c\delta_\e$, i.e. $H^{j+1, 3}$ and $H^{j, 2}$ are parallel slices having approximately the same length. We only sketch the final part of the proof of (i), which follows a similar strategy as the one of the proof of Lemma \ref{lemma:distance_horizontal_sides}. We suppose for simplicity that the first three horizontal slices $H^{j+1, 1}, H^{j+1, 2}, H^{j+1, 3}$ of $I_{j+1}$ satisfy 
    \[p^{j+1, 3}<p^{j+1, 2}<p^{j+1, 1},\quad\text{ and }\quad q^{j+1, 1}<q^{j+1, 2}<q^{j+1, 3}.\]
    Consider the set 
    \begin{equation}\label{eq:new_S}
    	S:=\Bigl((H^{j+1, 3}-\e e_2)\cup (H^{j+1, 2}-\e e_2)\cup (H^{j+1, 1}-\e e_2)\Bigl)\setminus I_{j+1}.
    \end{equation}
    Without loss of generality we can suppose that $S\subset I_j$. Following the same strategy of the second step of Lemma \ref{lemma:distance_horizontal_sides} we can construct a competitor $\tilde{u}_{j+1}$ with the following properties:
    \begin{equation*}
    	\begin{aligned}
    		\{\tilde{u}_{j+1}=1\} = I_{j+1}\cup S,\quad \E_\e(\tilde{u}_{j+1})\le \E_\e(u_{j+1})+2\e(1-k)\;\;\text{ and }\;\;\# \{\tilde{u}_{j+1}=0\} = C_\e.\\
    	\end{aligned}
    \end{equation*}
    By construction for every $p\in S$ it holds 
    \[d^\e_1(p, \partial I_j)\ge \e(\alpha_{j, 1}-1),\]
    and by the first part of the proof we have $\e\#S\ge \ell_j-4\delta_\e.$ Therefore we conclude 
    \[\frac{1}{\tau}\dis^1_\e(\tilde{u}_{j+1}, u_j)\le \frac{1}{\tau}\dis^1_\e(u_{j+1}, u_j)-\frac{(\e\#S)\cdot(\alpha_{j, 1}-1)\e}{\zeta}\le \frac{1}{\tau}\dis^1_\e(u_{j+1}, u_j) - \frac{(\ell_j-4\delta_\e)\cdot(\alpha_{j, 1}-1)\e}{\zeta}.\]
    By minimality of $u_{j+1}$ we deduce that 
    \[\frac{(\ell_j-4\delta_\e)(\alpha_{j, 1}-1)\e}{\zeta}\le2\e(1-k)\implies \alpha_{j, 1}\le \frac{2(1-k)\zeta}{\ell_j-4\delta_\e}+1<\frac{4(1-k)\zeta}{\ell_j}+1,\]
    where in last inequality we used that $\delta_\e\to 0$ as $\e\to 0$. Claim (i) follows.\vspace{5pt}\\
    \emph{Second claim.} From the first part we know that for every $p\in H^{j+1, 1}$ it holds $d^\e_1(p, \partial\I_j)\le(\alpha_{j, 1}+1)\e\le (c_1+1)\e.$ We set $L:=\#H^{j+1, 1}$, then $L\e=P_{j+1,1}$ . Arguing as in the second step of the proof of Proposition \ref{prop:connectedness}, we can construct a competitor $\tilde{u}_{j+1}$ such that $\#\{\tilde{u}_{j+1}=0\} = C_\e$, and 
    \[\{\tilde{u}_{j+1}=1\} = I_{j+1}\setminus H^{j+1, 1},\;\;H^{j+1, 1}\subset\{\tilde{u}_{j+1}=0\},\;\text{ and }\;\E_\e(\tilde{u}_{j+1})\le \E_\e(u_{j+1})-\e(1-k).\]
    By minimality of $u_{j+1}$ we deduce that 
    \[0\ge \E_\e(u_{j+1})+\frac{1}{\tau}\dis^1(u_{j+1}, u_j)-\E_\e(\tilde{u}_{j+1})-\frac{1}{\tau}\dis^1(\tilde{u}_{j+1}, u_j)\ge \e(1-k)-\frac{\e L\cdot (c_1+1)\e}{\zeta},\]
    and therefore
    \[L\e\ge\frac
    {\zeta(1-k)}{c_1+1},\]
    which completes the proof of (ii).
    \vspace{5pt}\\
    \emph{Third claim.} We write $\I_{j+1}$ as
    $\I_{j+1} = \hat{\I}_{j+1}\cup(\cup_{i=1}^4\PP_{j+1,i}),$
    where the sides $\PP_{j+1,i}$ are as in Definition \ref{def:staircase_set}. From Lemma \ref{lemma:5_preliminaries} we have that 
    for every $p\in R_{\hat{\I}_{j+1}}\cap\zero_{j+1}$ it holds \begin{equation}\label{eq:6.qualcosa}
      d^\e_1(p, \partial \I_{j+1})\le (\min\{\alpha_{j,i}:\:i=1, \dots, 4\}+1)\e.      \end{equation} The geometry of $I_j$ ensures that \eqref{eq:6.qualcosa} holds also for every $p\in\partial^+\I_{j+1}$.
    We now want to construct a good minimizer. First we prove that there exists $\tilde{u}_{j+1}$ satisfying \eqref{eq:good_minimizer}, and later we show that the set $\{\tilde{u}_{j+1}=1\}$ is an $\e$-quasi-rectangle.
    Without loss of generality we may suppose that $\alpha_{j, 1}=\min\{\alpha_{j,i},\,i=1, \dots, 4\}.$ We rename $\hat{I}_{j+1, 0} := \hat{I}_{j+1}$, and $\zero_{j+1, 0}:=\zero_{j+1}$, and we consider the set \begin{equation}\label{eq:6.S}
        S_0:=\bigl\{p\in R_{\hat{\I}_{j+1}}\cap\I_{j}\cap\zero_{j+1}:\:d^\e_1(p, \partial\I_j) = \alpha_{j,1}+1\bigl\}.
    \end{equation}
     We define inductively \begin{align*}\hat{\I}_{j+1,i+1}&:=\hat{\I}_{j+1, i}\cup S_i,\\
        \zero_{j+1, i+1} &:= \{p\in R_{\hat{\I}_{j+1}}\cap\partial^+\hat{\I}_{j+1, i+1}:\:\#\mathcal{N}(p)\cap\partial^-\hat{\I}_{j+1, i+1} = 2\},\\        S_{i+1}&:=\bigl\{p\in R_{\hat{\I}_{j+1}}\cap\I_{j}\cap\zero_{j+1, i+1}:\:d^\e_1(p, \partial\I_j) = \alpha_{j, 1}+1\bigl\}.
    \end{align*}  We interrupt the construction at the first index $\bar{s}$ such that $S_{\bar{s}+1} = S_{\bar{s}}$.
    We have \[\#(\cup_{i=1}^{\bar{s}}S_{i})\le \# (R_{\hat{\I}_{j+1}}\setminus \I_{j+1})\cap I_j\le c\bigl(\max\{\alpha_{j, i}:i=1, \dots, 4\}+1\bigr)\frac{\delta_\e}{\e}\le c\cdot c_1\frac{\delta_\e}{\e}\]
    for some positive constant $c$, where $c_1$ is the constant of claim (i).
    From the second step we have that $\#H^{j+1, 1}\ge c_2\e^{-1}$, hence since $\delta_\e\to 0$ as $\e\to 0$ we deduce that for $\e$ sufficiently small \[\#H^{j+1, 1}\ge \# (R_{\hat{\I}_{j+1}}\setminus \I_{j+1})\cap I_j\ge \#(\cup _{i=1}^{\bar{s}}S_{\bar{s}}).\]
    Consider any point $p^0\in S_0$. Since $d^\e_1(p, \partial I_j)\ge d^\e_1(p^{j+1, 1}, \partial I_j)$ the competitor $\tilde{u}_{j+1, 1}$ obtained by replacing $u_{j+1}(p)\mapsto 1$ and $u_{j+1}(p^{j+1, 1})\mapsto 0$ is still a minimizer of $\enF(\cdot, u_j)$. Let 
    $n:=\sum_{i=0}^{\bar{s}}\# S_{i}$. Arguing inductively, we can build a sequence of minimizers $\tilde{u}_{j+1, i}:\:i=1, \dots, n$ obtained by replacing $\tilde{u}_{j+1, i-1}(p^{j+1, 1}+(i-1)\e e_1)\mapsto 0$, and by replacing a suitable element $p^i\in\cup_{i=1}^{\bar{s}}S_s$ with $\tilde{u}_{j+1, i-1}(p^{i})\mapsto 1$. By construction the function $\tilde{u}_{j+1, n}$ verifies \eqref{eq:good_minimizer}. \\
    Now we rename for simplicity $\tilde{u}_{j+1}:=\tilde{u}_{j+1, n}$, and we are left to prove that $\{\tilde{u}_{j+1}=1\}$ is an $\e$-quasi-rectangle. Since $I_j$ is an $\e$-quasi-rectangle, we may write $I_j = \tilde{I}_j\cup (\cup_{i=1}^4\PP_{j, i}) \cup (\cup_{i=1}^4\Delta_{j, i})$ as in Definition \ref{def:quasi_rectangle}, so that $\tilde{I}_j$ is an octagon. Recalling that we denoted by $\e\alpha_{j, i}$ the distances between the parallel sides of $I_{j+1}$ and $I_j$, we define a new octagon $\tilde{I}_{j+1}$ (with sides $\tilde{P}_{j+1, i}$ and $\tilde{D}_{j+1, i}$) through the relations \eqref{eq: spostamenti} and \eqref{eq: Lung. lati} with $\tilde{P}_{j, i},\,\tilde{P}_{j+1, i}$ and $\tilde{D}_{j, i},\,\tilde{D}_{j+1, i}$ in place of $P_{j, i},\,P_{j+1, i}$ and $D_{j, i},\,D_{j+1, i}$ respectively. We set $\beta_{j, i}$ such that $\tilde{D}_{j+1, i} = \tilde{D}_{j, i}$, so that $\tilde{I}_{j+1}$ is an $\e$-quasi-rectangle. We observe that, denoting by $\tilde{\DD}_{j+1,i}$ the diagonal sides of $\tilde{I}_{j+1}$, by construction, for every $p\in\tilde{\DD}_{j+1, i}$ it holds $d^\e_1(p, \partial I_j)\ge \alpha_{j, 1}+1.$ Since $I_{j+1}$ satisfies \eqref{eq:good_minimizer} we conclude that $\tilde{I}_{j+1}\subset I_{j+1},$ and the claim follows.
\end{proof}

\begin{proposition}\label{prop:surrounded_shape_minimizer_final_part}    Let $\gamma<2$ and let $\delta_\e = \max\{\e\lceil4\sqrt{C_\e}\rceil, \e\lceil\e^{1/8-1}\rceil\}\}$. Let $u_j\in\A_\e$ be such that $I_j$ is an $\e$-quasi-rectangle $I_{j} = \tilde{I}_{j}\cup(\cup_{i=1}^4\PP_{j, i})\cup(\cup_{i=1}^4\Delta_{j, i})$ satisfying assumption \eqref{H}, and let $u_{j+1}$ be a good minimizer for $\enF(\cdot, u_j)$. Then, there exists $\bar{\e}$ such that for every $\e\le\bar{\e}$ the set $\I_{j+1}$ is an $\e$-quasi-rectangle, and there exists a constant $c$
depending only on $\bar{c}$ (the constant of \eqref{H}) such that the displacements of the parallel sides are 
\begin{equation}\label{eq:movement_parallel_sides_surrounding_surfactant_final_steps}
        \alpha_{j, i}\begin{cases}
            \in\Bigl\{\Bigl\lfloor\frac{2\zeta(1-k)}{\tilde{P}_{j,i}}\Bigl\rfloor,\Bigl\lceil\frac{2\zeta(1-k)}{\tilde{P}_{j,i}}\Bigl\rceil\Bigl\}&\text{ if }\text{ dist}\bigl(\frac{2\zeta(1-k)}{\tilde{P}_{j,i}}, \NN\bigl)\ge c\delta_\e\\[0.5em]
            \in\Bigl\{\Bigl[\frac{2\zeta(1-k)}{\tilde{P}_{j, i}}\Bigl]-1, \Bigl[\frac{2\zeta(1-k)}{\tilde{P}_{j, i}}\Bigl], \Bigl[\frac{2\zeta(1-k)}{\tilde{P}_{j,i}}\Bigl]+1\Bigl\}&\text{ otherwise.}
            \end{cases}
        \end{equation}
\end{proposition}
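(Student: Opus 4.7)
Since $u_{j+1}$ is a good minimizer, the $\e$-quasi-rectangle structure of $I_{j+1}$ is built into Definition~\ref{def:good_minimizer}; the only task is to establish the displacement formula \eqref{eq:movement_parallel_sides_surrounding_surfactant_final_steps}. I will adapt the two-competitor scheme of Theorem~\ref{teo:free_surfactant_movement}. Fix $i=1$ (the other sides are analogous) and build two competitors $v^\pm_{j+1}\in\A_\e$ that move the bottom slice $\PP_{j+1,1}$ inward (for $v^-$) or outward (for $v^+$) by one lattice step, while preserving both the $\e$-quasi-rectangle shape and the mass $\#\{v^\pm_{j+1}=0\}=C_\e$ forced by Lemma~\ref{lemma: surfactant costante}. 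Concretely, for $v^-_{j+1}$ I demote the row $\PP_{j+1,1}$ from $1$ to $-1$ and convert the same number of $-1$ cells one step below the current surfactant shell into $0$; for $v^+_{j+1}$ I promote the surfactant row $\PP_{j+1,1}-\e e_2$ (which is indeed surfactant since $\partial^+I_{j+1}\subset Z_{j+1}$ by Lemma~\ref{lemma:surrounded_shape_at_every_step}) to $1$ and re-seat the freed $0$'s one step below, so that $\partial^+\{v^+_{j+1}=1\}$ is again surrounded by $0$'s. In both constructions the pinned diagonals $\tilde D_i$ and the corner pieces $\Delta_i$ are left untouched.

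\textbf{Energy and dissipation.} For the energy, Lemma~\ref{lemma:energy_surfactant}(ii) combined with the identity of Remark~\ref{rem:energy_is_perimeter} yields
\begin{equation*}
\E_\e(u_{j+1}) = 2\e(1-k) C_\e + \tfrac12(1-k)\bigl(Per(I_{j+1})+Per(Z_{j+1}\cup I_{j+1})\bigr)
\end{equation*}
(and analogously for $v^\pm_{j+1}$), where both perimeters coincide with $Per(R_{\tilde I_{j+1}})$ up to corner corrections of size $O(\delta_\e)$. Shifting the bottom slice by $\pm\e$ changes each perimeter by exactly $\pm 2\e$, so
\begin{equation*}
\E_\e(v^\pm_{j+1})-\E_\e(u_{j+1}) = \pm 2\e(1-k)+O(\delta_\e\e).
\end{equation*}
For the dissipation, Proposition~\ref{prop:hausdorff_distance_from_boundary} and Lemma~\ref{lemma:5.10}(i)--(ii) give $|\tilde P_{j+1,1}-\tilde P_{j,1}|\le C\delta_\e$ and $\tilde P_{j+1,1}\ge c_2$, so repeating the computation \eqref{eq:free_dissipation_estimate_1}--\eqref{eq:free_dissipation_estimate_20} produces
\begin{equation*}
\tfrac{1}{\tau}\bigl(\dis^1_\e(v^+_{j+1},u_j)-\dis^1_\e(u_{j+1},u_j)\bigr)\le -\tfrac{(\tilde P_{j,1}-C\delta_\e)\alpha_{j,1}\e}{\zeta},
\end{equation*}
\begin{equation*}
\tfrac{1}{\tau}\bigl(\dis^1_\e(v^-_{j+1},u_j)-\dis^1_\e(u_{j+1},u_j)\bigr)\le \tfrac{(\tilde P_{j,1}+C\delta_\e)(\alpha_{j,1}+1)\e}{\zeta}.
\end{equation*}

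\textbf{Conclusion.} Feeding these two estimates into $\enF(u_{j+1})\le\enF(v^\pm_{j+1})$ and using that the surfactant dissipation term $\tfrac{\e^\gamma}{\tau}\dis^0_\e$ is unchanged (because $\#\{v^\pm_{j+1}=0\}=C_\e$), I rearrange to obtain
\begin{equation*}
\frac{2\zeta(1-k)}{\tilde P_{j,1}}-\frac{C\delta_\e}{\tilde P_{j,1}}-1 \le \alpha_{j,1}\le \frac{2\zeta(1-k)}{\tilde P_{j,1}}+\frac{C\delta_\e}{\tilde P_{j,1}}.
\end{equation*}
Since $\alpha_{j,1}\in\NN\cup\{0\}$, extracting integer parts yields \eqref{eq:movement_parallel_sides_surrounding_surfactant_final_steps}: when $2\zeta(1-k)/\tilde P_{j,1}$ sits at distance at least $c\delta_\e$ from $\NN$, the two bounds lock $\alpha_{j,1}$ into $\{\lfloor\cdot\rfloor,\lceil\cdot\rceil\}$, and otherwise the integer could slip one slot either way, giving the triple $\{[\cdot]-1,[\cdot],[\cdot]+1\}$. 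The remaining sides $i=2,3,4$ are handled identically.

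\textbf{Main obstacle.} The delicate point is the explicit surfactant-preserving construction of $v^\pm_{j+1}$ and the verification that the rearrangement of the corner pieces $\Delta_i$ only costs $O(\delta_\e\e)$ of energy, rather than an $O(1)$ term that would ruin the bound. Here the good-minimizer condition~\eqref{eq:good_minimizer} is crucial: it confines $Z_{j+1}\cap R_{\tilde I_{j+1}}$ to a boundary layer of thickness $\lesssim(\max_i\alpha_{j,i}+1)\e=O(\e)$ around $\partial I_j$ (by Lemma~\ref{lemma:5.10}(i)), so that all perimeter variations introduced by the shift of $\PP_{j+1,1}$ in the corners are genuinely negligible against the leading $\pm 2\e(1-k)$ term.
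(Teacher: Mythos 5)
Your overall plan is correct and matches the paper: you build two one-step competitors and derive a window of width $\approx 1$ for $\alpha_{j,1}$ from $\enF(u_{j+1})\le\enF(v^\pm_{j+1})$. The structural preservation (that $I_{j+1}$ is an $\e$-quasi-rectangle) does indeed come free from the good-minimizer hypothesis.

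However, your construction of the inner competitor $v^-_{j+1}$ is wrong in a way that breaks the whole estimate. You demote $\PP_{j+1,1}$ from $1$ to $-1$. But the row directly above, $\PP_{j+1,1}+\e e_2$, stays $1$, so every vertical bond across that horizontal interface becomes a $(-1,1)$ pair at cost $2\e$ instead of a $(1,1)$ pair at cost $0$. That alone contributes $\approx 2P_{j+1,1}$ to the energy, an $O(1)$ increase, utterly swamping the target $\mp 2\e(1-k)$. The construction also fails to preserve $\#\{v^-=0\}=C_\e$: turning $\PP_{j+1,1}$ into $-1$ does not free any $0$ cells, so converting $\#\PP_{j+1,1}$ new cells to $0$ below the shell strictly increases the surfactant mass, contradicting Lemma \ref{lemma: surfactant costante}. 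The correct move (the paper's $\tilde u^-$ in \eqref{eq:5.13}) is to demote $\PP_{j+1,1}$ from $1$ to $0$ — not $-1$ — and simultaneously demote the old bottom shell row $B$ from $0$ to $-1$; this slides the surfactant collar up by one row, keeps $\partial^+$ of the smaller $I$-set covered by $0$'s, and, after adding the short segment $S^-$ and the cells $\PP^\pm_{j+1,1}+\e e_2$ to compensate for $\#B>\#\PP_{j+1,1}$, yields an energy decrease of \emph{exactly} $2\e(1-k)$ with no spurious corrections. Your $v^+_{j+1}$ is roughly the right idea (it mirrors the paper's $\tilde u^+$), but the $v^-$ error makes the chain of inequalities you need unavailable.

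Two further remarks. First, your claimed error term $O(\delta_\e\e)$ in the energy comparison is not justified by anything in the proposal; the paper avoids any such term by being exact, and you cannot afford an $O(\delta_\e)$ slippage here since $\delta_\e\gg\e$. Second, the paper's dissipation bounds \eqref{eq:5.15_left_estimate}--\eqref{eq:5.17_rigth_estimate} retain the explicit term $(P_{j,1}-P_{j+1,1})/\tilde P_{j,1}$ rather than absorbing it silently. Your silent absorption into $C\delta_\e$ is legitimate only because Lemma~\ref{lemma:5.10}(i)--(ii) give $P_{j,1}-P_{j+1,1}=O(\e)=O(\delta_\e)$ and $\tilde P_{j,1}\ge c_2$; this needs to be invoked explicitly, since otherwise the fractional shift could be as large as $1$ and enlarge the admissible set of integers beyond what \eqref{eq:movement_parallel_sides_surrounding_surfactant_final_steps} allows.
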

\begin{proof}
    To simplify the notation we write $\alpha_i$ instead of $\alpha_{j, i}$ for $i=1, \dots, 4$. We prove the claim for $\alpha_{1}$ assuming, for simplicity, that $\alpha_1>0$. We can prove that \eqref{eq:movement_parallel_sides_surrounding_surfactant_final_steps} holds also in case $\alpha_1=0$ arguing as in the proof of Theorem \ref{teo:free_surfactant_movement}. 
    
    For $s=j, j+1$ we denote by $H^{s,i} = \dseg{p^{s, i}}{q^{s, i}},\,i=1, \dots, n_h^s$ the horizontal slices of $\I_{s}$ and let
    \[\PP_{s, 1}^{-}= \dseg{p^{s, 2}-\e e_2}{p^{s, 1}-\e e_1},\;\text{ and }\;\PP_{s, 1}^{+}= \dseg{q^{s, 1}+\e e_1}{q^{s, 2}-\e e_2}.\]
    Since by assumption $u_{j+1}$ is a good minimizer, then, by Definition \ref{def:good_minimizer}, $I_{j+1}$ is an $\e$-quasi-rectangle. In what follows we provide the complete proof in the case that $\I_{j+1}\cup\Z_{j+1}$ is a rectangle, while we comment later on the general case. 
    
    Suppose then that $\I_{j+1}\cup\Z_{j+1}$ is a rectangle. We denote by $B = \dseg{p^B}{q^B}$ the lowest slice of $\Z_{j+1},$ and we observe that for $\e$ sufficiently small it holds
    \begin{equation}\label{eq:5.12}
        \e(\#B-\#H^{j+1,2})\le c\delta_\e\;\;\text{ and }\;\;\e|\#H^{j,2}-\#H^{j+1,2}|\le c\delta_\e,
    \end{equation}
    for a positive constant $c$. We observe that, since $u_{j+1}$ is a minimizer, we can assume that either 
    \begin{equation}\label{eq:5.14}
        p^{j,1}_1\le p^{j+1,1}_1\le q^{j+1,1}_1\le q^{j,1}_1,\; \text{ or }\;p^{j+1,1}_1\le p^{j,1}_1\le q^{j,1}_1\le q^{j+1,1}_1      
    \end{equation}
    (otherwise we could strictly reduce the dissipation with a horizontal translation of the slice $H^{j+1,1}$).  
    We discuss the first situation of (\ref{eq:5.14}) (the other case being analogous).
    
    As in the proof of Theorem \ref{teo:free_surfactant_movement} we compare $u_{j+1}$ with two competitors $\tilde{u}_{j+1}^\pm$. 
    Let $N^-:=\#B-\#H^{j+1, 2}$ and denote by $S^-$ the segment
    \[S^-:=\dseg{p^{j+1, 1}+\e e_2}{p^{j+1, 1}+(N^--1)\e e_1+\e e_2}.\]
    We define a competitor $\tilde{u}_{j+1}^-$ as    \begin{equation}\label{eq:5.13}
        \tilde{u}^-_{j+1}(p):=        \begin{cases}
            0&\text{ if }p\in \PP_{j+1, 1}\cup (\PP^+_{j+1, 1}+\e e_2)\cup (\PP^-_{j+1, 1}+\e e_2)\cup S^-,\\
            -1&\text{ if }p\in B,\\
            u_{j+1}(p)&\text{ otherwise}.
        \end{cases}
    \end{equation}
    From Lemma \ref{lemma:energy_surfactant}-(ii) we deduce that $\E_\e(\tilde{u}^-_{j+1}) = \E_\e(u_{j+1})-2(1-k)\e.$ 
    We write $I_j = \tilde{I}_j\cup (\cup_{i=1}^4\PP_{j, i})\cup(\cup_{i=1}^4\Delta_{j, i})$, and we denote by $\tilde{P}_{j, i}$ the lengths of the parallel sides of $\tilde{I}_j$. From (\ref{eq:5.12}), setting $\e\#\PP_{s, 1}^\pm:=P^\pm_{s, 1}$ for $s=j, j+1$, we have that the variation of dissipation is 
    \begin{align*}    \dis^1(\tilde{u}^-_{j+1}, u_j)-&\dis^1(u_{j+1}, u_j)\le 
    \frac{(P^-_{j, 1}+P_{j+1, 1}+P^+_{j, 1})(\alpha_1+1)\e}{\zeta}+\\
    &+\frac{(P_{j, 1}-P_{j+1, 1})(\alpha_1+2)\e}{\zeta}+c\e\delta_\e = \frac{\tilde{P}_{j, 1}(\alpha_{1}+1)\e
    }{\zeta}+\frac{(P_{j, 1}-P_{j+1, 1})\e}{\zeta} + c\e\delta_\e.
    \end{align*}
    By minimality of $u_{j+1}$ we deduce \begin{equation}\label{eq:5.15_left_estimate}        \alpha_1\ge\frac{2\zeta(1-k)}{\tilde{P}_{j, 1}}-\frac{P_{j, 1}-P_{j+1, 1}}{\tilde{P}_{j, 1}}-1-c\delta_\e. 
    \end{equation}
    Now we construct the competitor $\tilde{u}_{j+1}^+$. We set $N^+:=\#H^{j+1,2}$, and we consider two points $\{\bar{p}, \bar{q}\}\subset B-\e e_2$ such that $\bar{p}_1 = p^{j+1, 2}_1$ and $\bar{q}_1 = q^{j+1, 2}_1$, so that 
    $\#\dseg{\bar{p}}{\bar{q}} = \# H^{j+1, 2}.$
    We define \begin{equation}\label{eq:5.16} \tilde{u}^+_{j+1}(p)= 
        \begin{cases}
            0&\text{ if }p\in \dseg{\bar{p}}{\bar{q}},\\
            1&\text{ if }p\in \PP^-_{j+1, 1}\cup(\PP_{j+1, 1}-\e e_2)\cup\PP^+_{j+1, 1},\\
            u_{j+1}(p)&\text{ otherwise}.
        \end{cases}
    \end{equation}
    In this case we find $\E_\e(\tilde{u}^+_{j+1})-\E_\e(u_{j+1}) = 2\e(1-k)$ and the estimate
    \[\dis^1(\tilde{u}^+_{j+1}, u_j)-\dis^1(u_{j+1}, u_j)\le -\frac{1}{\zeta}\Bigl(\tilde{P}_{j, 1}\alpha_1\e+\e\bigl(P_{j, 1}-P_{j+1, 1}\bigl)\Bigl) + c\e\delta_\e.\]
    By minimality of $u_{j+1}$ then we have    \begin{equation}\label{eq:5.17_rigth_estimate}
        \alpha_1\le \frac{2\zeta(1-k)}{\tilde{P}_{j, 1}}-\frac{P_{j, 1}-P_{j+1, 1}}{\tilde{P}_{j, 1}}+c\delta_\e.
    \end{equation}
    The claim follows directly from (\ref{eq:5.15_left_estimate}) and (\ref{eq:5.17_rigth_estimate}), observing that $\alpha_1$ is integer, and that
    \[0\le\frac{P_{j, 1}-P_{j+1, 1}}{\tilde{P}_{j, 1}} \le 1.\]
    In the general case $\I_{j+1}\cup\Z_{j+1}$ is not a rectangle then the proof follows the same line. This time, we can define $\tilde{u}^+_{j+1}$ arguing as in the proof of Lemma \ref{lemma:distance_horizontal_sides}, while to define $\tilde{u}_{j+1}^-$ we can argue as in the proof of Proposition \ref{prop:connectedness}-(Step 2).
\end{proof}

\begin{theorem}
\label{teo:final teo A_j surrounded}
    Let $A\subset\rr^2$ be a not degenerate octagon, and for every $\e>0$ let $A_\e$ be an octagon such that $d_{\mathcal{H}}(A, A_\e)\to 0$ as $\e\to 0$. Let $(u_0^\e)_\e$ be such that $u_0^\e\in\A_\e$ for every $\e$, and $\I^\e_0= A_\e\cap\e\ZZ^2.$ Let $\gamma<2$ and suppose that $C_\e:=\#\Z^\e_0$ satisfies \eqref{eq:5.1} and $\e^2C_\e\to 0$ as $\e\to 0$. Let $u^\e_j$ be a minimizing movement with initial datum $u^\e_0$ and such that $u^\e_j$ is a good minimizer for every $j$. For every $t\ge 0$ we denote by $A_\e(t)$ the set 
    \begin{equation}\label{eq:5_flow}
    A_\e(t) = \bigcup_{i\in\I^\e_{\lfloor t/\tau\rfloor}}Q_\e(i).
    \end{equation}
    Then,  it holds that $A_\e(t)$ converges as $\e\to 0$ locally uniformly in time to $A(t).$     
    $A(t)$ is an octagon for every $t\ge 0$ such that $A(0) = A$; denote by $\PP_i(t),\,\DD_i(t)$ its parallel and sloped sides, and $P_i(t),\,D_i(t)$ their lengths. Then, every side of $A(t)$ moves inwards in the direction which is orthogonal to the side. For every $t$ such that $A(t)$ is not a rectangle, the velocity $v_{\PP_i}(t)$ of the $i$-th side satisfies the following differential inclusion \begin{equation}\label{eq:5.2_sides_velocity}
        v_{\PP_i}(t) \begin{cases}
             = \frac{1}{\zeta}\Bigl\lfloor \frac{2\zeta(1-k)}{P_i(t)}\Bigl\rfloor&\text{ if }\frac{2\zeta(1-k)}{P_i(t)}\not\in\NN,\text{ and }\\[.5em]
             \in\Bigl[\frac{2(1-k)}{P_i(t)}-\frac{1}{\zeta},\,\frac{2(1-k)}{P_i(t)}\Bigl]&\text{ otherwise}.
        \end{cases}
    \end{equation}
    If there exists $\bar{t}$ such that $A(\bar{t})$ is a rectangle, then $A(t)$ is a rectangle for every $t\ge \bar{t}$, and the velocities $v_{\PP_i}$ satisfy the following differential inclusion
\begin{equation}\label{eq:movement_parallel_sides_surrounding_surfactant_final_steps_1}
        v_{\PP_i}(t)\begin{cases}
            \in\Bigl[\frac{1}{\zeta}\Bigl\lfloor\frac{2\zeta(1-k)}{P_i(t)}\Bigl\rfloor,\frac{1}{\zeta}\Bigl\lceil\frac{2\zeta(1-k)}{P_i(t)}\Bigl\rceil\Bigl]&\text{ if }\frac{2\zeta(1-k)}{P_i(t)}\not\in\NN,\\[.5em]          \in\Bigl[\frac{2(1-k)}{P_i(t)}-\frac{1}{\zeta},\frac{2(1-k)}{P_i(t)}+\frac{1}{\zeta}\Bigl]&\text{ otherwise.}
            \end{cases}
        \end{equation}
    Moreover as long as $D_i(t)>0$ the velocity of the side $\DD_i(t)$ is zero. If $D_i(\hat{t})=0$ for a certain $\hat{t}>0$, then $D_i(t)=0$ for every $t\ge\hat{t}$. 
\end{theorem}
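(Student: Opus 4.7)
For each $\e>0$, Propositions \ref{prop:surrounded_shape_minimizer_beginning} and \ref{prop:surrounded_shape_minimizer_final_part}, combined with Lemma \ref{lemma:5.10}, guarantee that the whole sequence $(I^\e_j)_j$ is well-defined and that $I^\e_j$ is either an octagon with at least one long diagonal or an $\e$-quasi-rectangle. My plan is to encode the shape of $A^\e_j$ through the eight numbers $P^\e_{j,i},D^\e_{j,i}$ ($i=1,\dots,4$) of Definitions \ref{def:sides_movement}--\ref{def:quasi_rectangle}, setting in the quasi-rectangle phase $D^\e_{j,i}:=0$ and $P^\e_{j,i}$ equal to the length of the external slice $\PP_{j,i}$, and to extend these values to piecewise affine functions $P^\e_i(t),D^\e_i(t)$ over the time step $\tau=\zeta\e$. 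Since the octagon $A^\e(t)$ in \eqref{eq:5_flow} is determined up to a rigid translation by its side lengths and the position of any of its diagonal-side supporting lines (all of which are pinned, as explained below), Hausdorff convergence of $A^\e(t)$ to a limit octagon $A(t)$ will reduce to uniform convergence of these two families of Lipschitz functions.

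In the long-diagonal regime, Proposition \ref{prop:surrounded_shape_minimizer_beginning} yields the integer-valued formula \eqref{eq:movement_parallel_sides_surrounding_surfactant} for $\alpha^\e_{j,i}$ and the inclusion $\zero_{j+1}\subset\zero_j$; in particular $\DD^\e_{j+1,i}\subset\DD^\e_{j,i}$, so each diagonal-side line is strictly pinned and \eqref{eq: Lung. lati} forces $\beta^\e_{j,i}=0$, so that $D^\e_{j,i}$ decays only by the shrinkage induced by the motion of the adjacent parallel sides. Uniform boundedness of $\alpha^\e_{j,i}\leq 2\zeta(1-k)/\bar c + 1$ yields an $\e$-independent Lipschitz bound on $P^\e_i(t),D^\e_i(t)$. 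In the quasi-rectangle regime, Lemma \ref{lemma:5.10} shows that the $\e$-quasi-rectangular structure is preserved and that the parallel-side lengths remain bounded below; since $u^\e_j$ is by assumption a good minimizer, Proposition \ref{prop:surrounded_shape_minimizer_final_part} provides the displacement formula \eqref{eq:movement_parallel_sides_surrounding_surfactant_final_steps} with $\tilde P^\e_{j,i}$ in place of $P^\e_{j,i}$, the difference being $O(\delta_\e)$. Crucially, $\tilde D^\e_{j,i}/\sqrt2=\delta_\e\to 0$, so in the limit $A(t)$ is forced to be a rectangle from the first time the quasi-rectangle regime is entered.

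The passage to the limit then follows the argument at the end of \cite{BGN}: by Arzel\`a--Ascoli and a diagonal subsequence, $P^\e_i(t),D^\e_i(t)$ converge uniformly on compact time intervals to Lipschitz functions $P_i(t),D_i(t)$; at any $t$ where $2\zeta(1-k)/P_i(t)$ is not an integer, local constancy of the floor function forces $\alpha^\e_{j,i}$ to be constant on a window of time steps of length uniformly bounded below, which through \eqref{eq: Lung. lati} identifies $P'_i(t)$ and $D'_i(t)$ with the values in \eqref{eq:5.2_sides_velocity}; at exceptional times the pointwise inclusion follows by a standard Clarke-subdifferential argument, and since the sides monotonically decrease, the full sequence (not just a subsequence) converges. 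The diagonal pinning claim and the implication $D_i(\hat t)=0\Rightarrow D_i(t)=0$ for $t\geq\hat t$ follow directly from the pinning of the supporting lines in the long-diagonal regime and from the second regime being of pure rectangular type.

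The main technical obstacle I anticipate is at the transition time between the two regimes, when the last surviving diagonal collapses: here I would need to verify that the two displacement formulas \eqref{eq:movement_parallel_sides_surrounding_surfactant} and \eqref{eq:movement_parallel_sides_surrounding_surfactant_final_steps} are compatible in the limit, so that the resulting $P_i(t)$ remains Lipschitz across this time. This reduces to observing that the extra $O(\delta_\e)$ error and the extra integer of slack allowed in \eqref{eq:movement_parallel_sides_surrounding_surfactant_final_steps} both vanish as $\e\to 0$, and that $\tilde P^\e_{j,i}\to P_{j,i}$, so that both prescriptions collapse to the same differential inclusion in the limit.
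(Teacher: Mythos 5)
Your proposal takes essentially the same route as the paper: the paper's proof of this theorem is a one-line deferral to Propositions \ref{prop:connectedness}, \ref{prop:surrounded_shape_minimizer_beginning}, \ref{prop:surrounded_shape_minimizer_final_part} and Lemma \ref{lemma:5.10}, invoking the Arzel\`a--Ascoli/Lipschitz-interpolation argument from Theorem \ref{teo:free_surfactant_movement} (itself borrowed from \cite{BGN}), and that is precisely the machinery you spell out. Your extra remarks — the explicit reconstruction of $A^\e(t)$ from side lengths and pinned diagonal supporting lines, and the compatibility check at the transition between the octagon and quasi-rectangle regimes — are useful details that the paper leaves implicit but do not constitute a different method.
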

\begin{proof}
The proof directly follows from Propositions \ref{prop:connectedness}, \ref{prop:surrounded_shape_minimizer_beginning} and \ref{prop:surrounded_shape_minimizer_final_part}, arguing as in the proof of Theorem \ref{teo:free_surfactant_movement}. We only point out that the connectedness is ensured by Proposition \ref{prop:connectedness} together with Lemma \ref{lemma:5.10}-(ii).
\end{proof}

\end{document}